\newcommand{\labitem}[2]{%
	\def\@itemlabel{\textbf{#1}}
	\item
	\def\@currentlabel{#1}\label{#2}}
\title{On the energy image density conjecture of Bouleau and Hirsch}
\author{Sylvester Eriksson-Bique}
\address[Sylvester Eriksson-Bique]{Department of Math. and Stat.
	P.O. Box 35 \\
	FI-40014 University of Jyväskylä, Finland}
\email{sylvester.d.eriksson-bique@jyu.fi}
\author{Mathav Murugan}
\address[Mathav Murugan]{Department of Mathematics, University of British Columbia,
	Vancouver, BC V6T 1Z2, Canada. }
\email{mathav@math.ubc.ca}
\thanks{SEB is supported by the Research Council of Finland via the project \emph{GeoQuantAM: Geometric and Quantitative Analysis on Metric spaces}, , grant no. 354241. MM is supported in part by NSERC and the Canada Research Chairs program.}
\subjclass[2020]{Primary: 31C25, 31E05, 30L99; Secondary: 49Q15, 26B05, 60J60, 60G30, 46E35, 49J52, 53C23, 31C15, 28A12}
\keywords{Energy image density property, currents, absolute continuity, Cheeger's conjecture, $p$-energy measure, decomposability bundle, cone null sets, Lipschitz approximation, martingale dimension}
\date{\today}
\newtheorem{theorem}[equation]{Theorem}
\newtheorem{lemma}[equation]{Lemma}
\newtheorem{proposition}[equation]{Proposition}
\newtheorem{corollary}[equation]{Corollary}
\numberwithin{equation}{section}
\theoremstyle{definition}
\newtheorem{definition}[equation]{Definition}
\theoremstyle{remark}
\newtheorem{remark}[equation]{Remark}
\newtheorem{conjecture}[equation]{Conjecture}
\newtheorem{example}[equation]{Example}
\newcommand*{\N}{\mathbb{N}}
\DeclareMathOperator{\bphi}{\phi}
\DeclareMathOperator{\blambda}{\boldsymbol{\lambda}}
\DeclareMathOperator*{\esssup}{ess\,sup}
\DeclareMathOperator{\dist}{dist}
\DeclareMathOperator{\Tr}{Tr} 
\DeclareMathOperator{\len}{len}
\DeclareMathOperator{\Lip}{Lip}
\DeclareMathOperator{\LIP}{LIP}
\DeclareMathOperator{\cCap}{\rm Cap}
\DeclareMathOperator{\cL}{\mathcal{L}}
\DeclareMathOperator{\cF}{\mathcal{F}}
\DeclareMathOperator{\cH}{\mathcal{H}}
\newcommand{\one}{\mathds{1}} 
\DeclareMathOperator{\Gr}{Gr}
\DeclareMathOperator{\supp}{supp}
\newcommand{\abs}[1]{{\left\vert\kern-0.25ex #1 \kern-0.25ex\right\vert}}
\DeclarePairedDelimiter\norm{\lVert}{\rVert}
\newcommand\restr[2]{{
		\left.\kern-\nulldelimiterspace 
		#1 
		\vphantom{\big|} 
		\right|_{#2} 
}} 
\def\XXint#1#2#3{{\setbox0=\hbox{$#1{#2#3}{\int}$ }
		\vcenter{\hbox{$#2#3$ }}\kern-.6\wd0}}
\begin{document}

	\begin{abstract}
		We affirmatively resolve the energy image density conjecture of Bouleau and Hirsch (1986). Beyond the original framework of Dirichlet structures, we establish the energy image density property in several related settings. In particular, we formulate a version of the property that encompasses  strongly local, regular Dirichlet forms, Sobolev spaces defined via upper gradients, and self-similar energies on fractals, thereby unifying these under a single framework. As applications, we prove the finiteness of the martingale dimension for diffusions satisfying sub-Gaussian heat kernel bounds, and we obtain a new proof of a conjecture of Cheeger concerning the Hausdorff dimension of the images of differentiability charts in PI spaces. The proof of the energy image density property is based on a structure theorem for measures and normal currents in $\mathbb{R}^n$ due to De Philippis--Rindler, together with the notions of decomposability bundles due to Alberti--Marchese and cone null sets due to Alberti--Csörnyei--Preiss and Bate.
	\end{abstract}

	\maketitle
	
	\section{Introduction}
	\subsection{Background and Overview}
	The goal of this work is a proof of the energy image density (EID) conjecture of Bouleau and Hirsch \cite[p.~251]{bouleauhirsch}. This conjecture arises as a natural generalization of a fundamental result in Malliavin calculus: the non-degeneracy (invertibility) of the Malliavin matrix associated with an $\mathbb{R}^n$-valued random variable implies the absolute continuity of its law with respect to Lebesgue measure. This result forms a key step in Malliavin's  proof of  H\"ormander's hypoellipticity theorem \cite{Hor, Mal,BH91, Nua,Hai}.  The Malliavin matrix admits a natural generalization to local Dirichlet forms which leads to 
	the energy image density conjecture of Bouleau and Hirsch. Roughly speaking, this conjecture asserts that the invertibility of a generalized Malliavin matrix of a random variable implies  the absolute continuity of the law of that random variable.

	For a precise statement of this conjecture, we recall the relevant definitions below.
	\begin{definition} \label{d:dirstructure}
		A \emph{Dirichlet structure}  $(X,\mathcal{X},\mu,\mathcal{E}, \mathcal{F})$ 	is a probability space $(X,\mathcal{X},\mu)$ along with a quadratic form $(\mathcal{E},\mathcal{F})$ on $L^2(X,\mu)$ that satisfies the following properties.
		\begin{enumerate}[(i)]
			\item (densely defined, non-negative definite, quadratic form) $\mathcal{E}: \mathcal{F} \times \mathcal{F} \to \mathbb{R}$ is bilinear, where $\mathcal{F}$ is a dense subspace of $L^2(X,\mu)$ and $\mathcal{E}(f,f) \ge 0$ for all $f \in \mathcal{F}$.
			\item (closed form) $(\mathcal{E},\mathcal{F})$ is a \emph{closed} form; that is, $\mathcal{F}$ is a Hilbert space equipped with the inner product
			\[
			\mathcal{E}_1(f,g):= \mathcal{E}(f,g)+ \langle f,g \rangle_{L^2(\mu)}, \quad \mbox{for all $f,g \in \mathcal{F}$}.
			\]
			\item (Markovian property) $f \in \mathcal{F}$ implies that $\widetilde{f}:=0 \vee (f \wedge 1) \in \mathcal{F}$ and $\mathcal{E}(\widetilde{f},\widetilde{f}) \le \mathcal{E}(f,f)$.
			\item $\one \in \mathcal{F}$ and $\mathcal{E}(\one,\one)=0$, where $\one$ is the constant function on $X$ that is identically one.
			\item (existence of carr\'e du champ operator) For all $f \in \mathcal{F} \cap L^\infty(m)$, there exists $\gamma(f,f) \in L^1(m)$ such that for all $h \in \mathcal{F} \cap L^\infty(m)$, we have 
			\[
			\mathcal{E}(fh,f)- \frac{1}{2}\mathcal{E}(h,f^2)= \int h \gamma(f,f) \,d\mu.
			\]
			\item (strong locality) For all $f,g \in \mathcal{F}$ and $a \in \mathbb{R}$ such that $(f+a)g =0$ implies $\mathcal{E}(f,g)=0$.
		\end{enumerate}
	\end{definition}
	By \cite[Proposition I.4.1.3]{BH91}, for any  Dirichlet structure  $(X,\mathcal{X},\mu,\mathcal{E},\mathcal{F})$  there exists a unique positive symmetric and continuous bilinear form $\gamma: \mathcal{F} \times \mathcal{F} \to L^1(\mu)$ (called the \emph{carr\'e du champ operator}) such that
	\begin{equation} \label{e:def-carreduchamp}
		\frac{1}{2}\mathcal{E}(fh,g)+\frac{1}{2}\mathcal{E}(gh,f)-\frac{1}{2}\mathcal{E}(h,fg)= \int h \gamma(f,g)\,d\mu, \quad \mbox{for all $f,g, \in \mathcal{F} \cap L^\infty$.} 
	\end{equation}
	For a function $f =(f_1,\ldots,f_n) \in \mathcal{F}^n$, we define the carr\'e du champ matrix   $\gamma(f)$ as the $n\times n$ matrix 
	\begin{equation} \label{e:def-cdc-matrix}
		\gamma(f) = \begin{bmatrix}
			\gamma(f_i,f_j)
		\end{bmatrix}_{1 \le i,j \le n} \in \mathbb{R}^{n \times n}.
	\end{equation}
	The carr\'e du champ matrix is the natural generalization of the Malliavin matrix to the setting of strongly local Dirichlet forms. In the special case of the Ornstein--Uhlenbeck Dirichlet form on   Wiener space,  this carr\'e du champ matrix coincides with the classical Malliavin matrix. We refer to \cite[Definition II.2.3.3]{BH91}  and \cite[p.~92]{Nua} for the definitions of the Ornstein--Uhlenbeck Dirichlet form on   Wiener space  and   Malliavin matrix respectively. Hence the energy image density property in Definition \ref{d:eid} and Conjecture \ref{con:eid} below can be viewed as a sweeping generalization of  Malliavin's criterion for absolute continuity of the law of a random variable on Wiener space using non-degeneracy of the Malliavin matrix under minimal regularity assumption \cite[Theorem 5.5]{Mal}.

	Next, we recall the energy image density property introduced by Bouleau and Hirsch.
	\begin{definition} \label{d:eid}
		Let $(X,\mathcal{X},\mu,\mathcal{E},\mathcal{F})$ be a Dirichlet structure with the associated carr\'e du champ operator $\gamma: \mathcal{F} \times \mathcal{F} \to L^1(\mu)$. We say that the Dirichlet structure  $(X,\mathcal{X},\mu,\mathcal{E},\mathcal{F})$ satisfies the \emph{energy image density property}, if 	 for any $n \in \mathbb{N}$, $f \in \mathcal{F}^n$, we have
		\begin{equation} \label{e:imagemeas}
			f_*(\one_{\{\det(\gamma(f))>0\}}\cdot \mu) \ll \mathcal{L}_n,
		\end{equation}
		where $\mathcal{L}_n$ is the Lebesgue measure on $\mathbb{R}^n$.
	\end{definition}
	Let us briefly explain the condition $\det(\gamma(f))>0$ in \eqref{e:imagemeas}. 
	Consider the Dirichlet form for Brownian motion of $\mathbb{R}^n$ given by $\mathcal{E}(g,g)= \int \abs{\nabla g}(x)^2\,dx$ for all $g \in \mathcal{F}=W^{1,2}(\mathbb{R}^n)$, where $\nabla g$ denotes the distributional gradient of $g$. Let $f:\mathbb{R}^n \to \mathbb{R}^n$ be a smooth function whose components are in the Sobolev space $W^{1,2}(\mathbb{R}^n)$. In this case, $\gamma(f_i,f_j)= \nabla f_i \cdot \nabla f_j$ and hence the condition $\det(\gamma(f))(x)>0$ is equivalent to the statement that the differential $Df$ of $f$ at $x$ is surjective, or equivalently, $f$ is a submersion at $x$.  For an abstract Dirichlet form, this condition can be interpreted more generally as requiring that $f$ `varies in all infinitesimal directions' at $x$. Intuitively, the invertibility of the carr\'e du champ matrix ensures that $f$  has enough local variability so that its image is sufficiently `spread out', a viewpoint that directly motivates the energy image density condition in more abstract settings. As we shall see in Definition \ref{d:p-independent}, this condition $\det(\gamma(f))>0$  can also be interpreted as an infinitesimal version of linear independence of the components of $f$; cf.~Lemmas \ref{l:invertibly-independence} and \ref{l:invertibly-independence-dirstructure}.

	The Bouleau-Hirsch conjecture \cite[p.~251]{bouleauhirsch} is stated as follows. 
	\begin{conjecture}[EID conjecture]  \label{con:eid}
		Every Dirichlet structure satisfies the energy image density property.
	\end{conjecture}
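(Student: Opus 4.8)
The plan is to argue by contradiction, and to reduce the $n$-dimensional statement, via the structure theory for singular measures in $\mathbb{R}^n$, to a chain-rule argument in the spirit of Bouleau and Hirsch's treatment of the case $n=1$. Fix a Dirichlet structure $(X,\mathcal{X},\mu,\mathcal{E},\mathcal{F})$, $n\in\mathbb{N}$, $f=(f_1,\dots,f_n)\in\mathcal{F}^{n}$, and set $\nu:=f_*(\one_{\{\det(\gamma(f))>0\}}\cdot\mu)$, a finite Borel measure on $\mathbb{R}^{n}$. Two elementary facts will be used throughout. First, since $\one\in\mathcal{F}$ and $\mathcal{E}(\one,\one)=0$, Cauchy--Schwarz forces $\mathcal{E}(\one,\cdot)\equiv 0$, so \eqref{e:def-carreduchamp} with $h=\one$ gives $\mathcal{E}(u,v)=\int\gamma(u,v)\,d\mu$ for all $u,v\in\mathcal{F}\cap L^{\infty}$. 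Second, a chain rule: for $\varphi\in C^{1}_{c}(\mathbb{R}^{n})$ one has $\varphi\circ f\in\mathcal{F}$ and $\gamma(\varphi\circ f)=\nabla\varphi(f)^{\mathsf T}\gamma(f)\nabla\varphi(f)$, hence on $\{\det(\gamma(f))>0\}$, where $\gamma(f)$ is positive definite, $\gamma(\varphi\circ f)\ge\lambda_{\min}(\gamma(f))\,\abs{\nabla\varphi(f)}^{2}$ with $\lambda_{\min}(\gamma(f))>0$. Assume, for contradiction, that $\nu\not\ll\mathcal{L}_{n}$, and let $\nu=\nu^{a}+\nu^{s}$ be the Lebesgue decomposition, so $\nu^{s}\neq 0$.

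Here the input from geometric measure theory enters. By Alberti--Marchese combined with the De Philippis--Rindler structure theorem, a finite measure on $\mathbb{R}^{n}$ is absolutely continuous with respect to $\mathcal{L}_{n}$ if and only if its decomposability bundle equals $\mathbb{R}^{n}$ almost everywhere; since $\nu^{s}\perp\mathcal{L}_{n}$, the bundle $V(\nu^{s},y)$ is a proper subspace for $\nu^{s}$-a.e.\ $y$. A measurable selection over countably many rational directions then produces a unit vector $e$ and a Borel set $A$ with $\nu^{s}(A)>0$ and $e\notin V(\nu^{s},y)$ for $\nu^{s}$-a.e.\ $y\in A$. Using the description of the decomposability bundle via cone null sets (Alberti--Csörnyei--Preiss, Bate) and decomposing $A$ further, we may assume $A\subseteq N$ for a Borel set $N$ with $\mathcal{L}_{n}(N)=0$ that is null on every line whose direction lies in a fixed open cone $C$ about $e$. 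It therefore suffices to prove: for such a cone null set $N$, $\mu\bigl(f^{-1}(N)\cap\{\det(\gamma(f))>0\}\bigr)=0$.

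For this remaining step I would extend Bouleau and Hirsch's one-dimensional argument. Exploiting the cone nullity of $N$, one constructs (the ``Lipschitz approximation'' step) smooth, uniformly bounded functions $\psi_{k}\colon\mathbb{R}^{n}\to\mathbb{R}$ with $\norm{\psi_{k}}_{\infty}\to 0$, with $\abs{\nabla\psi_{k}}\ge 1$ on a neighbourhood of $N$ (their gradients pointing essentially along $e$), and such that the functions $u_{k}:=\psi_{k}\circ f$ form an $\mathcal{E}_{1}$-Cauchy sequence; in the model case $n=1$ one may take $\psi_{k}(x)=\int_{0}^{x}\rho_{k}$ with $\rho_{k}$ continuous, $\rho_{k}\equiv 1$ on an open set $W_{k}\downarrow N$, and $\rho_{k}$ decreasing in $k$, so that $\gamma(u_{k})=\rho_{k}(f)^{2}\gamma(f)$ and, since $0\le\rho_{k}-\rho_{j}\le 1$ for $j\ge k$, $\mathcal{E}(u_{k}-u_{j})\le\int(\rho_{k}(f)-\rho_{j}(f))\,\gamma(f)\,d\mu\to 0$. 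As $\mu$ is a probability measure, $\norm{\psi_{k}}_{\infty}\to 0$ gives $u_{k}\to 0$ in $L^{2}(\mu)$, and closedness of $(\mathcal{E},\mathcal{F})$ then forces $\mathcal{E}(u_{k})\to 0$. On the other hand, the chain rule and the lower bound on $\abs{\nabla\psi_{k}}$ give, for every $k$,
\[
\mathcal{E}(u_{k})=\int_{X}\gamma(u_{k})\,d\mu\ \ge\ \int_{f^{-1}(N)\cap\{\det(\gamma(f))>0\}}\lambda_{\min}(\gamma(f))\,d\mu\ =:\ I\ \ge\ 0 ,
\]
so $I=0$; since $\lambda_{\min}(\gamma(f))>0$ on $\{\det(\gamma(f))>0\}$, this yields $\mu(f^{-1}(N)\cap\{\det(\gamma(f))>0\})=0$. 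Summing over the countably many pieces covering $A$ gives $\nu^{s}(A)=0$, contradicting $\nu^{s}(A)>0$; hence $\nu\ll\mathcal{L}_{n}$.

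The hard part is the construction of the functions $\psi_{k}$ when $n\ge 2$. Since $N$ is not a graph but merely cone null, keeping $\abs{\nabla\psi_{k}}$ bounded below throughout $N$ forces $\nabla\psi_{k}$ to carry a transverse component (so that it remains a curl-free gradient), and this must be arranged without destroying the uniform $L^{\infty}$-smallness, the smoothness bounds, or the $\mathcal{E}_{1}$-Cauchy property simultaneously. This is precisely where the full De Philippis--Rindler structure theorem — equivalently, the structure of the push-forward of the matrix measure $[\gamma(f_i,f_j)]\,\mu$ as a normal current (after a resolvent regularization of $f$) — rather than the elementary observation that $\mathcal{L}_{n}$-null sets are null on almost every line in every direction, is essential: it guarantees that a singular measure carries the oscillatory structure needed for $\psi_{k}\to 0$ in $L^{\infty}$ while $\nabla\psi_{k}$ stays non-degenerate along $e$ on $N$.
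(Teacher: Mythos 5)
Your reduction is structured correctly up to the point where you must prove a vanishing statement for cone null sets: Lebesgue decomposition of $\nu$, the Alberti--Marchese / De Philippis--Rindler characterization of $\mathcal{L}_n$-absolute continuity via decomposability bundles, the passage to a single unit direction $e$ transverse to the bundle on a positive-measure piece, and the reduction to showing $\mu\bigl(f^{-1}(N)\cap\{\det(\gamma(f))>0\}\bigr)=0$ for a cone null set $N$ all match the paper's first proof (Theorem~\ref{thm:eid-dirspace} via Propositions~\ref{prop:decompoabilitybundlesinuglar} and~\ref{prop:decompositionconenull}). Your chain-rule identity and the lower bound $\gamma(\psi\circ f)\ge\lambda_{\min}(\gamma(f))\,\abs{\nabla\psi(f)}^{2}$ on $\{\det(\gamma(f))>0\}$ are also correct.

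The genuine gap is in the final approximation step, and it is not just a missing technicality — your construction runs in the \emph{opposite direction} from what cone nullity actually provides, and you yourself concede you do not know how to carry it out for $n\ge 2$. You ask for smooth $\psi_k$ with $\norm{\psi_k}_\infty\to 0$, $\abs{\nabla\psi_k}\ge 1$ on a neighbourhood of $N$, and $(\psi_k\circ f)$ Cauchy in $\mathcal{E}_1$; you then contrast $\mathcal{E}(\psi_k\circ f)\ge I>0$ with $\mathcal{E}(\psi_k\circ f)\to 0$ from closedness. The informational content of $N$ being $C(e,\theta)$-cone null is that rectifiable curves pointing \emph{into} the cone spend $\mathcal{H}^1$-zero time in $N$ (Lemma~\ref{l:conenull}). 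This is naturally suited to showing the directional derivative $\langle e,\nabla\rangle$ can be made \emph{small} on $N$ — i.e.\ that $g=\sin(\epsilon)\one_N+\one_{N^c}$ is an upper gradient for the linear function $\langle e,\cdot\rangle$ (Lemma~\ref{l:upper-gradient}) — not that a gradient bounded \emph{below} on $N$ can coexist with uniform $L^\infty$-smallness. In $n=1$ there is no curl constraint and one can take any $\rho_k\downarrow\one_N$, but for $n\ge 2$ the vector field you require must be both curl-free and transversally oscillatory to keep $\psi_k$ small, and you offer no mechanism for this; appealing to the De Philippis--Rindler theorem to "guarantee the oscillatory structure" is not supported — that theorem is a rigidity statement about singular measures and $\mathcal{A}$-free fields, not a construction principle for such $\psi_k$.

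What the paper does instead, and what resolves the issue: it does not try to force the gradient to be large on $N$. It applies Lemma~\ref{l:upper-gradient} together with the Heinonen--Koskela-style infimization in Proposition~\ref{prop:approximations} to produce $1$-Lipschitz approximations $f_j$ of the linear function $y\mapsto\langle e,y\rangle$ with $\Lip_a[f_j]\le\epsilon$ pointwise on $N$ and $f_j\to\langle e,\cdot\rangle$ pointwise. Then the multivariable Lipschitz chain rule (Proposition~\ref{prop:multidimcontraction-structure}, which must be established since the approximations are only Lipschitz, not $C^1$) bounds the energy measure of $f_j\circ\bphi$ on $f^{-1}(N)\cap A$ from above by $\epsilon^p$ times $\sum_i\Gamma_p\langle\phi_i\rangle$, while $p$-independence bounds it below by $\delta$ times the same quantity; weak lower semicontinuity transports the upper bound to the limit, and $\epsilon^p<\delta$ gives the contradiction. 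Your closedness-based argument in place of lower semicontinuity is fine in spirit (the two are closely related), but the large-gradient route is what does not work. If you want an approach that avoids the approximation issue altogether, the paper's second proof (\S\ref{ss:alternate}) shows that the currents $T_{f,f_i}=f_*\bigl((\gamma(f_j,f_i)\mu)_j\bigr)$ become normal after resolvent regularization and applies Theorem~\ref{t:dr-current} directly; your last paragraph gestures at this but does not develop it.
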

	
	Our main result is the positive answer to Bouleau--Hirsch conjecture. 
	\begin{theorem}\label{thm:eid-dirstructure}	
		Let $(X,\mathcal{X},\mu,\mathcal{E},\mathcal{F})$ be a Dirichlet structure with the associated carr\'e du champ operator $\gamma: \mathcal{F} \times \mathcal{F} \to L^1(\mu)$. For any $n \in \mathbb{N}$, $\bphi \in \mathcal{F}^n$, we have
		\begin{equation*}
			f_*(\one_{\{\det(\gamma(\bphi))>0\}}\cdot \mu) \ll \mathcal{L}_n,
		\end{equation*}
		where $\mathcal{L}_n$ is the Lebesgue measure on $\mathbb{R}^n$, and $\gamma(\bphi)$ is the carr\'e du champ matrix as defined in \eqref{e:def-cdc-matrix}.
	\end{theorem}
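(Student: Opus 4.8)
The plan is to argue by contradiction and to reduce the statement to the structure theorem of De Philippis--Rindler via the decomposability bundle. Suppose the conclusion fails, and write $A:=\{\det(\gamma(\bphi))>0\}$ and $\nu:=\bphi_*(\one_A\cdot\mu)$. Then the singular part $\nu_s$ of $\nu$ with respect to $\mathcal{L}_n$ is nonzero and concentrated on some Borel set $N\subseteq\R^n$ with $\mathcal{L}_n(N)=0$. Since pushforward commutes with restriction, $\widetilde\mu:=\one_{A\cap\bphi^{-1}(N)}\cdot\mu$ is a nonzero finite measure on $X$ with $\sigma:=\bphi_*\widetilde\mu=\nu_s\perp\mathcal{L}_n$, and $\det(\gamma(\bphi))>0$ holds $\widetilde\mu$-a.e.; since $v^{\top}\gamma(\bphi)v=\gamma(\langle\bphi,v\rangle,\langle\bphi,v\rangle)\ge 0$ for every $v\in\R^n$, the matrix $\gamma(\bphi)$ is symmetric positive semidefinite $\mu$-a.e., hence strictly positive definite $\widetilde\mu$-a.e. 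Composing $\bphi$ with a smooth diffeomorphism $\Psi$ of $\R^n$ onto a bounded open set and using the chain rule $\gamma(\Psi\circ\bphi)=(D\Psi\circ\bphi)\,\gamma(\bphi)\,(D\Psi\circ\bphi)^{\top}$ changes neither the absolute continuity question nor the positive definiteness of $\gamma(\bphi)$, so I may assume $\bphi\in(\cF\cap L^{\infty})^n$. The goal becomes to contradict $0\ne\sigma\perp\mathcal{L}_n$.

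\textbf{Reduction to a full decomposability bundle.} The core is to show that the decomposability bundle $V(\sigma,\cdot)$ of $\sigma$, in the sense of Alberti--Marchese, equals $\R^n$ for $\sigma$-a.e.\ point; the De Philippis--Rindler structure theorem --- in the form that a Radon measure on $\R^n$ with a $\mu$-a.e.\ full decomposability bundle is absolutely continuous with respect to $\mathcal{L}_n$ --- then forces $\sigma\ll\mathcal{L}_n$, a contradiction. By measurability of $V(\sigma,\cdot)$ and intersection over a fixed basis $e_1,\dots,e_n$ of $\R^n$, it suffices to show that for each fixed unit vector $v$ one has $v\in V(\sigma,y)$ for $\sigma$-a.e.\ $y$. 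If this fails for some $v$, then after restricting $\widetilde\mu$ to the preimage of the offending set (using that restriction can only shrink the bundle) one has $v\notin V(\sigma,y)$ for $\sigma$-a.e.\ $y$, so by the characterization of the decomposability bundle through cone null sets (Alberti--Csörnyei--Preiss in the plane, Bate in general) together with a Lipschitz approximation of cone null sets, $\sigma$ is concentrated on a countable union $\bigcup_k E_k$ where each $E_k$ lies in the zero set of a Lipschitz function of the special form $g_k(y)=\langle y,v\rangle-h_k(\Pi_v y)$, with $\Pi_v$ the orthogonal projection onto $v^{\perp}$ and $h_k$ Lipschitz; in particular the $v$-component of $\nabla g_k$ is identically $1$ wherever it exists.

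\textbf{The bridge to the Dirichlet structure, and the main obstacle.} Pulling back, $\widetilde\mu$ is concentrated on $\bigcup_k\bphi^{-1}(E_k)$, so $\widetilde\mu(\bphi^{-1}(E_k))>0$ for some $k$. On that set $g_k\circ\bphi=\langle\bphi,v\rangle-h_k(\Pi_v\bphi)$ vanishes identically; since $g_k\circ\bphi\in\cF$ (a Lipschitz function of finitely many elements of $\cF\cap L^{\infty}$), strong locality gives $\gamma(g_k\circ\bphi,g_k\circ\bphi)=0$ $\widetilde\mu$-a.e.\ on $\bphi^{-1}(E_k)$. On the other hand the carré-du-champ chain rule should yield $\gamma(g_k\circ\bphi,g_k\circ\bphi)=(\nabla g_k\circ\bphi)^{\top}\gamma(\bphi)(\nabla g_k\circ\bphi)$, which is strictly positive $\widetilde\mu$-a.e., because $\gamma(\bphi)$ is positive definite $\widetilde\mu$-a.e.\ and $\nabla g_k\circ\bphi\ne 0$ (its $v$-component equals $1$). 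This contradiction completes the proof. I expect the main obstacle to be making that last identity rigorous: the components $\bphi_i$ lie only in $\cF$, not in the domain of the generator, and $g_k$ is merely Lipschitz, so evaluating $\nabla g_k$ along $\bphi$ and asserting the chain rule is circular with energy image density itself. Breaking this circularity --- by approximating $g_k$ by smooth functions, controlling energies, and exploiting the graph form of $g_k$, equivalently by transporting the normal-current structure underlying the decomposability bundle to the Dirichlet-form side through the carré du champ and strong locality --- is the technically heaviest step, and it is what must be carried out uniformly over all Dirichlet structures (and within the unified framework of the paper).
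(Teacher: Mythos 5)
Your high-level strategy---argue by contradiction, reduce to a nonzero singular pushforward $\sigma$, invoke the Alberti--Marchese decomposability bundle and De Philippis--Rindler, and pass to cone null sets---matches the first of the paper's two proofs (\S\ref{ss:lsc-approach}), but there is a genuine gap at the step where you pass from ``$v\notin V(\sigma,y)$ for $\sigma$-a.e.\ $y$'' to ``$\sigma$ is concentrated on a countable union of sets lying in zero sets of Lipschitz functions $g_k(y)=\langle y,v\rangle-h_k(\Pi_v y)$,'' i.e.\ on Lipschitz graphs over $v^\perp$. That implication is false. Take $n=2$, $v=e_1$, a non-atomic Borel probability measure $\mu_K$ on a compact Lebesgue-null Cantor set $K\subset\mathbb{R}$, and $\sigma=\mu_K\times\mathcal{L}_1|_{[0,1]}$. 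Then $K\times[0,1]$ is $C(e_1,\theta)$-cone null for every $\theta<\pi/2$: projecting any curve with derivative in $C(e_1,\theta)$ onto the first coordinate gives a bilipschitz map, so the time spent with first coordinate in $K$ is $\mathcal{L}_1$-null. Hence $e_1\notin T^{AM}_\sigma(y)$ for $\sigma$-a.e.\ $y$. Yet for every Lipschitz $h:\mathbb{R}\to\mathbb{R}$ the graph $\{y_1=h(y_2)\}$ carries $\sigma$-measure $\int_0^1\mu_K(\{h(t)\})\,dt=0$ by non-atomicity, so no countable union of such graphs supports $\sigma$. The correct statement---which the paper uses via the Alberti--Marchese lemma quoted in the proof of Proposition~\ref{prop:decompositionconenull}---is only that $\sigma$ is carried, up to a null set, by a countable union of compact cone null sets, and these are in general much wilder than Lipschitz graphs. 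Consequently your ``bridge to the Dirichlet structure,'' which relies on $g_k\circ\bphi$ vanishing identically on a set of positive $\widetilde{\mu}$-measure, never gets off the ground.

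Separately, the chain-rule obstruction you flag as ``the technically heaviest step'' is real, and the paper does not resolve it by smoothing $g_k$; even your amended approach would run into the fact that $\nabla g_k$ is defined only $\mathcal{L}_n$-a.e.\ while $\sigma\perp\mathcal{L}_n$. The paper avoids pointwise gradients of Lipschitz functions entirely: Proposition~\ref{prop:approximations} produces, by an infimizing construction over rectifiable curves with respect to an upper gradient that is small on the cone null set (Lemma~\ref{l:upper-gradient}), a sequence of $1$-Lipschitz $f_j$ converging pointwise to $\langle\blambda,\cdot\rangle$ whose \emph{asymptotic} Lipschitz constants $\Lip_a[f_j]$---defined at every point, not merely $\mathcal{L}_n$-a.e.---are small on the cone null set. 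These feed into the one-sided contraction inequality $\Gamma\langle f_j\circ\bphi\rangle\le(\Lip_a[f_j]\circ\bphi)^{2}\sum_i\Gamma\langle\phi_i\rangle$ of Proposition~\ref{prop:multidimcontraction-structure}, and weak lower semicontinuity of the energy measure then collides with the lower bound coming from $\det\gamma(\bphi)>0$. You may also want to know that the paper offers a second, shorter proof (\S\ref{ss:alternate}) that applies De Philippis--Rindler directly: the pushforwards $T_{f,g}=f_*\bigl((\Gamma(f_i,g))_i\bigr)$ with $g\in D(A)$ are one-dimensional normal currents with boundary $-f_*(A(g)\cdot\mu)$, and a density argument in the Hilbert space $\mathcal{F}$ removes the restriction $g\in D(A)$; that route bypasses cone null sets and Lipschitz approximation altogether.
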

	
	We discuss some past works related to the energy image density property.
	As evidence towards this conjecture, Bouleau and Hirsch obtained the  energy image density property for scalar-valued functions; that is, when  $n=1$ in \eqref{e:imagemeas}  \cite[Corollaire 6]{Bou}, \cite[Th\'eor\`eme 4.1]{bouleauhirsch}. Furthermore, they verified Conjecture \ref{con:eid} for the Ornstein-Uhlenbeck Dirichlet form on Wiener space \cite[Th\'eor\`eme 9]{BH86b}. In this context, the energy image density property is better known as the Bouleau--Hirsch criterion for absolute continuity and is widely used in stochastic analysis \cite{AP,BNQZ,CFV,CFG,NV,NQ,Sch}. More recently, further evidence toward Conjecture \ref{con:eid} was obtained by Malicet and Poly in the general setting for all values of $n \in \mathbb{N}$: they showed that if $f \in \mathcal{F}^n, n \in \mathbb{N}$ satisfies $\det(\gamma(f))>0$ $\mu$-almost surely, then the law of $f$ is a Rajchman measure on $\mathbb{R}^n$  \cite[Theorem 2.2]{MP}.

	Prior to this work, two main approaches have been used to establish  the energy image density property in higher dimensions $n \ge 2$.  The first approach, originating from Malliavin's seminal paper \cite[p.~196]{Mal}, involves showing that the distributional derivative of the pushforward measure is a Radon measure \cite[Lemma 2.1.1]{Nua} or \cite[Lemma I.7.2.2.1]{BH91}. This approach relies on an integration by parts formula \cite[\textsection 2.1.2]{Nua}.
	However, in order to generalize this integration by parts approach to the setting of Dirichlet forms we need to assume additional smoothness assumptions that the entries of the carr\'e du champ matrix (or generalized Malliavin matrix) belongs to the domain of the Dirichlet form and that the function belongs to the domain of the generator \cite[Theorem I.7.2.2]{BH91}.  
	
	The second approach that is due to Bouleau and Hirsch relies on Federer's co-area formula. Its main advantage   is that it requires minimal assumptions on regularity (compare \cite[Theorem 2.1.2]{Nua} with \cite[Theorem 2.1.1]{Nua}), which makes it particularly useful for applications to stochastic differential equations with less regular coefficients \cite[Section V]{BH86b}. However, implementing this co-area formula approach requires significant additional structural assumptions on the Dirichlet form \cite[Theorem II.5.2.2]{BH91}. Nevertheless, this approach has been successfully extended to a number of examples of interest \cite{Coq,Son,BD09,BD15}. The results of Bouleau and Denis \cite{BD09,BD15}
	on EID property has applications to obtain regularity of solutions to stochastic differential equations with jumps. More generally, all of the applications of energy image density property mentioned above concern  regularity of solutions to stochastic differential equations (with the exception of \cite{AP} which concerns random nodal volumes).

	Even in the classical case of the Ornstein--Uhlenbeck Dirichlet structure on Wiener space, Malliavin already pointed out that the only available method for establishing the energy image density property proceeds via the co-area formula. Indeed, in \cite[p.~86]{Mal-book}, he writes ``it should be emphasized that there does not exist currently an alternative approach to the regularity of laws for $\mathbb{R}^d$-valued functionals with one derivative." 
	This observation highlights the necessity of developing a new approach to Conjecture~\ref{con:eid}, as a co-area formula is not available in general.
	
	\subsection{Outline of the proofs} \label{ss:proof-outline}
	The crux of our proof is a deep structure theorem of  measures and normal currents in $\mathbb{R}^n$ developed by De Philippis and Rindler \cite{DR} that gives a criterion for absolute continuity of measure in terms of linearly independent normal currents as we recall below. The relevant terminology and notation related to currents is recalled in \textsection \ref{ss:alternate}.  
	\begin{theorem} \label{t:dr-current} \cite[Corollary 1.12]{DR} Let $T_1=\vec{T_1} \norm{T_1}, \ldots, T_n= \vec{T_n} \norm{T_n}$ be one-dimensional normal currents on $\mathbb{R}^n$ such that there exists a positive Radon measure $\nu$ on $\mathbb{R}^n$ with the following properties:
		\begin{enumerate}[(i)]
			\item $\nu \ll \norm{T_i}$ for $i=1,\ldots,n$;
			\item For $\nu$-a.e.~$x \in \mathbb{R}^n$, $\operatorname{span}\{\vec{T_1}(x),\ldots,\vec{T_n}(x)\}=\mathbb{R}^n$.
		\end{enumerate}
		Then $\nu \ll \mathcal{L}_n$.
	\end{theorem}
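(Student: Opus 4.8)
The plan is to deduce Theorem~\ref{t:dr-current} from the De Philippis--Rindler structure theorem for $\mathcal A$-free measures (the main result of \cite{DR}, of which the statement above is a corollary), applied to a single vector measure assembled from $T_1,\dots,T_n$; the crux is a wave-cone computation that turns the spanning hypothesis~(ii) into an obstruction to the existence of a Lebesgue-singular part of $\nu$.

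First I would set $\mu_i\defeq\vec{T_i}\,\norm{T_i}$; since $T_i$ is a normal $1$-current, $\mu_i$ is a finite $\R^n$-valued measure on $\R^n$ and $\divr\mu_i=-\partial T_i$ is a finite signed measure. Assemble these into the finite Radon measure $\Theta\defeq(\mu_1,\dots,\mu_n)$ on $\R^n$ with values in $(\R^n)^n$, and consider the first-order homogeneous constant-coefficient differential operator $\mathcal A(\eta_1,\dots,\eta_n)\defeq(\divr\eta_1,\dots,\divr\eta_n)$, so that $\mathcal A\Theta=-(\partial T_1,\dots,\partial T_n)$ is again a finite measure. The principal symbol $\mathbb A(\xi)$, for $\xi\in\R^n\setminus\{0\}$, is (up to a nonzero scalar) the linear map $(v_1,\dots,v_n)\mapsto(\xi\cdot v_1,\dots,\xi\cdot v_n)$, whose kernel consists exactly of those tuples with $v_i\perp\xi$ for every $i$; consequently the wave cone is
\[
\Lambda_{\mathcal A}=\bigcup_{\xi\ne0}\ker\mathbb A(\xi)=\bigl\{(v_1,\dots,v_n)\in(\R^n)^n:\ \operatorname{span}\{v_1,\dots,v_n\}\ne\R^n\bigr\}.
\]
I would then invoke the structure theorem in the form valid for measures $\Theta$ whose image $\mathcal A\Theta$ under a homogeneous operator is itself a finite measure (this is precisely what underlies \cite[Corollary~1.12]{DR}): it yields that the polar $\frac{d\Theta}{d\abs{\Theta}}(x)\in\Lambda_{\mathcal A}$ for $\abs{\Theta}^{s}$-almost every $x$, where $\abs{\Theta}^{s}$ denotes the part of $\abs{\Theta}$ singular with respect to $\mathcal L_n$.

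Now suppose, towards a contradiction, that $\nu\not\ll\mathcal L_n$, so that its Lebesgue-singular part $\nu^{s}$ is nonzero. From $\nu\ll\norm{T_i}=\abs{\mu_i}\le\abs{\Theta}$, uniqueness of the Lebesgue decomposition gives the chain $\nu^{s}\ll\norm{T_i}^{s}\ll\abs{\Theta}^{s}$ for every $i$. Write $\norm{T_i}^{s}=c_i\,\abs{\Theta}^{s}$ with Borel functions $c_i\ge0$; then $\Theta^{s}=(c_1\vec{T_1},\dots,c_n\vec{T_n})\,\abs{\Theta}^{s}$, and since the polar of $\Theta$ coincides with that of $\Theta^{s}$ at $\abs{\Theta}^{s}$-a.e.\ point, we get $\frac{d\Theta}{d\abs{\Theta}}(x)=(c_1(x)\vec{T_1}(x),\dots,c_n(x)\vec{T_n}(x))$ for $\abs{\Theta}^{s}$-a.e.\ $x$. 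Moreover $\norm{T_i}^{s}$ is concentrated on $\{c_i>0\}$, and $\nu^{s}\ll\norm{T_i}^{s}$, so $c_i(x)>0$ for $\nu^{s}$-a.e.\ $x$ and every $i$. Since positive rescalings do not change spans, hypothesis~(ii) (valid $\nu^{s}$-a.e.\ because $\nu^{s}\ll\nu$) gives $\operatorname{span}\{c_1(x)\vec{T_1}(x),\dots,c_n(x)\vec{T_n}(x)\}=\R^n$, i.e.\ $\frac{d\Theta}{d\abs{\Theta}}(x)\notin\Lambda_{\mathcal A}$, for $\nu^{s}$-a.e.\ $x$. But $\nu^{s}\ll\abs{\Theta}^{s}$ together with the structure theorem forces $\frac{d\Theta}{d\abs{\Theta}}(x)\in\Lambda_{\mathcal A}$ for $\nu^{s}$-a.e.\ $x$. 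These two assertions are incompatible unless $\nu^{s}=0$; hence $\nu\ll\mathcal L_n$.

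The step I expect to be the main obstacle is the use of the structure theorem in this form: De Philippis--Rindler's result is cleanest for measures that are exactly $\mathcal A$-free, whereas here $\mathcal A\Theta=-(\partial T_1,\dots,\partial T_n)$ is a generically Lebesgue-singular measure, so one must check that in the blow-up/tangent-measure scheme underlying the structure theorem the boundary contributions scale away at $\abs{\Theta}^{s}$-a.e.\ point -- equivalently, one invokes the precise current formulation of \cite[Corollary~1.12]{DR}. The remaining ingredients -- the absolute-continuity chain $\nu^{s}\ll\norm{T_i}^{s}\ll\abs{\Theta}^{s}$ and the componentwise Lebesgue decomposition of $\Theta$ -- are routine, but they are exactly what connects the structure theorem's abstract conclusion to the geometric spanning hypothesis.
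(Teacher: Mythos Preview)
The paper does not give its own proof of this statement: Theorem~\ref{t:dr-current} is quoted verbatim as \cite[Corollary~1.12]{DR} and used as a black box, both in the proof of Proposition~\ref{prop:decompoabilitybundlesinuglar} and in the alternate proof in \S\ref{ss:alternate}. So there is nothing in the paper to compare your argument against.

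That said, your sketch is correct and is essentially the deduction De~Philippis and Rindler themselves carry out to obtain their Corollary~1.12 from their main structure theorem. The bundling of the $n$ currents into a single $(\R^n)^n$-valued measure, the identification of the operator $\mathcal A=(\divr,\dots,\divr)$, and the wave-cone computation $\Lambda_{\mathcal A}=\{(v_1,\dots,v_n):\operatorname{span}\{v_i\}\neq\R^n\}$ are exactly right. One cosmetic point: what you write as $\frac{d\Theta}{d\abs{\Theta}}(x)=(c_1\vec{T_1},\dots,c_n\vec{T_n})$ is the polar only up to a positive scalar; since $\Lambda_{\mathcal A}$ is a cone this is harmless, but it is worth saying so. The technical point you flag at the end---that $\mathcal A\Theta$ is a measure rather than zero---is genuine and is handled in \cite{DR} by the observation that in the blow-up the zeroth-order right-hand side scales away (their Theorem~1.1 is in fact stated for $\mathcal A\mu\in\mathcal M$, not just $\mathcal A\mu=0$); you should cite that form directly rather than circle back to Corollary~1.12.
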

	We provide two distinct proofs of Theorem~\ref{thm:eid-dirstructure}, each invoking Theorem \ref{t:dr-current} in a substantially different manner.
	The first proof (\textsection \ref{ss:alternate}) proceeds by a direct application of the sufficient condition for absolute continuity stated in Theorem~\ref{t:dr-current} to the pushforward measure in the statement of Conjecture \ref{con:eid}. Although this yields a comparatively short argument, it depends essentially on the Hilbert space structure of~$\mathcal{F}$.
	The other proof (\textsection \ref{ss:lsc-approach}), while longer, is more robust and extends beyond the classical framework of Dirichlet structures. Within this approach, we establish the energy image density property in a unified setting encompassing strongly local, regular Dirichlet forms, Sobolev spaces defined via upper gradients, and self-similar energies on fractals. As we explain in \textsection\ref{ss:ext-apply}, these generalizations are motivated by further applications, including the finiteness of the martingale dimension and a new proof of Cheeger’s conjecture concerning the Hausdorff dimension of images of differentiability charts in PI spaces.
	
	Let us first outline how to use Theorem \ref{t:dr-current}  directly to the measure $\nu:=f_*(\one_{\{\det(\gamma(\bphi))>0\}}\cdot \mu)$ where $f \in \mathcal{F}^n$ to obtain the energy image density property.  In order to apply Theorem  \ref{t:dr-current}, we sketch a simple construction of normal currents associated to Dirichlet structures. If 	
	$f=(f_1,\ldots,f_n) \in \mathcal{F}^n$ and $g \in \mathcal{F}$, then the pushforward of the $\mathbb{R}^n$-valued measure $(\gamma(f_i,g)\cdot \mu)_{1 \le i \le n}$ under $f:X \to \mathbb{R}^n$ is an $\mathbb{R}^n$-valued finite Borel measure on $\mathbb{R}^n$ and hence can be viewed as a current with finite mass. If $D(A)$ denotes the domain of the generator $A$ associated with the Dirichlet structure and if we assume in addition that $g \in D(A)$, then the current described above happens to be a normal current in the sense of Federer and Fleming. 
	This is a simple consequence of the chain rule for Dirichlet forms (see Lemma \ref{l:normal-current}).
	If $f \in \mathcal{F}^n$, the currents $T_i= f_*\left( (\gamma(f_j,f_i)\cdot \mu)_{j=1}^n \right)$ for $i=1,\ldots,n$ satisfy the following properties  (see Lemma \ref{l:dr-assume}): 
	\[
	f_* \left(\one_{\{\det(\gamma(f))>0\}}\cdot \mu\right)  \ll \norm{T_i}, \quad \mbox{for all $i=1,\ldots,n$},   
	\]
	where $\norm{T_i}$ denotes the mass measure of $T_i=\vec{T_i} \cdot \norm{T_i}$, and 
	\[
	\operatorname{span}\{\vec{T_1}(x),\ldots,\vec{T_n}(x)\}=\mathbb{R}^n, \quad \mbox{for 	$f_* \left(\one_{\{\det(\gamma(f))>0\}}\cdot \mu\right)$-a.e.}	\]
	If $f \in D(A)^n$, we can apply Theorem \ref{t:dr-current}   with the above normal currents $\{T_i:1 \le i \le n\}$ to obtain the energy image density property. For a general $f \in \mathcal{F}^n$, $T_i, 1 \le i \le n$ need not be   normal currents, and we proceed by an approximation argument using the density of $D(A)$ in the Hilbert space $\mathcal{F}$. Readers interested solely in the proof of Theorem \ref{thm:eid-dirstructure}, and not in the extensions or applications discussed above, may proceed directly to \textsection\ref{ss:alternate}, as the material in \textsection\ref{s:p-Dirspace} and \textsection\ref{s:measures} is not used therein.
	
	The other approach is more closely related to the proof of the energy image density property due to Bouleau and Hirsch in the scalar case ($n=1$), which we recall below in a slightly adapted form \cite[Proof of Theorem I.5.2.3]{BH91}. Let $f \in \mathcal{F}$ and  $K \subset \mathbb{R}$ be  compact  such that $\mathcal{L}_1(K)=0$.
	Consider the sequence of $1$-Lipschitz, continuously differentiable functions $g_k: \mathbb{R} \to \mathbb{R}$ defined by 
	\[
	g_k(0)=0, \quad 	g_k'(x):= 1 \wedge (n \dist(x,K)), \quad \mbox{for all $x \in \mathbb{R}, k\in \mathbb{N}$.}
	\]
	Then it is easy to see that $g_k$ converges to the identity map pointwise, and for any $f \in \mathcal{F}$, $g_k \circ f$ converges weakly in the Hilbert space $\mathcal{F}$ to $f$. Thus by the weak lower semicontinuity of energy measures together with the chain rule, we obtain
	\begin{align*}
		\int_{f^{-1}(K)} \gamma(f,f)\,d\mu  &\le \liminf_{n \to \infty}\int_{f^{-1}(K)} \gamma(g_k \circ f,g_k \circ f)\,d\mu\\
		& =  \liminf_{n \to \infty}\int_{f^{-1}(K)} \abs{g_k'(f(x))}^2\gamma( f,  f)\,d\mu =0,
	\end{align*}
	where the last equality follows from the fact that $g_k'=0$ on $K$. By the regularity of the measure $f_*(\gamma(f,f) \cdot \mu)$, this establishes the energy image density property in the scalar case. In the case $n\ge 2$, the same strategy applies, but the construction of suitable approximations is significantly more delicate and relies on sophisticated tools such as the notion of decomposability bundle due to Alberti and Marchese \cite{AlbMar}, cone null sets due to Alberti,  Cs\"ornyei,  and Preiss \cite{ACPdiff,ACPstruct},  a deep structure theorem of singular measures and normal currents in $\mathbb{R}^n$ due to De Philippis and Rindler mentioned above in the other approach \cite{DR}, and a new approximation result (Proposition \ref{prop:approximations}). Our approximations are only Lipschitz instead of $C^1$, so we also develop a suitable substitute for chain rule for Lipschitz functions in $\mathbb{R}^n$  (see  Proposition \ref{prop:multidimcontraction-structure}). This type of argument has recently been used to address somewhat similar problems: by di Marino, Lu\v{c}i\'c, and Pasqualetto \cite{pasqualetto} for a question of Fukushima on the closability of energy forms, and by Alberti, Bate, and Marchese \cite{BAM} for the closability of differential operators.

	Next, we describe the ideas behind the  construction of approximations used in our weak lower semicontinuity approach.  
	Let $f \in \mathcal{F}^n$ with $n \ge 2$. Since $\gamma(f)$ is a nonnegative semidefinite matrix-valued function, positivity of the determinant can be expressed as  
	\[
	\{\det(\gamma(f))>0\} = \bigcup_{k=1}^\infty \Bigl\{ \inf_{\abs{\blambda}=1} \blambda^T \gamma(f) \blambda \,\ge\, k^{-1}\Tr(\gamma(f)) >0 \Bigr\},
	\]
	where $\blambda \in \mathbb{R}^n$ and $\Tr(\cdot)$ denotes the trace. Thus it is enough to show that for any $\delta>0$, the pushforward  
	\[
	\nu := f_*(\one_A \mu)
	\quad \text{satisfies} \quad
	\nu \ll \mathcal{L}_n,
	\]
	with  
	\[
	A = \left\{ \inf_{\abs{\blambda}=1} \blambda^T \gamma(f)\blambda \,\ge\, \delta \Tr(\gamma(f)) >0 \right\}.
	\]  
	
	Unlike the scalar case, the argument here proceeds by contradiction. Suppose there exist $\delta>0$ and $f \in \mathcal{F}^n$ such that $\nu$ is not absolutely continuous with respect to $\mathcal{L}_n$. Then one finds a compact set $K \subset \mathbb{R}^n$ with $\mathcal{L}_n(K)=0$ but $\nu(K)>0$. In this case the restriction $\one_K \nu$ admits a decomposability bundle of dimension at most $n-1$, $\nu$-a.e. on $K$.  
	
	The decomposability bundle $T^{AM}_\nu(x)$, introduced by Alberti and Marchese \cite{AlbMar}, assigns to each $x \in \mathbb{R}^n$, a subspace of $\mathbb{R}^n$ capturing the directions in which every Lipschitz function is $\nu$-a.e. differentiable. They proved the existence  of this bundle, and combined with the results of \cite{DR} one obtains the following key consequence: for any $\epsilon>0$, there exist a compact $B \subset K$ with $\nu(B)>0$ and a unit vector $\blambda=(\lambda_1,\ldots,\lambda_n) \in \mathbb{R}^n$ such that every rectifiable curve $\sigma:[0,l]\to\mathbb{R}^n$ satisfies  
	\begin{equation}\label{e:cnull-intro}
		\abs{\langle \blambda, \sigma'(t)\rangle} \le \epsilon \abs{\sigma'(t)}, \qquad \text{for $\mathcal{L}_1$-a.e. } t \in \sigma^{-1}(B).
	\end{equation}  
	
	Heuristically, \eqref{e:cnull-intro} expresses that the set $B$ is almost orthogonal to $\blambda$ in an infinitesimal sense, a property formalized through cone-null sets (see Definition \ref{d:cone-null} and Lemma \ref{l:conenull}). The existence of such $B$ follows from the link between decomposability bundles and cone-null sets \cite[Lemma 7.5]{AlbMar}, together with the result of De Philippis--Rindler ensuring that $\dim T^{AM}_\nu(x) \le n-1$ for $\nu$-a.e. $x \in K$ (see Propositions \ref{prop:decompoabilitybundlesinuglar} and \ref{prop:decompositionconenull}).

	The estimate \eqref{e:cnull-intro} satisfied by $B$ allows us to construct a sequence of $1$-Lipschitz approximations $g_k:\mathbb{R}^n \to \mathbb{R}, k \in \mathbb{N}$ of $g(y):= \langle \blambda, y \rangle$ such that $$\lim_{k \to \infty} g_k(y)= g(y) \quad \mbox{for all $y \in \mathbb{R}^n$}, \quad g_k(0)=g(0)=0, \quad \mbox{for all $k \in \mathbb{N}$,}$$
	and 
	\[
	\Lip_a[g](y) \le \epsilon, \quad \mbox{for all $k \in \mathbb{N}, y \in B$,}
	\]
	where $\Lip_a[g]$ is the asymptotic Lipschitz constant (see \eqref{e:def-Lipa} and \textsection \ref{ss:approximation}). This estimate along with weak lower semicontinuity of energy measures and a  version of chain rule for Lipschitz functions   implies that 
	\begin{align*}
		\int_{f^{-1}(B) \cap A}	\gamma\left( \sum_{i=1}^n \lambda_i f_i,  \sum_{i=1}^n \lambda_i f_i\right) \,d\mu &\le \liminf_{k \to \infty} \int_{f^{-1}(B) \cap A}\gamma\left( g_k \circ f,  g_k \circ f \right) \,d\mu\\
		&\le \liminf_{k \to \infty} \int_{f^{-1}(B) \cap A} (\Lip_a[g] \circ f)^2 \sum_{i=1}^n \gamma (f_i,f_i) \,d\mu\\	\\
		&\le \epsilon^2 \int_{f^{-1}(B) \cap A} \sum_{i=1}^n \gamma (f_i,f_i)\, d\mu.
	\end{align*}
	On the other hand, the definition of $A$ implies that
	\[
	\int_{f^{-1}(B) \cap A}	\gamma\left( \sum_{i=1}^n \lambda_i f_i,  \sum_{i=1}^n \lambda_i f_i\right)\, d\mu \ge \delta \int_{f^{-1}(B) \cap A} \sum_{i=1}^n \gamma (f_i,f_i)\, d\mu >0.
	\]
	The above two estimates lead to the desired contradiction if we choose $\epsilon^2 < \delta$.
	
	The approximating functions $g_k$ are constructed by an infimizing procedure, and the convergence $g_k\to g$ is shown with a compactness argument. A model for this type of argument comes from the proof of Heinonen and Koskela for the equality of modulus and capacity \cite[Proposition 2.17]{heinonenkoskela}. Our argument adapts this proof to give an approximation for functions; see Proposition \ref{prop:approximations}. A similar approach was used by Cheeger \cite[Sections 5,6,9]{Ch}, which inspired the methods in \cite{seblipdense,bate2024fragment, continuousdense}. 
	
	\subsection{Extensions and applications} \label{ss:ext-apply}
	A remarkable feature of the energy image density property is its broad generality. We establish this property for all strongly local, regular Dirichlet forms \cite{FOT,CF} in Theorem~\ref{thm:eid-mmdspace}, and further extend it to \emph{$p$-Dirichlet spaces} and \emph{$p$-Dirichlet structures}, which generalize regular strongly local Dirichlet forms and Dirichlet structures, respectively. These spaces can be viewed as analogues of the classical Sobolev space $W^{1,p}$ and have recently attracted considerable attention in the analysis on metric spaces \cite{Ch,Shanmun,HKST} and on fractals \cite{Kus93, Kigami,pgasket,cao,Murugan-Shimizu} (see Example~\ref{ex:p-dir} for details). We apply the energy image density property for regular, strongly local Dirichlet forms to prove the finiteness of the martingale dimension under sub-Gaussian heat kernel bounds, thereby verifying a recent conjecture \cite[Conjecture 3.12]{murugananalytic}. Moreover, the extension to $p$-Dirichlet spaces yields a new proof of Cheeger’s conjecture on the Hausdorff dimension of images of charts in Lipschitz differentiability spaces \cite[Conjecture 4.63]{Ch}, previously resolved in \cite{DMR}; see Proposition \ref{prop:cheeger}.
	
	The formulation of the energy image density property for regular Dirichlet forms requires some care, since energy measures need not be absolutely continuous with respect to the reference measure \cite{Kus89,Kus93,Hino-sing,KM20}; in this setting, the energy measure of $f$ is the analogue of $\gamma(f,f)\,\mu$ in Dirichlet structures.
	In this case, we define the carr\'e du champ matrix by taking Radon–Nikodym derivatives of energy measures with respect to a \emph{minimal energy dominant measure} (see Definition~\ref{d:minimal-energy-dominant}). Such a measure has the property that the energy measure of every function in the domain of the Dirichlet form is absolutely continuous with respect to it, which allows us to define a suitable analogue of the carr\'e du champ matrix $\gamma_\nu(f)$ for each $f \in \mathcal{F}^n$ (see~\eqref{e:def-cdc-reg}).

	The modified carr\'e du champ matrix $\gamma_\nu(f)$  is essential for characterizing and analyzing martingale dimension.   Martingale dimension originates from a result of Ventcel’ \cite{Ven}, who showed that every square-integrable martingale additive functional of Brownian motion in $\mathbb{R}^n$ can be represented as a sum of $n$ stochastic integrals with respect to the coordinate martingales. More generally, the martingale dimension of a Markov process is the smallest integer $k$ such that every martingale additive functional, under suitable integrability conditions, can be expressed as a sum of $k$ stochastic integrals with respect to a fixed family of $k$ martingale additive functionals \cite[Definition~3.3]{hinoenergymeasures}. The concept traces back to \cite{MW,DV}, and, building on Kusuoka’s ideas \cite{Kus89,Kus93}, Hino showed that martingale dimension is equivalently given by the maximal rank of the modified carr\'e du champ matrix \cite[Theorem~3.4]{hinoenergymeasures}. 	The martingale dimension can be interpreted as the dimension of the tangent space of a measurable Riemannian structure associated to the Dirichlet form \cite[Theorem~3.4]{Hin13b}. For strongly local Dirichlet forms whose heat kernel satisfies Gaussian bounds, the martingale dimension coincides with the dimension of Cheeger’s measurable cotangent bundle constructed in his work on the almost-everywhere differentiability of Lipschitz functions on metric measure spaces \cite{Ch}, \cite[Theorem~3.2]{murugananalytic}.
	
	We now explain how the energy image density property can be used to obtain upper bounds on the martingale dimension. Suppose that the domain of the Dirichlet form contains a dense set of $\alpha$-H\"older continuous functions for some $\alpha \in (0,1]$, and that the underlying space has finite Hausdorff dimension $d_H$. This situation occurs for diffusion processes with sub-Gaussian heat kernel bounds, where the existence of such a dense set follows from the H\"older regularity of the heat kernel. By the continuity of energy measures, it is enough to bound the rank of the carr\'e du champ matrix $\gamma_\nu(f)$ for functions $f \in \mathcal{F}^n$ whose components lie in this dense subset of $\alpha$-H\"older functions. 
	
	Since the image of the set $\{\gamma_\nu(f)>0\}$ under such an $\alpha$-H\"older map $f$ can increase Hausdorff dimension by at most a factor of $\alpha^{-1}$, the set $f(\{\gamma_\nu(f)>0\})$ has Hausdorff dimension at most $d_H/\alpha$. As a consequence of the energy image density property, if the martingale dimension is at least $n$, then there exists $f \in \mathcal{F}^n$, $\alpha$-H\"older continuous, such that $f(\{\gamma_\nu(f)>0\})$ supports a nonzero measure absolutely continuous with respect to $\mathcal{L}_n$. In this case, the Hausdorff dimension of $f(\{\gamma_\nu(f)>0\})$ equals $n$, and we thus obtain an upper bound $d_H/\alpha$ for the martingale dimension. 
	
	We note that the study of martingale dimension for diffusions satisfying sub-Gaussian heat kernel bounds dates back to the seminal work of Barlow and Perkins, who first raised the question for Brownian motion on the Sierpiński gasket \cite[Problem 10.6]{BP}. Although this problem was soon solved by Kusuoka \cite{Kus89,Kus93}, the martingale dimension remains unknown for many examples, such as generalized Sierpiński carpets. The best available estimates for martingale dimension are restricted to self-similar spaces and Dirichlet forms (see, for example, \cite[Theorem 3.5]{Hin13a}). On the other hand, there are several examples of diffusions satisfying sub-Gaussian heat kernel bounds where the underlying space and Dirichlet form are not self-similar \cite{BH,HKKZ}. In such cases, even the finiteness of the martingale dimension was not known prior to this work. We hope that the energy image density property can lead to better understanding of martingale dimension as a number of questions still remain open \cite[\textsection 3.2]{murugananalytic}.

	As another application of the energy image density property, we include a short new proof of Cheeger's conjecture on the Hausdorff dimension of image of differentiability charts in PI spaces (Proposition \ref{prop:cheeger}). Cheeger's conjecture was resolved  by a result of De Philippis, Marchese and Rindler \cite[Theorem~4.1.1]{DMR} that shows that the image of reference measure under differentiability chart is absolutely continuous with respect to the Lebesgue measure. The setting of \cite{DMR} is more general as their result is for Lipschitz differentiability spaces of which PI spaces are a subclass.

	Yet another application of our results is an answer to a question raised by Ambrosio and Kirchheim \cite[p.~15]{AKircheim}. A result of Preiss (\cite[Theorem 3.3]{AKircheim}) states that for any finite measure $\mu$ on $\mathbb{R}^2$ that is not absolutely continuous with respect to $\mathcal{L}_2$, there exists a sequence of continuously differentiable, Lipschitz functions $g_n: \mathbb{R}^2 \to \mathbb{R}^2$ converging   pointwise to the identity map with uniformly bounded Lipschitz constant, and such that 
	\[
	\lim_{n \to \infty}	\int_{\mathbb{R}^2} \det(\nabla g_n)\,d\mu < \mu(\mathbb{R}^2).
	\]
	We show that this theorem is true for $\mathbb{R}^n$ for all $n \in \mathbb{N}$ in Theorem \ref{t:detjacobian}. This result along with the same argument in \cite[Proof of Theorem  3.8]{AKircheim} implies that any top-dimensional Ambrosio--Kirchheim metric current on $\mathbb{R}^n$ has absolutely continuous mass measure for any $n \in \mathbb{N}$, a result obtained  earlier by different methods in \cite[Theorem 1.15]{DR} (see Corollary \ref{cor:metric-current}).

	\subsection{Outline of the paper}
	The approach based on weak lower semicontinuity and approximations mentioned in \textsection\ref{ss:proof-outline} applies   to a broad class of examples. To capture the generality of this approach, in \textsection\ref{s:p-Dirspace} we introduce two abstract frameworks of $p$-Dirichlet spaces and $p$-Dirichlet structures that generalize regular strongly local Dirichlet forms and Dirichlet structures respectively to a non-linear setting. We develop some basic   notions such as capacity, quasicontinuous functions in this framework (\textsection\ref{ss:cap-qcts}) along with the notion of $p$-independence (Definition \ref{d:p-independent}) that is a generalization of the non-degeneracy condition on carr\'e du champ matrix. In \textsection\ref{s:measures}, we develop the necessary preliminary results on decomposability bundles, cone null sets and Lipschitz approximations for the approach based on lower semicontinuity. In \textsection\ref{s:proofs}, we present the two different proofs based on lower semicontinuity (\textsection\ref{ss:lsc-approach}) and on normal currents constructed from Dirichlet forms (\textsection\ref{ss:alternate}).
	In \textsection\ref{s:applications}, we discuss different applications of our results such as a finiteness theorem for martingale dimension (\textsection\ref{ss:martingale-dim}), a new proof of Cheeger's conjecture (\textsection\ref{ss:cheeger}) and an answer to a problem raised by Ambrosio and Kirchheim regarding a generalization of a result due to Preiss (\textsection\ref{ss:preiss}).
	\subsection*{Notation}
	\noindent
	For a subset $A \subset X$, we write $\one_A$ for the indicator function of $A$. When $A = X$, we abbreviate $\one_X$ as $\one$.
	
	\smallskip
	\noindent
	In any metric space $(X,d)$, we denote by
	\[
	B(x,r) = \{ y \in X : d(x,y) < r \}
	\quad \text{and} \quad
	\overline{B}(x,r) = \{ y \in X : d(x,y) \le r \}
	\]
	the open and closed balls of radius $r > 0$ centered at $x \in X$.
	
	\smallskip
	\noindent
	The inner product in $L^2(X,\mu)$ is denoted by 
	$\langle \cdot, \cdot \rangle_{L^2(X,\mu)}$ or simply 
	$\langle \cdot, \cdot \rangle_{L^2(\mu)}$.
	For $x, y \in \mathbb{R}^n$, we denote by $\abs{x}$ and 
	$\langle x, y \rangle$ the Euclidean norm and inner product, respectively. 
	Unless specified otherwise, $\mathbb{R}^n$ is equipped with the Euclidean distance.
	
	\smallskip
	\noindent
	For a square matrix $M$, we denote by $\det(M)$ and $\Tr(M)$ its determinant and trace, respectively.
	
	\smallskip
	\noindent
	If $\mu$ is a measure and $f$ is a measurable function, then 
	$f\mu$ (or $f \cdot \mu$) denotes the measure 
	$A \mapsto \int_A f\,d\mu$. 
	If $\mu$ and $\nu$ are two measures on the same space, we write 
	$\mu \ll \nu$ and $\mu \perp \nu$ to indicate absolute continuity and singularity, respectively. We write $\mu \leq \nu$ if $\mu(A)\leq \nu(A)$ for all measurable sets $A$. We denote the measure $\one_A \mu$ also as $\mu|_A$.
	
	\smallskip
	\noindent
	If $f:X \to Y$ is a function and $A \subset X$, then $\restr{f}{A}: A \to Y$ denotes the restriction of $f$ to $A$.
	
	\smallskip
	\noindent
	For $a,b \in \mathbb{R}$, we write $a \vee b := \max(a,b)$ and $a \wedge b := \min(a,b)$.
	\section{Background on \texorpdfstring{$p$}{p}-Dirichlet spaces} \label{s:p-Dirspace}
	\subsection{Lipschitz functions}
	
	If $X$ is a metric space, we call $f:X \to \mathbb{R}$ ($L-$)Lipschitz if 
	\[
	\frac{|f(x)-f(y)|}{d(x,y)}\le L
	\]
	for all $x,y\in X$. The smallest constant $L$ such that $f$ is $L-$Lipschitz is the Lipschitz constant of $f$ and is denoted $\LIP[f]$. For $x\in X$ the asymptotic Lipschitz constant of $f$ at $x$ is denoted
	\begin{equation} \label{e:def-Lipa}
		\Lip_a[f](x)=\lim_{r\to 0} \LIP[f|_{B(x,r)}].
	\end{equation}
	We denote the vector space of $\mathbb{R}$-valued Lipschitz functions on $X$ by $\Lip(X)$.
	
	Let  $f \in \Lip(\mathbb{R}^n)$ and $S \subset \mathbb{R}^n$ be a countable dense set. Then the function $x \mapsto \LIP[f|_{B(x,r)}]$ is Borel measurable as 
	\[
	x \mapsto  \LIP[f|_{B(x,r)}]= \sup_{(p,q) \in S \times S, p \neq q} \frac{\abs{f(p)-f(q)}}{\abs{p-q}} \one_{B(p,r)} (x) \one_{B(q,r)}(x)
	\]	
	is a supremum of a countable family of Borel functions. Therefore $\Lip_a(f)(x)= \lim_{r \downarrow 0} \LIP[f|_{B(x,r)}]$ is also Borel measurable.
	
	A map $f$ is called ($L-$)bi-Lipschitz, if
	\[
	L^{-1}\le \frac{|f(x)-f(y)|}{d(x,y)}\le L
	\]
	for all $x,y\in X$.
	
	\subsection{\texorpdfstring{$p$}{p}-Dirichlet spaces}
	
	Dirichlet structures are defined in Definition \ref{d:dirstructure}. In this section, we define a variant of this setting to $p\neq 2$ for locally compact spaces. 
	
	\begin{definition} \label{d:dir-space}
		We call $(X,d,\mu, \mathcal{E}_p, \mathcal{F}_p, \Gamma_p)$ a local $p$-Dirichlet space if the following hold:
		\begin{enumerate}  
			\item \textbf{Locally compact space}: $(X,d,\mu)$ is a locally compact metric space equipped with a Radon measure $\mu$.
			\item \textbf{Completeness:} 	The space $\mathcal{F}_p$  is a subspace of $L^p(X,\mu)$ and
			$\mathcal{E}_p: \mathcal{F}_p \to [0,\infty)$ is a non-negative function such that $\mathcal{F}_p$ is a Banach space when equipped with the norm $\norm{f}_{\mathcal{F}_p}=(\norm{f}_{L^p}^p+\mathcal{E}_p(f))^{1/p}$.
			\item \textbf{Homogeneity:} For all $f\in \mathcal{F}_p$ there exists a finite non-negative Borel measure $\Gamma_p \langle f \rangle$ on $X$ such that $\Gamma_p \langle f \rangle(X)= \mathcal{E}_p(f)$   and for all $\lambda \in \mathbb{R}$
			\[
			\Gamma_p\langle \lambda f\rangle = |\lambda|^p \Gamma_p\langle f\rangle.
			\]
			\item \textbf{Sublinearity:} For every $f,g\in \mathcal{F}_p$ and every Borel set $A\subset X$, 
			\begin{equation} \label{e:def-sublinear}
				\Gamma_p\langle f+g\rangle(A)^{\frac{1}{p}}\le \Gamma_p\langle f\rangle(A)^{\frac{1}{p}}+\Gamma_p\langle g\rangle(A)^{\frac{1}{p}}
			\end{equation}
			\item \textbf{Chain rule:} For all $f\in \mathcal{F}_p$, $g\in \Lip(\mathbb{R})$ with $g(0)=0$,  we have $g \circ f \in \mathcal{F}_p$ and
			\begin{equation} \label{e:def-chain}
				d\Gamma_p\langle g\circ f\rangle \le \LIP[g]^p d\Gamma_p\langle f\rangle.
			\end{equation}
			\item \textbf{Locality:} For every $f\in \mathcal{F}_p$ and open set $A\subset X$, if $f|_A=c$ $\mu$-a.e., then $\Gamma_p\langle f\rangle(A)=0$. 
			\item \textbf{Weak lower semicontinuity:} For every $f\in L^p(X)$, and for any sequence of functions $(f_i)_{i \in \mathbb{N}}$ in $\mathcal{F}_p$ such that $f_i\to f \in L^p(X)$ with $\sup_{i\in \mathbb{N}} \mathcal{E}_p(f_i)<\infty$, then $f\in \mathcal{F}_p$ and
			\begin{equation} \label{e:def-lsc}
				\Gamma_p\langle f \rangle(A)\le \liminf_{i\to \infty} \Gamma_p\langle f_i \rangle(A)
			\end{equation}
			for every Borel set $A\subset X$. 
		\end{enumerate}
	\end{definition}

	\begin{remark} \label{r:local-subadditive}
		$(X,d,\mu, \mathcal{E}_p, \mathcal{F}_p, \Gamma_p)$ a local $p$-Dirichlet space.
		We record some simple consequences of the definition above.
		\begin{enumerate}[(i)]
			\item 	Suppose $f,g \in \mathcal{F}_p$, $O\subset X$ be open such that $\restr{(f-g)}{O} \equiv c$  for some $c \in \mathbb{R}$, then 
			\begin{equation} \label{e:loc-subl}
				\Gamma_p \langle f \rangle (A)=	\Gamma_p \langle g \rangle (A)
			\end{equation}
			for all Borel sets $A \subset O$. This follows from sublinearity, homogeneity, along with $\Gamma_p \langle f-g \rangle (A) \le \Gamma_p \langle f-g \rangle (O)=0$.
			\item If $(f_n)_{n \in \mathbb{N}}$ is a sequence converging to $f \in \mathcal{F}_p$ in the Banach space $(\mathcal{F}_p, \norm{\cdot}_{\mathcal{F}_p})$, by sublinearity and homogeneity properties, for any Borel set $A$, we have 
			\begin{equation}\label{e:conv-emeas}
				\lim_{n \to \infty} \Gamma_p \langle f_n \rangle(A) =\Gamma_p \langle f \rangle(A).
			\end{equation}
		\end{enumerate}
	\end{remark}
	
	Currently, there is no standard axiomatization for local $p$-Dirichlet spaces. In fact, rather than an axiomitization, in each setting of interest a different set of crucial properties has been verified for a given energy construction. Each, however, has a restrictive assumption built into it, and the assumptions used in one setting are incompatible with the other.
	\begin{enumerate}
		\item For potential theory in metric measure spaces, one often uses an upper gradient based approach \cite{Ch,Shanmun,AGS,HKST}. An illustrative example of this is equipping $\mathbb{R}^n$  with the Lebesgue measure $\mathcal{L}_n$ and $\ell_t$-metric for $t\in [1,\infty]$, in which case the Sobolev space equals the usual Sobolev space, but the energy of Sobolev functions is given by $\Gamma_p\langle f\rangle(A)=\int_A \|\nabla f\|_{\ell_s}^p d\mathcal{L}_n$, where $s\in [1,\infty]$ is the dual exponent from $\frac{1}{s}+\frac{1}{t}=1$. This type of energy in metric measure spaces is non-trivial only if the $p$-Modulus of all curves in the space is positive, which fails to hold for many fractals of interest. We verify that Sobolev spaces defined using upper gradients are $p$-Dirichlet spaces in Lemma \ref{lem:upper-gradient=p-Dirichlet-space}.
		\item Self-similar $p$-energies on self-similar fractals can be obtained via fixed-point arguments and $\Gamma$-limits of rescaled energies \cite{Kus93, Kigami}. A good example of this is the $p$-energy constructed for a Sierpi\'nski gasket \cite{pgasket,cao}, and the Sierpi\'nski carpet \cite{Murugan-Shimizu}. In these settings, one has all of the above properties, and additionally one can obtain reflexivity, $p$-Clarkson's inequalities and generalized $p$-contraction properties, see e.g. \cite{Kajino-Shimizu, Kajino-Shimizu-penergy}. These contraction properties fail to hold in infinite dimensional metric space settings and many natural self-similar energies, such as the $\ell_t$-energies mentioned in the previous bullet point. Also, in many settings self-similarity is too restrictive, e.g. on boundaries of hyperbolic groups. 
		\item Every regular, strongly local Dirichlet form can be viewed as a $p$-Dirichlet space with $p=2$ (see Lemma \ref{lem:df-dirspace}).
	\end{enumerate}
	It may be that fewer axioms would suffice. 
	Our axioms in Definition \ref{d:dir-space} are designed to encompass diverse frameworks such as regular strongly local Dirichlet forms, Sobolev spaces and energies defined via upper gradients, and self-similar $p$-energies on fractals. The aim is to provide a unified framework within which the energy image density property can be established in these various settings, while maintaining close analogies with the theory of analysis on metric spaces. Indeed, this general framework gives a setting to resolve a generalized version of this conjecture, and to prove an extension of Cheeger's conjecture to independent collections of functions, Proposition \ref{prop:independent}. 
	
	The axioms here are obtained by essentially removing any self-similarity, uniform convexity, generalized contraction, or reflexivity assumptions from the frameworks in \cite{Kigami, Kajino-Shimizu, Murugan-Shimizu, Kajino-Shimizu-penergy}, while replacing them with the chain rule and lower-semicontinuity. These latter assumptions are satisfied in both of the frameworks above, but the first ones are not in general satisfied in the upper gradient based approaches. Indeed, here are a few examples of energies that satisfy our axioms but fail one or more of these assumptions.
	\begin{example} \label{ex:p-dir}
		\begin{enumerate} 
			\item Equip the plane $\mathbb{R}^2$ with the Lebesgue measure and $\ell_1$-metric. Then the upper gradient energy is given by $$\Gamma_p\langle f\rangle(A)=\int_A\max\{|\partial_x f|, |\partial_y f|\}^p \mathcal{L}_2.$$ This energy is self-similar but is not uniformly convex, and fails the contraction properties in \cite{Kajino-Shimizu}, but the resulting Sobolev space is reflexive. The usual $p-$Dirichlet energy on $\mathbb{R}^2$, corresponding with using the $\ell_2$ metric is uniformly convex and comparable to this energy. This corresponds to a general renorming result for finite dimensional vector spaces. This example is also a self-similar $p$-energy in the sense of \cite{Murugan-Shimizu} if restricted on $[0,1]^2$, which is an attractor of four scaling maps by a facter $1/2$. This shows that without some infinitesimal assumptions, even self-similar $p$-energies may be non-unique.
			\item Let $(X=\prod_{i=1}^\infty [0,2^{-i}], d_1, \mu=\prod_{i=1}^\infty 2^i\mathcal{L}_1)$ be the infinite product space equipped with the $\ell_1$-metric and product of re-scaled Lebesgue measures. This space is compact and infinite dimensional, and one can show that in this example, the associated upper gradient Sobolev space is not reflexive, since it corresponds to the energy $f\to \int_X \sup_{i\in \mathbb{N}} |\partial_i f| d\mu$, for $f$ Lipschitz; see the example in \cite[Proposition 44]{ACM}. As shown in \cite{ACM}, this  space is not reflexive. Therefore, it can also not be uniformly convex. Note that by results in \cite{teriseb}, an upper gradient based Sobolev space can fail to be reflexive only if the space has infinite Hausdorff dimension.  Note that the chain rule and lower-semicontinuity here follow from general results in analysis on metric spaces, see e.g. \cite[Section 2]{gigli} and \cite[Proof of Theorem 6.1]{AGS}; in comparing these and other references note that \cite{AGS} establishes the equivalence of different definitions of upper gradient based Sobolev spaces in a metric measure space.
		\end{enumerate}
	\end{example}
	The final condition of weak lower semicontinuity is named here with the reflexive case in mind. Indeed, if the space $\cF_p$ is reflexive, then one can show that a sequence $f_i\in \cF_p$ converges in the weak topology if and only if $f_i\to f$ weakly in $L_p(X)$ and $\sup_{i\in\mathbb{N}} \mathcal{E}_o(f_i)<\infty$; see e.g. \cite[Theorems 3.18 and 3.19]{Brezis}. While sometimes the structure may not be reflexive, this terminology draws also parallels to the definition of Weaver derivations in \cite{weaver}, the notion of convergence used in the density in energy result in \cite[Introduction, Definition 4.2 and Theorem 7.2]{AGS}, and the weak continuity for metric currents \cite{AKircheim}.  
	We note that reflexivity of the Banach space $(\mathcal{F}_p, \norm{\cdot}_{\mathcal{F}_p})$ is a sufficient condition for the weak lower semicontinuity of energy measure in Definition \ref{d:dir-space}. The following statement and its proof is close to \cite[Proposition 4.10]{Kajino-Shimizu} and shows that weak lower semicontinuity often follows from reflexivity.
	\begin{lemma} \label{l:suff-lsc}
		Let  $(X,d,\mu, \mathcal{E}_p, \mathcal{F}_p, \Gamma_p)$ satisfy properties (1)-(6) in Definition \ref{d:dir-space}. If the Banach space   $(\mathcal{F}_p, \norm{\cdot}_{\mathcal{F}_p})$  is reflexive, then  $(X,d,\mu, \mathcal{E}_p, \mathcal{F}_p, \Gamma_p)$ satisfies the lower semicontinuity property in Definition  \ref{d:dir-space}-(7).
	\end{lemma}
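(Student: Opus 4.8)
The plan is to deduce the weak lower semicontinuity property from reflexivity by a standard Mazur-lemma argument, which reduces the estimate \eqref{e:def-lsc} for an arbitrary Borel set to the strong continuity of $A \mapsto \Gamma_p\langle\cdot\rangle(A)$ recorded in \eqref{e:conv-emeas}. Throughout, let $(f_i)_{i\in\mathbb{N}}$ be a sequence in $\mathcal{F}_p$ with $f_i \to f$ in $L^p(X,\mu)$ and $\sup_i \mathcal{E}_p(f_i) < \infty$.

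First I would verify that $f \in \mathcal{F}_p$. Since $L^p$-convergent sequences are bounded in $L^p$, the quantities $\norm{f_i}_{\mathcal{F}_p}^p = \norm{f_i}_{L^p}^p + \mathcal{E}_p(f_i)$ are bounded; hence, as $(\mathcal{F}_p,\norm{\cdot}_{\mathcal{F}_p})$ is reflexive, some subsequence converges weakly in $\mathcal{F}_p$ to some $g \in \mathcal{F}_p$. The inclusion $\mathcal{F}_p \hookrightarrow L^p(X,\mu)$ is a bounded linear map (one has $\norm{\cdot}_{L^p} \le \norm{\cdot}_{\mathcal{F}_p}$ by Definition~\ref{d:dir-space}-(2)), hence weak-to-weak continuous, so the same subsequence converges weakly to $g$ in $L^p$; but it converges strongly, hence weakly, to $f$, so $g = f$ and $f \in \mathcal{F}_p$.

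Next, to prove the inequality, I would fix a Borel set $A \subset X$, set $L := \liminf_{i\to\infty}\Gamma_p\langle f_i\rangle(A)$ (assuming $L<\infty$), and pass to a subsequence along which $\Gamma_p\langle f_i\rangle(A) \to L$. This subsequence still converges to $f$ in $L^p$ with bounded $\mathcal{F}_p$-norm, so I can extract a further subsequence $(f_{i_k})$ converging weakly in $\mathcal{F}_p$, whose limit must again be $f$ by the previous paragraph, while $\Gamma_p\langle f_{i_k}\rangle(A) \to L$ is preserved. By Mazur's lemma there exist finite convex combinations $g_j = \sum_{k\ge j}\alpha_{j,k}f_{i_k}$ (with $\alpha_{j,k}\ge 0$, $\sum_k\alpha_{j,k}=1$, and only finitely many nonzero) such that $g_j \to f$ strongly in $\mathcal{F}_p$. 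Iterating sublinearity \eqref{e:def-sublinear} over the finitely many terms and using homogeneity in the form $\Gamma_p\langle\lambda h\rangle(A)^{1/p}=|\lambda|\,\Gamma_p\langle h\rangle(A)^{1/p}$, I obtain $\Gamma_p\langle g_j\rangle(A)^{1/p} \le \sum_{k\ge j}\alpha_{j,k}\Gamma_p\langle f_{i_k}\rangle(A)^{1/p}$; since for each $\epsilon>0$ and all $j$ large every index $k$ with $\alpha_{j,k}>0$ satisfies $\Gamma_p\langle f_{i_k}\rangle(A)\le L+\epsilon$, this gives $\Gamma_p\langle g_j\rangle(A)^{1/p}\le (L+\epsilon)^{1/p}$. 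Finally, $g_j\to f$ in $(\mathcal{F}_p,\norm{\cdot}_{\mathcal{F}_p})$ together with \eqref{e:conv-emeas} of Remark~\ref{r:local-subadditive} yields $\Gamma_p\langle g_j\rangle(A)\to\Gamma_p\langle f\rangle(A)$, so $\Gamma_p\langle f\rangle(A)^{1/p}\le(L+\epsilon)^{1/p}$; letting $\epsilon\downarrow 0$ gives $\Gamma_p\langle f\rangle(A)\le L=\liminf_{i\to\infty}\Gamma_p\langle f_i\rangle(A)$, which is \eqref{e:def-lsc}.

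I do not expect a serious obstacle here: the argument is essentially the bookkeeping of three successive passages to subsequences --- one to realize the $\liminf$ over the fixed $A$, one to extract a weakly convergent subsequence, and the Mazur convex combinations --- where each time one must check that the weak $\mathcal{F}_p$-limit coincides with the fixed $L^p$-limit $f$. The only structural inputs are reflexivity (weak sequential compactness of bounded sets), sublinearity together with homogeneity (to push the energy bound through convex combinations), and \eqref{e:conv-emeas} (strong continuity of the set function $\Gamma_p\langle\cdot\rangle(A)$), all available from properties (1)--(6) of Definition~\ref{d:dir-space} and Remark~\ref{r:local-subadditive}.
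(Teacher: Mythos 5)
Your proposal is correct and follows essentially the same route as the paper's proof: extract a subsequence realizing the $\liminf$, pass to a weakly convergent subsequence via Kakutani's theorem, apply Mazur's lemma to obtain norm-convergent convex combinations, and push the energy bound through using sublinearity plus homogeneity together with the strong continuity \eqref{e:conv-emeas}. The only difference is cosmetic: you spell out the verification that $f\in\mathcal{F}_p$ (the paper takes this as understood) and use an explicit $\epsilon$-argument where the paper appeals directly to the convergence of the Cesàro-type averages.
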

	\begin{proof}
		Let	$f\in L^p(X)$, and let $(f_i)_{i \in \mathbb{N}}$ be a sequence of functions  in $\mathcal{F}_p$ such that $f_i\to f \in L^p(X)$ with $\sup_{i\in \mathbb{N}} \mathcal{E}_p(f_i)<\infty$. Let $A$ be a Borel subset of $X$. Pick a subsequence $(f_{i_k})_{k \in \mathbb{N}}$ such that 
		\begin{equation} \label{e:lsc1}
			\lim_{k \to \infty} \Gamma_p \langle f_{i_k} \rangle(A)= \liminf_{i \to \infty} \Gamma_p \langle f \rangle (A).
		\end{equation}
		By passing to a further subsequence if necessary and by using reflexivity of $(\mathcal{F}_p, \norm{\cdot}_{\mathcal{F}_p})$, Kakutani's theorem (see \cite[Theorem 3.18]{Brezis}), we may assume that $f_{i_k}$ converges weakly in  $(\mathcal{F}_p, \norm{\cdot}_{\mathcal{F}_p})$ to $f$.
		By Mazur's lemma (see \cite[Exercise 3.4-(1)]{Brezis} or \cite[Lemma 3.14]{Kajino-Shimizu}), for each $l \in \mathbb{N}$, there exists $N(l) \in \mathbb{N}$ with $N(l)>l$, $\{\alpha_{l,k}: l \le k \le N(l)\} \subset [0,1]$ such that $\sum_{k=l}^{N(l)} \alpha_{k,l}=1$ for all $l \in \mathbb{N}$, and $g_l:= \sum_{k=l}^{N(l)} \alpha_{l,k} f_{i_k}$ converges to $f$  in the norm topology of  $(\mathcal{F}_p, \norm{\cdot}_{\mathcal{F}_p})$ as $l \to \infty$. Thus we have 
		\begin{align*}
			\Gamma_p\langle f\rangle(A)^{1/p} &\stackrel{\eqref{e:conv-emeas}}{=} \lim_{l \to \infty}	\Gamma_p\langle g_l\rangle(A)^{1/p} \\
			&\le \lim_{l \to \infty} \sum_{k=l}^{N(l)} \alpha_{k,l} \Gamma_p \langle f_{i_k} \rangle (A)^{1/p} \quad \mbox{(by sublinearity and homogeneity)} \\
			&= \lim_{k \to \infty}  \Gamma_p \langle f_{i_k} \rangle(A)^{1/p}  \quad \mbox{(by $\sum_{k=l}^{N(l)} \alpha_{k,l}=1$)}
		\end{align*} 
		which along with \eqref{e:lsc1} implies \eqref{e:def-lsc}.
	\end{proof}

	\begin{definition}
		A $p$-Dirichlet space $(X,d,\mu, \mathcal{E}_p, \mathcal{F}_p, \Gamma_p)$ is regular if we have that $C_0(X)\cap \mathcal{F}_p$ is dense in $\mathcal{F}_p$ and dense in $C_0(X)$ in the uniform norm,  where $C_0(X)$ is the space of continuous functions on $X$ with compact support.
	\end{definition}
	The following lemma can be viewed as an improved version of the chain rule in Definition \ref{d:dir-space}-(5), where the global Lipschitz constant $\LIP[g]$ is replaced with the asymptotic Lipschitz constant function $\Lip_a[g]$.  We will later relax the continuity assumption in Proposition \ref{prop:quasicont-conseq}-(2).
	\begin{lemma}\label{lem:contlocal} Let $(X,d,\mu,\mathcal{E}_p, \mathcal{F}_p, \Gamma_p)$ be a regular  $p$-Dirichlet space 
		For every $g\in \Lip(\mathbb{R})$ with $g(0)=0$ and every $f\in \mathcal{F}_p\cap C(X)$ we have  $g \circ f \in \mathcal{F}_p$ and 
		\[
		\Gamma_p\langle g\circ f\rangle  \le (\Lip_a[g] \circ f)^p\cdot \Gamma_p\langle f\rangle.
		\]
	\end{lemma}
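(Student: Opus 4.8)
The plan is to localize the chain rule of Definition~\ref{d:dir-space}(5), which only provides the crude bound $d\Gamma_p\langle g\circ f\rangle\le\LIP[g]^p\,d\Gamma_p\langle f\rangle$, along the level sets of $\Lip_a[g]$. Note first that $g\circ f\in\mathcal{F}_p$ is already guaranteed by Definition~\ref{d:dir-space}(5), so only the inequality for energy measures needs proof. I claim it suffices to establish:
\[
(\star)\qquad \Gamma_p\langle g\circ f\rangle(B)\le t^p\,\Gamma_p\langle f\rangle(B)\quad\text{for every }t>0\text{ and every Borel }B\subseteq A_t,
\]
where $A_t:=\{x\in X:\Lip_a[g](f(x))<t\}$, a Borel set since $\Lip_a[g]$ is Borel (by the discussion preceding Definition~\ref{d:dir-space}) and $f$ is continuous. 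Granting $(\star)$, fix $\delta>0$ and partition $X$ into the Borel sets $E_k:=\{x:k\delta\le\Lip_a[g](f(x))<(k+1)\delta\}$, $k\in\mathbb{N}\cup\{0\}$; this is a finite partition because $\Lip_a[g]\le\LIP[g]<\infty$. Since $E_k\subseteq A_{(k+1)\delta}$ and $\Lip_a[g]\circ f+\delta\ge(k+1)\delta$ on $E_k$, applying $(\star)$ with $t=(k+1)\delta$ and summing over $k$ gives, for every Borel $A\subseteq X$,
\[
\Gamma_p\langle g\circ f\rangle(A)\le\int_A(\Lip_a[g]\circ f+\delta)^p\,d\Gamma_p\langle f\rangle.
\]
Letting $\delta\downarrow 0$ and invoking dominated convergence (legitimate as $\Gamma_p\langle f\rangle$ is finite and the integrands are bounded by $(\LIP[g]+1)^p$) yields $\Gamma_p\langle g\circ f\rangle\le(\Lip_a[g]\circ f)^p\cdot\Gamma_p\langle f\rangle$.

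To prove $(\star)$, fix $t>0$ and set $O_t:=\{y\in\mathbb{R}:\Lip_a[g](y)<t\}$, which is open: if $\LIP[g|_{B(y,r)}]<t$ and $y'\in B(y,r/2)$ then $B(y',r/2)\subseteq B(y,r)$, so $\Lip_a[g](y')<t$. For each $y\in O_t$ pick $r_y>0$ with $\LIP[g|_{B(y,r_y)}]<t$; since $\mathbb{R}$ is second countable, the cover $\{B(y,r_y):y\in O_t\}$ of $O_t$ has a countable subcover $\{B(y_i,r_i)\}_{i\in\mathbb{N}}$. On the interval $\overline{B}(y_i,r_i)=[a_i,b_i]$ we still have $\LIP[g|_{[a_i,b_i]}]=\LIP[g|_{B(y_i,r_i)}]<t$ by continuity of $g$, so the clamped function $h_i(y):=g\big(\min(b_i,\max(a_i,y))\big)$ is $t$-Lipschitz on $\mathbb{R}$ and coincides with $g$ on $B(y_i,r_i)$. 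Put $\tilde h_i:=h_i-h_i(0)$, still $t$-Lipschitz and now with $\tilde h_i(0)=0$. On the open set $U_i:=f^{-1}(B(y_i,r_i))$ we have $f(U_i)\subseteq B(y_i,r_i)$, hence $(g\circ f-\tilde h_i\circ f)|_{U_i}\equiv h_i(0)$. Since $g\circ f$ and $\tilde h_i\circ f$ both belong to $\mathcal{F}_p$ by Definition~\ref{d:dir-space}(5), Remark~\ref{r:local-subadditive}(i) together with the chain rule gives, for every Borel $B\subseteq U_i$,
\[
\Gamma_p\langle g\circ f\rangle(B)=\Gamma_p\langle\tilde h_i\circ f\rangle(B)\le\LIP[\tilde h_i]^p\,\Gamma_p\langle f\rangle(B)\le t^p\,\Gamma_p\langle f\rangle(B).
\]
Finally $A_t=f^{-1}(O_t)=\bigcup_i U_i$; setting $W_i:=U_i\setminus\bigcup_{j<i}U_j$ produces pairwise disjoint Borel sets with $\bigcup_i W_i=A_t$ and $W_i\subseteq U_i$, so for Borel $B\subseteq A_t$,
\[
\Gamma_p\langle g\circ f\rangle(B)=\sum_i\Gamma_p\langle g\circ f\rangle(B\cap W_i)\le t^p\sum_i\Gamma_p\langle f\rangle(B\cap W_i)=t^p\,\Gamma_p\langle f\rangle(B),
\]
which is $(\star)$.

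The one point requiring care — and what I would flag as the main (minor) obstacle — is the normalization $\tilde h_i(0)=0$ needed to apply the chain rule: in general one cannot extend $g|_{B(y_i,r_i)}$ to a globally $t$-Lipschitz function passing through the origin while keeping it equal to $g$ there, so instead we tolerate the additive discrepancy $h_i(0)$ and absorb it through the locality consequence Remark~\ref{r:local-subadditive}(i). Beyond this the argument is routine patching. Note that it uses only the axioms of Definition~\ref{d:dir-space} together with the continuity of $f$ (the latter precisely to make the sets $U_i$ open, as Remark~\ref{r:local-subadditive}(i) demands) and second countability of the target $\mathbb{R}$ — in particular neither the weak lower semicontinuity axiom nor regularity of the $p$-Dirichlet space is needed here.
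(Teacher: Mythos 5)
Your proof is correct and takes a genuinely, if modestly, different route from the paper's. The paper's argument localizes in the \emph{domain}: for each $x_0\in X$ and $\epsilon>0$ it chooses, by continuity of $f$, a ball $B(x_0,\delta)$ with $f(B(x_0,\delta))\subset B(f(x_0),\epsilon)$, replaces $g$ on $f(B(x_0,\delta))$ by a McSchane extension with Lipschitz constant $\LIP[g|_{B(f(x_0),\epsilon)}]$, invokes Remark~\ref{r:local-subadditive}(i), covers $X$ by such balls, and finally sends $\epsilon\to 0$. You instead localize in the \emph{codomain}: you show the superlevel-free set $O_t=\{\Lip_a[g]<t\}\subset\mathbb{R}$ is open, cover it by countably many intervals on which $g$ is $t$-Lipschitz, replace $g$ there by an explicit clamped extension (avoiding McSchane, which is natural since the target is $\mathbb{R}$), pull back to the open sets $U_i=f^{-1}(B(y_i,r_i))$, apply Remark~\ref{r:local-subadditive}(i) and disjointify to obtain the uniform bound $(\star)$ on $A_t$, and then patch the $A_t$'s together with the finite $\delta$-grid $\{E_k\}$ before letting $\delta\to 0$. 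Both proofs rest on the same three ingredients — the chain rule from Definition~\ref{d:dir-space}(5), locality via Remark~\ref{r:local-subadditive}(i), and dominated convergence at the end — but your codomain-side decomposition is slightly cleaner in two respects: the countable subcover comes for free from second countability of $\mathbb{R}$ rather than needing to be extracted from an open cover of $X$, and clamping replaces the appeal to an extension theorem.
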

	\begin{proof}
		Let $x_0\in X$, and let $\epsilon>0$. By chain rule, we already know $g\circ f \in \mathcal{F}_p$. Next, since $f$ is continuous, there exists a $\delta>0$ such that $f(B(x_0,\delta))\subset B(f(x_0),\epsilon)$. Let $\tilde{g}_{x_0,\delta}\in \Lip(\mathbb{R}^n)$ be equal to $g|_{f(B(x_0,\delta))}$ on $f(B(x_0,\delta))$ and which is  $\LIP[g|_{B(f(x_0),\epsilon)}]$-Lipschitz.  The existence of such a function $\tilde{g}_{x_0,\delta}\in \Lip(\mathbb{R}^n)$  follows from the  McSchane's extension theorem \cite[Theorem 6.2]{Hei}. Set $g_{x_0,\delta}=\tilde{g}_{x_0,\delta}-\tilde{g}_{x_0,\delta}(0)$. We have $g_{x_0,\delta} \circ f(x)-g \circ f(x) =c:=\tilde{g}_{x,\delta}(0)$ for $x\in B(x_0,\delta)$, and thus by \eqref{e:loc-subl}
		\begin{align*}
			\Gamma_p\langle g\circ f\rangle |_{B(x_0,\delta)} &=\Gamma_p\langle \tilde{g}_{x_0,\delta}\circ f\rangle |_{B(x,\delta)}  \\
			&\le \LIP[g|_{B(f(x_0),\epsilon)}]^p \cdot\Gamma_p\langle f \rangle|_{B(x_0,\delta)} \\
			&\le \LIP[g|_{B(f(x),2\epsilon)}]^p \cdot \Gamma_p\langle f \rangle|_{B(x_0,\delta)}.
		\end{align*}
		The space $X$ can be covered by such balls $B(x_0,\delta)$ and we get
		\[
		\Gamma_p\langle g\circ f\rangle \le \LIP[g|_{B(f(x),2\epsilon)}] \cdot \Gamma_p\langle f \rangle,
		\]
		and sending $\epsilon \to 0$, the claim follows by the dominated convergence theorem.
	\end{proof}
	
	The following is an improved version of locality for continuous functions for arbitrary Borel sets. The argument is similar to \cite[Proposition 2.22]{Ch}.
	
	\begin{lemma} \label{lem:loc-cont}
		For every $f\in \mathcal{F}_p \cap C(X)$ and any Borel set $A\subset X$, if $f|_A\equiv c$, then $\Gamma_p\langle f\rangle(A)=0$. 
	\end{lemma}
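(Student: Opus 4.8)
The plan is to approximate $f$ by functions that are \emph{exactly} constant on an open neighbourhood of $A$ — to which the open-set locality axiom Definition~\ref{d:dir-space}-(6) applies directly — and then to recover the assertion for $f$ itself by weak lower semicontinuity Definition~\ref{d:dir-space}-(7). This mirrors the truncation argument in \cite[Proposition 2.22]{Ch}.

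For each $\epsilon>0$ I would first introduce the $1$-Lipschitz ``clamping'' function $g_\epsilon\colon\mathbb{R}\to\mathbb{R}$ given by $g_\epsilon(t)=\operatorname{med}(t-\epsilon,\,c,\,t+\epsilon)$; it equals $c$ on $[c-\epsilon,c+\epsilon]$ and equals $t\mp\epsilon$ when $t$ lies outside that interval. Normalising, set $h_\epsilon:=g_\epsilon-g_\epsilon(0)$, so that $h_\epsilon$ is still $1$-Lipschitz and $h_\epsilon(0)=0$. The chain rule Definition~\ref{d:dir-space}-(5) then gives $h_\epsilon\circ f\in\mathcal{F}_p$ together with $\Gamma_p\langle h_\epsilon\circ f\rangle\le\Gamma_p\langle f\rangle$.

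Next, I would exploit the continuity of $f$: the set $U_\epsilon:=f^{-1}\big((c-\epsilon,c+\epsilon)\big)$ is open and contains $A$ because $f\equiv c$ on $A$. On $U_\epsilon$ one has $f(x)-\epsilon<c<f(x)+\epsilon$, hence $g_\epsilon\circ f\equiv c$ and therefore $h_\epsilon\circ f\equiv c-g_\epsilon(0)$ on $U_\epsilon$. Since $h_\epsilon\circ f$ is constant on the \emph{open} set $U_\epsilon$, the locality axiom yields $\Gamma_p\langle h_\epsilon\circ f\rangle(U_\epsilon)=0$; as $\Gamma_p\langle h_\epsilon\circ f\rangle$ is a nonnegative measure and $A\subseteq U_\epsilon$, this gives $\Gamma_p\langle h_\epsilon\circ f\rangle(A)=0$ for every $\epsilon>0$.

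Finally, I would let $\epsilon\downarrow 0$ along $\epsilon=1/k$. Because $h_\epsilon$ is $1$-Lipschitz with $h_\epsilon(0)=0$, one has the pointwise bound $|h_\epsilon\circ f|\le|f|\in L^p(X,\mu)$; moreover a short computation shows $g_\epsilon(0)\to 0$ and $g_\epsilon\circ f\to f$ pointwise as $\epsilon\to0$, so $h_\epsilon\circ f\to f$ pointwise. Dominated convergence then gives $h_{1/k}\circ f\to f$ in $L^p(X,\mu)$, while $\sup_k\mathcal{E}_p(h_{1/k}\circ f)\le\mathcal{E}_p(f)<\infty$. Applying weak lower semicontinuity with the Borel set $A$ yields
\[
\Gamma_p\langle f\rangle(A)\ \le\ \liminf_{k\to\infty}\Gamma_p\langle h_{1/k}\circ f\rangle(A)\ =\ 0 .
\]
I expect the only delicate point to be the bookkeeping with the additive constants: ensuring that $h_\epsilon\circ f$ is exactly (and not merely approximately) constant on the open set $U_\epsilon$, that $h_\epsilon(0)=0$ so the chain rule applies, and that the normalising constants $g_\epsilon(0)$ vanish in the limit so that $h_\epsilon\circ f\to f$; the remainder is a routine application of the axioms of a $p$-Dirichlet space.
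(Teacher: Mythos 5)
Your proof is correct and follows essentially the same route as the paper's: the paper defines $\widetilde g_k(y)=\max(y-c-k^{-1},\min(y-c+k^{-1},0))$, which is exactly $g_{1/k}(y)-c$ in your notation, then normalises by subtracting the value at $0$, applies the chain rule, notes that $g_k\circ f$ is constant on the open set $f^{-1}((c-k^{-1},c+k^{-1}))\supset A$ so locality kills the energy there, and concludes via weak lower semicontinuity — the identical chain of steps you outline, with the median expressed as a nested max/min.
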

	\begin{proof}
		Define for each $k \in \mathbb{N}$, a $1$-Lipschitz functions $\widetilde{g}_k, g_k:\mathbb{R} \to \mathbb{R}$ as $\widetilde{g}_k(y)=\max(y-c-k^{-1},\min(y-c+k^{-1},0))$ $g_k(y):= \widetilde{g}_k(y)-\widetilde{g}_k(0)$, so that  
		\[
		\LIP[g_k]=1, \quad g_k(0)=0 \quad \mbox{for all $k \in \mathbb{N}$}, \quad \lim_{k \to \infty} g_k(y)= y, \quad \mbox{for all $y \in \mathbb{R}$.}
		\]
		Hence by chain rule, we have $g_k \circ f \in \mathcal{F}_p \cap C(X)$ for all $k \in \mathbb{N}$ and
		\begin{equation} \label{e:cl1}
			\sup_{k \in \mathbb{N}} \mathcal{E}_p(g_k \circ f) \le \mathcal{E}_p(f).
		\end{equation}
		By $\abs{g_k \circ f- f} \le 2 \abs{f}$, $\lim_{k \to \infty} g_k \circ f =f$, and dominated convergence theorem we have \begin{equation} \label{e:cl2}
			\lim_{k\to \infty}\norm{g_k \circ f - f}_{L^p}=0.
		\end{equation}
		Note that $g_k \circ f \equiv g_k(c)$ on the open set $f^{-1}((c-k^{-1},c+k^{-1}))$,   and hence by locality
		\begin{equation}\label{e:cl3}
			\Gamma_p \langle g_k \circ f \rangle  \left(f^{-1}((c-k^{-1},c+k^{-1}))\right)=0.
		\end{equation}
		Hence by lower semicontinuity,  and locality, we obtain
		\begin{align*}
			\Gamma_p\langle f \rangle(A) &\le \liminf_{k \to \infty} 	\Gamma_p\langle g_k \circ f \rangle(A) \quad \mbox{(by \eqref{e:cl1},\eqref{e:cl2} and \eqref{e:def-lsc})}\\
			&\le \liminf_{k \to \infty} 	\Gamma_p\langle g_k \circ f \rangle(f^{-1}(c-k^{-1},c+k^{-1})) \quad \mbox{($A \subset f^{-1}(c-k^{-1},c+k^{-1})$)}\\
			&=0 \quad \mbox{(by \eqref{e:cl3})}.
		\end{align*}
	\end{proof}
	The following estimate is a version of chain rule for vector valued continuous functions. This is a very weak version of the generalized contraction property used in \cite{Kajino-Shimizu, Kajino-Shimizu-penergy}. Unlike  these properties, we show that this weaker contraction property follows from the our axioms and does not depend on any auxiliary assumptions. We will later relax the continuity assumption in Proposition \ref{prop:quasicont-conseq}-(3). 
	\begin{lemma}\label{lem:multidimcontraction} Let $(X,d,\mu,\mathcal{E}_p, \mathcal{F}_p, \Gamma_p)$ be a local $p$-Dirichlet space.
		For every $g\in \Lip(\mathbb{R}^n)$ with $g(0)=0$ and every $f=(f_1,\dots, f_n) \in (\mathcal{F}_p \cap C_0(X))^n$, we have  $g \circ f \in \mathcal{F}_p$ and
		\begin{equation} \label{e:c-rule1}
			\Gamma_p\langle g \circ f\rangle \le \left(1 \vee n^{(p-2)/2}\right)\LIP[g]^p \cdot \sum_{i=1}^n \Gamma_p\langle f_i\rangle.
		\end{equation}
		Moreover for all $f=(f_1,\dots, f_n) \in (\mathcal{F}_p \cap C_0(X))^n$, we have
		\begin{equation} \label{e:c-rule2}
			\Gamma_p\langle g \circ f\rangle \le \left(1 \vee n^{(p-2)/2}\right) (\Lip_a[g] \circ f)^p \cdot \sum_{i=1}^n \Gamma_p\langle f_i\rangle.
		\end{equation}
	\end{lemma}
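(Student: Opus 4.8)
The plan is to reduce the estimate, via approximation together with the weak lower semicontinuity property, to the case where $g$ is a finite $\min$--$\max$ combination of affine functions, and to handle that case by partitioning $X$ according to which affine piece is active and invoking locality. Throughout, the basic building block is the \emph{affine case}: for $v\in\mathbb{R}^n$ put $\langle v,f\rangle:=\sum_{i=1}^n v_if_i\in\mathcal{F}_p$; iterating sublinearity \eqref{e:def-sublinear} and using homogeneity gives $\Gamma_p\langle\langle v,f\rangle\rangle(A)^{1/p}\le\sum_i|v_i|\,\Gamma_p\langle f_i\rangle(A)^{1/p}$ for every Borel $A$, and then Cauchy--Schwarz together with the elementary inequality $\bigl(\sum_i t_i^{2/p}\bigr)^{p/2}\le\bigl(1\vee n^{(p-2)/2}\bigr)\sum_i t_i$ (for $t_i\ge0$; split $p\ge2$ and $p\le2$) yields $\Gamma_p\langle\langle v,f\rangle\rangle(A)\le\bigl(1\vee n^{(p-2)/2}\bigr)|v|^p\sum_i\Gamma_p\langle f_i\rangle(A)$.

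\emph{Finite $\min$--$\max$ of affine functions.} Let $h=\min_{z\in F}\max_{w\in W}a_{z,w}$ with $F,W$ finite, $a_{z,w}(y)=\langle w,y\rangle+c_{z,w}$ affine, and $h(0)=0$. First, $h\circ f\in\mathcal{F}_p$: writing $\min$ and $\max$ via $\tfrac12(u+v\mp|u-v|)$, using that $\mathcal{F}_p$ is a vector space and applying the one-dimensional chain rule \eqref{e:def-chain} to the $1$-Lipschitz maps $t\mapsto|t+c|-|c|$ (which vanish at $0$), an induction on the structure of the lattice polynomial shows that a $\min$--$\max$ combination of functions $\langle v_i,f\rangle+c_i$ equals $u+P(c_1,\dots,c_N)\,\one$ for some $u\in\mathcal{F}_p$, where $P(c_1,\dots,c_N)$ is the value of the corresponding lattice polynomial evaluated at the constants. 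Applied to $c_{z,w}=a_{z,w}(0)$ this constant equals $h(0)=0$, so $h\circ f=u\in\mathcal{F}_p$. For the energy bound, partition $X$ into the finitely many Borel sets $E_{z,w}$ on which the pair $(z,w)$ realizes both the outer $\min$ and the inner $\max$; on $E_{z,w}$ one has $h\circ f-\langle w,f\rangle\equiv c_{z,w}$, so Lemma \ref{lem:loc-cont} gives $\Gamma_p\langle h\circ f-\langle w,f\rangle\rangle(E_{z,w})=0$, whence by sublinearity and the affine case $\Gamma_p\langle h\circ f\rangle(A\cap E_{z,w})\le\bigl(1\vee n^{(p-2)/2}\bigr)|w|^p\sum_i\Gamma_p\langle f_i\rangle(A\cap E_{z,w})$; summing over $(z,w)$ gives $\Gamma_p\langle h\circ f\rangle(A)\le\bigl(1\vee n^{(p-2)/2}\bigr)\bigl(\max_{z,w}|w|\bigr)^p\sum_i\Gamma_p\langle f_i\rangle(A)$.

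\emph{General $g$ and conclusion.} By homogeneity we may assume $L:=\LIP[g]>0$. Set $K:=\overline{f(X)}$, which is compact since each $f_i$ is bounded, and note that $g\circ f$ is continuous, bounded, and vanishes off the compact set $\bigcup_i\supp f_i$, hence $g\circ f\in L^p(X)$. From $g(y)=\inf_z\bigl(g(z)+L|y-z|\bigr)$ and $L|u|=\max_{|w|=L}\langle w,u\rangle$, replacing the infimum by one over a finite set $F_m$ containing $0$ and forming an $\eta_m$-net of $K$, the supremum by one over a finite $\eta_m$-net $W_m$ of the sphere of radius $L$, and subtracting the value at $0$, we obtain $g_m=\min_{z\in F_m}\max_{w\in W_m}a^m_{z,w}$ with $g_m(0)=0$, with every affine piece of gradient norm $L$, and with $g_m\to g$ uniformly on $K$. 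By the previous step $g_m\circ f\in\mathcal{F}_p$ and $\Gamma_p\langle g_m\circ f\rangle(A)\le\bigl(1\vee n^{(p-2)/2}\bigr)L^p\sum_i\Gamma_p\langle f_i\rangle(A)$, so $\sup_m\mathcal{E}_p(g_m\circ f)<\infty$; since $g_m\circ f\to g\circ f$ in $L^p(X)$ (uniform convergence on the fixed compact $\bigcup_i\supp f_i$), weak lower semicontinuity \eqref{e:def-lsc} gives $g\circ f\in\mathcal{F}_p$ and the bound \eqref{e:c-rule1}. Finally \eqref{e:c-rule2} follows from \eqref{e:c-rule1} by the localization argument of Lemma \ref{lem:contlocal}: for $x_0\in X$ and $\epsilon>0$ pick $\delta>0$ with $f(B(x_0,\delta))\subset B(f(x_0),\epsilon)$, extend $g|_{B(f(x_0),\epsilon)}$ to $g_{x_0,\epsilon}\in\Lip(\mathbb{R}^n)$ with the same Lipschitz constant and $g_{x_0,\epsilon}(0)=0$; then $g\circ f$ and $g_{x_0,\epsilon}\circ f$ differ by a constant on $B(x_0,\delta)$, so \eqref{e:loc-subl} and \eqref{e:c-rule1} give $\Gamma_p\langle g\circ f\rangle|_{B(x_0,\delta)}\le\bigl(1\vee n^{(p-2)/2}\bigr)\LIP[g|_{B(f(x_0),\epsilon)}]^p\sum_i\Gamma_p\langle f_i\rangle|_{B(x_0,\delta)}$, and covering $X$ by such balls and letting $\epsilon\to0$ (dominated convergence, with the integrable constant dominating function $\LIP[g]^p$) yields \eqref{e:c-rule2}.

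\emph{Main obstacle.} The crux is the first part of the $\min$--$\max$ step, namely showing $h\circ f\in\mathcal{F}_p$ even though $\one$ need not belong to $\mathcal{F}_p$: this forces one to track additive constants through the lattice polynomial, and is exactly where the hypothesis $g(0)=0$ is essential. The accompanying design choice is to approximate $g$ by finite $\min$--$\max$ combinations of affine functions (rather than, say, $C^1$ mollifications, where a nonzero error term would appear), so that locality applies exactly on the level-cells $E_{z,w}$ and no dimension-dependent factor is introduced beyond the sharp constant $1\vee n^{(p-2)/2}$.
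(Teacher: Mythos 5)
Your proof is correct and follows the same structure as the paper's argument: establish the affine case with the sharp $\ell_p/\ell_2$ constant; extend to finite $\min$--$\max$ combinations of affine maps by carefully tracking additive constants (so as to stay inside $\mathcal{F}_p$ even though $\one$ need not belong to it) and invoking locality on the level cells to reduce to the affine bound; approximate a general Lipschitz $g$ by piecewise-linear $\LIP[g]$-Lipschitz functions and pass to the limit via weak lower semicontinuity; and finally localize via McShane's extension theorem to obtain \eqref{e:c-rule2}, exactly as in Lemma \ref{lem:contlocal}. The only genuine variations are in the bookkeeping: you derive the constant via Cauchy--Schwarz followed by the power inequality $\bigl(\sum_i t_i^{2/p}\bigr)^{p/2}\le(1\vee n^{(p-2)/2})\sum_i t_i$, whereas the paper applies H\"older with exponent $p/(p-1)$ together with monotonicity of $\ell_q$-norms (same outcome); and your piecewise-linear approximants discretize the McShane infimal convolution $g(y)=\inf_z\bigl(g(z)+L|y-z|\bigr)$ over a finite net of $\overline{f(X)}$ together with a finite net of the sphere of radius $L$, whereas the paper uses the max--min construction built from the auxiliary functions $h_k$ adapted from \cite{resnetapprox}. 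Both constructions yield $\LIP[g]$-Lipschitz piecewise-linear functions converging pointwise to $g$, so they interchange identically with the lower-semicontinuity step.
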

	\begin{proof}
		First, consider a linear function $g(x)=\sum_{i=1}^n \lambda_ix_i$, and note that $\|\blambda\|_2=\LIP[g]$ for $\blambda=(\lambda_i)_{i=1}^n$. In the following calculation, $\|\blambda\|_p$ is the $\ell_p$-norm on $\mathbb{R}^n$.
		By  H\"older's inequality and monotonicty of $\ell_p$-norms, we have
		\begin{equation} \label{e:crule1}
			\Gamma_p\langle \sum_{i=1}^n \lambda_i x_i \rangle(A) \le \|\blambda\|_{\frac{p}{p-1}}^p \sum_{i=1}^k \Gamma_p\langle u_i \rangle(A) \leq \left(1 \vee n^{(p-2)/2}\right) \|\lambda\|_{2}^p \sum_{i=1}^k \Gamma_p\langle u_i \rangle(A).
		\end{equation}

		Thus, \eqref{e:c-rule1} is true for linear functions. Second, we extend the claim \eqref{e:c-rule1} to piecewise linear functions.  Such a function can be expressed using minima and maxima of functions, and thus we consider these first.
		If $u,v\in \mathcal{F}_p, a \in [0,\infty)$, then $\min(u,v+a)\in \mathcal{F}_p$. Indeed, $\min(u,v+a)=\frac{u+v}{2}-\frac{(|u-v-a|-a)}{2}$ and thus $\min(u,v+a)\in \mathcal{F}_p$ by applying the chain rule to the $1$-Lipschitz function $g(t)= \abs{t-a}-a$ and sublinearity. Furthermore 
		\begin{align*}
			\Gamma_p\langle \min(u,v+a)\rangle(A)^\frac{1}{p} &\stackrel{ \eqref{e:def-sublinear},\eqref{e:def-chain}}{\le} \Gamma_p\langle \frac{u+v}{2} \rangle(A)^\frac{1}{p} + \Gamma_p\langle \frac{|u-v|}{2} \rangle(A)^\frac{1}{p}\\
			& \stackrel{\eqref{e:def-sublinear}}{\le} \Gamma_p\langle u \rangle(A)^\frac{1}{p} + \Gamma_p\langle v \rangle(A)^\frac{1}{p}.
		\end{align*}
		By induction, for any $k \in \mathbb{N}$, for all $u_1,\dots, u_k\in \mathcal{F}_p, a_1,a_2,\ldots,a_k \in [0,\infty)$ with $a_1=0$, we have
		\begin{equation} \label{e:cr0}
			\min_{1 \le i \le k}(u_i+a_i)\in \mathcal{F}_p.
		\end{equation}             
		
		Next, let $g$ be a piecewise linear function with $g(0)=0$, where a function $g$ is \emph{piecewise linear}, if there is a partition to closed sets $(A_i)_{i=1}^k$ s.t. $g|_{A_i}$ are affine linear $a_j+g_j$,  where $(g_j)_{j=1}^k$  are linear functions amd $a_i\in \mathbb{R}$. Without loss of generality assume that each $A_i$ has non-empty interior and $0\in A_1$. Any such function can be expressed as $g=\max_{i=1,\dots, N}(\min_{j\in S_i} a_j +g_j),$ where $S_i\subset \{1,\dots,k\}$ is some collection of subsets for $i=1,\dots, N$; for a proof of this fact see e.g. \cite[Theorem 4.1]{maxmin}. Let $\widetilde{g_j}=\min_{j\in S_i} a_j +g_j$. We can assume by re-indexing if necessary that $\widetilde{g_1}(0)=g(0)=0$ and $a_j\geq 0$ for all $j\in S_1$, with at least one $a_j=0$. Then, by the previous claim, we get $\widetilde{g_j}\circ f-\widetilde{g_j}(0) \in \cF_p$ for all $j=1,\dots, N$. Next
		\[
		g = \max_{i=1,\dots, N} b_i+\widetilde{g_j}-\widetilde{g_j}(0),
		\]
		with $b_i=\widetilde{g_j}(0)\leq 0$ and $b_1=0$. By negating the function $u_i$ in \eqref{e:cr0}, we get $g\circ f \in \cF_p$.
		
		Thus, by the previous claims and induction  $g(f_1,\dots, f_n)\in \cF_p$ for every piecewise linear function. Further, for every $A_i$, we have that the difference $g\circ f|_{f^{-1}(A_i)}-g_j \circ f|_{f^{-1}(A_j)}$ is a constant. The set $f^{-1}(A_j)$ is a closed set, and thus by locality in Lemma \ref{lem:loc-cont}, the argument in Remark \ref{r:local-subadditive}-(i), and \eqref{e:crule1}, we have  
		\[
		\Gamma_p\langle g\circ f\rangle|_{A_i}=\Gamma_p\langle g_i\circ f\rangle|_{f^{-1}(A_i)}\leq  \left(1 \vee n^{(p-2)/2}\right) \LIP[g_i]^p \sum_{i=1}^n \Gamma_p\langle f_i\rangle|_{f^{-1}(A_i)}.
		\]
		Since $A_i$ have non-empty interior, we have $\LIP[g_i]\leq \LIP[g]$. Thus, the claim \eqref{e:c-rule1} follows now for all piecewise linear functions $g$ since the sets $f^{-1}(A_i)$ cover the space.
		
		Next, we approximate every Lipschitz function $g$ by a sequence of piecewise linear Lipschitz functions $g_i$ with $\LIP[g_i]\leq \LIP[g]$. This can be shown in many ways, and we adapt an argument from \cite[Proposition 2.2 and Corollary 2.4]{resnetapprox}, which uses Lipschitz neural networks. Let $S=\{s_i:i\in \mathbb{N}\}$ be an enumeration of a countable dense set  in $\mathbb{R}^n$, and assume $s_1=0$. Let $S_k=\{s_1,\dots, s_k\}$ and for $k\in \mathbb{N}$
		\[
		h_k(x)=\max\left\{\frac{1}{\|s_i-s_j\|}|\langle s_i-s_j, x\rangle| : i\neq j, i,j=1,\dots, k\right\}.
		\]
		The functions $h_k$ are piecewise linear and $1$-Lipschitz and $h_k(s_i-s_j)=\|s_i-s_j\|$ for $i,j=1,\dots, k$. Let 
		\[
		g_k(x)=\min\{ g(s_j)+\LIP[g]h_k(x-s_j) : j=1,\dots, n\}.
		\]
		It is direct to see that each $g_k$ is $\LIP[g]$-Lipschitz, and piecewise linear. Further $g_k|_{S_k}=g|_{S_k}$, and $g_k(0)=0$ follow from $h_k(s_i-s_j)=\|s_i-s_j\|$ for $i,j=1,\dots, k$. From these and density of $S$ it follows that $g_k(x)\to g(x)$ for all $x\in \mathbb{R}^n$. Now,  $|g_k \circ f(x)| \leq \LIP[g_k] \sum_{i=1}^n |f_i(x)|$, and thus $g_k \circ f\in L^p(X)$ and by dominated convergence $g_k\circ f \to g\circ f$ in $L^p(X)$. Further, by \eqref{e:c-rule1} for piecewise linear functions, we obtain 
		\[
		\Gamma_p\langle g_k \circ f\rangle \leq \left(1 \vee n^{(p-2)/2}\right) \LIP[g]^p \sum_{i=1}^n d\Gamma_p\langle f_i\rangle.
		\]
		Thus, the claim follows by sending $k\to \infty$ and the weak lower-semicontinuity  \eqref{e:def-lsc}.

		The proof of \eqref{e:c-rule2} from \eqref{e:c-rule1} follows from locality and McSchane's theorem by using the same argument as in the proof of Lemma \ref{lem:contlocal}.  
	\end{proof}
	\begin{remark}
		In the case of strongly local Dirichlet forms ($p=2$) if we assume in addition that $g \in \mathcal{C}^1(\mathbb{R}^n) \cap \Lip(\mathbb{R}^n)$, the estimate \eqref{e:c-rule1} is an easy consequence of chain rule  for energy measures  with $g(0)=0$ \cite[Corollary I.6.1.3]{BH91}, \cite[Theorem 3.2.2]{FOT}. We sketch this for the case of Dirichlet structures as given in Definition \ref{d:dirstructure}. The chain rule \cite[Corollary I.6.1.3]{BH91} implies that  for any $f \in \mathcal{F}^n$ and $G \in C^1(\mathbb{R}^n) \cap\Lip(\mathbb{R}^n)$ as above with $G(0)=0$ then the associated carr\'e du champ can be expressed as
		\[
		\gamma(G(f),G(f)):= (\nabla G \circ f)^T \gamma(f) (\nabla G \circ f), \quad \mbox{$\mu$-a.e.,  }
		\]
		where $\gamma(f)$ denotes the carr\'e du champ matrix and $\nabla G$ denotes the gradient of $G$ as a column vector. It is easy to verify that (see the proof of Lemma \ref{l:invertibly-independence}) $\gamma(f)$ is a symmetric non-negative definite matrix almost everywhere. So the desired conclusion \eqref{e:c-rule2} follows from the fact that $\Lip_a[G] = \abs{\nabla G}$ for any $G \in C^1(\mathbb{R}^n) \cap\Lip(\mathbb{R}^n)$ along with the fact that maximal eigenvalue of a symmetric non-negative definite matrix is less than or equal to its trace. However, for Lipschitz functions that are not necessarily $C^1$, the estimate \eqref{e:c-rule2} in Lemma \ref{lem:multidimcontraction} seems new to the best of our knowledge even for Dirichlet forms. The estimate \eqref{e:c-rule1} for the case of Dirichlet forms follows from the same argument in \cite[Proof of Proposition I.3.3.1]{BH91}; see also \cite[Exercise I.3.5]{BH91}. Very recently, a  generalization of such contraction properties have been comprehensively studied  \cite[Theorem A.2]{Kajino-Shimizu}. 
	\end{remark}

	\subsection{Independence}
	In this subsection, we introduce the notion of \emph{$p$-independence}, which may be viewed as an infinitesimal analogue of the linear independence of energy measures.  This concept serves as a natural generalization of the non-degeneracy condition $\det(\gamma(f)) \neq 0$ appearing in the formulation of the energy image density property as we   verify in Lemmas \ref{l:invertibly-independence} and \ref{l:invertibly-independence-dirstructure}.

	We recall the notion of an lattice infimum which is also sometimes referred to as essential infimum in the literature. 
	\begin{definition} \label{d:lattice-inf}
		Let $\Lambda$ be some $\sigma$-finite Borel measure on $(X,d)$ and let $\mathcal{F}$ be any collection of measurable functions. Then
		\[
		G= \bigwedge_{f\in \mathcal{F}} f 
		\]
		is a   measurable  function   that  satisfies the following properties. 
		\begin{enumerate}
			\item ($G$ is a lower bound for $\mathcal{F}$) For all $f \in \mathcal{F}$, we have $G(x)\le f(x)$ for $\Lambda$-a.e. $x\in X$, and
			\item ($G$ is maximal among lower bounds) For any other function $\widetilde{G}(x)$ which satisfies the condition in (1),  we have $\widetilde{G}(x)\le G(x)$ for $\Lambda$-a.e. $x\in X$.
		\end{enumerate}
	\end{definition}
	The lattice infimum depends on the reference measure, and we write
	\[
	\bigwedge^\Lambda_{f\in \mathcal{F}} f,  
	\]
	to emphasize the measure $\Lambda$.  This is  crucial for us, since we will be considering spaces equipped with multiple, mutually singular measures. Moreover, it is immediate from the second property in the definition that the lattice infimum $\bigwedge^\Lambda_{f\in \mathcal{F}} f$ is uniquely defined up to $\Lambda$-null sets.

	With these, we can define a notion of independence. We denote the unit sphere in $\mathbb{R}^n$ as $\mathbb{S}^{n-1}:= \{ \blambda \in \mathbb{R}^n \vert \abs{\blambda} =1\} \subset \mathbb{R}^{n}$.
	
	\begin{definition} \label{d:p-independent}
		A map $\bphi=(\phi_1,\dots, \phi_n):X\to \mathbb{R}$ with $ \phi_i\in \mathcal{F}_p$ for all $i=1,\dots,n$ is called $p$-independent in a measurable set $A\subset X$, if 
		\[
		\bigwedge^{\Lambda_{\bphi}}_{\blambda \in \mathbb{S}^{n-1}} \frac{d\Gamma_p\langle \sum_{i=1}^n \lambda_i \phi_i\rangle}{d\Lambda_{\bphi}}>0  \ \ \ \ \ \Lambda_{\bphi}\text{-a.e. in } A
		\]
		where $\blambda = (\lambda_1,\dots,\lambda_n)\in  \mathbb{S}^{n-1} \subset \mathbb{R}^n$ and
		$\Lambda_{\bphi} =\sum_{i=1}^n \Gamma_p\langle\phi_i\rangle$.
	\end{definition}
	In this particular case, it is easier to describe the lattice infimum. Let $S$ be any countable dense subset of $\mathbb{S}^{n-1}$. Then,
	\[
	\bigwedge^{\Lambda_{\bphi}}_{|\blambda|=1} \frac{d\Gamma_p\langle \sum_{i=1}^n \lambda_i \phi_i\rangle}{d\Lambda_{\bphi}}=\inf_{\blambda \in S} \frac{d\Gamma_p\langle \sum_{i=1}^n \lambda_i \phi_i\rangle}{d\Lambda_{\bphi}}.
	\]
	This can be seen from the fact that sublinearity yields that 	for all $\blambda=(\lambda_1,\dots, \lambda_n), \blambda'=(\lambda_1,\dots,\lambda_n')$,  we have
	\begin{align*}
		\MoveEqLeft{\abs{\left(\frac{d\Gamma_p\langle \sum_{i=1}^n \lambda_i \phi_i\rangle}{d\Lambda_{\bphi}}(x)\right)^{1/p}-\left(\frac{d\Gamma_p\langle \sum_{i=1}^n \lambda'_i \phi_i\rangle}{d\Lambda_{\bphi}}(x)\right)^{1/p}}}\\ &\le \sum_{i=1}^n \abs{\lambda_i-\lambda_i'}\left(\frac{d\Gamma_p\langle  \phi_i\rangle}{d\Lambda_{\bphi}}(x)\right)^{1/p} \quad \mbox{for $\Lambda_{\bphi}$-a.e. $x\in X$.}
	\end{align*}
	
	This implies that $\blambda \to \frac{d\Gamma_p\langle \sum_{i=1}^n \lambda_i \phi_i\rangle}{d\Lambda_{\bphi}}(x)$ is Lipschitz continuous on $S$ for $\Lambda_{\bphi}$-a.e. $x\in X$. This continuity allows one to verify the properties of an lattice infimum.

	Definition \ref{d:p-independent} has appeared in various guises in many  works: \cite{Ch, bate2024fragment,teriseb,murugananalytic,hinomartingaledim,hinoenergymeasures}. In different forms (via essentially a dual construction involving Weaver derivations), it appears in \cite{gigli, schioppaalberti,schioppaweaver}. Some similar ideas can also be seen in the early work of \cite{weaver}. In the case of Dirichlet forms, the notion of $p$-independence is essentially equivalent to the invertibility of the carr\'e du champ matrix (see Lemmas \ref{l:invertibly-independence} and \ref{l:invertibly-independence-dirstructure}).
	
	We record an elementary lemma for future use.
	\begin{lemma}\label{lem:essinf}
		A map $\bphi=(\phi_1,\dots, \phi_n):X\to \mathbb{R}$ with $ \phi_i\in \mathcal{F}_p$ for all $i=1,\dots,n$ is $p$-independent in $A\subset X$ if and only if there exist measurable sets $A_n$ such that
		\begin{enumerate}
			\item $\Lambda_{\bphi}(A\setminus \bigcup_{n=1}^\infty A_i)=0$, 
			\item For each $i \in \mathbb{N}$, we have
			\[
			\bigwedge^{\one_{A_i}\cdot\Lambda_{\bphi}}_{\blambda \in \mathbb{S}^{n-1}}	\frac{d\Gamma_p\langle \sum_{j=1}^n \lambda_j \phi_j\rangle}{d\Lambda_{\bphi}}\geq \frac{1}{i},
			\]
			where $\one_{A_i} \cdot \Lambda_{\bphi}$ is the measure given by the restriction of $\Lambda_{\bphi}$ to $A_i$.
		\end{enumerate}
	\end{lemma}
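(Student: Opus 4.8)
The plan is to reduce the statement to the explicit description of the lattice infimum recorded immediately before the lemma. Fix once and for all a countable dense subset $S \subset \mathbb{S}^{n-1}$ and set
\[
G := \inf_{\blambda \in S} \frac{d\Gamma_p\langle \sum_{i=1}^n \lambda_i \phi_i\rangle}{d\Lambda_{\bphi}},
\]
a Borel function defined $\Lambda_{\bphi}$-a.e. As explained before the statement, $G$ realizes the lattice infimum $\bigwedge^{\Lambda_{\bphi}}_{\blambda \in \mathbb{S}^{n-1}} \tfrac{d\Gamma_p\langle \sum_i \lambda_i \phi_i\rangle}{d\Lambda_{\bphi}}$. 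The observation I would isolate first is that the \emph{same} representative $G$ also realizes the lattice infimum with respect to any restricted measure $\one_B \cdot \Lambda_{\bphi}$: a countable pointwise infimum is automatically a lower bound for the family in the sense of Definition~\ref{d:lattice-inf} with respect to every reference measure, and the Lipschitz-in-$\blambda$ estimate recorded before the statement shows that for $\Lambda_{\bphi}$-a.e.\ (hence $\one_B\cdot\Lambda_{\bphi}$-a.e.)\ $x$ one has $G(x) = \inf_{\blambda \in \mathbb{S}^{n-1}} \tfrac{d\Gamma_p\langle \sum_i \lambda_i \phi_i\rangle}{d\Lambda_{\bphi}}(x)$ as a genuine pointwise infimum, which forces maximality among lower bounds. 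Consequently $p$-independence of $\bphi$ in $A$ is exactly the assertion that $G > 0$ $\Lambda_{\bphi}$-a.e.\ on $A$, and condition~(2) for a set $A_i$ is exactly the assertion that $G \ge 1/i$ $\Lambda_{\bphi}$-a.e.\ on $A_i$.

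Granting this reformulation, both implications are routine bookkeeping with countable unions and null sets. For the forward direction, assume $\bphi$ is $p$-independent in $A$, i.e.\ $G > 0$ $\Lambda_{\bphi}$-a.e.\ on $A$, and put $A_i := A \cap \{G \ge 1/i\}$. Then $A \setminus \bigcup_{i=1}^\infty A_i \subset A \cap \{G = 0\}$ has $\Lambda_{\bphi}$-measure zero, giving~(1), while $G \ge 1/i$ on $A_i$ by construction gives~(2). For the converse, suppose sets $A_i$ satisfying (1) and (2) are given. By (2) and the reformulation above, $G \ge 1/i > 0$ $\Lambda_{\bphi}$-a.e.\ on each $A_i$; hence $G > 0$ $\Lambda_{\bphi}$-a.e.\ on $\bigcup_{i=1}^\infty A_i$, and since by (1) the set $A$ is contained in $\bigcup_{i=1}^\infty A_i$ up to a $\Lambda_{\bphi}$-null set, we conclude $G > 0$ $\Lambda_{\bphi}$-a.e.\ on $A$, i.e.\ $\bphi$ is $p$-independent in $A$.

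The only step that requires genuine (if mild) care is the one isolated in the first paragraph: that the countable-infimum representative $G$ continues to serve as the lattice infimum after the reference measure is restricted from $\Lambda_{\bphi}$ to $\one_{A_i}\cdot\Lambda_{\bphi}$. I expect this to be the main — indeed essentially the only — obstacle, and I would settle it once using the Lipschitz dependence on $\blambda$ established before the statement, rather than re-deriving it inside each implication; everything else is routine.
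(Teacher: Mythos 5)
Your proof is correct and takes essentially the same route as the paper's: both set $G$ to be the lattice infimum realized by the countable pointwise infimum over a dense $S \subset \mathbb{S}^{n-1}$, take $A_i$ to be the sublevel sets $\{G \ge 1/i\}$ (the paper uses $G^{-1}(1/i,\infty)$), and read off both implications from the definition. The paper states this very tersely, whereas you helpfully isolate and justify the one genuinely subtle point — that the same representative $G$ remains the lattice infimum after restricting the reference measure — which the paper leaves implicit.
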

	\begin{proof}
		This is quite direct from the definition. Indeed, let $G:X \to \mathbb{R}$ be the lattice infimum in the definition of independence and let $A_i=G^{-1}(1/i,\infty)$. Then, the lemma follows by the definition of an lattice infimum.
	\end{proof}

	\subsection{Capacities and quasicontinuous functions} \label{ss:cap-qcts}
	It is well known that, in the theory of Dirichlet forms and Sobolev spaces, every function admits a quasicontinuous representative possessing enhanced continuity properties \cite[Theorem 2.1.2]{FOT}, \cite[Theorem 4.19]{EvGa}.
	We show that an analogous statement holds for $p$-Dirichlet spaces and use these quasicontinuous representatives to formulate the energy image density property in this framework (Theorem~\ref{thm:eid-dirspace}).

	For a local $p$-Dirichlet space $(X,d,\mu,\mathcal{E}_p, \mathcal{F}_p, \Gamma_p)$, and a subset $A \subset X$, we define the \emph{capacity} of $A$ as
	\[
	\cCap(A)= \inf\{ \norm{f}_{L^p}^p + \mathcal{E}_p(f) : f \in \mathcal{F}_p, f=1 \text{ in a neighborhood of } A\},
	\]
	with the usual convention that $\inf \emptyset = \infty$.
	
	It is immediate from the definition that $\cCap(A) \le \cCap(B)$ whenever $A \subset B$. We also need the following countable sub-additivity of capacities. For Dirichlet forms, stronger countable sub-additivity properties are known but the following version is sufficient for our purposes \cite[Theorem 2.1.1]{FOT}.
	\begin{lemma} \label{l:cap}
		Let $(X,d,\mu, \mathcal{E}_p,\mathcal{F}_p, \Gamma_p)$ be a $p$-Dirichlet space, where $p \in (1,\infty)$. Then  for any sequence of subsets $(A_n)_{n \in \mathbb{N}}$ of $X$, we have 
		\begin{equation} \label{e:subad}
			\cCap \left(\bigcup_{n=1}^\infty A_n \right)^{1/p}\le 2^{(p-1)/p} \sum_{n=1}^\infty \cCap(A_n)^{1/p}. 
		\end{equation}
	\end{lemma}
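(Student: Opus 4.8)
The plan is to produce a single admissible competitor for $\bigcup_{n} A_n$ by summing admissible competitors for the individual sets $A_n$, and then to control its norm in $\mathcal{F}_p$ via the triangle inequality for $\norm{\cdot}_{\mathcal{F}_p}$. First dispose of the trivial cases: if $\cCap(A_n)=\infty$ for some $n$, or if $\sum_{n}\cCap(A_n)^{1/p}=\infty$, the right-hand side of \eqref{e:subad} is infinite and there is nothing to prove; so assume each $\cCap(A_n)$ is finite and $\sum_n\cCap(A_n)^{1/p}<\infty$. Fix $\epsilon>0$ and choose, for each $n$, a function $f_n\in\mathcal{F}_p$ with $f_n=1$ on some open set $U_n\supseteq A_n$ and $\norm{f_n}_{L^p}^p+\mathcal{E}_p(f_n)\le\cCap(A_n)+\epsilon 2^{-n}$. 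Applying the chain rule \eqref{e:def-chain} to the $1$-Lipschitz function $t\mapsto 0\vee(t\wedge1)$ (which vanishes at $0$), and noting that this operation increases neither $\mathcal{E}_p$ nor the $L^p$-norm (since $\abs{0\vee(t\wedge1)}\le\abs{t}$) and preserves the property of being $\equiv 1$ on $U_n$, we may and do assume $0\le f_n\le1$. Then $\norm{f_n}_{\mathcal{F}_p}\le(\cCap(A_n)+\epsilon 2^{-n})^{1/p}$, so $\sum_n\norm{f_n}_{\mathcal{F}_p}<\infty$.

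Since $(\mathcal{F}_p,\norm{\cdot}_{\mathcal{F}_p})$ is a Banach space, the absolutely convergent series $\sum_n f_n$ converges in $\mathcal{F}_p$ to some $g$ with $\norm{g}_{\mathcal{F}_p}\le\sum_n\norm{f_n}_{\mathcal{F}_p}$ (the triangle inequality for $\norm{\cdot}_{\mathcal{F}_p}$ is part of the Banach-space axiom, and also follows from sublinearity \eqref{e:def-sublinear} with $A=X$ together with Minkowski's inequality in $\ell^p(\mathbb{R}^2)$). Because the $f_n$ are nonnegative, the pointwise sum $\widetilde g:=\sum_n f_n\in[0,\infty]$ is measurable and agrees with $g$ $\mu$-a.e.\ (the partial sums increase to $\widetilde g$ pointwise and converge to $g$ in $L^p$, hence along a subsequence $\mu$-a.e.). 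Put $G:=0\vee(g\wedge1)\in\mathcal{F}_p$. Using the representative $0\vee(\widetilde g\wedge1)$ of $G$, we have $G=1$ on the open set $V:=\bigcup_n U_n\supseteq\bigcup_n A_n$, since $\widetilde g(x)\ge f_m(x)=1$ whenever $x\in U_m$; hence $G$ is admissible for $\bigcup_n A_n$. As truncation increases neither $\mathcal{E}_p$ (by \eqref{e:def-chain}) nor the $L^p$-norm,
\[
\cCap\Bigl(\bigcup_{n=1}^\infty A_n\Bigr)\le\norm{G}_{L^p}^p+\mathcal{E}_p(G)\le\norm{g}_{\mathcal{F}_p}^p\le\Bigl(\sum_{n=1}^\infty\norm{f_n}_{\mathcal{F}_p}\Bigr)^p\le\Bigl(\sum_{n=1}^\infty\bigl(\cCap(A_n)+\epsilon 2^{-n}\bigr)^{1/p}\Bigr)^p.
\]
Taking $p$-th roots, using $(a+b)^{1/p}\le a^{1/p}+b^{1/p}$, and letting $\epsilon\downarrow0$ gives $\cCap(\bigcup_n A_n)^{1/p}\le\sum_n\cCap(A_n)^{1/p}$, which, since $2^{(p-1)/p}\ge1$, is even stronger than \eqref{e:subad}.

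The argument is mostly bookkeeping; the points needing care are the reduction to $0\le f_n\le1$ (required so that $\sum_n f_n\ge1$ near each $A_n$), the passage to the infinite sum via completeness of $\mathcal{F}_p$, and distinguishing the $L^p$-class $g$ from its pointwise representative $\widetilde g$ when checking the neighborhood condition. An alternative competitor is $\sup_n f_n$, which, being $\le1$, needs no final truncation; to see it lies in $\mathcal{F}_p$ one uses the weak lower semicontinuity axiom \eqref{e:def-lsc} and monotone convergence in $L^p$, and the norm bound then follows from $\norm{\sup_{n\le N}f_n}_{L^p}^p\le\sum_n\norm{f_n}_{L^p}^p$, the iterated max-subadditivity $\Gamma_p\langle\max_{n\le N}f_n\rangle(X)^{1/p}\le\sum_n\mathcal{E}_p(f_n)^{1/p}$ established in the proof of Lemma \ref{lem:multidimcontraction}, and the elementary inequality $\sum_n u_n+(\sum_n v_n^{1/p})^p\le(\sum_n(u_n+v_n)^{1/p})^p$, itself a consequence of the coordinatewise monotonicity of $x\mapsto(\sum_n x_n)^p-\sum_n x_n^p$ on $[0,\infty)^\mathbb{N}$.
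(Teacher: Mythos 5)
Your proof is correct, and it takes a genuinely different route from the one in the paper. The paper's proof takes $f=\max_{k}f_k$ as the competitor (with $f_k\ge 0$ via the chain rule applied to $t\mapsto |t|$), establishes $f\in\mathcal F_p$ by showing $\max_{1\le k\le N}f_k\to f$ in $L^p$ with bounded energies and invoking the weak lower semicontinuity axiom \eqref{e:def-lsc}, and controls $\mathcal E_p(f)^{1/p}$ via the iterated max-subadditivity used in \eqref{e:cr0}. Because $\max$ is nonlinear, the $L^p$-part and the energy-part of $\norm{\cdot}_{\mathcal F_p}$ have to be bounded separately and then recombined, which is exactly where the constant $2^{(p-1)/p}$ enters. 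You instead take the sum $\sum_n f_n$ (after normalizing $0\le f_n\le 1$, so that nonnegativity forces the sum to be $\ge 1$ near each $A_n$), exploit Banach-space completeness to form the infinite sum directly, and truncate at the end; since $\sum$ is linear, the full triangle inequality for $\norm{\cdot}_{\mathcal F_p}$ applies in one shot. This avoids both the lower semicontinuity axiom and the max-subadditivity lemma and yields the strictly sharper bound $\cCap(\bigcup_n A_n)^{1/p}\le\sum_n\cCap(A_n)^{1/p}$, i.e.\ the constant $1$ rather than $2^{(p-1)/p}$. The one step worth flagging for the record: the conclusion $\norm{G}_{L^p}^p+\mathcal E_p(G)\le\norm{g}_{\mathcal F_p}^p$ uses both $|G|\le|g|$ pointwise and \eqref{e:def-chain} with a $1$-Lipschitz contraction fixing $0$, and it is essential that the admissibility of $G$ for $\bigcup_n A_n$ is checked through the pointwise-a.e.\ representative $\widetilde g$, as you do; this matches the paper's reading of ``$f=1$ in a neighborhood'' as ``$f=1$ $\mu$-a.e.\ on an open set containing $A$.''
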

	\begin{proof}
		Without loss of generality, we may assume that $\sum_{n=1}^\infty \cCap(A_n)^{1/p} <\infty$.
		Let $\epsilon>0$ be arbitrary. For each $k \in \mathbb{N}$, let $f_k \in  \mathcal{F}_p$ be such that $f_k \equiv 1$ $\mu$-a.e.~on $O_k$, where $O_k$ is an open set containing $A_k$ and 
		\begin{equation} \label{e:subad1}
			\left(\norm{f_k}_{L^p}^p +	\mathcal{E}_p(f_k)^p\right)^{1/p}  \le  \cCap(A_k)^{1/p} + 2^{-k}\epsilon.
		\end{equation}
		By replacing $f_k$ with $\abs{f_k}$ and using chain rule \eqref{e:def-chain}, we may assume that $f_k \ge 0$ for all $k \in \mathbb{N}$.
		By \eqref{e:cr0}, for any $n \in \mathbb{N}$, we have $\max_{1 \le i \le n} f_i \in \mathcal{F}_p$ and 
		\begin{equation} \label{e:subad2}
			\mathcal{E}_p(\max_{1 \le i \le n} f_i )^{1/p} \le \sum_{i=1}^n \mathcal{E}_p(f_i)^{1/p} \le    \sum_{k=1}^\infty \left(\cCap(A_n)^{1/p} + 2^{-k}\epsilon\right) < \infty. 
		\end{equation}
		Since 
		\begin{equation} \label{e:subad3}
			\norm{ \max_{k \in \mathbb{N}} f_k}_{L^p} \le \norm{\sum_{k \in \mathbb{N}} f_k}_{L^p} \le
			\sum_{k=1}^\infty \norm{f_k}_{L^p} \le \sum_{k=1}^\infty \left(\cCap(A_n)^{1/p} + 2^{-k}\epsilon\right) < \infty,
		\end{equation}
		$\max_{k \in \mathbb{N}} f_k \in L^p$. We note that $\max_{1 \le k \le n} f_k$ converges in $L^p$ to $f:=\max_{k \in \mathbb{N}} f_k$ since
		\[
		\norm{f-\max_{1 \le k \le n} f_k}_{L^p} \le \norm{\max_{k \ge n+1} f_k}_{L^p} \le \sum_{k=n+1}^\infty \norm{f_k}_{L^p} \xrightarrow{n \to \infty}0.
		\]
		Thus by  \eqref{e:subad2}  and \eqref{e:def-lsc}, we deduce that $f=\max_{k \in \mathbb{N}} f_k  \in \mathcal{F}_p$ and 
		\begin{equation} \label{e:subad4}
			\mathcal{E}_p(f)^{1/p} \stackrel{\eqref{e:def-lsc}, \eqref{e:subad2}}{\le} \liminf_{n \to \infty} \mathcal{E}_p(\max_{1 \le k \le n} f_k)^{1/p} \stackrel{\eqref{e:subad2}}{\le}  \sum_{k=1}^\infty 	\mathcal{E}_p(f_k)^{1/p}.
		\end{equation}
		Note that $f=\max_{k \in \mathbb{N}} f_k  \in \mathcal{F}_p$ satisfies $f=1$ $\mu$-a.e.~on the neighborhood $\cup_{k \in \mathbb{N}} O_k$ of  $\cup_{k \in \mathbb{N}} A_k$ and hence
		\begin{align*}
			\cCap \left(\bigcup_{k \in \mathbb{N}}A_k\right)^{1/p} &\le 	\left(\norm{f}_{L^p}^p +	\mathcal{E}_p(f)\right)^{1/p} \le \norm{f}_{L^p} +	\mathcal{E}_p(f)^{1/p}\\
			&\le \sum_{k=1}^\infty \left( \norm{f_k}_{L^p} +	\mathcal{E}_p(f_k)^{1/p} \right) \quad \mbox{(by \eqref{e:subad3}, \eqref{e:subad4})} \\
			&\le 2^{(p-1)/p} \sum_{k=1}^\infty \left( \norm{f_k}_{L^p}^p +	\mathcal{E}_p(f_k)  \right)^{1/p} \\
			&\le 2^{(p-1)/p} \sum_{k=1}^\infty \left(\cCap(A_k)^{1/p} + 2^{-k}\epsilon\right).
		\end{align*}
		Letting $\epsilon \downarrow 0$, we obtain \eqref{e:subad}.
	\end{proof}
	
	Together with the notion of $p$-independence introduced in Definition \ref{d:p-independent}, we introduce the following notion of quasicontinuous functions, which will be needed for the formulation of the energy image density property for regular $p$-Dirichlet spaces.
	\begin{definition}
		A  (pointwise defined) function $f:X \to \mathbb{R}$  is called \emph{quasicontinuous}, if for every $\epsilon>0$ there exists an open set $O\subset X$ with $\cCap(O)<\epsilon$ and $f|_{X\setminus O}$ is continuous.  
	\end{definition}
	We prove the existence and uniqueness of quasicontinuous representatives for functions in the $p$-Dirichlet space, extending the corresponding result known for regular Dirichlet forms \cite[Theorem 2.1.3, Lemma 2.1.4]{FOT}.
	\begin{proposition} \label{prop:quasicont}
		If $(X,d,\mu,\mathcal{E}_p,\mathcal{F}_p , \Gamma_p)$ is a regular local $p$-Dirichlet space, then the following hold
		\begin{enumerate}
			\item \textbf{Existence:} Every $f\in \mathcal{F}_p$ has a quasicontinuous representative: there exists a function $\tilde{f}:X \to \mathbb{R}$ which is quasicontinuous and $f=\tilde{f}$ a.e. Moreover, $\tilde{f}:X \to \mathbb{R}$ can be chosen such that there exists a sequence $(f_n)_{n \in \mathbb{N}}$ of functions in $\mathcal{F}_p \cap C_0(X)$ and a decreasing sequence of open sets $(B_n)_{n \in \mathbb{N}}$ such that 
			\[
			\cCap(B_n)^{1/p} \le 2^{-n}, 
			\]
			and
			\[\abs{f_k(x)-\tilde{f}(x)} \le 2^{1-n} \quad \mbox{for all $k \ge n$ and $x \in B_n^c$.} \]
			\item \textbf{Uniqueness:} If $f,g\in \mathcal{F}_p$ are quasicontinuous and $f=g$ a.e., then $\cCap(\{f\neq g\})=0$.
		\end{enumerate}
	\end{proposition}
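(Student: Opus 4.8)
The plan is to follow the classical scheme for regular Dirichlet forms (\cite[Theorem 2.1.3 and Lemma 2.1.4]{FOT}), the essential ingredients being a weak-type bound for the capacity, the countable subadditivity of Lemma \ref{l:cap}, and a fast-Cauchy-sequence construction. First I would record the weak-type bound: if $g \in \mathcal{F}_p \cap C_0(X)$ and $t > 0$, then $\{|g| > t\}$ is open and $\phi := \min(1,|g|/t) = \psi \circ g$, with $\psi(s) := \min(1,|s|/t)$, satisfies $\psi(0)=0$ and $\LIP[\psi] = t^{-1}$; by the chain rule \eqref{e:def-chain}, $\phi \in \mathcal{F}_p$, $\phi \equiv 1$ on the open set $\{|g|>t\}$, and $\|\phi\|_{L^p}^p + \mathcal{E}_p(\phi) \le t^{-p}\|g\|_{\mathcal{F}_p}^p$, so $\cCap(\{|g|>t\})^{1/p} \le t^{-1}\|g\|_{\mathcal{F}_p}$. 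I would also use two elementary facts: that $\cCap(A)=0$ implies $\mu(A)=0$ (any admissible competitor is $\ge 1$ $\mu$-a.e.\ on an open neighbourhood of $A$), and conversely that any open $\mu$-null set has zero capacity, since the zero function is admissible in the defining infimum (being equal to $1$ $\mu$-a.e.\ on a $\mu$-null open set is vacuous).

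\textbf{Existence.} Using regularity, choose $f_n \in \mathcal{F}_p \cap C_0(X)$ with $f_n \to f$ in $\mathcal{F}_p$, and pass to a subsequence for which $\sum_n 2^n \|f_{n+1}-f_n\|_{\mathcal{F}_p}$ converges rapidly. Then $U_k := \{|f_{k+1}-f_k| > 2^{-k}\}$ is open with $\cCap(U_k)^{1/p} \le 2^k\|f_{k+1}-f_k\|_{\mathcal{F}_p}$, and $B_n := \bigcup_{k \ge n} U_k$ is open with $\cCap(B_n)^{1/p} \le 2^{(p-1)/p}\sum_{k \ge n}\cCap(U_k)^{1/p} \le 2^{-n}$ after choosing the subsequence fast enough; the $B_n$ decrease. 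On $X \setminus B_n$ we have $|f_{k+1}-f_k| \le 2^{-k}$ for all $k \ge n$, so $(f_k)_{k \ge n}$ is uniformly Cauchy there, converging uniformly to a continuous limit with $|f_k - \lim| \le 2^{1-n}$ for $k \ge n$. Since the $B_n$ decrease, these local uniform limits are compatible and define $\tilde f$ on $X \setminus \bigcap_n B_n$; I set $\tilde f := 0$ on $\bigcap_n B_n$, a set of zero capacity, hence $\mu$-null. Then $\tilde f$ is quasicontinuous via the open sets $B_n$, it satisfies the two displayed bounds in the statement, and $\tilde f = f$ $\mu$-a.e., since $f_n \to \tilde f$ pointwise off the $\mu$-null set $\bigcap_n B_n$ while $f_n \to f$ in $L^p$.

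\textbf{Uniqueness.} Let $u$ be any quasicontinuous version of $f \in \mathcal{F}_p$, and let $\tilde f$, $(f_n)$, $(B_n)$ be as above. By countable subadditivity it suffices to show $\cCap(\{u \ne \tilde f\}) = 0$, since for two quasicontinuous versions $u,v$ of the same element this gives $\cCap(\{u \ne v\}) \le \cCap(\{u \ne \tilde f\}) + \cCap(\{v \ne \tilde f\}) = 0$. Quasicontinuity of $u$ provides decreasing open sets $P_m$ with $\cCap(P_m)^{1/p} \le 2^{-m}$ and $u|_{X \setminus P_m}$ continuous. On $X \setminus (B_n \cup P_m)$ both $u$ and $\tilde f$ are continuous (for $\tilde f$, because $f_k \to \tilde f$ uniformly off $B_n$) and agree $\mu$-a.e.; hence $\{u \ne \tilde f\} \cap (X \setminus (B_n \cup P_m))$ is relatively open there and $\mu$-null. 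Letting $m, n$ vary, $\{u \ne \tilde f\}$ is covered by $\bigcap_n B_n$ together with such relatively open $\mu$-null pieces; using that open $\mu$-null sets have zero capacity — after slightly enlarging the exceptional open sets so as to control the closures $\overline{B_n \cup P_m}$, which is where the weak-type bound is invoked once more — and Lemma \ref{l:cap}, I conclude $\cCap(\{u \ne \tilde f\}) = 0$.

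\textbf{Main obstacle.} The existence step is a routine fast-Cauchy-sequence argument once the weak-type estimate is in hand. The delicate point is uniqueness: the weak-type bound applies only to continuous members of $\mathcal{F}_p$, as superlevel sets of a merely quasicontinuous function need not be open, so the whole argument has to be routed through the continuous approximants $f_n$ and the good representative $\tilde f$; the careful bookkeeping is to verify that the relatively open $\mu$-null "error" sets and the closures of the exceptional open sets genuinely contribute no capacity, which is handled by combining the vanishing of the capacity of open $\mu$-null sets with the countable subadditivity of $\cCap^{1/p}$.
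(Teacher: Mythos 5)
Your existence proof is essentially the same as the paper's: a fast-Cauchy subsequence of continuous approximants, the superlevel sets $A_i = \{|f_i-f_{i+1}| > 2^{-i-1}\}$, the countable subadditivity of Lemma~\ref{l:cap}, and uniform convergence off the tails $B_n = \bigcup_{i\ge n} A_i$. That part is fine, as are the preliminary observations (the weak-type bound for $g \in \mathcal{F}_p \cap C_0(X)$, $\cCap(A)=0 \Rightarrow \mu(A)=0$, and that open $\mu$-null sets have zero capacity).

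The uniqueness argument has a genuine gap. The pieces $E_{n,m} := \{u \ne \tilde f\} \cap (X \setminus (B_n \cup P_m))$ are relatively open in the \emph{closed} set $X \setminus (B_n \cup P_m)$, not open in $X$, so their $\mu$-nullity gives no capacity information: the zero function is not an admissible competitor for a set that is merely relatively open. What escapes your cover is precisely the part of $\{u \ne \tilde f\}$ sitting on $\partial(B_n \cup P_m)$. The proposed repair (``slightly enlarging the exceptional open sets so as to control the closures $\overline{B_n \cup P_m}$'') does not work: capacity is not controlled by passing to closures, since an open set of tiny capacity can have a closure of arbitrarily large capacity (e.g.\ a dense open set of small measure), and the weak-type bound controls superlevel sets of the individual continuous functions $f_i-f_{i+1}$, not the closure of the infinite union $B_n$.

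The paper resolves uniqueness by \emph{constructing an admissible competitor} rather than assembling a capacity-zero cover. Reducing to $g=0$, one takes a quasicontinuous $v$ with $v=0$ $\mu$-a.e.\ and shows $\cCap(A_\delta)=0$ for $A_\delta := \{|v|>\delta\}$. Given an open $O_\epsilon$ with $\cCap(O_\epsilon)<\epsilon$ and $v|_{O_\epsilon^c}$ continuous, the set $U := A_\delta \cap O_\epsilon^c$ is relatively open in $O_\epsilon^c$ \emph{and $\mu$-null}, so $V := U \cup O_\epsilon$ is open in $X$ and covers $A_\delta$. The key point is that a competitor $h_\epsilon$ for $O_\epsilon$ (equal to $1$ $\mu$-a.e.\ on an open neighbourhood $N$ of $O_\epsilon$ with small $\|h_\epsilon\|_{\mathcal{F}_p}$) already equals $1$ $\mu$-a.e.\ on the open set $N \cup U \supset V$, because $U$ is $\mu$-null; the paper packages this by taking $\max(h_\epsilon, |v|/\delta)$, which is pointwise $\ge 1$ on $V$ but equals $|h_\epsilon|$ $\mu$-a.e., hence has small norm. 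No boundary ever needs to be controlled. If you want to keep your set-up, set $v := u - \tilde f$ and run this competitor argument; the cover approach misses that the $\mu$-null piece $U$ contributes nothing to the competitor's norm, which is the whole point.
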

	
	\begin{proof}
		\begin{enumerate}
			\item The first follows by a standard argument, e.g. \cite[Theorem 2.1.3]{FOT} or \cite[Theorem 3.7]{Shanmun}. Since  $C_0(X) \cap \mathcal{F}_p$ is dense in the Banach space $(\mathcal{F}_p,\norm{\cdot}_{\mathcal{F}_p})$, we can find a sequence $f_i\in \mathcal{F}_p\cap C_0(X)$ with $f_i\to f$ in $\mathcal{F}_p$ and $\norm{f_i-f_j}_{\mathcal{F}_p}\le 4^{-\min(i,j)-2}$. Let $A_{i}=\{x : \abs{f_i-f_{i+1}}> 2^{-i-1}\}$, so that 
			\[
			\cCap(A_i)^{1/p} \le 2^{i+1} \norm{f_i-f_{i+1}}_{\mathcal{F}_p} \le 2^{-i-3}.
			\]
			Let $B_N=\bigcup_{i=N}^{\infty} A_i$. The set $B_N$ is open since $A_i$ is open for all $i\in \N$. By  Lemma \ref{l:cap}, we  have $$\cCap(B_N)^{1/p}\le 2^{(p-1)/p} \sum_{n=N}^\infty \cCap(A_n)^{1/p} \le 2 \sum_{n=N}^\infty 2^{-n-3}\le 2^{-N}.$$ For $x\not\in B_N$ we have $|f_i-f_{i+1}|\le 2^{-i-1}$ for all $i\geq N$, and thus $f_i(x)$ converges uniformly outside $B_N$ and we have 
			\[\abs{f_n(x)-\lim_{n\to \infty}f_n(x)} \le \sum_{k=N}^\infty \abs{f_k(x)-f_{k+1}(x)}   \le 2^{-N}, \quad \mbox{for all $x \in B_N^c$ and $n \ge N$.}\]
			Let $\tilde{f}(x)$ be this pointwise limit, which is defined outside $\bigcap_{N=1}^\infty B_N$. Clearly, $\mu(\bigcap_{N=1}^\infty B_N)=0$, and thus $\tilde{f}(x)$ is a representative of $f$. Since $f_i$ are continuous, then $\tilde{f}$ is also continuous outside $B_N$ by uniform convergence outside of this set. Thus, $\tilde{f}$ is a quasicontinuous representative of $f$.
			\item 	It suffices to consider the case $g=0$ by replacing $f$ with $f-g$. Fix $\delta>0$. Let $A=\{x: \abs{f(x)}>\delta\}$. We will show that $\cCap(A)=0$, from which the claim follows by exhausting the set where $f\neq 0$ by sets of this form and by using Lemma \ref{l:cap}. Fix $\epsilon>0$ and let $O_\epsilon$ be the open set such that $f|_{X\setminus O_\epsilon}$ is continuous and $\cCap(O_\epsilon)<\epsilon$. Fix an open set $U=\{x\in X\setminus O_\epsilon: |f(x)|>\delta\}$ which is relatively open in $X\setminus O_\epsilon$. Thus, $V=U\cup O_\epsilon$ is open. Let $h_\epsilon$ be the function such that $h_\epsilon \ge  1$ on $O_\epsilon$ and with $\norm{h_\epsilon}_{\mathcal{F}_p}\le \epsilon$. Define $\tilde{h}_\epsilon = \max(h_\epsilon, |f|/\delta)= 2^{-1}(h_\epsilon+|f|/\delta - \abs{h_\epsilon- |f|/\delta})$. By an argument similar to the proof of \eqref{e:cr0}, we obtain $\|\tilde{h}_\epsilon\|_{\mathcal{F}_p}\le \epsilon$ and $\tilde{h}_\epsilon \geq 1$ for all $x\in V$. Since $V$ is open and $V \supset A$, we conclude $\cCap(A)\le \epsilon$ for every $\epsilon>0$, and the claim follows. \qedhere
		\end{enumerate}	
	\end{proof}
	Parts (1) and (2) of the following result are well-known for regular Dirichlet forms. We refer to \cite[Lemma 3.2.4]{FOT}, \cite[Corollary I.7.1.2]{BH91}    respectively    for Dirichlet forms.  Part (3) is new to the best of our knowledge and will play an important role in the proof of the energy image density property.
	\begin{proposition} \label{prop:quasicont-conseq}
		Let  $(X,d,\mu, \mathcal{E}_p, \mathcal{F}_p, \Gamma_p)$ be a regular $p$-Dirichlet space.
		If $f\in \mathcal{F}_p$ is quasicontinuous and let $A\subset X$ be Borel measurable, then the following hold: 
		\begin{enumerate}
			\item If $\cCap(A)=0$, then $\Gamma_p\langle f\rangle(A)=0$.
			\item If $g \in \Lip(\mathbb{R})$ with $g(0)=0$, then $g \circ f \in \mathcal{F}_p$ and 
			\[
			\Gamma_p\langle g\circ f\rangle  \le( \Lip_a[g] \circ f)^p \cdot \Gamma_p\langle f\rangle.
			\]
			\item If $g\in \Lip(\mathbb{R}^n)$ with $g(0)=0$, and $f=(f_1,\dots, f_n)$ with $f_i\in \mathcal{F}_p$ and $f_i$ are quasicontinuous, then
			\begin{equation} \label{e:multi-chainrule}
				\Gamma_p\langle g\circ f\rangle  \le \left(1 \vee n^{(p-2)/2}\right) (\Lip_a[g] \circ f)^p\cdot \sum_{i=1}^n \Gamma_p\langle f_i\rangle.
			\end{equation}
		\end{enumerate}
	\end{proposition}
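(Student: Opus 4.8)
The plan is to prove the three parts in order, with part~(1) serving as the foundation for parts~(2) and~(3).

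\emph{Part (1).} I would first reduce to the case $f \in \mathcal{F}_p \cap C_0(X)$: since the space is regular there are $f_k \in \mathcal{F}_p \cap C_0(X)$ with $f_k \to f$ in $\mathcal{F}_p$, and then $\Gamma_p\langle f_k\rangle(A) \to \Gamma_p\langle f\rangle(A)$ by \eqref{e:conv-emeas}, so it suffices to show $\Gamma_p\langle u\rangle(A) = 0$ whenever $u \in \mathcal{F}_p \cap C_0(X)$ and $\cCap(A) = 0$. For such $u$, set $R := \sup_X\abs{u} < \infty$ and pick $h_j \in \mathcal{F}_p$ with $0 \le h_j \le 1$ (applying the chain rule to $t \mapsto 0 \vee (t \wedge 1)$), $h_j \equiv 1$ on an open set $O_j \supseteq A$, and $\norm{h_j}_{\mathcal{F}_p} \le 1/j$. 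Put
\[
v_j := \bigl((u - R h_j) \vee 0\bigr) + \bigl((u + R h_j) \wedge 0\bigr),
\]
which lies in $\mathcal{F}_p$ by the chain rule (the maps $t \mapsto t \vee 0$ and $t \mapsto t \wedge 0$ are $1$-Lipschitz and fix $0$). On $O_j$ we have $u - R \le 0 \le u + R$, so $v_j \equiv 0$ there, and locality gives $\Gamma_p\langle v_j\rangle(O_j) = 0$, whence $\Gamma_p\langle v_j\rangle(A) = 0$. Since $\abs{v_j - u} \le 2 R h_j$ and $\norm{h_j}_{L^p} \le \norm{h_j}_{\mathcal{F}_p} \to 0$, we get $v_j \to u$ in $L^p$; moreover sublinearity, homogeneity and the chain rule give $\mathcal{E}_p(v_j)^{1/p} \le 2\mathcal{E}_p(u)^{1/p} + 2R\,\mathcal{E}_p(h_j)^{1/p}$, so $\sup_j \mathcal{E}_p(v_j) < \infty$. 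Weak lower semicontinuity then yields $\Gamma_p\langle u\rangle(A) \le \liminf_j \Gamma_p\langle v_j\rangle(A) = 0$.

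\emph{Parts (2) and (3).} These follow from one common argument, using Lemma~\ref{lem:contlocal} in case~(2) and Lemma~\ref{lem:multidimcontraction}-\eqref{e:c-rule2} in case~(3); I describe~(3), as~(2) is the case $n = 1$. By part~(1), modifying any $f_i$ on a set of zero capacity changes neither side of \eqref{e:multi-chainrule}, so by Proposition~\ref{prop:quasicont}(1) (and its proof) I may take each $f_i$ to be the quasicontinuous representative there, so there are $f_i^k \in \mathcal{F}_p \cap C_0(X)$ with $f_i^k \to f_i$ in $\mathcal{F}_p$ and decreasing open sets $B_m^{(i)}$ with $\cCap(B_m^{(i)})^{1/p} \le 2^{-m}$ and $\abs{f_i^k - f_i} \le 2^{1-m}$ on $(B_m^{(i)})^c$ for all $k \ge m$. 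Set $B_m := \bigcup_{i=1}^n B_m^{(i)}$; by Lemma~\ref{l:cap}, $\cCap(B_m) \to 0$, so $\cCap\bigl(\bigcap_m B_m\bigr) = 0$ and hence $\Gamma_p\langle f_i\rangle\bigl(\bigcap_m B_m\bigr) = 0$ by part~(1). Fix a Borel set $A$ and $\rho > 0$; for $m$ with $2^{1-m}\sqrt{n} < \rho/2$ and $k \ge m$ one has $B(f^k(x),\rho/2) \subseteq B(f(x),\rho)$ on $B_m^c$, hence $\Lip_a[g](f^k(x)) \le \LIP[g|_{B(f(x),\rho)}]$, a majorant independent of $k$. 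Applying Lemma~\ref{lem:multidimcontraction}-\eqref{e:c-rule2} to $f^k = (f_1^k,\dots,f_n^k) \in (\mathcal{F}_p\cap C_0(X))^n$ and splitting the integral over $B_m$ and $B_m^c$ gives, for $k\ge m$,
\[
\Gamma_p\langle g \circ f^k\rangle(A) \le \bigl(1 \vee n^{(p-2)/2}\bigr)\Bigl[\LIP[g]^p \textstyle\sum_i \Gamma_p\langle f_i^k\rangle(B_m) + \int_A \LIP[g|_{B(f(\cdot),\rho)}]^p \, d\bigl(\textstyle\sum_i \Gamma_p\langle f_i^k\rangle\bigr)\Bigr].
\]
Since $g \circ f^k \to g \circ f$ in $L^p$ with $\sup_k \mathcal{E}_p(g \circ f^k) < \infty$, weak lower semicontinuity gives $g \circ f \in \mathcal{F}_p$ and $\Gamma_p\langle g \circ f\rangle(A) \le \liminf_k \Gamma_p\langle g \circ f^k\rangle(A)$. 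I then let $k \to \infty$ (using that $\Gamma_p\langle f_i^k\rangle \to \Gamma_p\langle f_i\rangle$ on Borel sets by \eqref{e:conv-emeas}, which extends to integration of the fixed bounded integrand in the standard way), then $m \to \infty$ (so $\sum_i \Gamma_p\langle f_i\rangle(B_m) \to 0$ by continuity of the finite measures from above), and finally $\rho \to 0$ (so $\LIP[g|_{B(f(\cdot),\rho)}]^p \downarrow (\Lip_a[g] \circ f)^p$ pointwise, and dominated convergence applies against the finite measure $\sum_i \Gamma_p\langle f_i\rangle$), arriving at \eqref{e:multi-chainrule}. (In case~(2), $g\circ f\in\mathcal{F}_p$ is already immediate from the chain rule.)

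\emph{Main obstacle.} The crux is part~(1): once energy measures of quasicontinuous functions are known not to charge capacity-zero sets, parts~(2) and~(3) reduce to limiting arguments. The delicate point there is that the natural approximants $f^k$ are \emph{continuous} while the limit $f$ is only quasicontinuous, so the asymptotic Lipschitz constant cannot be passed directly through the limit; this is circumvented by the layered passage $k \to \infty$ (which needs only setwise convergence of the energy measures), followed by $m \to \infty$ to discard the exceptional sets $B_m$ via part~(1), and finally $\rho \to 0$ to recover $\Lip_a[g]$ from the $k$-independent majorant $\LIP[g|_{B(f(\cdot),\rho)}]$.
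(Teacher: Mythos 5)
Your proposal is correct and follows essentially the same route as the paper: reduce part~(1) to $\mathcal{F}_p\cap C_0(X)$ via density and the convergence \eqref{e:conv-emeas}, build auxiliary functions that are constant on a neighborhood of $A$ (the paper reduces to $f\ge 0$ and uses $\min(f+Mh_k,M)$, you use the two-sided $v_j$ and skip the sign reduction), then apply locality and weak lower semicontinuity; for parts~(2)--(3) the paper likewise passes through the quasicontinuous approximants of Proposition~\ref{prop:quasicont}, applies Lemma~\ref{lem:contlocal}/\ref{lem:multidimcontraction}, splits over $B_n$ and $B_n^c$, and takes the same layered limits $k\to\infty$, $n\to\infty$, $\epsilon\to 0$. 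The only minor simplification the paper makes that you could borrow is to note that sublinearity actually yields total-variation convergence $\abs{\Gamma_p\langle f^k_i\rangle-\Gamma_p\langle f_i\rangle}\to 0$, making the passage $k\to\infty$ under the bounded integrand immediate, rather than invoking setwise convergence plus uniformly bounded mass.
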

	
	\begin{proof}
		\begin{enumerate}
			\item 	By density and sub-additivity, for the first claim it suffices to assume that $f \in \mathcal{F}_p \cap C_0(X)$. By chain rule $f_+:= \max(f,0), f_-=\max(-f,0)$ belong to $\mathcal{F}_p$. Since $f=f_+-f_-$, by \eqref{e:def-sublinear} and homogeneity, it suffices to consider the case  $f \in \mathcal{F}_p \cap C_0(X)$ with $f \ge 0$. 		Thus there exists  $M \in (0,\infty)$ such that $0 \le f=\abs{f} \le M$.

			Let $\epsilon>0$ be arbitrary. Choose an open set $O_{k}$ such that $A\subset O_k$ and a non-negative function $h_k\in \mathcal{F}_p$ such that $h_k\geq 1$ $\mu$-a.e.~on $O_k$ and $\norm{h_k}_{\mathcal{F}_p}\le 2^{-k}/M.$ Define
			\[
			f_k = \min(f+Mh_k,M).
			\]
			By the chain rule and sub-linearity, we have 
			\begin{equation}\label{eq:energyestfk}
				\Gamma_p\langle f_k \rangle(E) \stackrel{\eqref{e:def-chain}}{\le} \Gamma_p\langle f+Mh_k \rangle(E) \stackrel{\eqref{e:def-sublinear}}{\le} (\Gamma_p\langle f \rangle(E)^\frac{1}{p}+2M\Gamma_p\langle h_k \rangle(E)^\frac{1}{p})^{p}
			\end{equation}
			for all Borel sets $E\subset X$. Moreover,
			\[
			\abs{f-f_k}= f_k-f\le Mh_k, \quad \mbox{$\mu$-almost everywhere,}
			\]
			and thus
			\[
			\norm{f-f_k}_{L^p} \le M 	\norm{h_k}_{L^p} \le M 	\norm{h_k}_{\mathcal{F}_p}  \le 2^{-k},
			\]
			$f_{k}\to f$ in $L^p(X)$. 
			Since $0 \le f \le M$ and $h_k \ge 1$ $\mu$-a.e.~on $O_k$, we have $f_k \equiv M$ on $O_k$ and hence by locality we have 
			\begin{equation} \label{e:loc-fk}
				\Gamma_p \langle f_k \rangle(O_k) =0, \quad \mbox{for all $k \in \mathbb{N}$.}
			\end{equation}
			Since  $f_{k}\to f$ in $L^p(X)$, we obtain
			\begin{align*}
				\Gamma_p\langle f\rangle(A)
				&\le \liminf_{k\to\infty} \Gamma_p\langle f_k\rangle(A) \quad \mbox{(by \eqref{e:def-lsc} and $\norm{f-f_k}_{L^p} \to 0$)} \\
				&\le \liminf_{k\to\infty} \Gamma_p\langle f_k\rangle(O_k)  \stackrel{\eqref{e:loc-fk}}{=} 0. 
			\end{align*}
			\item	Let $\epsilon>0$ be arbitrary. It suffices to prove the bound 
			\begin{equation} \label{e:suff-bnd}
				\Gamma_p\langle g\circ f\rangle  \le \left( \LIP(\restr{g}{B(f(x),\epsilon)}) \right)^p \, \Gamma_p\langle f\rangle 
			\end{equation}
			and then let $\epsilon \to 0$ and use dominated convergence theorem.  By  part (1), Proposition \ref{prop:quasicont}, and by modifying $f$ on  a set of capacity zero
			we may   that there exists a sequence
			$(f_n)_{n \in \mathbb{N}}\in C(X)\cap \mathcal{F}_p$ and a decreasing sequence of  open sets $(B_n)_{n \in \mathbb{N}}$ such that $f_i$ converge pointwise to $f$ outside $\bigcap_{k=1}^\infty B_k$,   $\mathcal{E}_p(f_k-f)\to 0$ as $k\to\infty$, $\cCap(B_n) \le 2^{-n}$
			and
			\begin{equation} \label{e:fk-conv}
				\abs{f(x)-f_k(x)} \le 2^{1-n}, \quad \mbox{for all $n \in \mathbb{B}, k,n \in \mathbb{N}$ such that $k \ge n$.}
			\end{equation}
			Note that 
			\begin{equation} \label{e:lip-weak}
				\norm{g \circ f_i - g \circ f}_{L^p} \le \LIP(g) \norm{f_i-f}_{L^p} \xrightarrow{i \to \infty} 0, \quad \sup_{i \in \mathbb{N}} \mathcal{E}_p(g \circ f_i) \stackrel{\eqref{e:def-chain}}{\le} \LIP(g)^p \sup_{i \in \mathbb{N}} \mathcal{E}_p(f_i) < \infty.
			\end{equation}
			Thus for any $n \in \mathbb{N}$ such that $2^{1-n}<\epsilon$ and any Borel set $A \subset X$, we have
			\begin{align} \label{e:qc-crule1}
				\Gamma_p\langle g\circ f\rangle(A)  &\le \liminf_{k\to\infty} \Gamma_p\langle g\circ f_k\rangle(A) \quad \mbox{(by \eqref{e:lip-weak} and \eqref{e:def-lsc})} \nonumber \\
				&\le \liminf_{k\to\infty} \int_A \Lip_ag(f_k(x))^p \, \Gamma_p\langle f_k\rangle (dx)\quad \mbox{(by Lemma \ref{lem:contlocal})} \nonumber \\
				&\stackrel{\eqref{e:fk-conv}}{\le} \liminf_{k\to\infty} \int_{(A\setminus B_n)} \left( \LIP(\restr{g}{B(f(x),\epsilon)}) \right)^p \,\Gamma_p\langle f_k\rangle(dx)  + \LIP[g]^p \Gamma_p\langle f_k\rangle(B_n). 
			\end{align}
			By sublinearity, for any $k \in \mathbb{N}$  we have 
			\begin{equation} \label{e:qc-crule2}
				\sup_{A \subset X}\abs{\Gamma_p\langle f \rangle(A)^{1/p}- \Gamma_p\langle f_k \rangle(A)^{1/p} } \stackrel{\eqref{e:def-sublinear} }{\le} \mathcal{E}_p(f-f_k)^{1/p} \xrightarrow{k \to \infty} 0.
			\end{equation}
			By \eqref{e:qc-crule2}, we have 
			\begin{equation} \label{e:qc-crule3}
				\lim_{k \to \infty} \Gamma_p \langle f_k \rangle (B_n) =  \Gamma_p \langle f \rangle (B_n).
			\end{equation}
			Since the measures $\Gamma_p \langle f_k \rangle, k \in \mathbb{N}$ have uniformly bounded mass,  the   estimate \eqref{e:qc-crule2} implies that $\abs{\Gamma_p\langle f \rangle- \Gamma_p\langle f_k \rangle} \xrightarrow{k \to \infty}0$, where $\abs{\nu}$ denotes the total variation measure of a signed measure $\nu$. Hence for any Borel set $A \subset X$, we have
			\begin{align} \label{e:qc-crule4}
				\MoveEqLeft{\abs{\int_{A} \LIP[g|_{B(f(x),\epsilon)}]^p\,\Gamma_p\langle f_k\rangle(dx)-\int_{A} \LIP[g|_{B(f(x),\epsilon)}]^p\,\Gamma_p\langle f\rangle(dx)}} \nonumber \\
				&\le \int_{X}\LIP[g]^p\,\abs{\Gamma_p\langle f \rangle-\Gamma_p\langle f_k \rangle}   (dx) \xrightarrow{k \to \infty} 0.
			\end{align}
			Hence by \eqref{e:qc-crule1}, \eqref{e:qc-crule3} and \eqref{e:qc-crule4},  $n \in \mathbb{N}$ such that $2^{1-n}<\epsilon$ and any Borel set $A \subset X$, we have
			\[
			\Gamma_p\langle g\circ f\rangle(A) \le \LIP(g)^p \Gamma_p \langle f \rangle (B_n) +  \int_{A}  \left( \LIP(\restr{g}{B(f(x),\epsilon)}) \right)^p \,\Gamma_p\langle f\rangle(dx)
			\]
			Sending $n \to \infty$ and $\epsilon \to 0$ yields the claim by the first part and since $\Gamma_p\langle f\rangle$ is a Borel regular finite measure.
			\item  The proof is identical to that of part (2), by approximation using a sequence of functions in $(\mathcal{F}_p \cap C_0(X))^n$ except that we use Lemma \ref{lem:multidimcontraction} instead of Lemma \ref{lem:contlocal}. \qedhere
		\end{enumerate}
	\end{proof}

	\subsection{Regular Dirichlet forms} \label{ss:reg-df}
	We recall the notion of regular Dirichlet form and explain that it is a   $2$-Dirichlet space in the sense of Definition \ref{d:dir-space}. We also study the relationship between the notion of $p$-independence and the invertibility of a Malliavin-type matrix.

	Throughout this subsection (\textsection \ref{ss:reg-df}), we assume that $(X,d)$ is a complete and locally compact separable metric space equipped with a Radon measure $\mu$ with full support.
	Let $(\mathcal{E},\mathcal{F})$ be a \emph{symmetric Dirichlet form} on $L^{2}(X,\mu)$;
	that is, $\mathcal{F}$ is a dense linear subspace of $L^{2}(X,\mu)$, and
	$\mathcal{E}:\mathcal{F}\times\mathcal{F}\to\mathbb{R}$
	is a non-negative definite symmetric bilinear form which is \emph{closed}
	($\mathcal{F}$ is a Hilbert space under the inner product $\mathcal{E}_{1}:= \mathcal{E}+ \langle \cdot,\cdot \rangle_{L^{2}(\mu)}$)
	and \emph{Markovian} ($f^{+}\wedge 1\in\mathcal{F}$ and $\mathcal{E}(f^{+}\wedge 1,f^{+}\wedge 1)\le \mathcal{E}(f,f)$ for any $f\in\mathcal{F}$).
	Recall that $(\mathcal{E},\mathcal{F})$ is called \emph{regular} if
	$\mathcal{F}\cap C_{\mathrm{c}}(X)$ is dense both in $(\mathcal{F},\mathcal{E}_{1})$
	and in $(C_{\mathrm{c}}(X),\|\cdot\|_{\mathrm{sup}})$, and that
	$(\mathcal{E},\mathcal{F})$ is called \emph{strongly local} if $\mathcal{E}(f,g)=0$
	for any $f,g\in\mathcal{F}$ with $\supp_\mu[f]$, $\supp_\mu[g]$ compact and
	$\supp_\mu[f-a\one_{X}]\cap\supp_\mu[g]=\emptyset$ for some $a\in\mathbb{R}$. Here
	$C_{\mathrm{c}}(X)$ denotes the space of $\mathbb{R}$-valued continuous functions on $X$ with compact support, and
	for a Borel measurable function $f:X\to[-\infty,\infty]$ or an
	$\mu$-equivalence class $f$ of such functions, $\supp_\mu[f]$ denotes the support of the measure $|f|\,d\mu$,
	i.e., the smallest closed subset $F$ of $X$ with $\int_{X\setminus F}|f|\,d\mu=0$,
	which exists since $X$ has a countable open base for its topology; note that
	$\supp_\mu[f]$ coincides with the closure of $X\setminus f^{-1}(0)$ in $X$ if $f$ is continuous.
	The pair $(X,d,\mu,\mathcal{E},\mathcal{F})$ of a metric measure space $(X,d,\mu)$ and a strongly local,
	regular symmetric Dirichlet form $(\mathcal{E},\mathcal{F})$ on $L^{2}(X,\mu)$ is termed
	a \emph{metric measure Dirichlet space}, or an \emph{MMD space} in abbreviation.  
	We refer to \cite{FOT} for details of the theory of symmetric Dirichlet forms.
	
	We recall the definition of energy measure.
	Note that $fg\in\mathcal{F}$
	for any $f,g\in\mathcal{F}\cap L^{\infty}(X,\mu)$ by \cite[Theorem 1.4.2-(ii)]{FOT}
	and that $\{(-n)\vee(f\wedge n)\}_{n=1}^{\infty}\subset\mathcal{F}$ and
	$\lim_{n\to\infty}(-n)\vee(f\wedge n)=f$ in norm in $(\mathcal{F},\mathcal{E}_{1})$
	by \cite[Theorem 1.4.2-(iii)]{FOT}.
	
	\begin{definition}\label{d:EnergyMeas}
		Let $(X,d,\mu,\mathcal{E},\mathcal{F})$ be an MMD space.
		The \emph{energy measure} $\Gamma(f,f)$ of $f\in\mathcal{F}$
		associated with $(X,d,\mu,\mathcal{E},\mathcal{F})$ is defined,
		first for $f\in\mathcal{F}\cap L^{\infty}(X,\mu)$ as the unique ($[0,\infty]$-valued)
		Borel measure on $X$ such that
		\begin{equation}\label{e:EnergyMeas}
			\int_{X} g \, d\Gamma(f,f)= \mathcal{E}(f,fg)-\frac{1}{2}\mathcal{E}(f^{2},g) \qquad \textrm{for all $g \in \mathcal{F}\cap C_{\mathrm{c}}(X)$,}
		\end{equation}
		and then by
		$\Gamma(f,f)(A):=\lim_{n\to\infty}\Gamma\bigl((-n)\vee(f\wedge n),(-n)\vee(f\wedge n)\bigr)(A)$
		for each Borel subset $A$ of $X$ for general $f\in\mathcal{F}$. The signed measure $\Gamma(f,g)$ for $f,g \in \mathcal{F}$ is defined by polarization.
	\end{definition}
	We verify that every MMD space can be viewed as a regular, local, $p$-Dirichlet space in the sense of Definition \ref{d:dir-space} with $p=2$.
	\begin{lemma} \label{lem:df-dirspace}
		Let $(X,d,\mu,\mathcal{E},\mathcal{F})$ be an MMD space. Let $\mathcal{F}_2=\mathcal{F}$ and $\mathcal{E}_2: \mathcal{F}_2 \to [0,\infty)$ be defined by $\mathcal{E}_2(f):= \mathcal{E}(f,f)$ for all $f \in \mathcal{F}_2$. Let $\Gamma_2: \mathcal{F}_2 \to \mathcal{M}(X)$ denote the energy measure $\Gamma_2(f):=\Gamma(f,f)$ as given in Definition \ref{d:EnergyMeas}. Then  $(X,d,\mu,\mathcal{E}_2, \mathcal{F}_2, \Gamma_2)$ is a regular, local $2$-Dirichlet space in the sense of Definition \ref{d:dir-space}.
	\end{lemma}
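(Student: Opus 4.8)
The plan is to check, in a non-circular order, the defining properties of a regular local $2$-Dirichlet space (Definition~\ref{d:dir-space}) for $(X,d,\mu,\mathcal{E}_2,\mathcal{F}_2,\Gamma_2)$. The \emph{locally compact space} property holds by the standing assumptions of \textsection\ref{ss:reg-df}. For \emph{completeness}, note that $\norm{f}_{\mathcal{F}_2}^2 = \norm{f}_{L^2(\mu)}^2 + \mathcal{E}(f,f) = \mathcal{E}_1(f,f)$, so $(\mathcal{F}_2,\norm{\cdot}_{\mathcal{F}_2})$ coincides with the Hilbert space $(\mathcal{F},\mathcal{E}_1)$, which is complete because $(\mathcal{E},\mathcal{F})$ is closed; being Hilbert it is in particular reflexive, which we shall use below. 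Regularity of the $p$-Dirichlet space is then immediate from the definition of a regular Dirichlet form, since $C_0(X)=C_{\mathrm c}(X)$ in the sense of Definition~\ref{d:dir-space} and $\norm{\cdot}_{\mathcal{F}_2}$ generates the $\mathcal{E}_1$-topology.

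The \emph{homogeneity} property follows from the standard theory of energy measures of strongly local regular Dirichlet forms \cite[\textsection 3.2]{FOT}: for $f\in\mathcal{F}\cap L^\infty$ the energy measure $\Gamma(f,f)$ is non-negative and Borel, the energy measures of the truncations $(-n)\vee(f\wedge n)$ increase to $\Gamma(f,f)$ since truncation is a normal contraction, and strong locality together with the Beurling--Deny--LeJan representation gives $\Gamma(f,f)(X)=\mathcal{E}(f,f)<\infty$; thus $\Gamma_2\langle f\rangle:=\Gamma(f,f)$ is a finite non-negative Borel measure of total mass $\mathcal{E}_2(f)$, and $\Gamma_2\langle\lambda f\rangle=\lambda^2\Gamma_2\langle f\rangle$ by bilinearity of $(f,g)\mapsto\Gamma(f,g)$. \emph{Sublinearity} is the Cauchy--Schwarz inequality for energy measures: for each Borel $A$ and each $t\in\mathbb{R}$ the measure $\Gamma(f+tg,f+tg)$ is non-negative, whence $\Gamma(f,g)(A)^2\le\Gamma(f,f)(A)\,\Gamma(g,g)(A)$, and expanding $\Gamma(f+g,f+g)(A)$ gives \eqref{e:def-sublinear}. \emph{Locality} --- the vanishing of $\Gamma(f,f)$ on an open set on which $f$ is $\mu$-a.e.\ constant --- is a standard consequence of strong locality \cite[Corollary~3.2.1]{FOT}, \cite{CF}.

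With completeness (hence reflexivity), homogeneity and sublinearity in hand, \emph{weak lower semicontinuity} follows by repeating the proof of Lemma~\ref{l:suff-lsc}: that proof uses only sublinearity, homogeneity, the consequence~\eqref{e:conv-emeas} (itself a formal consequence of the two), Mazur's lemma and Kakutani's theorem, and crucially \emph{not} the chain rule, so no circularity arises from establishing it now. It then remains to prove the \emph{chain rule}. For $g\in\Lip(\mathbb{R})$ with $g(0)=0$, the map $g/\LIP[g]$ is a normal contraction, so $g\circ f\in\mathcal{F}_2=\mathcal{F}$ by \cite[Theorem~1.4.2]{FOT}. For the measure estimate, take first $g\in C^1(\mathbb{R})$ with $g(0)=0$ and $L:=\LIP[g]$; then the chain rule for energy measures \cite[Theorem~3.2.2]{FOT} gives $d\Gamma_2\langle g\circ f\rangle\le L^2\,d\Gamma_2\langle f\rangle$. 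For a general $L$-Lipschitz $g$ with $g(0)=0$, mollify and subtract a constant to obtain $g_k\in C^1(\mathbb{R})$ with $g_k(0)=0$, $\LIP[g_k]\le L$, and $\norm{g_k-g}_{L^\infty(\mathbb{R})}\to 0$; since $\abs{g_k(t)-g(t)}\le\min\bigl(\norm{g_k-g}_{L^\infty(\mathbb{R})},\,2L\abs t\bigr)$, dominated convergence gives $g_k\circ f\to g\circ f$ in $L^2(\mu)$, while $\sup_k\mathcal{E}_2(g_k\circ f)\le L^2\mathcal{E}_2(f)$; hence weak lower semicontinuity yields $\Gamma_2\langle g\circ f\rangle(A)\le\liminf_k\Gamma_2\langle g_k\circ f\rangle(A)\le L^2\,\Gamma_2\langle f\rangle(A)$ for every Borel $A$, which is \eqref{e:def-chain}.

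The routine parts are the locally compact space and completeness properties, sublinearity, and regularity; the steps requiring some care are the identity $\Gamma(f,f)(X)=\mathcal{E}(f,f)$ and locality, which genuinely use strong locality and for which I would cite \cite{FOT,CF}. The main obstacle is the passage from the $C^1$ chain rule to the Lipschitz chain rule, which I would treat by the mollification-and-lower-semicontinuity argument above; this in turn forces one to verify weak lower semicontinuity \emph{before} the chain rule, rather than invoking Lemma~\ref{l:suff-lsc}, whose hypotheses already presuppose the chain rule.
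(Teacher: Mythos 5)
Your proof is correct, but it takes a genuinely different route to the chain rule than the paper. The paper proves the Lipschitz chain rule \emph{directly}, by reducing (via homogeneity) to the case $\LIP[g]=1$ of a normal contraction and citing \cite[Proposition I.4.1.1, Theorem I.3.3.3]{BH91}; once the chain rule is in hand, properties (1)--(6) of Definition~\ref{d:dir-space} are all verified, and lower semicontinuity then follows by a clean invocation of Lemma~\ref{l:suff-lsc} (Hilbert space $\Rightarrow$ reflexive). You instead derive the Lipschitz chain rule from the $C^1$ chain rule \cite[Theorem~3.2.2]{FOT} by mollification, and to close that argument you need weak lower semicontinuity \emph{before} the chain rule; you correctly observe that the proof (though not the formal hypotheses) of Lemma~\ref{l:suff-lsc} uses only sublinearity, homogeneity, reflexivity, Mazur and Kakutani, and so can be run at that earlier stage. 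Your route is more self-contained (it avoids the external contraction-for-energy-measures input from \cite{BH91} in favor of the $C^1$ chain rule, which is more elementary) at the cost of a more delicate ordering of the verifications. The circularity you flag is a concern only for your own ordering; the paper's proof, establishing the chain rule first, never encounters it. Both arguments are sound.
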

	\begin{proof}
		The first three properties in Definition \ref{d:dir-space}  immediately follow from the definitions. The sublinearity property follows from Cauchy-Schwarz inequality for energy measures \cite[(2.2)]{hinoenergymeasures}. 
		
		In order to show chain rule, consider $g \in \Lip(\mathbb{R})$ with $g(0)=0$. By homogeneity property, we may assume that $\LIP[g]= 1$. In this case,  the chain rule in Definition \ref{d:dir-space}   follows from the definition of energy measure,  and \cite[Proposition I.4.1.1, Theorem I.3.3.3]{BH91}.
		
		The locality property in  Definition \ref{d:dir-space}  follows immediately from \cite[Corollary 3.2.1]{FOT}. Finally, the lower semicontinuity property follows from Lemma \ref{l:suff-lsc} as every Hilbert space is reflexive.
	\end{proof}
	
	Unlike the case of Dirichlet structures, the energy measure need not be absolutely continuous with respect to the reference measure $\mu$. In fact, for Dirichlet forms corresponding to  diffusion processes on many fractals, the energy measures are singular to the reference measure \cite{Kus89, Kus93, Hino-sing, KM20}. Therefore, in order to define a suitable analogue of the carr\'e du champ matrix in \eqref{e:def-cdc-matrix}, we recall the notion of a minimal energy dominant measure. 
	\begin{definition}[{\cite[Definition 2.1]{hinoenergymeasures}}]\label{d:minimal-energy-dominant}
		Let $(X,d,\mu,\mathcal{E},\mathcal{F})$ be an MMD space and let $\Gamma(\cdot,\cdot)$ denote the corresponding energy measure. A $\sigma$-finite Borel measure
		$\nu$ on $X$ is called a \textbf{minimal energy-dominant measure}
		of $(\mathcal{E},\mathcal{F})$ if the following two conditions are satisfied:
		\begin{enumerate} 
			\item[(i)](Domination) For every $f \in \mathcal{F}$, we have $\Gamma(f,f) \ll \nu$.
			\item[(ii)](Minimality) If another $\sigma$-finite Borel measure $\nu'$
			on $X$ satisfies condition (i) with $\nu$ replaced
			by $\nu'$, then $\nu \ll \nu'$.
		\end{enumerate}
		The existence of minimal energy dominant measure is due to Nakao \cite[Lemma 2.2]{Nak} (see also \cite[Lemma 2.3 and Proposition 2.7]{hinoenergymeasures}).  It is immediate that any two minimal energy dominant measures are mutually absolutely continuous.
	\end{definition}

	Given a minimal energy dominant measure $\nu$ on an MMD space $(X,d,\mu,\mathcal{E},\mathcal{F})$ and $\bphi=(\phi_1,\ldots,\phi_n) \in \mathcal{F}^n$, we define the carr\'e du champ matrix $\gamma_\nu(\bphi)$ with respect to $\nu$ as the $n \times n$ matrix given by
	\begin{equation} \label{e:def-cdc-reg}
		\gamma_\nu(\bphi)(x):= \begin{bmatrix}
			\frac{d\Gamma(\phi_i,\phi_j)}{d\nu}(x)
		\end{bmatrix}_{1 \le i,j \le n}, \quad \mbox{for all $x \in X$.}
	\end{equation}

	We show that the notion of $p$-linearly independence (cf. Definition \ref{d:p-independent}) is equivalent to the invertibility of the carr\'e du champ matrix. This allows us to show the equivalence between the classical formulation on energy image density property and our formulation using $p$-independence.
	\begin{lemma} \label{l:invertibly-independence}
		Let $(X,d,\mu,\mathcal{E},\mathcal{F})$ be an MMD space, also viewed as a   regular, local $2$-Dirichlet space $(X,d,\mu,\mathcal{E}_2, \mathcal{F}_2, \Gamma_2)$ as given in Lemma \ref{lem:df-dirspace}. Let $n \in \mathbb{N}$, and   $\bphi=(\phi_1,\ldots,\phi_n) \in \mathcal{F}^n$. Let $\nu$ be a minimal energy dominant measure for $(X,d,\mu,\mathcal{E},\mathcal{F})$ and let $A$ be a Borel subset of $X$. Then the following are equivalent:
		\begin{enumerate}[(a)]
			\item $\det(\gamma_\nu(\bphi))>0$ for $\one_{\{\Tr (\gamma_\nu(\bphi))>0\}}\cdot \nu$-a.e.~$A$.
			\item $\bphi$ is $2$-linearly independent on $A$.
		\end{enumerate}
		Furthermore, $\bphi$ is $2$-linearly independent on the set $I_{\bphi}:=\{x \in X \vert \det(\gamma_\nu(\bphi)(x))>0\}$ and the measures
		$\one_{I_{\bphi}} \cdot \nu$ and $\one_{I_{\bphi}} \cdot \Lambda_{\bphi}$ are mutually absolutely continuous.
	\end{lemma}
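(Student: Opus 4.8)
The plan is to reduce both conditions (a) and (b) to a single pointwise statement about the symmetric positive semidefinite matrix $\gamma_\nu(\bphi)(x)$, by passing everywhere to Radon--Nikodym derivatives with respect to $\nu$. First I would record that the energy measure is bilinear, so that for every $\blambda=(\lambda_1,\dots,\lambda_n)\in\mathbb{S}^{n-1}$ we have the identity of signed measures
\[
\Gamma_2\langle\textstyle\sum_{i=1}^n\lambda_i\phi_i\rangle=\sum_{i,j=1}^n\lambda_i\lambda_j\,\Gamma(\phi_i,\phi_j);
\]
for general, possibly unbounded, $\phi_i\in\mathcal{F}$ this follows from the defining identity \eqref{e:EnergyMeas} together with \cite[Theorem 1.4.2]{FOT} by truncation and polarization. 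By the Cauchy--Schwarz inequality for energy measures \cite[(2.2)]{hinoenergymeasures} and the domination property of $\nu$ we have $\abs{\Gamma(\phi_i,\phi_j)}\ll\nu$, so taking $d/d\nu$ gives, for $\nu$-a.e.\ $x$,
\[
\frac{d\Gamma_2\langle\sum_{i=1}^n\lambda_i\phi_i\rangle}{d\nu}(x)=\blambda^{T}\gamma_\nu(\bphi)(x)\blambda.
\]
Running this over a countable dense set $S\subset\mathbb{S}^{n-1}$ and using that $\Gamma(g,g)\ge 0$ shows that $\gamma_\nu(\bphi)(x)$ is symmetric and positive semidefinite for $\nu$-a.e.\ $x$. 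Next I would note that $\Lambda_{\bphi}=\sum_i\Gamma(\phi_i,\phi_i)\ll\nu$ with $d\Lambda_{\bphi}/d\nu=\Tr(\gamma_\nu(\bphi))$, so $\Lambda_{\bphi}\bigl(\{\Tr(\gamma_\nu(\bphi))=0\}\bigr)=0$, while on $\{\Tr(\gamma_\nu(\bphi))>0\}$ the measures $\Lambda_{\bphi}$ and $\nu$ are mutually absolutely continuous and, by the chain rule for Radon--Nikodym derivatives,
\[
\frac{d\Gamma_2\langle\sum_{i=1}^n\lambda_i\phi_i\rangle}{d\Lambda_{\bphi}}(x)=\frac{\blambda^{T}\gamma_\nu(\bphi)(x)\blambda}{\Tr(\gamma_\nu(\bphi)(x))},\qquad\Lambda_{\bphi}\text{-a.e.\ }x.
\]

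Given these formulas, (a)$\Leftrightarrow$(b) is the elementary linear-algebra fact that for a symmetric positive semidefinite matrix $M$ one has $\inf_{\abs{\blambda}=1}\blambda^{T}M\blambda=\lambda_{\min}(M)$, that $\lambda_{\min}(M)>0\Leftrightarrow\det M>0\Leftrightarrow M$ is positive definite, and that $\det M>0$ forces $\Tr M>0$. Indeed, since $\Lambda_{\bphi}$ charges no part of $\{\Tr(\gamma_\nu(\bphi))=0\}$, condition (b) says precisely that $\inf_{\blambda\in S}\frac{d\Gamma_2\langle\sum_i\lambda_i\phi_i\rangle}{d\Lambda_{\bphi}}>0$ $\Lambda_{\bphi}$-a.e.\ on $A\cap\{\Tr(\gamma_\nu(\bphi))>0\}$, which by the last display and continuity of $\blambda\mapsto\blambda^{T}M\blambda$ (so that the infimum over $S$ equals that over $\mathbb{S}^{n-1}$) is equivalent to $\det(\gamma_\nu(\bphi)(x))>0$ there; and since $\Lambda_{\bphi}\sim\nu$ on $\{\Tr(\gamma_\nu(\bphi))>0\}$, this is in turn equivalent to $\det(\gamma_\nu(\bphi))>0$ for $\one_{\{\Tr(\gamma_\nu(\bphi))>0\}}\cdot\nu$-a.e.\ $x\in A$, i.e.\ (a). For the remaining assertions: $I_{\bphi}=\{\det(\gamma_\nu(\bphi))>0\}$ is contained, up to a $\nu$-null set, in $\{\Tr(\gamma_\nu(\bphi))>0\}$, so taking $A=I_{\bphi}$ makes (a) trivially true and hence, by the equivalence just established, $\bphi$ is $2$-independent on $I_{\bphi}$; and $\one_{I_{\bphi}}\cdot\Lambda_{\bphi}=\one_{I_{\bphi}}\Tr(\gamma_\nu(\bphi))\cdot\nu$ with $\Tr(\gamma_\nu(\bphi))>0$ on $I_{\bphi}$, so $\one_{I_{\bphi}}\cdot\nu$ and $\one_{I_{\bphi}}\cdot\Lambda_{\bphi}$ are mutually absolutely continuous.

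I do not expect a genuine obstacle here; the only steps needing a little care are the bilinearity/polarization formula for energy measures of unbounded functions in $\mathcal{F}$ (handled by truncating via $\{(-k)\vee(\phi_i\wedge k)\}_{k}$ and using \eqref{e:EnergyMeas}) and the Radon--Nikodym chain rule $d\mu_1/d\mu_2=(d\mu_1/d\nu)/(d\mu_2/d\nu)$, valid $\mu_2$-a.e.\ whenever $\mu_1\ll\mu_2\ll\nu$. The real content of the lemma is just that the lattice infimum in Definition~\ref{d:p-independent} computes the least eigenvalue of the carr\'e du champ matrix $\gamma_\nu(\bphi)$ normalised by its trace.
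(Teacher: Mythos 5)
Your argument is correct and follows essentially the same route as the paper's proof: pass to Radon--Nikodym derivatives with respect to $\nu$ to identify the lattice infimum as $\Tr(\gamma_\nu(\bphi))^{-1}$ times the smallest eigenvalue of the (symmetric positive-semidefinite) carr\'e du champ matrix, then use the equivalence $\det M>0\Leftrightarrow\lambda_{\min}(M)>0$ for $M\in\mathcal{S}_n^+$ together with $\Lambda_{\bphi}\sim\nu$ on $\{\Tr(\gamma_\nu(\bphi))>0\}$. The extra care you flag about truncation for unbounded $\phi_i$ and the Radon--Nikodym chain rule is a minor refinement of points the paper treats implicitly.
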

	\begin{proof}
		Note that for any $\blambda=(\lambda_1,\ldots,\lambda_n) \in \mathbb{R}^n$, by the bilinearity of energy measure, we have 
		\begin{equation} \label{e:emeas-bilinear}
			0 \le \frac{d\Gamma(\sum_{i=1}^n \lambda_i \phi_i, \sum_{i=1}^n \lambda_i \phi_i)}{d \nu}= \blambda^T \gamma_\nu(f) \blambda, \quad \mbox{$\nu$-a.e.,}
		\end{equation}
		where $\blambda$ is regarded as a column vector. This implies that 
		\begin{equation} \label{e:sn-valued}
			\nu \left(\{x \in X \vert \gamma_\nu(\bphi)(x) \not\in \mathcal{S}_n^+\}\right)=0,
		\end{equation}
		where  $\mathcal{S}_n^+$ denotes the set of symmetric and positive semidefinite $n \times n$-matrices. 
		Also note that 
		\begin{equation} \label{e:rn-trace}
			\frac{d \Lambda_{\bphi}}{d \nu} = \Tr(\gamma_\nu(\bphi)),
		\end{equation}
		where $\Tr(M)$ denotes the trace of a square matrix $M$.
		By \eqref{e:rn-trace}, the measure $\Lambda_{\bphi}$ is mutually absolutely continuous with respect to the measure $\one_{\{\Tr (\gamma_\nu(\bphi))>0\}}\cdot \nu$, and hence   
		\begin{align} \label{e:sing-det}
			\bigwedge_{\blambda \in \mathbb{S}^{n-1}}^{\Lambda_{\bphi}} \frac{d \Gamma_2 \langle \sum_{i=1}^n \lambda_i \phi_i \rangle}{d \Lambda_{\bphi}} &= \bigwedge_{\blambda \in \mathbb{S}^{n-1}}^{\one_{\{\Tr (\gamma_\nu(\bphi)(x))>0\}}\cdot \nu} \Tr (\gamma_\nu(\bphi))^{-1}  \blambda^T \gamma_\nu(f) \blambda\quad \mbox{(\eqref{e:emeas-bilinear}, \eqref{e:rn-trace})} \nonumber\\
			&= \Tr (\gamma_\nu(\bphi))^{-1}  \sigma_1\left( \gamma_\nu(\bphi)\right),
		\end{align}
		where $\sigma_1(M)$ denotes the smallest eigenvalue of $M \in \mathcal{S}_n^+$. 
		
		Note that for any $M \in\mathcal{S}_n^+$, we have $\det(M)>0$ if and only if $\sigma_1(M)>0$. 
		Hence (a) implies that   $\sigma_1\left( \gamma_\nu(\bphi)(x)\right)>0$ for $\one_{\{\Tr (\gamma_\nu(\bphi))>0\}}\cdot \nu$-a.e.~$x \in A$.  This along with \eqref{e:sing-det} shows the equivalence between (a) and (b).
		
		The claim that $\bphi$ is $2$-linearly independent on $I_{\bphi}$ follows immediately from the implication (a)$\implies$(b) as $\det(\gamma_\nu(\bphi))>0$ on $I_{\bphi}$.
		
		Note that 
		\begin{equation} \label{e:tr-det}
			\{M \in \mathcal{S}_n^+ >0 \vert \det(M)>0 \} \subset 	\{M \in \mathcal{S}_n^+ >0 \vert \Tr(M)>0 \}.
		\end{equation}
		Hence \eqref{e:sn-valued} and \eqref{e:tr-det} implies that 
		\begin{equation} \label{e:ac-last}
			\one_{I_{\bphi}} \cdot \nu= \one_{I_{\bphi}} \one_{\{\Tr (\gamma_\nu(\bphi))>0\}}\cdot \nu.
		\end{equation}
		Since the measure $\one_{\{\Tr (\gamma_\nu(\bphi))>0\}}\cdot \nu$ is mutually absolutely continuous to $\Lambda_{\bphi}= \Tr(\gamma_\nu(\bphi))\cdot\nu$, we conclude by \eqref{e:ac-last} that  $\one_{I_{\bphi}} \cdot \nu$ is mutually absolutely continuous with respect to $\one_{I_{\bphi}} \cdot \Lambda_{\bphi}$.
	\end{proof}
	
	\subsection{\texorpdfstring{$p$}{p}-Dirichlet structures}
	The framework of $p$-Dirichlet spaces from Definition \ref{d:dir-space} does not cover the Dirichlet structures of Definition \ref{d:dirstructure}, since the latter assumes no topology on the underlying space. We therefore introduce a parallel framework for Dirichlet structures without a topological assumption; with this framework the proof of the energy image density property for Dirichlet structures proceeds almost identically to the $p$-Dirichlet-space case.
	\begin{definition} \label{d:p-dir-structure} Let $p \in (1,\infty)$.
		We say that $(X,\mathcal{X},\mu,\mathcal{E}_p, \mathcal{F}_p, \Gamma_p)$ is a $p$-Dirichlet structure if it satisfies the following properties:
		\begin{enumerate}
			\item			\textbf{Probability space}: $(X,\mathcal{X},\mu)$ is a probability space and $\mathcal{F}_p$ is a subspace of $L^p(X,\mu)$ such that $\one_X \in \mathcal{F}_p$.
			\item \textbf{Completeness:} 
			$\mathcal{E}_p$ is a semi-norm on $\mathcal{F}_p$ such that $\mathcal{F}_p$ is a Banach space when equipped with the norm $\norm{f}_{\mathcal{F}_p}=(\norm{f}_{L^p}^p+\mathcal{E}_p(f))^{1/p}$.
			\item \textbf{Homogeneity:} For all $f\in \mathcal{F}_p$ there exists a finite non-negative Borel measure $\Gamma_p \langle f \rangle$ on $X$ such that $\Gamma_p \langle f \rangle(X)= \mathcal{E}_p(f)$   and for all $\lambda \in \mathbb{R}$
			\[
			\Gamma_p\langle \lambda f\rangle = |\lambda|^p \Gamma_p\langle f\rangle.
			\]
			\item \textbf{Absolute continuity of energy measure}:  For all $f \in \mathcal{F}_p$, we have $\Gamma_p \langle f \rangle \ll \mu$.
			\item \textbf{Sublinearity:} For every $f,g\in \mathcal{F}_p$ and $A\subset X$, 
			\[
			\Gamma_p\langle f+g\rangle(A)^{\frac{1}{p}}\le \Gamma_p\langle f\rangle(A)^{\frac{1}{p}}+\Gamma_p\langle g\rangle(A)^{\frac{1}{p}}
			\]
			\item \textbf{Chain rule:} For all $f\in \mathcal{F}_p$, $g\in \Lip(\mathbb{R})$ with $g(0)=0$,  we have $g \circ f \in \mathcal{F}_p$ and
			\[
			\Gamma_p\langle g\circ f\rangle  \le \left((\Lip_a[g] \circ f)(x)\right)^p   \,\Gamma_p\langle f\rangle.
			\]
			Note that the measure $\left((\Lip_a[g] \circ f)(x)\right)^p   \,\Gamma_p\langle f\rangle$ is well-defined due to the absolute continuity of energy measure; that is, this measure does not depend on the choice of $f$ from the $\mu$-a.e.~equivalence class.
			\item \textbf{Locality:} For every $f\in \mathcal{F}_p$ and any measurable  set $A\in \mathcal{X}$, if $f|_A=c$ $\mu$-a.e., then $\Gamma_p\langle f\rangle(A)=0$. In particular, $\Gamma_p \langle \one_X \rangle$ is the zero measure.
			\item \textbf{Weak lower semicontinuity:} For every $f\in L^p(X)$, and for any sequence of functions $(f_i)_{i \in \mathbb{N}}\in \mathcal{F}_p$ such that $f_n\to f \in L^p(X)$ with $\sup_{i\in \mathbb{N}} \mathcal{E}_p(f_i)<\infty$, then $f\in \mathcal{F}_p$ and
			\[
			\Gamma_p\langle f \rangle(A)\le \liminf_{i\to \infty} \Gamma_p\langle f_i \rangle(A)
			\]
			for every measurable set $A\subset X$.
		\end{enumerate}
	\end{definition}
	\begin{remark} \label{r:diff}
		Note that the chain rule in Definition \ref{d:p-dir-structure} is stronger than the corresponding assumption in the case of $p$-Dirichlet space (Definition \ref{d:dir-space}-(5)). The chain rule in Definition \ref{d:p-dir-structure} is essentially same as the conclusions in Lemma \ref{lem:contlocal} and Proposition \ref{prop:quasicont-conseq}-(2) without any restrictions such as continuity and quasi-continuity. This is made possible thanks to the absolute continuity assumption.
		
		Similarly, the version of locality in Definition \ref{d:p-dir-structure} holds for all measurable sets, unlike the corresponding property in Definition \ref{d:dir-space} which has the additional restriction that the set $A$ is open.
	\end{remark}
	
	We verify that the above definition contains Dirichlet structure in the sense of Definition \ref{d:dirstructure}. 
	
	\begin{lemma}\label{lem:dirichlet} Let  $(X,\mathcal{X}, \mu,\mathcal{F},\mathcal{E})$ be a Dirichlet structure (in the sense of Definition \ref{d:dirstructure}) with carr\'e du champ $\gamma: \mathcal{F} \times \mathbb{F} \to L^1(X,\mu)$ as characterized by \eqref{e:def-carreduchamp}. 
		Let $\mathcal{F}_2:=\mathcal{F}$, 	 $\mathcal{E}_2(f):= \mathcal{E}(f,f)$ for all $f \in \mathcal{F}_2$  and 
		$\Gamma_2\langle f \rangle := \gamma (f,f) \cdot \mu$ denote the corresponding energy measure. Then  $(X,\mathcal{X}, \mu,\mathcal{E}_2, \mathcal{F}_2, \Gamma_2)$ is a $2$-Dirichlet structure in the sense of Definition \ref{d:p-dir-structure}. 		
	\end{lemma}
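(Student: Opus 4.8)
The plan is to verify, one at a time, the eight axioms of Definition \ref{d:p-dir-structure} for the data $\mathcal{F}_2=\mathcal{F}$, $\mathcal{E}_2(f)=\mathcal{E}(f,f)$, $\Gamma_2\langle f\rangle=\gamma(f,f)\cdot\mu$; most are routine and only the chain rule (axiom (6)) and locality (axiom (7)) require genuine work. Axiom (1) is contained in Definition \ref{d:dirstructure}-(i),(iv) (in particular $\one\in\mathcal{F}$). Axiom (2) holds because $\norm{f}_{\mathcal{F}_2}^2=\norm{f}_{L^2(\mu)}^2+\mathcal{E}(f,f)=\mathcal{E}_1(f,f)$, so $(\mathcal{F}_2,\norm{\cdot}_{\mathcal{F}_2})$ is exactly the Hilbert space of Definition \ref{d:dirstructure}-(ii), hence a Banach space, and $\mathcal{E}(\cdot,\cdot)^{1/2}$ is a seminorm by non-negativity, bilinearity, and Cauchy--Schwarz for $\mathcal{E}$. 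Axiom (3) follows from the bilinearity of $\gamma$, which gives $\Gamma_2\langle\lambda f\rangle=\lambda^2\Gamma_2\langle f\rangle$, together with the identity $\int_X\gamma(f,f)\,d\mu=\mathcal{E}(f,f)$ (a consequence of \eqref{e:def-carreduchamp} with $h=\one$ and $\mathcal{E}(\one,\cdot)\equiv 0$, extended by density; see \cite[Proposition I.4.1.3]{BH91}), and axiom (4) is immediate since $\gamma(f,f)\in L^1(\mu)$. For axiom (5), the pointwise Cauchy--Schwarz inequality $\abs{\gamma(f,g)}\le\gamma(f,f)^{1/2}\gamma(g,g)^{1/2}$ $\mu$-a.e.\ (see \cite[Proposition I.4.1.3]{BH91}) yields $\gamma(f+g,f+g)^{1/2}\le\gamma(f,f)^{1/2}+\gamma(g,g)^{1/2}$ $\mu$-a.e., and integrating over $A$ and invoking Minkowski's inequality in $L^2(\one_A\cdot\mu)$ gives sublinearity. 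Finally, for axiom (8) (weak lower semicontinuity) I would run the proof of Lemma \ref{l:suff-lsc} verbatim: that proof uses only completeness, homogeneity, sublinearity, and reflexivity, and $(\mathcal{F},\mathcal{E}_1)$, being a Hilbert space, is reflexive (one simply replaces ``Borel set'' by ``measurable set'' throughout).

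The remaining two axioms I plan to derive from the $C^1$ chain rule for strongly local Dirichlet structures, $\gamma(G\circ f,G\circ f)=(G'\circ f)^2\gamma(f,f)$ $\mu$-a.e.\ for $G\in C^1(\mathbb{R})\cap\Lip(\mathbb{R})$ with $G(0)=0$ \cite[Corollary I.6.1.3]{BH91} --- this is where the strong locality hypothesis Definition \ref{d:dirstructure}-(vi) enters --- together with axiom (8) just established. For locality (7): given $f\in\mathcal{F}$ and measurable $A$ with $f=c$ $\mu$-a.e.\ on $A$, reduce to $c=0$ using that $\gamma(\one,\one)=0$ $\mu$-a.e.\ (as $\int\gamma(\one,\one)\,d\mu=\mathcal{E}(\one,\one)=0$) and hence $\gamma(f,\one)=0$ $\mu$-a.e.\ by Cauchy--Schwarz, so $\gamma(f-c\one,f-c\one)=\gamma(f,f)$ $\mu$-a.e. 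Then pick $C^1$, $1$-Lipschitz functions $\psi_n$ with $\psi_n(0)=0$, $\psi_n'\equiv 0$ on $(-1/n,1/n)$, and $\psi_n(t)\to t$; the $C^1$ chain rule gives $\psi_n\circ f\in\mathcal{F}$ with $\Gamma_2\langle\psi_n\circ f\rangle(A)=\int_A(\psi_n'\circ f)^2\,d\Gamma_2\langle f\rangle=0$ since $\psi_n'(0)=0$ and $f=0$ $\mu$-a.e.\ on $A$, while $\psi_n\circ f\to f$ in $L^2(\mu)$ with $\sup_n\mathcal{E}(\psi_n\circ f)\le\mathcal{E}(f,f)$, so axiom (8) forces $\Gamma_2\langle f\rangle(A)\le\liminf_n\Gamma_2\langle\psi_n\circ f\rangle(A)=0$.

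For the chain rule (6), which must involve the asymptotic Lipschitz constant $\Lip_a[g]$ rather than $\LIP[g]$, I would mollify. Given $L$-Lipschitz $g$ with $g(0)=0$, put $g_\epsilon=g\ast\rho_\epsilon$ and $\widetilde g_\epsilon=g_\epsilon-g_\epsilon(0)$; these are smooth, $L$-Lipschitz, vanish at $0$, and satisfy $\abs{g_\epsilon'(y)}\le\LIP\bigl[g|_{B(y,2\epsilon)}\bigr]$ (by Rademacher's theorem, since $\abs{g'(z)}\le\LIP[g|_{B(y,2\epsilon)}]$ for a.e.\ $z\in B(y,\epsilon)$). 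The $C^1$ chain rule gives $\Gamma_2\langle\widetilde g_\epsilon\circ f\rangle=(g_\epsilon'\circ f)^2\,\Gamma_2\langle f\rangle\le(\LIP[g|_{B(f(\cdot),2\epsilon)}])^2\,\Gamma_2\langle f\rangle$, and since $\widetilde g_\epsilon\circ f\to g\circ f$ in $L^2(\mu)$ (because $\sup_y\abs{\widetilde g_\epsilon(y)-g(y)}\le 2L\epsilon$ and $\mu$ is finite) with $\sup_\epsilon\mathcal{E}(\widetilde g_\epsilon\circ f)\le L^2\mathcal{E}(f,f)$, axiom (8) yields $g\circ f\in\mathcal{F}$ and, for every measurable $A$,
\[
\Gamma_2\langle g\circ f\rangle(A)\ \le\ \liminf_{n\to\infty}\int_A\bigl(\LIP[g|_{B(f(x),2/n)}]\bigr)^2\,d\Gamma_2\langle f\rangle(x)\ =\ \int_A(\Lip_a[g]\circ f)^2\,d\Gamma_2\langle f\rangle,
\]
the last equality by dominated convergence, since $\LIP[g|_{B(y,2/n)}]\downarrow\Lip_a[g](y)$ and $\LIP[g]^2\gamma(f,f)\in L^1(\mu)$; this is exactly the asserted inequality $\Gamma_2\langle g\circ f\rangle\le(\Lip_a[g]\circ f)^2\,\Gamma_2\langle f\rangle$, and the measure on the right is unambiguous thanks to axiom (4). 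I expect the main obstacle to be precisely this $\Lip_a$-refinement in axiom (6): as the remark after Lemma \ref{lem:multidimcontraction} points out, the chain rule with the asymptotic Lipschitz constant is not available in the literature even for classical Dirichlet forms, so one cannot simply cite it and must instead carry the mollification-plus-lower-semicontinuity argument through, verifying carefully the derivative bound $\abs{g_\epsilon'(y)}\le\LIP[g|_{B(y,2\epsilon)}]$, its monotone degeneration to $\Lip_a[g](y)$, and the compatibility of all the limiting operations with taking energy measures of arbitrary measurable sets.
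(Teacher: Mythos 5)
Your verification is correct, but for axioms (6) and (7) you take a genuinely different and more self-contained route than the paper. The paper simply cites the Lipschitz functional calculus of Bouleau--Hirsch \cite[Theorem I.7.1.1, Corollary I.7.1.2]{BH91} for both the chain rule and locality, and then combines it with the elementary observation that $|g'|\le\Lip_a[g]$ holds $\mathcal{L}_1$-a.e.\ to pass from $g'$ to the asymptotic Lipschitz constant. You instead bypass the Lipschitz chain rule entirely: for locality you reduce to $c=0$ and apply the $C^1$ chain rule to truncations $\psi_n$ vanishing near zero, while for the chain rule you mollify $g$, apply the $C^1$ result \cite[Corollary I.6.1.3]{BH91}, and pass to the limit via weak lower semicontinuity. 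What each approach buys: the paper's route is short but leans on a result (\cite[Theorem I.7.1.1]{BH91}) that itself depends on the scalar energy image density property (since $g'$ is only defined $\mathcal{L}_1$-a.e.\ and one must know $f_*(\Gamma_2\langle f\rangle)\ll\mathcal{L}_1$ to make $(g'\circ f)$ meaningful), whereas your derivative bound $|g_\epsilon'(y)|\le\LIP[g|_{B(y,2\epsilon)}]$ is pointwise in $y$, so you never need to evaluate a Rademacher derivative along the image of $f$; your argument is essentially the same approximation scheme the paper uses later (Lemma \ref{lem:contlocal}, Proposition \ref{prop:quasicont-conseq}) in the topological $p$-Dirichlet-space setting, where the Bouleau--Hirsch citation is unavailable. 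One small correction of attribution: the remark you invoke after Lemma \ref{lem:multidimcontraction} concerns novelty of the \emph{multi-dimensional} estimate \eqref{e:c-rule2}; the one-dimensional chain rule required for axiom (6) is in fact available in \cite{BH91} (as the paper's own proof shows by citing it), so your route is an alternative rather than a necessity here.

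Two things to make sure are stated, both of which your sketch implicitly handles but are worth flagging. First, there is a potential ordering issue: you prove axiom (8) via the Mazur argument of Lemma \ref{l:suff-lsc} and then use axiom (8) to establish axioms (6) and (7); this is fine because the proof of Lemma \ref{l:suff-lsc} uses only completeness, homogeneity, sublinearity, and reflexivity, and makes no appeal to the chain rule or locality, but you should say so explicitly since the hypothesis of Lemma \ref{l:suff-lsc} nominally asks for properties (1)--(6). Second, in the dominated convergence step at the end of the chain-rule argument you should observe that the maps $y\mapsto\LIP[g|_{B(y,2/n)}]$ decrease pointwise to $\Lip_a[g](y)$ (monotone in $n$) and are dominated by the constant $\LIP[g]$, and that the dominating measure is $\gamma(f,f)\mu$ restricted to $A$, which is finite; with these in place the exchange of $\liminf$ and integral is justified.
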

	\begin{proof}
		Properties (1), (2), and (4) of Definition \ref{d:p-dir-structure} are immediate. The homogeneity property follows from the bilinearity of $\gamma$ \cite[Proposition I.4.1.3]{BH91}. The sublinearity property follows from Cauchy-Schwarz inequality for energy measure similar to the proof of Lemma \ref{lem:df-dirspace}.
		
		The  chain rule in Definition \ref{d:p-dir-structure} for Lipschitz functions follows from \cite[Corollary I.7.1.2, Theorem I.7.1.1.]{BH91}, Radamacher's theorem on almost everywhere differentiability of Lipschitz functions, and using the fact that for any $g \in \Lip(\mathbb{R})$, we have $\abs{g(x)}' \le \Lip_a[g](x)$ for $\mathcal{L}_1$-a.e.~$x \in \mathbb{R}$, where $g'$ is the derivative of $g$ $\mathcal{L}_1$-a.e. 
		
		The locality property (Definition \ref{d:p-dir-structure}-(7)) follows from \cite[Theorem I.7.1.1]{BH91}.  Finally, the weak lower semicontinuity (Definition \ref{d:p-dir-structure}-(8)) follows from the same argument as given in \cite[Proposition 4.10]{Kajino-Shimizu} or Lemma \ref{l:suff-lsc}.
	\end{proof}

	Since the chain rule and locality property for $p$-Dirichlet structures is valid for all functions in $\mathcal{F}_p$ and for all measurable sets, the same argument in the proof of Lemma \ref{lem:multidimcontraction} yields the following version of Proposition \ref{prop:quasicont-conseq}-(3).
	
	\begin{proposition}\label{prop:multidimcontraction-structure} Let $(X,d,\mu,\mathcal{E}_p, \mathcal{F}_p, \Gamma_p)$ be a $p$-Dirichlet structure and let $n \in \mathbb{N}$.
		For every $g\in \Lip(\mathbb{R}^n)$ with $g(0)=0$ and every $f=(f_1,\dots, f_n) \in \mathcal{F}_p^n$  we have   $g \circ f \in \mathcal{F}_p$ and 
		\begin{equation} \label{e:c-rule-ds}
			\Gamma_p\langle g \circ f\rangle \le \left(1 \vee n^{(p-2)/2}\right) (\Lip_a[g] \circ f)^p \cdot \sum_{i=1}^n \Gamma_p\langle f_i\rangle.
		\end{equation}
	\end{proposition}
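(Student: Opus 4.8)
The plan is to repeat the proof of Lemma~\ref{lem:multidimcontraction} almost verbatim. The one structural difference is that a $p$-Dirichlet structure carries no topology on $X$, so the localization in the target performed there via continuity of $f$ (as in the proof of Lemma~\ref{lem:contlocal}) must be replaced by a localization inside $X$ using preimages $f^{-1}(B)$ of small balls $B\subset\mathbb{R}^n$. This replacement is legitimate precisely because, for $p$-Dirichlet structures, both the chain rule (Definition~\ref{d:p-dir-structure}-(6)) and locality (Definition~\ref{d:p-dir-structure}-(7)) hold for arbitrary measurable sets and without continuity hypotheses.

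First I would establish the weaker bound $\Gamma_p\langle g\circ f\rangle\le(1\vee n^{(p-2)/2})\LIP[g]^p\cdot\sum_{i=1}^n\Gamma_p\langle f_i\rangle$, together with $g\circ f\in\mathcal{F}_p$, i.e.\ the analogue of~\eqref{e:c-rule1}. For linear $g(x)=\langle\blambda,x\rangle$ this follows from homogeneity, sublinearity, H\"older's inequality and $\|\blambda\|_{p/(p-1)}^p\le(1\vee n^{(p-2)/2})\|\blambda\|_2^p$, exactly as in~\eqref{e:crule1}. For piecewise linear $g$ with $g(0)=0$, membership $g\circ f\in\mathcal{F}_p$ follows from the closure of $\mathcal{F}_p$ under $(u,v)\mapsto\min(u,v+a)$ with $a\ge0$ — which uses the chain rule applied to the $1$-Lipschitz map $t\mapsto|t-a|-a$ and sublinearity — hence under finite lattice infima as in~\eqref{e:cr0}, combined with the representation of $g$ as an iterated maximum of minima of affine functions. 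Writing $\mathbb{R}^n$ as a finite union of closed cells $A_i$ with non-empty interior on which $g$ is affine, the measurable sets $f^{-1}(A_i)$ cover $X$ and, after disjointifying, $g\circ f$ agrees on each of them with a linear combination of $f_1,\dots,f_n$ up to an additive constant; the measurable-set locality of Definition~\ref{d:p-dir-structure}-(7) (used as in Remark~\ref{r:local-subadditive}-(i), now for measurable rather than open sets) then reduces the estimate on each cell to the linear case, and $\LIP[g|_{A_i}]\le\LIP[g]$ together with summing over the finitely many cells gives the bound for piecewise linear $g$. Finally, for general $g\in\Lip(\mathbb{R}^n)$ with $g(0)=0$, take the piecewise linear approximants $g_k$ from the proof of Lemma~\ref{lem:multidimcontraction} (with $\LIP[g_k]\le\LIP[g]$, $g_k(0)=0$, and $g_k\to g$ pointwise); since $|g_k\circ f|\le\LIP[g]\sum_i|f_i|\in L^p$, dominated convergence gives $g_k\circ f\to g\circ f$ in $L^p$, while $\sup_k\mathcal{E}_p(g_k\circ f)<\infty$ follows from the piecewise linear case, so the weak lower semicontinuity in Definition~\ref{d:p-dir-structure}-(8) yields $g\circ f\in\mathcal{F}_p$ and the displayed bound.

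To upgrade this to~\eqref{e:c-rule-ds} I would localize in $X$. Fix $\epsilon>0$ and a countable dense set $\{z_m\}_{m\in\mathbb{N}}\subset\mathbb{R}^n$; the balls $B(z_m,\epsilon/2)$ cover $\mathbb{R}^n$, so the sets $C_m:=f^{-1}(B(z_m,\epsilon/2))\setminus\bigcup_{j<m}f^{-1}(B(z_j,\epsilon/2))$ form a measurable partition of $X$. For each $m$, McShane's extension theorem (as used in the proof of Lemma~\ref{lem:contlocal}) provides $\widetilde g_m\in\Lip(\mathbb{R}^n)$ with $\widetilde g_m=g$ on $\overline{B}(z_m,\epsilon/2)$ and $\LIP[\widetilde g_m]=\LIP[g|_{\overline{B}(z_m,\epsilon/2)}]$; set $g_m:=\widetilde g_m-\widetilde g_m(0)$. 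Since $g\circ f-g_m\circ f$ is constant on $C_m$, measurable-set locality gives $\Gamma_p\langle g\circ f\rangle|_{C_m}=\Gamma_p\langle g_m\circ f\rangle|_{C_m}$, and the bound just established, applied to $g_m$, controls this by $(1\vee n^{(p-2)/2})\LIP[g|_{\overline{B}(z_m,\epsilon/2)}]^p\sum_i\Gamma_p\langle f_i\rangle|_{C_m}$. Because $x\in C_m$ forces $\overline{B}(z_m,\epsilon/2)\subset B(f(x),\epsilon)$, hence $\LIP[g|_{\overline{B}(z_m,\epsilon/2)}]\le\LIP[g|_{B(f(x),\epsilon)}]$, summing over $m$ gives, for every measurable $A\subset X$,
\[
\Gamma_p\langle g\circ f\rangle(A)\le\left(1\vee n^{(p-2)/2}\right)\int_A\LIP[g|_{B(f(x),\epsilon)}]^p\,d\Bigl(\sum_{i=1}^n\Gamma_p\langle f_i\rangle\Bigr)(x).
\]
Letting $\epsilon\downarrow0$ and applying dominated convergence (with $\LIP[g|_{B(f(x),\epsilon)}]\downarrow\Lip_a[g](f(x))$ and dominating function $\LIP[g]^p$, integrable against the finite measure $\sum_i\Gamma_p\langle f_i\rangle$) yields~\eqref{e:c-rule-ds}.

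I expect the only genuinely new point, and hence the main obstacle, to be the measurable-partition localization of the last paragraph, which substitutes for the continuity-based argument of Lemma~\ref{lem:contlocal}; the rest is a transcription of the proof of Lemma~\ref{lem:multidimcontraction} that, thanks to the measurable-set forms of locality and the chain rule in Definition~\ref{d:p-dir-structure}, is in fact somewhat simpler, requiring no quasicontinuous representatives.
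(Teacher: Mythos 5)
Your proof is correct and follows the route the paper indicates in its one-line justification: transcribe the proof of Lemma~\ref{lem:multidimcontraction}, exploiting that for $p$-Dirichlet structures the chain rule and locality hold for all measurable sets with no continuity hypothesis. You have correctly identified and filled in the one genuine adaptation the paper leaves implicit — since $X$ carries no topology, the ball covering of $X$ via continuity of $f$ used in Lemma~\ref{lem:contlocal} must be replaced by the measurable partition $\{C_m\}$ built from preimages $f^{-1}(B(z_m,\epsilon/2))$ of a countable ball cover of $\mathbb{R}^n$, with measurable-set locality doing the work that topological localization did before.
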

	The same argument in the proof of  Lemma \ref{l:invertibly-independence} yields the following analogous result for $p$-Dirichlet structures, where the definition of $p$-independence for $p$-Dirichlet structures is as given in Definition \ref{d:p-independent}.
	\begin{lemma} \label{l:invertibly-independence-dirstructure}
		Let  $(X,\mathcal{X}, \mu,\mathcal{F},\mathcal{E})$ be a Dirichlet structure (in the sense of Definition \ref{d:dirstructure}) with carr\'e du champ $\gamma:\mathcal{F} \times \mathcal{F} \to L^1(\mu)$. Consider this also as a $2$-Dirichlet structure  $(X,\mathcal{X}, \mu,\mathcal{E}_2, \mathcal{F}_2, \Gamma_2)$  in the sense of Definition \ref{d:p-dir-structure} as given by Lemma \ref{lem:dirichlet}.
		Let $n \in \mathbb{N}$,     $\bphi=(\phi_1,\ldots,\phi_n) \in \mathcal{F}_2^n$ and let $\gamma(\bphi)$ denote the associated carr\'e du champ matrix as defined in \eqref{e:def-cdc-matrix}.  Then the following are equivalent:
		\begin{enumerate}[(a)]
			\item $\det(\gamma(\bphi))>0$ for $\one_{\{\Tr (\gamma(\bphi))>0\}}\cdot \mu$-a.e.~$A$.
			\item $\bphi$ is $2$-linearly independent on $A$.
		\end{enumerate}
		Furthermore, $\bphi$ is $2$-linearly independent on the set $I_{\bphi}:=\{x \in X \vert \det(\gamma(\bphi)(x))>0\}$ and the measures
		$\one_{I_{\bphi}} \cdot \nu$ and $\one_{I_{\bphi}} \cdot \Lambda_{\bphi}$ are mutually absolutely continuous.
	\end{lemma}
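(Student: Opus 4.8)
The plan is to run the proof of Lemma~\ref{l:invertibly-independence} essentially verbatim, with the reference measure $\mu$ of the Dirichlet structure taking the place of the minimal energy dominant measure $\nu$ used there (and, correspondingly, the last sentence of the statement read with $\one_{I_{\bphi}}\cdot\mu$ in place of $\one_{I_{\bphi}}\cdot\nu$). What legitimizes this substitution is that, by Lemma~\ref{lem:dirichlet}, $(X,\mathcal{X},\mu,\mathcal{E}_2,\mathcal{F}_2,\Gamma_2)$ is a $2$-Dirichlet structure, so by property~(4) of Definition~\ref{d:p-dir-structure} every energy measure satisfies $\Gamma_2\langle f\rangle = \gamma(f,f)\cdot\mu \ll \mu$ with $\frac{d\Gamma_2\langle f\rangle}{d\mu} = \gamma(f,f)$, and by bilinearity of the carr\'e du champ $\frac{d\Gamma_2\langle \sum_{i=1}^n\lambda_i\phi_i\rangle}{d\mu} = \blambda^T\gamma(\bphi)\blambda$ for every $\blambda = (\lambda_1,\dots,\lambda_n)\in\mathbb{R}^n$, regarded as a column vector. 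Thus $\mu$ is an energy-dominant measure; and the key point is that the proof of Lemma~\ref{l:invertibly-independence} never uses the \emph{minimality} of $\nu$, only the domination property, so it transfers without change.

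First I would record the two algebraic facts used: taking $\blambda$ over a countable dense subset of $\mathbb{S}^{n-1}$ in the inequality $\blambda^T\gamma(\bphi)\blambda \ge 0$ shows that $\gamma(\bphi)(x)$ is symmetric positive semidefinite for $\mu$-a.e.\ $x$, and summing the diagonal entries gives $\frac{d\Lambda_{\bphi}}{d\mu} = \sum_{i=1}^n\gamma(\phi_i,\phi_i) = \Tr(\gamma(\bphi))$, so that $\Lambda_{\bphi}$ and $\one_{\{\Tr(\gamma(\bphi))>0\}}\cdot\mu$ are mutually absolutely continuous. Then, using the explicit description of the lattice infimum over a countable dense subset of $\mathbb{S}^{n-1}$ recorded just after Definition~\ref{d:p-independent} and changing measures from $\mu$ to $\Lambda_{\bphi}$, one obtains exactly as in \eqref{e:sing-det} the identity
\[
\bigwedge^{\Lambda_{\bphi}}_{\blambda\in\mathbb{S}^{n-1}} \frac{d\Gamma_2\langle\sum_{i=1}^n\lambda_i\phi_i\rangle}{d\Lambda_{\bphi}} = \Tr(\gamma(\bphi))^{-1}\,\sigma_1(\gamma(\bphi)) \qquad \Lambda_{\bphi}\text{-a.e.},
\]
where $\sigma_1(M)$ denotes the smallest eigenvalue of $M\in\mathcal{S}_n^+$. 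Since $\det(M)>0$ if and only if $\sigma_1(M)>0$ for $M\in\mathcal{S}_n^+$, condition~(a) — positivity of $\det(\gamma(\bphi))$ at $\one_{\{\Tr(\gamma(\bphi))>0\}}\cdot\mu$-a.e.\ point of $A$ — is equivalent to positivity of the above lattice infimum $\Lambda_{\bphi}$-a.e.\ on $A$, which is condition~(b).

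For the remaining assertions I would take $A = I_{\bphi}$ to get $2$-independence of $\bphi$ on $I_{\bphi}$ (since $\det(\gamma(\bphi))>0$ there), and then argue as in \eqref{e:ac-last}: from the inclusion $\{M\in\mathcal{S}_n^+ : \det(M)>0\}\subset\{M\in\mathcal{S}_n^+ : \Tr(M)>0\}$ together with the fact that $\gamma(\bphi)$ takes values in $\mathcal{S}_n^+$ $\mu$-a.e., one gets $\one_{I_{\bphi}}\cdot\mu = \one_{I_{\bphi}}\one_{\{\Tr(\gamma(\bphi))>0\}}\cdot\mu$, and mutual absolute continuity of the right-hand side with $\Lambda_{\bphi} = \Tr(\gamma(\bphi))\cdot\mu$ yields that $\one_{I_{\bphi}}\cdot\mu$ and $\one_{I_{\bphi}}\cdot\Lambda_{\bphi}$ are mutually absolutely continuous. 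I do not anticipate any real difficulty here; the single thing worth verifying carefully is that the cited argument uses only domination and not minimality of the dominant measure, so that no analogue of a minimal energy dominant measure has to be constructed in the Dirichlet-structure setting, where $\mu$ itself plays that role.
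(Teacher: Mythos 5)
Your proposal is correct and matches the paper's intended approach exactly: the paper simply states that ``the same argument in the proof of Lemma~\ref{l:invertibly-independence} yields'' this result, and you have spelled out precisely why the substitution $\nu\to\mu$ is legitimate (namely, that $\mu$ is itself energy-dominant in the Dirichlet-structure setting and minimality is never used). Your observation that the final clause of the statement should be read with $\one_{I_{\bphi}}\cdot\mu$ in place of $\one_{I_{\bphi}}\cdot\nu$ correctly identifies what is evidently a copy-paste slip in the statement carried over from Lemma~\ref{l:invertibly-independence}.
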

	
	\subsection{Energy image density properties in different frameworks}	
	Next, we formulate our results concerning the energy image density property in different frameworks: regular $p$-Dirichlet spaces, MMD spaces (regular, strongly local Dirichlet forms), and $p$-Dirichlet structures. We will prove these in \textsection\ref{s:proofs} after developing relevant tools in  \textsection\ref{s:measures}.
	
	The following theorem establishes energy image density property for $p$-Dirichlet spaces.
	\begin{theorem}\label{thm:eid-dirspace}
		Assume that $(X,d,\mu, \mathcal{E}_p, \mathcal{F}_p, \Gamma_p)$  is a regular local $p$-Dirichlet space and let $n \in \mathbb{N}$. If $\bphi=(\phi_1,\dots, \phi_n):X\to \mathbb{R}^n, n \in \mathbb{N}$ is such that each $\phi_i\in \mathcal{F}_p$ is quasicontinuous for $i=1,\dots, n$, and if $\bphi$ is   $p$-independent  on a Borel set $A\subset X$, then 
		\[
		\bphi_*(\one_A\cdot \Lambda_{\bphi})\ll\mathcal{L}_n,
		\]
		where $\Lambda_{\bphi}=\sum_{i=1}^d \Gamma_p\langle \phi_i\rangle$.
	\end{theorem}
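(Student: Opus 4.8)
\emph{Strategy.} The plan is to argue by contradiction, following the roadmap of \textsection\ref{ss:proof-outline}; the two soft inputs are the weak lower semicontinuity axiom of Definition~\ref{d:dir-space}-(7) and the Lipschitz chain rule of Proposition~\ref{prop:quasicont-conseq}-(3). First I would pass to a quantitative form of $p$-independence. By Lemma~\ref{lem:essinf} there are Borel sets $A_i\subset X$ (Borel after modifying by a $\Lambda_{\bphi}$-null set) with $\Lambda_{\bphi}\bigl(A\setminus\bigcup_i A_i\bigr)=0$ on which $d\Gamma_p\langle\sum_j\lambda_j\phi_j\rangle/d\Lambda_{\bphi}\ge 1/i$, $\Lambda_{\bphi}$-a.e.\ and simultaneously for every $\blambda\in\mathbb{S}^{n-1}$ — here one uses $\Gamma_p\langle\sum_j\lambda_j\phi_j\rangle\ll\Lambda_{\bphi}$ (sublinearity) and the a.e.\ Lipschitz dependence of $\bigl(d\Gamma_p\langle\sum_j\lambda_j\phi_j\rangle/d\Lambda_{\bphi}\bigr)^{1/p}$ on $\blambda$ over a countable dense subset of $\mathbb{S}^{n-1}$, as recorded after Definition~\ref{d:p-independent}. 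Since $\bphi_*(\one_A\Lambda_{\bphi})\le\sum_i\bphi_*(\one_{A_i}\Lambda_{\bphi})$ and a countable sum of measures absolutely continuous w.r.t.\ $\mathcal{L}_n$ is again absolutely continuous, it suffices to treat one $A_i$. Relabelling, I may assume there is $\delta>0$ with $\Gamma_p\langle\sum_{j}\lambda_j\phi_j\rangle(E)\ge\delta\,\Lambda_{\bphi}(E)$ for every Borel $E\subset A$ and every $\blambda\in\mathbb{S}^{n-1}$. Set $\nu:=\bphi_*(\one_A\Lambda_{\bphi})$, a finite (hence Radon) Borel measure on $\mathbb{R}^n$ since $\Lambda_{\bphi}(X)=\sum_i\mathcal{E}_p(\phi_i)<\infty$. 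Fix $\epsilon>0$ with $(1\vee n^{(p-2)/2})\epsilon^p<\delta$, and suppose for contradiction that $\nu\not\ll\mathcal{L}_n$; by inner regularity there is a compact $K\subset\mathbb{R}^n$ with $\mathcal{L}_n(K)=0$ and $\nu(K)>0$.

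Because $\one_K\nu\perp\mathcal{L}_n$, combining the De Philippis--Rindler theorem (Theorem~\ref{t:dr-current}, in the guise forcing $\dim T^{AM}_{\nu}\le n-1$ on $K$) with the Alberti--Marchese link between decomposability bundles and cone-null sets produces, for this $\epsilon$, a compact $B\subset K$ with $\nu(B)>0$ and a unit vector $\blambda\in\mathbb{S}^{n-1}$ such that $B$ is $(\blambda,\epsilon)$-cone-null in the sense of Definition~\ref{d:cone-null} (i.e.\ \eqref{e:cnull-intro} holds); this is exactly Propositions~\ref{prop:decompoabilitybundlesinuglar} and~\ref{prop:decompositionconenull}. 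Feeding $B$ into the approximation result, Proposition~\ref{prop:approximations}, yields $1$-Lipschitz functions $g_k\colon\mathbb{R}^n\to\mathbb{R}$ with $g_k(0)=0$, $g_k(y)\to\langle\blambda,y\rangle=:g(y)$ for every $y\in\mathbb{R}^n$, and $\Lip_a[g_k](y)\le\epsilon$ for all $k\in\mathbb{N}$ and all $y\in B$.

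Now I would run the lower-semicontinuity/chain-rule computation on $W:=\bphi^{-1}(B)\cap A$, which is $\Lambda_{\bphi}$-measurable once a Borel representative of $\bphi$ is fixed (legitimate since, by Proposition~\ref{prop:quasicont-conseq}-(1), $\Lambda_{\bphi}$ does not charge capacity-null sets). As $g_k$ is $1$-Lipschitz with $g_k(0)=0$, $|g_k\circ\bphi|\le|\bphi|\in L^p(\mu)$, so $g_k\circ\bphi\to g\circ\bphi=\sum_j\lambda_j\phi_j$ in $L^p(\mu)$ by dominated convergence, while Proposition~\ref{prop:quasicont-conseq}-(3) gives $\sup_k\mathcal{E}_p(g_k\circ\bphi)\le(1\vee n^{(p-2)/2})\sum_j\mathcal{E}_p(\phi_j)<\infty$. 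Hence Definition~\ref{d:dir-space}-(7) applies, and together with Proposition~\ref{prop:quasicont-conseq}-(3) and $\Lip_a[g_k]\circ\bphi\le\epsilon$ on $\bphi^{-1}(B)\supset W$,
\begin{align*}
	\Gamma_p\bigl\langle \textstyle\sum_j\lambda_j\phi_j\bigr\rangle(W)
	&\le\liminf_{k\to\infty}\Gamma_p\langle g_k\circ\bphi\rangle(W)\\
	&\le\liminf_{k\to\infty}(1\vee n^{(p-2)/2})\int_W(\Lip_a[g_k]\circ\bphi)^p\,\textstyle\sum_{j}d\Gamma_p\langle\phi_j\rangle
	\le(1\vee n^{(p-2)/2})\,\epsilon^p\,\Lambda_{\bphi}(W).
\end{align*}
On the other hand the quantitative $p$-independence on $A\supset W$ gives $\Gamma_p\langle\sum_j\lambda_j\phi_j\rangle(W)\ge\delta\,\Lambda_{\bphi}(W)$, while $\Lambda_{\bphi}(W)=\nu(B)>0$ by definition of the pushforward. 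Dividing through by $\Lambda_{\bphi}(W)\in(0,\infty)$ yields $\delta\le(1\vee n^{(p-2)/2})\epsilon^p<\delta$, a contradiction; hence $\nu\ll\mathcal{L}_n$, which is the desired conclusion after the reduction.

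I expect the genuinely hard content to lie not in the argument above but in the two imported ingredients it rests on: (i) the existence of the $(\blambda,\epsilon)$-cone-null set $B$, which requires the De Philippis--Rindler dimension bound on the decomposability bundle of a singular measure together with the Alberti--Marchese cone-null/decomposability dictionary; and (ii) the construction of the approximations $g_k$ with asymptotic Lipschitz constant $\le\epsilon$ on $B$ in Proposition~\ref{prop:approximations}, carried out by an infimal-convolution and compactness argument of Heinonen--Koskela type. A secondary nuisance is the measurability and capacity bookkeeping needed to push $\one_A\Lambda_{\bphi}$ forward under a merely quasicontinuous map and to form $\bphi^{-1}(B)$; this is routine once Proposition~\ref{prop:quasicont-conseq} is available.
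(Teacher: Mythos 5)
Your proposal is correct and follows essentially the same route as the paper's own proof: reduce by Lemma~\ref{lem:essinf} to a quantitative $p$-independence constant $\delta>0$, argue by contradiction via inner regularity to produce a compact $\mathcal{L}_n$-null set of positive $\nu$-measure, invoke Propositions~\ref{prop:decompoabilitybundlesinuglar} and~\ref{prop:decompositionconenull} to extract a cone-null subset $B$ of positive measure with an associated direction $\blambda$, approximate $y\mapsto\langle\blambda,y\rangle$ by $1$-Lipschitz functions small on $B$ via Proposition~\ref{prop:approximations}, and close with the lower-semicontinuity/chain-rule squeeze. The only gap worth naming is that you treat Proposition~\ref{prop:approximations} as something that can be ``fed'' the set $B$ directly, whereas it actually takes as input a lower semicontinuous, strictly positive upper gradient $g$ for $\langle\blambda,\cdot\rangle$ together with an increasing sequence of continuous minorants; these come from Lemma~\ref{l:upper-gradient}, which converts the cone-null property of $B$ (via Lemma~\ref{l:conenull}) into the specific upper gradient $g=\sin(\epsilon)\one_B+\one_{\mathbb{R}^n\setminus B}$ and supplies the continuous approximants. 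That intermediate lemma is short but not trivial, and without it Proposition~\ref{prop:approximations} has nothing to act on. You should also record, if only in passing, the constraint $\epsilon\in(0,\pi/2)$ needed for Proposition~\ref{prop:decompositionconenull}, and that the cone-null conclusion gives the constant $\sin(\epsilon)\le\epsilon$, which is why the final inequality closes. Otherwise the measurability/capacity bookkeeping you flag at the end is handled exactly as you say, via Proposition~\ref{prop:quasicont-conseq}-(1).
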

	Note that since we do not assume that the energy measure is absolutely continuous with respect to $\mu$, we need to use quasicontinuous representatives in the formulation as the measure $\bphi_*(\one_A\cdot \Lambda_{\bphi})$ is well-defined\footnote{That is, it is independent of the choice of quasicontinuous representatives.} due to Propositions \ref{prop:quasicont}-(2) and \ref{prop:quasicont-conseq}-(1).  
	
	The energy image density property for MMD spaces is stated below with a more classical formulation using invertibility of carr\'e du champ matrix as defined above. 
	\begin{theorem}\label{thm:eid-mmdspace}	
		Let $(X,d,\mu,\mathcal{E},\mathcal{F})$ be an MMD space, and let $n \in \mathbb{N}$.  Let $\nu$ be a minimal energy dominant measure for $(X,d,\mu,\mathcal{E},\mathcal{F})$.  Let $\bphi=(\phi_1,\dots, \phi_n):X\to \mathbb{R}^n, n \in \mathbb{N}$ is such that each $\phi_i\in \mathcal{F}_p$ is quasicontinuous for $i=1,\dots, n$ and let $\gamma_\nu(\bphi)$ denote the associated carr\'e du champ matrix with respect to $\nu$ as defined in \eqref{e:def-cdc-reg}.
		Then 
		\[
		\bphi_*(\one_{\{\det(\gamma_\nu(\bphi))>0\}}\cdot \nu ) \ll \mathcal{L}_n,
		\]
		where $\Lambda_{\bphi}=\sum_{i=1}^d \Gamma_p\langle \phi_i\rangle$ and  $\Phi(x)_{i,j}=\frac{d\Gamma_2\langle \phi_i,\phi_j \rangle}{d\Lambda_{\bphi}}$.
	\end{theorem}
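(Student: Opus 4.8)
The plan is to obtain Theorem~\ref{thm:eid-mmdspace} as a consequence of the energy image density property for $p$-Dirichlet spaces (Theorem~\ref{thm:eid-dirspace}), applied with $p=2$, together with the dictionary between invertibility of the carr\'e du champ matrix and $2$-independence supplied by Lemma~\ref{l:invertibly-independence}. Indeed, by Lemma~\ref{lem:df-dirspace} the MMD space $(X,d,\mu,\mathcal{E},\mathcal{F})$ is a regular, local $2$-Dirichlet space $(X,d,\mu,\mathcal{E}_2,\mathcal{F}_2,\Gamma_2)$ with $\Gamma_2\langle f\rangle=\Gamma(f,f)$, so Theorem~\ref{thm:eid-dirspace} is available in this setting, and each quasicontinuous $\phi_i$ is an admissible input.

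First I would set $I_{\bphi}:=\{x\in X:\det(\gamma_\nu(\bphi)(x))>0\}$, which is a Borel set since the entries of $\gamma_\nu(\bphi)$ are the Borel-measurable Radon--Nikodym derivatives $\tfrac{d\Gamma(\phi_i,\phi_j)}{d\nu}$. By the final assertion of Lemma~\ref{l:invertibly-independence}, $\bphi$ is $2$-linearly independent on $I_{\bphi}$, and the restricted measures $\one_{I_{\bphi}}\cdot\nu$ and $\one_{I_{\bphi}}\cdot\Lambda_{\bphi}$ are mutually absolutely continuous, where $\Lambda_{\bphi}=\sum_{i=1}^n\Gamma_2\langle\phi_i\rangle$. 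Applying Theorem~\ref{thm:eid-dirspace} with the Borel set $A=I_{\bphi}$ then yields
\[
\bphi_*\bigl(\one_{I_{\bphi}}\cdot\Lambda_{\bphi}\bigr)\ll\mathcal{L}_n.
\]
Since pushforward preserves absolute continuity (if $\sigma\ll\tau$ and $g$ is measurable then $g_*\sigma\ll g_*\tau$, because $g_*\tau(B)=0$ forces $\tau(g^{-1}(B))=0$, hence $\sigma(g^{-1}(B))=0$, hence $g_*\sigma(B)=0$), the mutual absolute continuity of $\one_{I_{\bphi}}\cdot\nu$ and $\one_{I_{\bphi}}\cdot\Lambda_{\bphi}$ gives
\[
\bphi_*\bigl(\one_{\{\det(\gamma_\nu(\bphi))>0\}}\cdot\nu\bigr)=\bphi_*\bigl(\one_{I_{\bphi}}\cdot\nu\bigr)\ll\bphi_*\bigl(\one_{I_{\bphi}}\cdot\Lambda_{\bphi}\bigr)\ll\mathcal{L}_n,
\]
which is the desired conclusion.

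The only point requiring a word of care is the well-definedness of the pushforward measures appearing above, i.e.\ their independence of the chosen quasicontinuous representatives of the $\phi_i$: this follows from Proposition~\ref{prop:quasicont-conseq}-(1), which gives $\Gamma_2\langle\phi_i\rangle(N)=0$ whenever $\cCap(N)=0$, combined with the uniqueness of quasicontinuous representatives in Proposition~\ref{prop:quasicont}-(2) — exactly as in the remark following Theorem~\ref{thm:eid-dirspace}. Beyond correctly invoking Lemma~\ref{l:invertibly-independence} and Theorem~\ref{thm:eid-dirspace}, there is no genuine obstacle in this argument; all of the substantive work is already carried out in those two results.
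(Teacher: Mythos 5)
Your proof is correct and follows exactly the route taken in the paper, which cites Theorem~\ref{thm:eid-dirspace}, Lemma~\ref{lem:df-dirspace}, and Lemma~\ref{l:invertibly-independence} as a one-line justification. You have simply made explicit the chain that the paper leaves implicit: identify the MMD space as a $2$-Dirichlet space, set $A=I_{\bphi}$, use the final assertion of Lemma~\ref{l:invertibly-independence} for both the $2$-independence on $I_{\bphi}$ and the mutual absolute continuity $\one_{I_{\bphi}}\cdot\nu \ll\gg \one_{I_{\bphi}}\cdot\Lambda_{\bphi}$, push forward, and finish.
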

	The justification for the use of quasicontinuous representatives is similar to the remarks following Theorem \ref{thm:eid-dirspace} as Theorem \ref{thm:eid-mmdspace} can be viewed as a special case due to Lemma \ref{l:invertibly-independence}.

	Finally, we state the energy image density property for $p$-Dirichlet structures.
	\begin{theorem}\label{thm:eid-pdirstructure}
		Assume that  $(X,\mathcal{X},\mu,\mathcal{E}_p, \mathcal{F}_p, \Gamma_p)$ is a   $p$-Dirichlet structure and let $n \in \mathbb{N}$. If $\bphi=(\phi_1,\dots, \phi_n):X\to \mathbb{R}^n, n \in \mathbb{N}$ is such that each $\phi_i\in \mathcal{F}_p$  and if $\bphi$ is a $p$-independent function  on a measurable set $A\subset \mathcal{X}$, then 
		\[
		\bphi_*(\one_A \cdot \Lambda_{\bphi})\ll\mathcal{L}_n,
		\]
		where $\Lambda_{\bphi}=\sum_{i=1}^d \Gamma_p\langle \phi_i\rangle$.
	\end{theorem}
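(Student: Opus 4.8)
The plan is to run the weak–lower–semicontinuity argument sketched in \textsection\ref{ss:proof-outline}, in exact parallel with the proof of Theorem \ref{thm:eid-dirspace}, but with Proposition \ref{prop:multidimcontraction-structure} playing the role of Proposition \ref{prop:quasicont-conseq}-(3). The point is that for a $p$-Dirichlet structure the chain rule and locality hold for \emph{every} $f \in \mathcal{F}_p$ and \emph{every} measurable set (Remark \ref{r:diff}), so no quasicontinuity or topological regularity is needed, and the pushforward $\bphi_*(\one_A \cdot \Lambda_{\bphi})$ is unambiguously defined thanks to the absolute continuity axiom.

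First I would reduce to a uniform lower bound on the relevant Radon–Nikodym densities. By Lemma \ref{lem:essinf} there are measurable sets $A_i$ with $\Lambda_{\bphi}(A \setminus \bigcup_i A_i) = 0$ and $\bigwedge^{\one_{A_i} \cdot \Lambda_{\bphi}}_{\blambda \in \mathbb{S}^{n-1}} \frac{d\Gamma_p\langle \sum_j \lambda_j \phi_j \rangle}{d\Lambda_{\bphi}} \ge i^{-1}$. Since the pushforward is countably additive, it suffices to prove the conclusion with $A$ replaced by each $A_i$; hence I may assume there is $\delta > 0$ such that $\frac{d\Gamma_p\langle \sum_j \lambda_j \phi_j \rangle}{d\Lambda_{\bphi}} \ge \delta$ for $\Lambda_{\bphi}$-a.e.\ point of $A$ and every $\blambda \in \mathbb{S}^{n-1}$. (Here $\Gamma_p\langle \sum_j \lambda_j \phi_j\rangle \ll \Lambda_{\bphi}$ by sublinearity and homogeneity, so these densities are well defined.)

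Then I argue by contradiction. If $\nu := \bphi_*(\one_A \cdot \Lambda_{\bphi})$ is not absolutely continuous with respect to $\mathcal{L}_n$, inner regularity gives a compact $K \subset \mathbb{R}^n$ with $\mathcal{L}_n(K) = 0$ and $\nu(K) > 0$. Since $\one_K \nu \perp \mathcal{L}_n$, the De Philippis–Rindler structure theorem combined with the Alberti–Marchese decomposability bundle (Propositions \ref{prop:decompoabilitybundlesinuglar} and \ref{prop:decompositionconenull}) yields $\dim T^{AM}_\nu(x) \le n-1$ for $\nu$-a.e.\ $x \in K$, and the link with cone-null sets (Lemma \ref{l:conenull}) produces, for any prescribed $\epsilon > 0$, a compact $B \subset K$ with $\nu(B) > 0$ and a unit vector $\blambda = (\lambda_1,\dots,\lambda_n)$ along which $B$ is $\epsilon$-cone-null. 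Feeding this into the approximation result Proposition \ref{prop:approximations} then produces $1$-Lipschitz functions $g_k : \mathbb{R}^n \to \mathbb{R}$ with $g_k(0) = 0$, $g_k(y) \to g(y) := \langle \blambda, y\rangle$ for every $y$, and $\Lip_a[g_k](y) \le \epsilon$ for all $k$ and all $y \in B$.

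Finally I would assemble the contradiction. Because each $\phi_i \in L^p$ and each $g_k$ is $1$-Lipschitz with $g_k(0) = 0$, dominated convergence (with dominant $\abs{\bphi}$) gives $g_k \circ \bphi \to g \circ \bphi = \sum_i \lambda_i \phi_i$ in $L^p$, while Proposition \ref{prop:multidimcontraction-structure}, using $\Lip_a[g_k] \le \LIP[g_k] = 1$, gives $\sup_k \mathcal{E}_p(g_k \circ \bphi) \le (1 \vee n^{(p-2)/2}) \sum_i \mathcal{E}_p(\phi_i) < \infty$. Hence the weak lower semicontinuity axiom and Proposition \ref{prop:multidimcontraction-structure}, together with $\Lip_a[g_k] \le \epsilon$ on $B$, yield
\[
\Gamma_p\Bigl\langle \textstyle\sum_i \lambda_i \phi_i \Bigr\rangle\bigl(\bphi^{-1}(B) \cap A\bigr) \le \liminf_{k \to \infty} \Gamma_p\langle g_k \circ \bphi\rangle\bigl(\bphi^{-1}(B) \cap A\bigr) \le \bigl(1 \vee n^{(p-2)/2}\bigr)\epsilon^p\, \Lambda_{\bphi}\bigl(\bphi^{-1}(B) \cap A\bigr),
\]
whereas the uniform density bound from the reduction step gives $\Gamma_p\langle \sum_i \lambda_i \phi_i\rangle(\bphi^{-1}(B) \cap A) \ge \delta\, \Lambda_{\bphi}(\bphi^{-1}(B) \cap A)$. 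Since $\Lambda_{\bphi}(\bphi^{-1}(B) \cap A) = \nu(B) > 0$, cancelling forces $\delta \le (1 \vee n^{(p-2)/2})\epsilon^p$, which is absurd once $\epsilon$ is chosen so small that $(1 \vee n^{(p-2)/2})\epsilon^p < \delta$. I expect the genuinely hard part to lie not in this bookkeeping but in the inputs being quoted — the dimension bound $\dim T^{AM}_\nu \le n-1$ for singular $\nu$, which rests on Theorem \ref{t:dr-current}, and especially the construction in Proposition \ref{prop:approximations} of uniformly Lipschitz approximants of a linear functional whose asymptotic Lipschitz constant is forced below $\epsilon$ on the cone-null set $B$; relative to those, the $L^p$-convergence, the uniform energy bounds, and the reduction step are routine.
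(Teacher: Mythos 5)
Your proposal is correct and follows essentially the same route as the paper: the paper's proof of Theorem \ref{thm:eid-pdirstructure} is precisely the proof of Theorem \ref{thm:eid-dirspace} with Proposition \ref{prop:multidimcontraction-structure} substituted for Proposition \ref{prop:quasicont-conseq}-(3), which is exactly the argument you spell out (reduce via Lemma \ref{lem:essinf}, contradiction via a compact $\mathcal{L}_n$-null set of positive $\nu$-measure, Propositions \ref{prop:decompoabilitybundlesinuglar} and \ref{prop:decompositionconenull} to get a cone-null set, Lemma \ref{l:upper-gradient} and Proposition \ref{prop:approximations} for the approximants, then lower semicontinuity and the chain rule). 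The only inaccuracy is a misattribution: the passage from the decomposability-bundle bound to the cone-null set $B$ with $\nu(B)>0$ is Proposition \ref{prop:decompositionconenull}, not Lemma \ref{l:conenull}, and the intermediate upper-gradient construction feeding into Proposition \ref{prop:approximations} is Lemma \ref{l:upper-gradient}.
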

	The affirmative answer to Bouleau--Hirsch conjecture follows from Theorem \ref{thm:eid-pdirstructure} and Lemmas \ref{lem:dirichlet}. \ref{l:invertibly-independence-dirstructure}.
	\section{Background on analysis of measures in \texorpdfstring{$\mathbb{R}^n$}{Rn}} \label{s:measures}
	\subsection{Curves, line integrals and upper gradients}
	We recall some basic notions concerning curves, line integrals and upper gradients and refer the reader to \cite[Chapters 5 and 6]{HKST} for a comprehensive introduction. 
	A \emph{curve} in $\mathbb{R}^n$ is a continuous map $\gamma:I \to \mathbb{R}^n$, where $I \subset \mathbb{R}$ is a compact interval. 
	Given a curve $\gamma: I \to \mathbb{R}^n$, the \emph{length of $\gamma$} (denoted by $L(\gamma)$) is the supremum 
	\[
	L(\gamma):=\sup	\sum_{i=1}^k \abs{\gamma(t_i)-\gamma(t_{i-1})}, 
	\] 
	where $k$ varies over $\mathbb{N}$, and the numbers $t_i$ vary over all finite sequences of the form $\min(I)=t_0 < t_1 \ldots <t_k= \max(I)$. A \emph{rectifiable curve} is a curve whose length is finite. For a rectifiable curve $\gamma:I \to \mathbb{R}^n$, there is an associated \emph{length function} $s_\gamma: I \to [0,L(\gamma)]$ defined by 
	\[
	s_\gamma(t):= L\left(\restr{\gamma}{[\min(T),t]}\right), \quad \mbox{for all $t \in I$.}
	\]
	It is known that for any rectifiable curve $\gamma:I \to \mathbb{R}^n$, $s_\gamma$ is continuous and non-decreasing. Furthermore, there is a unique $1$-Lipschitz continuous map $\gamma_s: [0,L(\gamma)] \to \mathbb{R}^n$ such that 
	\[
	\gamma= \gamma_s \circ s_\gamma.
	\]
	The curve $\gamma_s$ is called the \emph{arc length parametrization of $\gamma$.} It immediately follows from the definition that 
	\[
	L\left(\restr{\gamma_s}{[u,v]}\right)= v-u, \quad \,\mbox{for any $0\le u \le v \le L(\gamma)$.}
	\]
	By Radamacher's theorem the derivative $\gamma_s'(t)$ exists for $\mathcal{L}_1$-a.e.~$t\in [0,L(\gamma)]$ and satisfies 
	\[
	\abs{\gamma_s'(t)}= \lim_{v \to t} \abs{\frac{\gamma_s(v)-\gamma_s(t)}{v-t}}=1, \quad \mbox{for $\mathcal{L}_1$-a.e.~$t \in [0,L(\gamma)]$.}
	\]
	Given a rectifiable curve $\gamma:I \to \mathbb{R}^n$ and a non-negative Borel function $g: \mathbb{R}^n \to [0,\infty]$, the \emph{line integral of $g$ over $\gamma$} is defined as 
	\[
	\int_{\gamma}g \,ds:= \int_0^{L(\gamma)} g(\gamma_s(t))\,dt,
	\]
	where $\gamma_s:[0,L(\gamma)]\to \mathbb{R}^n$ is the arc length parametrization of $\gamma$. Next, we recall the notion of an upper gradient due to \cite[\textsection 2.9]{heinonenkoskela}.
	\begin{definition}
		We say that a Borel function $g: \mathbb{R}^n \to [0,\infty]$ is an \emph{upper gradient} of $f: \mathbb{R}^n \to \mathbb{R}$ if 
		\[
		\abs{f(\gamma(a)) - f(\gamma(b))} \le \int_\gamma g\,ds,
		\]
		for all rectifiable curves $\gamma:[a,b] \to \mathbb{R}^n$.
	\end{definition}

	\subsection{Decomposability bundle and cone null sets}
	We  recall the definition of decomposability bundle of Alberti and Marchese  \cite{AlbMar}. 
	Let $\Gr(\mathbb{R}^n)=\bigcup_{d=0}^{n}{\Gr}(d,n)$ where ${\Gr}(d,n)$ denotes the \emph{Grasmannian space} of all $d$-dimensional subspaces of $\mathbb{R}^n$. The space $\Gr(\mathbb{R}^n)$ is a finite disjoint union of smooth manifolds, and inherits from this a natural topology. 
	\begin{definition} \label{d:am-decbundle}
		Let $\mu$   be a finite Borel measure on $\mathbb{R}^n$. Then the \emph{decomposability bundle} is a Borel map $T^{AM}_\mu: \mathbb{R}^n \to \Gr(\mathbb{R}^n)$ such that the following
		statements hold:
		\begin{enumerate}[(i)]
			\item Every Lipschitz function $f \in \Lip(\mathbb{R}^n)$ is differentiable at $\mu$-a.e.~$x \in \mathbb{R}^n$ with respect to the
			linear subspace $T_\mu^{AM}$, that is, there exists a linear function $df_x^{AM}: T^{AM}_\mu(x) \to \mathbb{R}$, such that
			\[ f(x+h) = f(x) + df_x^{AM}(h) + o(\lvert h\rvert)
			\quad \text{for } h \in T^{AM}_\mu(x). \]
			\item The previous statement is sharp:  there exists a Lipschitz function $f \in \Lip(\mathbb{R}^n)$
			such that for $\mu$-a.e.~$x \in \mathbb{R}^n$ and every $v \not\in T^{AM}_\mu(x)$ the derivative of $f$ at $x$ in the
			direction $v$ does not exist. More precisely, for $\mu$-a.e.~$x \in \mathbb{R}^n$, we have
			\[
			\liminf_{t \to 0} \frac{f(x+tv)-f(x)}{t} < \limsup_{t \to 0}  \frac{f(x+tv)-f(x)}{t}, \quad \mbox{for all $v \not\in T^{AM}_\mu(x)$}
			\]
		\end{enumerate}
	\end{definition}
	The definition above is different from the one in \cite[\textsection 2.6]{AlbMar} but is equivalent due to \cite[Theorem 1.1]{AlbMar}.
	It is immediate that the two defining properties determine $T_\mu^{AM}$ uniquely up to $\mu$-almost everywhere equivalence.
	For every finite Radon measure $\mu$ on $\mathbb{R}^n$, a  decomposability bundle $T^{AM}_\mu$ exists by \cite[Lemma 2.4]{AlbMar}. If $n \ge 2$, the decomposability bundle has a canonical pointwise well-defined version   described  in terms of directions of normal currents \cite[Theorem 6.4, \textsection 6.1]{AlbMar}.

	The following proposition, which follows from \cite[Corollary 1.12, Lemma 3.1]{DR} and \cite[Proposition 2.9]{AlbMar} is key to our approach; see also \cite[Theorem 6.4, \textsection 1.5]{AlbMar}.
	\begin{proposition} \label{prop:decompoabilitybundlesinuglar}
		If $\mu$ be a finite singular measure on $\mathbb{R}^n$, then it has a decomposability bundle $T_\mu^{AM}$ and $\dim(T_\mu^{AM}(x))\le n-1$ for $\mu$-a.e. $x\in \mathbb{R}^n$. 
	\end{proposition}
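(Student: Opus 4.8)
The plan is to take the existence of the decomposability bundle for granted (it is \cite[Lemma 2.4]{AlbMar}, recalled above) and to concentrate on the dimension bound. I would prove that the Borel set
\[
E := \{\, x \in \mathbb{R}^n : \dim(T^{AM}_\mu(x)) = n \,\}
\]
is $\mu$-null, by showing that $\mu|_E$ is absolutely continuous with respect to $\mathcal{L}_n$ and then combining this with the hypothesis $\mu \perp \mathcal{L}_n$. This is exactly the situation in which Theorem~\ref{t:dr-current} is designed to be applied.

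The structural ingredient I would invoke is the description of the decomposability bundle in terms of one-dimensional normal currents, obtained by combining \cite[Proposition 2.9]{AlbMar} with \cite[Lemma 3.1]{DR}: there is a countable family of one-dimensional normal currents $T_j = \vec{T_j}\,\norm{T_j}$, $j \in \mathbb{N}$, with $\norm{T_j} \ll \mu$ for every $j$, such that, setting $h_j := d\norm{T_j}/d\mu$ and $A_j := \{ h_j > 0 \}$, one has
\[
T^{AM}_\mu(x) = \operatorname{span}\{\, \vec{T_j}(x) : j \in \mathbb{N},\ x \in A_j \,\}
\qquad \text{for } \mu\text{-a.e.\ } x.
\]
Since $\norm{T_j} = h_j\,\mu$, any Borel $S \subseteq A_j$ with $\norm{T_j}(S) = 0$ satisfies $\int_S h_j\,d\mu = 0$ with $h_j > 0$ on $S$, whence $\mu(S) = 0$; consequently $\mu|_{A_j} \ll \norm{T_j}$ for each $j$.

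Now suppose, towards a contradiction, that $\mu(E) > 0$. For each multi-index $J = (j_1,\dots,j_n) \in \mathbb{N}^n$ set
\[
E_J := \bigl\{\, x \in E : x \in A_{j_i}\ \text{for } i=1,\dots,n,\ \text{and}\ \vec{T_{j_1}}(x),\dots,\vec{T_{j_n}}(x)\ \text{are linearly independent} \,\bigr\},
\]
a Borel set. By the span identity, for $\mu$-a.e.\ $x \in E$ some $n$ of the vectors $\vec{T_j}(x)$ with $x \in A_j$ form a basis of $\mathbb{R}^n$, so $\mu\bigl(E \setminus \bigcup_{J \in \mathbb{N}^n} E_J\bigr) = 0$; as $\mathbb{N}^n$ is countable, there is a $J$ with $\mu(E_J) > 0$. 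Put $\nu := \mu|_{E_J}$, a finite positive Radon measure on $\mathbb{R}^n$. Then $\nu \ll \norm{T_{j_i}}$ for each $i$ (because $E_J \subseteq A_{j_i}$ and $\mu|_{A_{j_i}} \ll \norm{T_{j_i}}$), while $\operatorname{span}\{\vec{T_{j_1}}(x),\dots,\vec{T_{j_n}}(x)\} = \mathbb{R}^n$ for $\nu$-a.e.\ $x$ by the definition of $E_J$. Theorem~\ref{t:dr-current}, applied to the normal currents $T_{j_1},\dots,T_{j_n}$ and to $\nu$, therefore yields $\nu \ll \mathcal{L}_n$. On the other hand $\mu \perp \mathcal{L}_n$, and the restriction of a measure singular to $\mathcal{L}_n$ is again singular to $\mathcal{L}_n$, so $\nu \perp \mathcal{L}_n$; a measure that is simultaneously absolutely continuous and singular with respect to $\mathcal{L}_n$ must vanish, contradicting $\nu(\mathbb{R}^n) = \mu(E_J) > 0$. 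Hence $\mu(E) = 0$, i.e.\ $\dim(T^{AM}_\mu(x)) \le n-1$ for $\mu$-a.e.\ $x$.

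I expect the only genuinely non-routine step to be the structural input of the second paragraph: isolating, from \cite[Proposition 2.9]{AlbMar} and \cite[Lemma 3.1]{DR}, the existence of a \emph{countable} family $\{T_j\}$ of one-dimensional normal currents with $\norm{T_j} \ll \mu$ whose directions span $T^{AM}_\mu$ $\mu$-almost everywhere. Once this is in hand, the measurable selection of the $n$ indices $j_1,\dots,j_n$ on a set of positive $\mu$-measure and the closing absolutely-continuous-versus-singular dichotomy are elementary; the case $n=1$ is subsumed (the argument uses a single current $T_{j_1}$ with $\vec{T_{j_1}}(x)\neq 0$).
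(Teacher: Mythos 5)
Your argument is correct and follows essentially the same route as the paper: the paper also reduces to the set where the bundle is full, invokes the normal-current description of the bundle via \cite[Lemma~3.1]{DR}, and applies \cite[Corollary~1.12]{DR} (Theorem~\ref{t:dr-current}) to contradict singularity. The only difference is presentational: you unpack the content of \cite[Lemma~3.1]{DR} into an explicit countable family of currents and perform the measurable selection of $n$ spanning directions by hand, whereas the paper first restricts $\mu$ to $E$ using the strong locality of the bundle \cite[Proposition~2.9-(i)]{AlbMar} and then cites \cite[Lemma~3.1 and Corollary~1.12]{DR} together as a black box.
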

	\begin{proof}
		Suppose to the contrary that $A$ is Borel set such that $\mu(A)>0$ and $T_\mu^{AM}(x)= \mathbb{R}^n$ for all $x \in A$.   Then by strong locality principle for decomposability bundle in \cite[Proposition 2.9-(i)]{AlbMar},  the restriction   $\mu_A:=\one_A \cdot \mu$ of $\mu$ on $A$ satisfies 
		\[
		T^{AM}_{\mu_A}(x)= T^{AM}_{\mu}(x)= \mathbb{R}^n, \quad \mbox{for $\mu_A$-a.e.~$x \in \mathbb{R}^n$.}
		\]
		By \cite[Lemma 3.1 and Corollary 1.12]{DR}, we conclude that $\mu_A \ll \mathcal{L}_n$, which contradicts our assumptions $\mu \perp \mathcal{L}_n$ and $\mu_A(A)=\mu(A)>0$.
	\end{proof}

	We will apply Proposition \ref{prop:decompoabilitybundlesinuglar} in the proof of Theorem \ref{thm:eid-dirspace} via contradiction. A key insight for us is that the existence of a lower dimensional decomposability bundle implies the existence of certain approximations. This argument uses the notion of cone null sets whose definition we recall below \cite[\textsection 4.11]{AlbMar}. 
	Let $v\in \mathbb{R}^n$ be a unit vector in $\mathbb{R}^n$ and $\theta\in (0,\pi/2)$. The \emph{cone about $v$ with angle $\theta$} is denoted
	\[
	C(v,\theta)=\{w\in \mathbb{R}^n : \langle v,w\rangle \geq \cos(\theta)\abs{w}\}.
	\] 
	The topological interior of the cone  is denoted $C^\circ$, and $-C$ denotes the opposite cone $C(-v,\theta)$.

	For the definition of cone null sets below, it is helpful to recall that  	  for any Lipschitz curve $\gamma:I \to \mathbb{R}^n$, the derivative
	\[
	\gamma'(t):= \lim_{s \to t, s \in I} \frac{\gamma(s)-\gamma(t)}{s-t},  
	\]
	exists for $\mathcal{L}_1$-a.e.~$t \in I$ due to Radamacher's theorem.
	\begin{definition}\label{d:cone-null}
		If $C=C(v,\theta)$ is a cone with $v \in \mathbb{S}^{n-1}, \theta \in (0,\pi/2)$ and $K \subset \mathbb{R}^n$ is a compact set, then $K\subset \mathbb{R}^n$ is called $C$-cone null, if for every Lipschitz curve $\gamma:I\to \mathbb{R}^n$  such that $I \subset \mathbb{R}$ is a compact interval, $\gamma'(t)\in C$ for $\mathcal{L}_1$-a.e.~$t\in I$ we have $\cH^1(\operatorname{image}(\gamma) \cap K)=0$. 
	\end{definition}
	The following lemma  is  useful for the computation of upper gradient  on cone null sets.
	\begin{lemma} \label{l:conenull}
		Let $K \subset \mathbb{R}^n$ be compact $C$-cone null set, where $C=C(v,\theta) \subset \mathbb{R}^n, v \in \mathbb{S}^{n-1}, \theta \in (0,\pi/2)$ is a cone. Let $\gamma:I \to \mathbb{R}^n$ be a Lipschitz curve. Then 
		\[
		\gamma'(t) \notin C^\circ \cup (-C)^\circ, \quad \mbox{for $\mathcal{L}_1$-a.e.~$t \in \gamma^{-1}(K)$.}
		\]
	\end{lemma}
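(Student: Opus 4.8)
The plan is to argue by contradiction, and to treat the two cones $C^\circ$ and $(-C)^\circ$ separately. Reversing the parametrization of a Lipschitz curve negates its derivative and preserves its image, so $K$ is $C$-cone null if and only if it is $(-C)$-cone null; since $(-C)^\circ=-(C^\circ)$, applying the statement for $C^\circ$ to the cone $-C=C(-v,\theta)$ will handle $(-C)^\circ$. So it suffices to suppose that $\mathcal{L}_1(D_0)>0$, where $D_0=\{t\in I:\gamma\text{ is differentiable at }t,\ \gamma(t)\in K,\ \gamma'(t)\in C^\circ\}$, and derive a contradiction with $C$-cone nullity. Reparametrizing $\gamma$ by arc length (a routine change of variables, using that $s_\gamma'=|\gamma'|$ a.e.\ and the area formula, shows the corresponding set again has positive measure and that $\gamma'(t)\in C^\circ$ is preserved), I may assume $\gamma:[0,L]\to\mathbb{R}^n$ is $1$-Lipschitz with $|\gamma'|=1$ a.e. Since $C^\circ=\bigcup_m\bigl(C(v,\theta_m)\setminus\{0\}\bigr)$ for some $\theta_m\uparrow\theta$, I may fix $\theta'\in(0,\theta)$ and a compact set $F\subset\{t:\gamma(t)\in K,\ \gamma'(t)\in C(v,\theta')\}$ with $\mathcal{L}_1(F)>0$.

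\textbf{Construction of an auxiliary $C$-curve.} The heart of the argument is to extract from $\gamma|_F$ a genuine ``$C$-curve'' still meeting $K$ in positive $\cH^1$-measure. First I would upgrade the Lebesgue density theorem to a uniform statement: fixing a small $\eta>0$, the sets $G_k=\{t_0\in F:\mathcal{L}_1([t_0-r,t_0+r]\setminus F)\le\eta r\text{ for all }0<r\le 1/k\}$ are closed and increase (as $k\to\infty$) to a set of full measure in $F$, so $\mathcal{L}_1(G_k)>0$ for some $k$; intersecting $G_k$ with a sufficiently short interval about one of its density points produces a compact set $F_1$ of positive measure with $\operatorname{diam}F_1\le 1/k$. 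Then for any $s<t$ in $F_1$, from $[s,t]\subset[s-(t-s),\,s+(t-s)]$ and $s\in G_k$ one gets $\mathcal{L}_1([s,t]\setminus F)\le\eta(t-s)$. Writing $\gamma(t)-\gamma(s)=a+b$ with $a=\int_{[s,t]\cap F}\gamma'$ and $b=\int_{[s,t]\setminus F}\gamma'$: since $C(v,\theta')$ is a closed convex cone and $\gamma'\in C(v,\theta')$ on $F$, one has $a\in C(v,\theta')$ with $\langle v,a\rangle\ge\cos\theta'(1-\eta)(t-s)$ and $|a|\le(t-s)$, while $|b|\le\eta(t-s)$. Hence $\langle v,\gamma(t)-\gamma(s)\rangle/|\gamma(t)-\gamma(s)|\ge(\cos\theta'(1-\eta)-\eta)/(1+\eta)\to\cos\theta'>\cos\theta$ as $\eta\to0$, so for $\eta$ small enough $\gamma(t)-\gamma(s)\in C(v,\theta)=C$ for \emph{all} $s<t$ in $F_1$. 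Consequently $\phi:=\langle v,\gamma(\cdot)\rangle$ is nondecreasing on $F_1$, and $\gamma$ is constant on each level set of $\phi|_{F_1}$ (if $\langle v,w\rangle=0$ and $w\in C$ then $w=0$). I then define $\sigma$ on the compact interval $[\min\phi(F_1),\max\phi(F_1)]$ by $\sigma(\phi(t))=\gamma(t)$ for $t\in F_1$, and on each complementary interval $(c,d)$ of the compact set $\phi(F_1)$ by the affine segment from $\sigma(c)$ to $\sigma(d)$.

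\textbf{Verification and contradiction.} The chord estimate gives $|\gamma(t)-\gamma(s)|\le(\cos\theta)^{-1}(\phi(t)-\phi(s))$ for $s<t$ in $F_1$, which shows $\sigma$ is $(\cos\theta)^{-1}$-Lipschitz; on each inserted segment $\sigma'$ is a positive multiple of a chord of $\gamma|_{F_1}$, hence in $C$, and at density points of $\phi(F_1)$ the derivative $\sigma'$ is a limit of such positively rescaled chords and so lies in $C$ since $C$ is closed. Thus $\sigma$ is a Lipschitz curve with $\sigma'(y)\in C$ for a.e.\ $y$. Moreover $\operatorname{image}(\sigma)\supseteq\gamma(F_1)\subseteq K$, and since the $1$-Lipschitz map $x\mapsto\langle v,x\rangle$ sends $\gamma(F_1)$ onto $\phi(F_1)$, we obtain $\cH^1(\operatorname{image}(\sigma)\cap K)\ge\cH^1(\gamma(F_1))\ge\mathcal{L}_1(\phi(F_1))$. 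Finally $\mathcal{L}_1(\phi(F_1))>0$: applying the area (Banach indicatrix) formula to the absolutely continuous function $\phi$ gives $\int_{\mathbb{R}}\#(\phi^{-1}(y)\cap F_1)\,dy=\int_{F_1}|\phi'|=\int_{F_1}\langle v,\gamma'\rangle\ge\cos\theta'\,\mathcal{L}_1(F_1)>0$, so $\{y:\phi^{-1}(y)\cap F_1\ne\emptyset\}=\phi(F_1)$ has positive measure. This contradicts the hypothesis that $K$ is $C$-cone null (Definition~\ref{d:cone-null}), completing the argument.

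\textbf{Main obstacle.} I expect the delicate point to be the construction and verification of $\sigma$ — concretely, the uniformization of the density estimate so that \emph{every} chord of $\gamma|_{F_1}$, over a single set of positive measure, lies in $C$; this is exactly the place where the completely uncontrolled behaviour of $\gamma'$ on the ``gaps'' of $F$ must be absorbed. A secondary subtlety is ensuring the non-degeneracy bound $\mathcal{L}_1(\phi(F_1))>0$, which prevents the rearranged curve $\sigma$ from collapsing to a point.
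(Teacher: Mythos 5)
Your proof is correct, but it takes a genuinely different route from the paper's. The paper applies the $C^1$--Lusin approximation theorem for Lipschitz maps (Whitney extension): it replaces $\gamma$ by a $C^1$ curve $\widetilde{\gamma}$ that agrees with $\gamma$ and $\gamma'$ on a compact set $F$ of positive measure; since $\widetilde{\gamma}'$ is continuous and lies in $C^\circ$ on $F$, a finite cover of $F$ by small intervals $\overline{U_{t_i}}$ with $\widetilde{\gamma}'(\overline{U_{t_i}})\subset C^\circ$ produces finitely many $C^1$ ``$C$-curves'' through $K$, and the area formula (using $\abs{\widetilde{\gamma}'}>0$ there) turns the hypothesis $\cH^1(\widetilde{\gamma}(\overline{U_{t_i}})\cap K)=0$ into $\mathcal{L}_1(\{t\in\overline{U_{t_i}}:\widetilde{\gamma}(t)\in K\})=0$, contradicting $\mathcal{L}_1(F)>0$. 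You avoid $C^1$ approximation entirely: after passing to arc length and fixing $\theta'<\theta$, you uniformize the Lebesgue density estimate over a compact $F_1$ of small diameter so that \emph{every} chord $\gamma(t)-\gamma(s)$ with $s,t\in F_1$ lies in $C$, then reparametrize $\gamma|_{F_1}$ by the strictly increasing height function $\phi=\langle v,\gamma\rangle$ and fill the gaps affinely, producing a single Lipschitz ``$C$-curve'' $\sigma$ whose image meets $K$ in positive $\cH^1$-measure (by the Banach indicatrix identity applied to $\phi$). Your construction is self-contained and more elementary (no Whitney/Lusin $C^1$ theorem), at the cost of the careful uniformization and reparametrization bookkeeping; the paper's route is shorter once the $C^1$-approximation theorem is taken as a black box. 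Both arguments are sound, and your chord-based construction is in fact closer in spirit to how cone-null sets and Alberti representations are usually manipulated.
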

	\begin{proof}
		By considering the reversal of the curve $\gamma$, it is enough to show that $	\gamma'(t) \notin C^\circ$ for $\mathcal{L}_1$-a.e.$t \in \gamma^{-1}(K)$; that is, $\mathcal{L}_1(S)=0$, where
		$$S=\{t \in I \vert \mbox{$\gamma(t) \in K$, $\gamma'(t)$ exists and $\gamma'(t) \in C^\circ$}\}.$$
		Assume to the contrary that $\mathcal{L}_1(S)>0$. Let $\epsilon \in (0,\mathcal{L}_1(S))$. By $C^1$-approximation of Lipschitz functions (see \cite[Theorem 5.1.12]{KrPa} or \cite[Theorem 6.11]{EvGa}), there exists a $C^1$ function 
		$\widetilde{\gamma}: \mathbb{R} \to \mathbb{R}^n$ such that
		\[
		\mathcal{L}_1 \left( \left\{ t \in I \vert \gamma(t) \neq \widetilde{\gamma}(t) \mbox{ or }  \gamma'(t) \neq \widetilde{\gamma}'(t) \right\}\right)< \epsilon.
		\]
		Hence  the set
		\begin{equation} \label{e:cnull1}
			\widetilde{S}:= \{t \in S \vert  \widetilde{\gamma}(t)=\gamma(t) \in K, \widetilde{\gamma}'(t)=\gamma'(t) \in C^\circ  \}
		\end{equation}
		satisfies $\mathcal{L}_1(\widetilde{S}) \ge \mathcal{L}_1(S)- \epsilon>0$. Hence there is a compact set $F \subset \mathbb{R}$ such that
		\begin{equation} \label{e:cnull2}
			\mathcal{L}_1(F) >0, \quad \mbox{and } \quad F \subset \widetilde{S}.
		\end{equation}
		Since $\widetilde{\gamma}$ is $C^1$ for each $t \in F$, there is a bounded open interval $U_t \subset  \mathbb{R}$ such that $t\in U_t$ and  
		$\overline{U_t} \subset (\widetilde{\gamma}')^{-1}(C^\circ)$. By compactness of $F$, there exists $k \in \mathbb{N}$ and $\{t_1,\ldots,t_k\}  \subset F$ such that  \begin{equation} \label{e:cnull3}
			\bigcup_{i=1}^k U_{t_i} \supset F. 
		\end{equation}
		Let $\widetilde{\gamma}_i:\overline{U_{t_i}} \to \mathbb{R}^n$ denote the restriction of $\widetilde{\gamma}$ to the closure $\overline{U_{t_i}}$ for each $i=1,\ldots,k$. Each curve  $\widetilde{\gamma}_i$ satisfies   $\widetilde{\gamma}_i'(t) \in C^\circ$ for all $t \in \overline{U_{t_i}}$ for all $i=1,\ldots,k$. Combining this with the fact that $K$ is $C$-cone null and the area formula \cite[Theorem 3.8]{EvGa}, we have 
		\begin{equation*} 
			\mathcal{L}_1 \left( \{t \in \overline{U_{t_i}} \vert \widetilde{\gamma}(t) \in K \}\right)=0, \quad \mbox{for all $i=1,\ldots,k$.}
		\end{equation*}
		Hence by \eqref{e:cnull3} we have 
		\begin{equation}
			\label{e:cnull4} 
			\mathcal{L}_1 \left( \{t \in F \vert \widetilde{\gamma}(t) \in K \}\right)=0.
		\end{equation}
		Note that  \eqref{e:cnull4} along with  \eqref{e:cnull1} and  \eqref{e:cnull2}  gives the desired contradiction since   $\widetilde{\gamma}(t)  \in K$ for all $t \in F$.
	\end{proof}
	By the fundamental theorem of calculus and triangle inequality, for any $C^1$ function $f: \mathbb{R}^n \to \mathbb{R}$, the function $\abs{\nabla f}$ is always an upper gradient.
	The following lemma gives a smaller upper gradient on cone null sets.
	\begin{lemma} \label{l:upper-gradient}
		Let $K \subset \mathbb{R}^n$ be a compact $C(v,\theta)$-cone null set and $f \in C^1(\mathbb{R}^n)$, for some $v \in \mathbb{S}^{n-1}$ and $\theta \in (0,\pi/2)$.
		Then,
		\[
		g = \abs{\nabla f} \one_{\mathbb{R}^n \setminus K} + \sup_{w \in \mathbb{S}^{n-1} \setminus (C^\circ \cup( -C^\circ))} \abs{\langle \nabla f, w \rangle}
		\]
		is an upper gradient for $f$, where $C=C(v,\theta)$ such that
		\begin{enumerate}[(a)]
			\item $g$ is lower semicontinuous;
			\item For any closed ball $B$ such that $B^\circ \supset K$, there exists a non-decreasing sequence of  continuous functions $g_k: \mathbb{R}^n \to [0,\infty)$ such that $\lim_{k \to \infty} g_k(x)= g(x)$ for all $x \in \mathbb{R}^n$ and $g_k(x)=g(x)$ for all $x \in \mathbb{R}^n \setminus B, k \in \mathbb{N}$.
		\end{enumerate}
	\end{lemma}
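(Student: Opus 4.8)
The plan is to verify the three assertions in turn, the upper-gradient property being the only substantive one; lower semicontinuity and the approximation in (b) are then routine measure-theoretic bookkeeping using the specific form of $g$.

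For the upper-gradient property, I would fix a rectifiable curve $\gamma\colon[a,b]\to\mathbb{R}^n$ and pass to its arc length parametrization $\gamma_s\colon[0,L(\gamma)]\to\mathbb{R}^n$, which is $1$-Lipschitz and satisfies $\abs{\gamma_s'(t)}=1$ for $\mathcal{L}_1$-a.e.\ $t$. Since $f\in C^1(\mathbb{R}^n)$, the composition $f\circ\gamma_s$ is Lipschitz, hence absolutely continuous, and $(f\circ\gamma_s)'(t)=\langle\nabla f(\gamma_s(t)),\gamma_s'(t)\rangle$ for $\mathcal{L}_1$-a.e.\ $t$ by the chain rule, so integration and the triangle inequality give
\[
\abs{f(\gamma(b))-f(\gamma(a))}\le\int_0^{L(\gamma)}\abs{\langle\nabla f(\gamma_s(t)),\gamma_s'(t)\rangle}\,dt.
\]
Now split the domain of integration into $\{t:\gamma_s(t)\notin K\}$ and $\gamma_s^{-1}(K)$. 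On the former, Cauchy--Schwarz and $\abs{\gamma_s'(t)}=1$ bound the integrand by $\abs{\nabla f(\gamma_s(t))}=g(\gamma_s(t))$. On $\gamma_s^{-1}(K)$, Lemma \ref{l:conenull} applied to the Lipschitz curve $\gamma_s$ gives $\gamma_s'(t)\notin C^\circ\cup(-C)^\circ$ for $\mathcal{L}_1$-a.e.\ $t$; intersecting with the full-measure set where $\abs{\gamma_s'(t)}=1$ places $\gamma_s'(t)$ in $\mathbb{S}^{n-1}\setminus(C^\circ\cup(-C^\circ))$, so the integrand is at most $\sup_{w\in\mathbb{S}^{n-1}\setminus(C^\circ\cup(-C^\circ))}\abs{\langle\nabla f(\gamma_s(t)),w\rangle}=g(\gamma_s(t))$, using $\gamma_s(t)\in K$. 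Adding the two estimates and recognizing $\int_0^{L(\gamma)}g(\gamma_s(t))\,dt=\int_\gamma g\,ds$ shows that $g$ is an upper gradient of $f$.

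For (a), set $W:=\mathbb{S}^{n-1}\setminus(C^\circ\cup(-C^\circ))$, which is compact since $C^\circ$ is open; then $h(x):=\sup_{w\in W}\abs{\langle\nabla f(x),w\rangle}$ is continuous, because $(x,w)\mapsto\langle\nabla f(x),w\rangle$ is continuous and the supremum runs over a fixed compact set (with the convention $\sup\emptyset=0$). On the other hand $\abs{\nabla f}$ is continuous and $\one_{\mathbb{R}^n\setminus K}$ is lower semicontinuous because $K$ is closed, so the product $\abs{\nabla f}\,\one_{\mathbb{R}^n\setminus K}$ is lower semicontinuous, being the product of a nonnegative continuous function with a nonnegative lower semicontinuous one. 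Hence $g$ is lower semicontinuous as the sum of a lower semicontinuous function and a continuous one.

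For (b), write $u:=\abs{\nabla f}+h$, which is continuous, and note $g=u-\abs{\nabla f}\,\one_K=\abs{\nabla f}\,\one_{\mathbb{R}^n\setminus K}+h$, and in particular $g=u$ on $\mathbb{R}^n\setminus K\supset\mathbb{R}^n\setminus B$. Since $K$ is compact with $K\subset B^\circ$, one has $d_0:=\dist(\mathbb{R}^n\setminus B,K)>0$, and the continuous functions $\psi_k:=\max(0,\,1-kc\,\dist(\cdot,K))$ (for a suitable positive constant $c$) satisfy $\psi_k\colon\mathbb{R}^n\to[0,1]$, $\psi_k\downarrow\one_K$ pointwise, and $\{\psi_k>0\}\subset B$ for every $k$. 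Then $g_k:=u-\abs{\nabla f}\,\psi_k$ is continuous, $g_k\uparrow g$ pointwise since $\abs{\nabla f}\ge0$, $g_k=\abs{\nabla f}(1-\psi_k)+h\ge0$, and $g_k=u=g$ on $\mathbb{R}^n\setminus B$, as required.

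The only point needing care is in the upper-gradient step: one must coordinate Rademacher's theorem, the identity $\abs{\gamma_s'}=1$ $\mathcal{L}_1$-a.e., and Lemma \ref{l:conenull}, all for the \emph{same} arc length parametrization, so as to conclude that $\gamma_s'(t)\in\mathbb{S}^{n-1}\setminus(C^\circ\cup(-C^\circ))$ for $\mathcal{L}_1$-a.e.\ $t\in\gamma_s^{-1}(K)$. The definition of $g$ on $K$ was tailored precisely to absorb $\abs{\langle\nabla f(\gamma_s(t)),\gamma_s'(t)\rangle}$ once this is known, and the remaining items (a) and (b) are then straightforward.
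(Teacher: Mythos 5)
Your proof of the upper-gradient property is essentially the paper's argument: pass to the arc length parametrization, apply the chain rule, and on $\gamma_s^{-1}(K)$ use Lemma \ref{l:conenull} together with $\abs{\gamma_s'}=1$ a.e.\ to land the derivative in $\mathbb{S}^{n-1}\setminus(C^\circ\cup(-C^\circ))$, so the tailored definition of $g$ absorbs the integrand. For (a) the paper simply declares lower semicontinuity ``immediate''; you spell out the reason (continuity of $h(x)=\sup_{w\in W}\abs{\langle\nabla f(x),w\rangle}$ over the compact set $W$, plus lower semicontinuity of the nonnegative product $\abs{\nabla f}\,\one_{\mathbb{R}^n\setminus K}$), which is correct. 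Where you genuinely diverge is (b). The paper proceeds abstractly: it invokes the standard fact that a lower semicontinuous function is an increasing limit of continuous functions (via the inf-convolution $\widetilde{g_k}(x)=\inf_y(g(y)+k\abs{x-y})$), and then patches this with a Urysohn cutoff $\psi$ that equals $0$ near $K$ and $1$ outside $B$, taking $g_k=\psi g+(1-\psi)\widetilde{g_k}$. You instead exploit the explicit algebraic structure $g=u-\abs{\nabla f}\,\one_K$ with $u=\abs{\nabla f}+h$ continuous, and reduce the problem to approximating $\one_K$ from above by the continuous cutoffs $\psi_k=\max(0,1-kc\,\dist(\cdot,K))$, yielding $g_k=u-\abs{\nabla f}\,\psi_k$. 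Your construction is shorter and more explicit, avoiding both the Baire-type approximation lemma and Urysohn's lemma; the paper's construction is more robust in that it would apply to an arbitrary lower semicontinuous $g$ agreeing with a continuous function outside a compact set, whereas yours leans on the particular decomposition of this $g$. Both are correct; the only minor blemish in your write-up is the assertion ``$\abs{\nabla f(\gamma_s(t))}=g(\gamma_s(t))$'' off $K$, which under the paper's literal formula (where the supremum term is not multiplied by $\one_K$) should be an inequality $\le$; either way the bound you need holds.
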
	
	\begin{proof}
		Let $\gamma:[0,L] \to \mathbb{R}^n$ be any rectifiable curve  with arc length parametrization. 
		Note that 
		\begin{equation} \label{e:ug1}
			\abs{f(\gamma(0))- f(\gamma(L))} =\abs {\int_0^L  (f\circ \gamma)'(t)\,dt} \le \int_0^L \abs{ (f\circ \gamma)'(t)}\,dt.
		\end{equation}
		By the chain rule, we have $(f\circ \gamma)'(t) = \langle \nabla f(\gamma(t)), \gamma'(t)\rangle$. Since $\abs{\gamma'(t)}=1$ for $\mathcal{L}_1$-a.e.~$t \in [0,L]$, by Lemma \ref{l:conenull}, we have
		\begin{equation} \label{e:ug2}
			\abs{(f\circ \gamma)'(t)} = \langle \nabla f(\gamma(t)), \gamma'(t)\rangle  \le g(\gamma(t)), \quad \mbox{for $\mathcal{L}_1$-a.e.~$t \in [0,L]$.}
		\end{equation}  
		We conclude that $g$ is an upper gradient for $f$ using \eqref{e:ug1} and \eqref{e:ug2}.
		
		The lower semicontinuity of $g$ is immediate. By Baire's theorem there exists a  non-decreasing sequence of  continuous functions $\widetilde{g_k}: \mathbb{R}^n \to [0,\infty)$ such that $\lim_{k \to \infty} \widetilde{g_k}(x)= g(x)$ for all $x \in \mathbb{R}^n$; for instance, we can choose 
		\[
		\widetilde{g_k}(x):= \inf_{y \in \mathbb{R}^n}\left( g(y)+ k\abs{x-y}\right), \quad \mbox{for all $x \in \mathbb{R}^n$.}
		\]
		Let $B$ be a closed ball such that $K \subset B^\circ$.
		Choose a compact set $\widetilde{K} \subset \mathbb{R}^n$ such that $K \subset \widetilde{K}^\circ \subset \widetilde{K} \subset B^\circ$.
		By Urysohn's lemma, there exists a continuous function $\psi: \mathbb{R}^n \to [0,1]$ such that  $\psi \equiv 0$ on $\widetilde{K}$ and $\psi \equiv 1$ on $\mathbb{R}^n \setminus B$. Since $g$ is continuous on $\mathbb{R}^n \setminus K$ and $\psi \equiv 0$ in a neighborhood of $K$, we have $\psi g \in C(\mathbb{R}^n)$. Hence the sequence of functions 
		\[
		g_k(x):= \psi(x) g(x) + (1-\psi(x))\widetilde{g_k}(x), \quad \mbox{for all $x \in \mathbb{R}^n$}
		\]
		satisfies the desired properties.
	\end{proof}
	The following proposition is a consequence of results in \cite{AlbMar}.
	It implies that every measure whose decomposability bundle has dimension at most $n-1$ can be covered, up to a null set, by a countable collection of compact cone-null sets with cone angles arbitrarily close to $\pi/2$.
	Since the proof uses spherical metric on $\mathbb{S}^{n-1}$, we recall its definition below. The \emph{spherical metric} on $\mathbb{S}^{n-1}$ is defined by $\langle v, w \rangle= \cos \left(d_{\mathbb{S}^{n-1}}(v,w)\right)$ and $d_{\mathbb{S}^{n-1}}(v,w) \in [0,\pi]$ for all $v,w \in \mathbb{S}^{n-1}$. Note that for any $v \in \mathbb{S}^{n-1}, \theta \in (0,\pi/2)$, the cone $C(v,\theta)$ can be described in terms of the spherical metric as
	\[
	C(v,\theta)=   \{w \in \mathbb{R}^n \vert \mbox{ $w=0$, or $w \neq 0$ and $d_{\mathbb{S}^{n-1}}(v,w/\abs{w}) \le \theta$ } \}.
	\]
	\begin{proposition}\label{prop:decompositionconenull}  
		If $\mu$ is a non-zero finite  Borel measure on $\mathbb{R}^n$ with an at most $(n-1)$-dimensional decomposability bundle $T_{\mu}^{AM}: \mathbb{R}^n \to \bigcup_{d=0}^{n-1} \Gr(n,d)$, then for every $\epsilon \in (0,\pi/2)$ there exist a unit vector $w \in \mathbb{S}^{n-1}$ and a compact $C(w,\pi/2-\epsilon)$-cone null set $K \subset \mathbb{R}^n$ such that $\mu(K)>0$. 
	\end{proposition}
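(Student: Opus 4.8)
The plan is to exploit the bound $\dim T_\mu^{AM}(x)\le n-1$ to find, on a set of positive $\mu$-measure, a single direction $w$ that is nearly orthogonal to $T_\mu^{AM}(x)$, and then to feed this into the link between decomposability bundles and cone-null sets of Alberti--Marchese \cite[Lemma 7.5]{AlbMar}.

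First I would fix $\epsilon\in(0,\pi/2)$, set $\delta:=\epsilon/2$, and pick a finite $\delta$-net $w_1,\dots,w_N$ of the compact sphere $\mathbb{S}^{n-1}$ (with respect to $d_{\mathbb{S}^{n-1}}$). For each $i$ define
\[
E_i:=\bigl\{x\in\mathbb{R}^n:\ \exists\,u\in (T_\mu^{AM}(x))^{\perp}\cap\mathbb{S}^{n-1}\ \text{with}\ d_{\mathbb{S}^{n-1}}(u,w_i)<\delta\bigr\}.
\]
Since $T_\mu^{AM}$ is Borel and $V\mapsto V^{\perp}$, $V\mapsto\dist_{\mathbb{S}^{n-1}}(w_i,V)$ are continuous on $\Gr(\mathbb{R}^n)$, each $E_i$ is Borel. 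For $\mu$-a.e.\ $x$ the subspace $T_\mu^{AM}(x)$ is proper, so $(T_\mu^{AM}(x))^{\perp}$ contains a unit vector, which lies within spherical distance $\delta$ of some $w_i$; hence $\bigcup_i E_i$ has full $\mu$-measure, and since $\mu\neq0$ some $E:=E_{i_0}$ satisfies $\mu(E)>0$. Put $w:=w_{i_0}$.

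Next I would verify the transversality needed to apply \cite{AlbMar}. For $\mu$-a.e.\ $x\in E$ there is a unit vector $u\perp T_\mu^{AM}(x)$ with $d_{\mathbb{S}^{n-1}}(u,w)<\delta$; since every unit vector of $T_\mu^{AM}(x)$ is at spherical distance $\pi/2$ from $u$, the triangle inequality shows that the angle between $w$ and $T_\mu^{AM}(x)$ is at least $\pi/2-\delta$. Hence for every nonzero $v\in C(w,\pi/2-\epsilon)$ (whose angle with $w$ is at most $\pi/2-\epsilon$) the angle between $v$ and $T_\mu^{AM}(x)$ is at least $(\pi/2-\delta)-(\pi/2-\epsilon)=\epsilon/2>0$. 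Thus $C(w,\pi/2-\epsilon)\cap T_\mu^{AM}(x)=\{0\}$ with a uniform angular gap, for $\mu$-a.e.\ $x\in E$. By the restriction (strong locality) property of the decomposability bundle \cite[Proposition 2.9]{AlbMar} the same transversality holds with $\mu$ replaced by $\mu|_E$. Then \cite[Lemma 7.5]{AlbMar} yields that $\mu|_E$ is concentrated on a countable union of $C(w,\pi/2-\epsilon)$-cone-null Borel sets (if the cited statement produces cone-null sets only for slightly larger cones, the margin $\epsilon/2$ leaves room, and a set cone-null for a larger cone is a fortiori cone-null for a smaller one). Consequently one such Borel set $B$ has $\mu(B)>0$, and since $\mu$ is a finite Borel, hence Radon, measure on $\mathbb{R}^n$, inner regularity produces a compact $K\subset B$ with $\mu(K)>0$; being a subset of $B$, this $K$ is $C(w,\pi/2-\epsilon)$-cone-null in the sense of Definition~\ref{d:cone-null}, which completes the proof.

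The main obstacle will be invoking \cite[Lemma 7.5]{AlbMar} correctly: one must check that its hypothesis is exactly the transversality of $T_\mu^{AM}$ to the fixed cone $C(w,\pi/2-\epsilon)$ established above, and that its conclusion provides a concentration of $\mu|_E$ on cone-null sets for (a cone containing) this single fixed cone, rather than for a family of unrelated cones. The $\delta$-net step is arranged precisely so that one direction $w$ suffices on a set of positive measure, and the slack $\epsilon/2$ is there to absorb any loss in cone angle coming from the cited lemma.
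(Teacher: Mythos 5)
Your proposal is correct and follows essentially the same route as the paper: cover $\mathbb{S}^{n-1}$ by a finite spherical net, isolate a direction $w$ and a positive-measure set on which $C(w,\pi/2-\epsilon)$ meets $T^{AM}_\mu$ only at the origin, invoke \cite[Lemma 7.5]{AlbMar} to concentrate the restricted measure on cone-null sets, and finish by inner regularity. The only cosmetic differences are that you use a $\delta=\epsilon/2$ net and define $E_i$ via distance in $(T^{AM}_\mu(x))^\perp$ with an explicit angular slack, whereas the paper uses an $\epsilon$-net and defines $E_i$ directly by the cone condition, obtaining transversality via the spherical triangle inequality; these lead to the same application of the key lemma.
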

	\begin{proof}
		Let $N=\{w_i : i \in I\}$ be an $\epsilon$-net of $S^{n-1}$, with respect to the spherical metric; that is, $N$ is a maximal subset such that any pair of distinct points in $N$ have distance at least $\epsilon$. 
		For each $w_i \in N$, let $$E_i=\{x\in \mathbb{R}^n : T_\mu^{AM}(x)\cap C(w_i,\pi/2-\epsilon)=\{0\}\}.$$ 
		We claim that 
		\begin{equation} \label{e:cover}
			\mathbb{R}^n=\bigcup_{i \in I} E_i.
		\end{equation}
		To this end, let $x \in \mathbb{R}^n$. Since $\dim(T_\mu^{AM}(x)) \le n-1$, there exists $w \in \mathbb{S}^{n-1} \cap T_\mu^{AM}(x)^\perp$, where $V^\perp$ denotes the orthogonal complement of a subspace $V$. By the maximality of $N$, there exists $i \in I$ such that $d_{\mathbb{S}^{n-1}}(w,w_i)<\epsilon$.
		By triangle inequality for the spherical metric, we have 
		\[
		C(w_i, \pi/2 -\epsilon) \cap T_\mu^{AM}(x) \subset 	C(w, \pi/2 -\epsilon+d_{\mathbb{S}^{n-1}}(w,w_i)) \cap T_\mu^{AM}(x) = \{0\}, 
		\]
		where the last equality follows from the fact that $d_{\mathbb{S}^{n-1}}(w,w_i)<\epsilon$ and $w \in T_\mu^{AM}(x)^\perp \cap \mathbb{S}^{n-1}$. This completes the proof of \eqref{e:cover}.

		Since $\mu$ is non-zero, there is an index $i \in I$ such that $\mu(E_i)>0$. Now, by \cite[Lemma 7.5]{AlbMar}, there exists a Borel set $F_i \subset E_i$ such that $F_i$ is $C(w_i,\pi/2-\epsilon)$-cone null, and $\mu(E_i \setminus F_i)=0$. The claim follows then by the inner regularity of $\mu$.
	\end{proof}
	
	In fact, the proof of the previous proposition shows a bit more than we need: There exist compact sets $K_i$ and cones $C_i=C(w_i,\pi/2-\epsilon), i \in \mathbb{N}$ such that
	\begin{enumerate}
		\item $K_i$ are $C_i$-cone null for each $i \in \mathbb{N}$ and
		\item The sets $K_i$ cover $\mathbb{R}^n$ up to a $\mu$-null set; that is, \[
		\mu\left(\mathbb{R}^n \setminus \bigcup_{i=1}^\infty K_i\right)=0.
		\]
	\end{enumerate}
	\subsection{Approximation of Lipschitz functions} \label{ss:approximation}
	We state a new approximation theorem, which is a refinement of an earlier one from \cite{seblipdense,continuousdense}; see therein for a more detailed comparison with other methods used in other works. Inspiration for this approximation comes from \cite{Ch}  and the proof here is modeled after a well-known equality proof of capacity and modulus in \cite[Proof of Proposition 2.17]{heinonenkoskela}; see also the arguments in \cite{Keith}. The scheme is somewhat similar to a different approximation method from \cite[Lemma 6.3]{Bate}, which is based on \cite{ACPdiff, ACPstruct}; see in particular the general metric space result in \cite[Theorem 1.6.]{bate2024fragment} which applies to disconnected spaces. Similar ideas have also appeared in \cite{schioppaalberti,AlbMar, Aliaga}.
	
	We say that a sequence of functions $f_i\in \Lip(\mathbb{R}^n)$ \emph{converges weak-$*$} to $f\in \Lip(\mathbb{R}^n)$ if $f_i\to f$ pointwise and $\sup_{i\in\mathbb{N}}\LIP[f_i]<\infty$.  Note that if $f_i$ converges weak-$*$ to $f \in \Lip(\mathbb{R}^n)$, then $f_i$ converges uniformly on compact subsets of $\mathbb{R}^n$ to $f$. 
	The terminology weak-$*$ convergence comes from the fact that the space of Lipschitz functions is a dual space, and the topology mentioned above coincides with the weak-$*$ topology induced by its predual, the Arens–Eells space \cite[Corollary 3.4]{weaver-book}.
	
	The following weak-$*$ approximation of Lipschitz functions plays a crucial role in our proof of the energy image density conjecture.
	\begin{proposition}\label{prop:approximations} 
		Assume that $f\in \Lip(\mathbb{R}^n)$, $\LIP[f]>0$ and that $g:\mathbb{R}^n \to (0,\LIP[f]]$ is a strictly positive lower semicontinuous function  which is an upper gradient for $f$, satisfying $g|_{\mathbb{R}^n \setminus B(0,R)}\equiv \LIP[f]$ for some $R>0$. Let  $(g_i)_{i=1}^\infty$ be an increasing sequence of continuous functions with $g_i\to g$ pointwise and $g_i|_{\mathbb{R}^n \setminus B(0,R)}\equiv \LIP[f]$ for all $i\in \mathbb{N}$. Then there exists a sequence $f_i$ of $\LIP[f]$-Lipschitz functions $f_i \in \Lip(\mathbb{R}^n)$ that converge weak-$*$ to $f$ and for which
		\[
		\Lip_a[f_i]\leq g_i(x),
		\]
		for all $i\in \mathbb{N}$ and for all $x \in \mathbb{R}^n$.
	\end{proposition}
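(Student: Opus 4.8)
The plan is to adapt the Heinonen--Koskela scheme for the equality of capacity and modulus \cite{heinonenkoskela} to the setting of approximating the function $f$ itself. Fix $x_{0}\in\mathbb{R}^{n}$ once and for all (we will prove pointwise convergence at $x_{0}$); enlarging $R$ if necessary, which preserves all hypotheses, we may assume $x_{0}\in B(0,R)$, and replacing each $g_{i}$ by $\max(g_{i},0)$ — which keeps the monotonicity, the pointwise limit $g$ (since $g>0$), and the boundary value $\LIP[f]$ — we may assume $g_{i}\ge 0$. For each $i$ define
\[
f_{i}(x):=\inf\Bigl\{\,f(z)+\int_{\gamma}g_{i}\,ds \;:\; \gamma \text{ a rectifiable curve from }z\text{ to }x,\ z\in\mathbb{R}^{n}\,\Bigr\}.
\]
Since the constant curve at $x$ is admissible, $f_{i}\le f$; since $g_{i}$ is nondecreasing in $i$, so is $f_{i}$; and since $g_{i}\equiv\LIP[f]$ outside $B(0,R)$ while $g_{i}\le g\le\LIP[f]$ everywhere, a competitor starting at a far-away $z$ must spend length at least $|z|-R$ in the region $\{g_{i}=\LIP[f]\}$, so $f(z)+\int_{\gamma}g_{i}\,ds\ge f(0)-\LIP[f]R$ whenever $|z|\ge R$; hence $f_{i}$ is finite.

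Next I would verify the easy half of the conclusion. Given $x$ and $r>0$, for $y,y'\in B(x,r)$ and a near-optimal competitor $\gamma'$ from $z'$ to $y'$, concatenating $\gamma'$ with the segment $[y',y]$ (which lies in the convex set $B(x,r)$) is admissible for $f_{i}(y)$, giving $f_{i}(y)\le f_{i}(y')+\bigl(\sup_{B(x,r)}g_{i}\bigr)|y-y'|+\eta$ with $\eta\to 0$. Symmetrizing yields $\LIP[f_{i}|_{B(x,r)}]\le\sup_{B(x,r)}g_{i}$; letting $r\downarrow 0$ and using continuity of $g_{i}$ gives $\Lip_{a}[f_{i}]\le g_{i}$, while the same estimate for arbitrary $y,y'$ gives $\LIP[f_{i}]\le\sup g_{i}\le\LIP[f]$. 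Thus each $f_{i}$ is $\LIP[f]$-Lipschitz, and it remains only to prove $f_{i}\to f$ pointwise, equivalently $f_{\infty}(x_{0}):=\lim_{i}f_{i}(x_{0})=f(x_{0})$.

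Suppose, for contradiction, $f_{\infty}(x_{0})<f(x_{0})$. Choose curves $\gamma_{i}$ from $z_{i}$ to $x_{0}$ with $f(z_{i})+\int_{\gamma_{i}}g_{i}\,ds\le f_{\infty}(x_{0})+1/i$. A straightening argument confines the $\gamma_{i}$ to $\overline{B(0,R)}$: every excursion of $\gamma_{i}$ outside $B(0,R)$ lies in $\{g_{i}=\LIP[f]\}$, so it costs at least $\LIP[f]$ times the corresponding chord length, whereas the chord lies in $\overline{B(0,R)}$ where $g_{i}\le\LIP[f]$, so replacing excursions by chords does not increase the cost; and an initial portion from $z_{i}$ to the first entry point $w\in\partial B(0,R)$ may be dropped because $f(z_{i})+\LIP[f]\cdot\operatorname{len}\ge f(z_{i})+\LIP[f]|z_{i}-w|\ge f(w)$ by Lipschitzness of $f$. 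So we may assume $\gamma_{i}\subset\overline{B(0,R)}$ and $z_{i}\in\overline{B(0,R)}$. Now set $c:=\min_{\overline{B(0,R)}}g>0$ (the minimum is attained since $g$ is lower semicontinuous on a compact set). I claim $g_{i}\ge c/2$ on $\overline{B(0,R)}$ for all large $i$: otherwise there are points $p_{k}\to p^{\ast}$ with $g_{i_{k}}(p_{k})\le c/2$ and $i_{k}\to\infty$, whence $g_{m}(p^{\ast})\le c/2$ for every $m$ by continuity of $g_{m}$ and monotonicity, contradicting $g(p^{\ast})=\lim_{m}g_{m}(p^{\ast})\ge c$. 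Consequently $\operatorname{len}(\gamma_{i})\le\tfrac{2}{c}\int_{\gamma_{i}}g_{i}\,ds\le\tfrac{2}{c}\bigl(f_{\infty}(x_{0})+1-\min_{\overline{B(0,R)}}f\bigr)$ for large $i$, so the $\gamma_{i}$ have uniformly bounded length. Reparametrizing by constant speed on $[0,1]$ and applying Arzel\`a--Ascoli, a subsequence converges uniformly to an $L'$-Lipschitz parametrization $\sigma$ of a rectifiable curve $\gamma\subset\overline{B(0,R)}$ from $z:=\lim z_{i_{k}}$ to $x_{0}$, with $\operatorname{len}(\gamma)<\infty$. For each fixed $m$, continuity of $g_{m}$ and uniform convergence of the equi-Lipschitz parametrizations give $\int_{\gamma}g_{m}\,ds\le\lim_{k}\int_{\gamma_{i_{k}}}g_{m}\,ds\le\liminf_{k}\int_{\gamma_{i_{k}}}g_{i_{k}}\,ds$ (using $g_{m}\le g_{i_{k}}$ once $i_{k}\ge m$); letting $m\to\infty$ and invoking monotone convergence yields $\int_{\gamma}g\,ds\le\liminf_{k}\int_{\gamma_{i_{k}}}g_{i_{k}}\,ds$. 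Hence, using continuity of $f$, $f(z)+\int_{\gamma}g\,ds\le f_{\infty}(x_{0})$, which contradicts the upper gradient inequality $f(x_{0})-f(z)\le\int_{\gamma}g\,ds$ together with $f_{\infty}(x_{0})<f(x_{0})$. Therefore $f_{i}\uparrow f$ pointwise, and combined with the uniform Lipschitz bound this is exactly weak-$*$ convergence, completing the proof.

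I expect the main obstacle to be the last paragraph: confining the competitor curves to a fixed ball so that Arzel\`a--Ascoli applies (the straightening bookkeeping), extracting the limiting curve, and — most delicately — proving the lower semicontinuity $\int_{\gamma}g\,ds\le\liminf_{k}\int_{\gamma_{i_{k}}}g_{i_{k}}\,ds$ under the simultaneous convergence of the curves and the increasing convergence $g_{i}\uparrow g$ of a merely lower semicontinuous limit, for which the auxiliary uniform lower bound $g_{i}\ge c/2$ on the ball and the layer-by-layer comparison with the continuous $g_{m}$ are the crucial devices. The remaining steps (finiteness and monotonicity of $f_{i}$, the concatenation estimate for $\Lip_{a}$) are routine.
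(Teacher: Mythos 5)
Your proof is correct and follows essentially the same Heinonen--Koskela-inspired scheme as the paper: define $f_i$ by an infimal convolution along curves, get the Lipschitz and asymptotic Lipschitz bounds by concatenating a near-optimal competitor with a chord inside a small ball, and prove pointwise convergence by contradiction via Arzel\`a--Ascoli and lower semicontinuity of line integrals. The one substantive deviation is definitional: you let the starting point $z$ range over all of $\mathbb{R}^n$ and omit the paper's $\min\bigl(f(x),\cdot\bigr)$ truncation, which buys you $f_i\le f$ for free but forces your extra ``straightening'' step (replacing excursions outside $B(0,R)$ by chords and dropping the initial leg) to confine the near-optimal competitors to $\overline{B(0,R)}$ before compactness applies --- a step the paper sidesteps by building $\gamma(0)\in\overline{B(0,R)}$ into the definition of $f_i$ --- and you re-derive the lower semicontinuity of line integrals by a layer-by-layer comparison with the continuous $g_m$ plus monotone convergence, where the paper cites the known result; both variants are sound.
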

	\begin{proof}
		Since $g$ is lower semi-continuous, there is a $\delta>0$ such that $g|_{\overline{B(0,R)}}>\delta$. Thus, we see from pointwise convergence, the increasing property and compactness that for all large enough $i\in \mathbb{N}$, we have $g_i(x)>\delta$ for all $x\in \overline{B(0,R)}$. For such $i\in \mathbb{N}$, we have in fact that $g_i(x)>\delta$ for all $x\in \mathbb{R}^n$, by the assumption that $g_i|_{\mathbb{R}^n \setminus B(0,R)}\equiv \LIP[f]$. Up to reindexing, we may assume that $g_i(x)>\delta$ for all   $x \in \mathbb{R}^n$ and $i\in \mathbb{N}$.
		
		Now, define for any $x \in \mathbb{R}^n, i \in \mathbb{N}$,
		\begin{equation}\label{eq:fidef}
			f_i(x):=\min(f(x),\inf_{\gamma} f(\gamma(0))+\int_\gamma g_i ds),
		\end{equation}
		where the infimum is taken over all rectifiable curves $\gamma:[0,1]\to \mathbb{R}^n$ with $\gamma(1)=x$ and $\gamma(0)\in \overline{B(0,R)}$. The rest of the proof shows that this construction works. 
		
		We first argue that $f_i$ is a well-defined and finite Lipschitz function for each $i\in \mathbb{N}$. Let $x\in \mathbb{R}^n$ be fixed. We have the upper bound $f_i(x)\leq f(x)$ by construction. Since the infimum in \eqref{eq:fidef} is obtained by curves with $\gamma(0)\in \overline{B(0,R)}$, we have
		\[
		f_i(x)\geq \min(f(x),\inf_{y\in \overline{B(0,R)}}f(y)).
		\]
		In particular $f_i:\mathbb{R}^n \to \mathbb{R}$.
		
		Next, we estimate the difference $|f_i(x)-f_i(y)|$ for $x,y\in \mathbb{R}$. Let $\sigma:[0,1]\to \mathbb{R}^n, \sigma(t)=(1-t)x+ty$ be a straight line segment from $x$ to $y$. If $\gamma$ is any curve with $\gamma(1)=x$, then the (reparametrization of the) concatenation $\sigma \star \gamma$ satisfies $\sigma \star \gamma(1)=y$. Moreover, we have
		\begin{align}
			f_i(y)&\leq f(\sigma \star \gamma(0))+\int_{\sigma \star \gamma} g_i ds \nonumber \\
			&\leq f(\gamma(0))+\int_\gamma f_i ds + \int_{\sigma} g_i ds \nonumber \\
			&\leq f(\gamma(0))+\int_\gamma f_i ds + \LIP[f]|x-y| \label{eq:fioscbound}.
		\end{align}
		Taking an infimum over all $\gamma$ with $\gamma(1)=x$, we get
		\[
		f_i(y)\leq \inf_{\gamma} f(\gamma(0))+\int_\gamma g_i ds+\LIP[f]|x-y|,
		\]
		since also 
		\[
		f_i(y)\leq f(y)\leq f(x)+\LIP[f]|x-y|,
		\]
		we get $f_i(y)\leq f_i(x)+\LIP[f]|x-y|$ by combining the previous two inequalities.
		Switching the role of $x$ and $y$ yields that $f_i$ is $\LIP[f]$-Lipschitz.
		
		Next, the bound $\Lip_a[f_i](x)\leq g_i(x)$ is obvious for $x\not\in B(0,R)$. Let now $x,y\in B$ where $B$ is any ball in $\mathbb{R}^n$ contained in $B(0,R)$. Then, $\sigma \subset B \subset B(0,R)$ by convexity and we can improve the calculation in \eqref{eq:fioscbound} by replacing $\LIP[f]$ with $\sup_{z\in B} g_i(z)$ to give
		\[
		|f_i(x)-f_i(y)|\leq [\sup_{z\in B} g_i(z)] |x-y|
		\]
		Thus, $\LIP[f_i|_{B}]\leq \sup_{z\in B} |g_i(z)|$. By using this together with the continuity of $g_i$ for $B=B(x,r)$ and sending $r\to 0$ we see $\Lip_a[f_i](x)\leq g_i(x)$.
		
		We are left to show that $f_i(x)\to f(x)$ for every $x\in \mathbb{R}^n$. Fix an $x\in \mathbb{R}^n$ and suppose for the sake of contradiction that this is not the case. The integrands $g_i$ in  \eqref{eq:fidef} are increasing in $i$, and thus one sees that $f_i(x)\leq f_j(x)$ for every $i\leq j$. Thus, the limit $\lim_{i\to \infty} f_i(x)$ exists. Recall that $f_i(x)\leq f(x)$ for every $i\in \mathbb{N}$, and by our contrapositive, there must be some $\eta>0$ so that $f_i(x)<f(x)-\eta$ for all $i\in \mathbb{N}$. 
		
		By the definition of $f_i$, there exists a sequence $(\gamma_i)_{i \in \mathbb{N}}$ of rectifiable curves for which
		\begin{equation} \label{eq:contrary}
			f(\gamma_i(0))+\int_{\gamma_i} g_i ds \le f(x)-\eta, \mbox{where $\gamma_i(1)=x$ for all $i \in \mathbb{N}$}
		\end{equation}
		and $\gamma_i(0)\in \overline{B(0,R)}$. The case $f_i(x)=f(x)$ is precluded, since $f_i(x)<f(x)$.
		
		Recall that $g_i(z)>\delta$ for all $z\in \mathbb{R}^n$. Therefore, we have
		\[
		\delta \len(\gamma_i)\leq \int_{\gamma_i} g_i ds \leq f(x)-\eta-f(\gamma_i(0))\leq f(x)-\inf_{z\in \overline{B(0,R)}} f(z) := M <\infty.
		\]
		Thus, the lengths of $\gamma_i$ are uniformly bounded. Since $\gamma_i(1)=x$, these curves are also all contained in a compact subset of $\mathbb{R}^n$. By Arzel\'a-Ascoli, after a reparametrization, and passing to a subsequence, we may assume that $\gamma_i$ converge uniformly to some curve $\gamma$. 
		
		By the lower-semicontinuity of line integrals, see e.g. \cite[Proposition 4]{Keith} or \cite[Lemma 4.2]{DavidSEBsplitting}, for every $j\in \mathbb{N}$, we have
		\begin{align*}
			\int_\gamma g_j ds &\leq \liminf_{i\to \infty} \int_{\gamma_i} g_j ds \\
			&\leq  \liminf_{i\to \infty} \int_{\gamma_i} g_i ds \quad \mbox{(since $g_i \ge g_j$ for all $i \ge j$)}\\
			&\leq  \liminf_{i\to \infty} f(\gamma_i(1))-f(\gamma_i(0))-\eta \quad \mbox{(by \eqref{eq:contrary})} \\
			&=  f(\gamma(1))-f(\gamma(0))-\eta.
		\end{align*}
		By monotone convergence, we may send $j\to \infty$ to get
		\[
		\int_\gamma g ds = \lim_{j\to\infty} \int_\gamma g_j ds \leq  |f(\gamma(1))-f(\gamma(0))|-\eta,
		\]
		which is a contradiction to $g$ being an upper gradient. Thus, it must have been that $f_i(x)\to f(x)$, and the claim follows.
	\end{proof}
	
	\section{Proofs of energy image density properties} \label{s:proofs}
	
	\subsection{Lower semicontinuity based approach} \label{ss:lsc-approach}
	\begin{proof}[Proof of Theorem \ref{thm:eid-dirspace}]
		By passing to a subset of $A$ using Lemma \ref{lem:essinf},  we can assume that there exists a $\delta>0$ such that  
		\begin{equation} \label{e:main1}
			\bigwedge_{ (\lambda_1,\ldots,\lambda_n) \in \mathbb{S}^{n-1}}^{\one_A \cdot \Lambda_{\bphi}}	\frac{d\Gamma_p\langle \sum_{i=1}^n \lambda_i \phi_i\rangle}{d\Lambda_{\bphi}}\geq \delta.
		\end{equation}
		
		Consider the Lebesgue decomposition of $\bphi_*(\one_A \cdot \Lambda_{\bphi})$ with respect to $\cL_n$ as 
		\[
		\bphi_*(\one_A \Lambda_{\bphi})= \mu_s + \mu_a,
		\]
		where $\mu_s\perp \mathcal{L}_n$ and $\mu_a \ll \mathcal{L}_n$. We aim to show that $\mu_s = 0$.  Assume to the contrary that $\mu_s\neq 0$. Then since $\mu_s$ is a Radon measure, there exists a compact set $B\subset \mathbb{R}^n$ such that $\mu_s(B)>0$, i.e. $\Lambda_{\bphi}( A \cap \bphi^{-1}(B))>0$, and $\mathcal{L}_n(B)=0$.

		We have $\mathcal{L}_n(B)=0$, and thus by Proposition \ref{prop:decompoabilitybundlesinuglar} the measure $\nu= \one_B\bphi_*(\one_A\cdot\Lambda_{\bphi})$  has a $(n-1)$-dimensional decomposability bundle $T_\nu:\mathbb{R}^n\to \bigcup_{d=0}^{n-1}\Gr(n,d)$. 
		Let $\epsilon \in (0,\pi/2)$ be such that
		\begin{equation}\label{e:cont0}
			\delta > \epsilon^p C(n,p)>0, \quad \mbox{where $C(n,p)= 1 \vee n^{(p-2)/2}$.}
		\end{equation}
		Since $\nu$ is non-zero, by Proposition  \ref{prop:decompositionconenull}, there exist  unit vector  $\blambda=(\lambda_{1},\dots, \lambda_{n})\in S^{n-1}$ and a compact cone null set $F$ such that $F$ is $C(\blambda, \pi/2-\epsilon)$-cone null set  and $\nu(F)>0$. 
		Since $\nu=\one_B\bphi_*(\one_A\cdot\Lambda_{\bphi})$, by replacing $F$ with $F \cap B$ if necessary, we may assume that $F \subset B$.
		Let $K =  A \cap \bphi^{-1}(F) \subset A \cap \bphi^{-1}(B)$.	Since $\nu(F)>0$ and $\nu= \one_B\bphi_*(\one_A\cdot\Lambda_{\bphi})$, we have
		\begin{equation}  \label{e:main3a}
			\Lambda_{\bphi} \left( A \cap \bphi^{-1}(F) \right)	=\Lambda_{\bphi} \left( K \right)>0.
		\end{equation} 
		Since $F$ is $C(\blambda,\pi/2-\epsilon)$-cone null, then by applying Lemma \ref{l:upper-gradient} to the function $f(y):=\langle \blambda,y \rangle$, we obtain that 
		\[
		g(y):= \sin(\epsilon) \one_{F}(y) + \one_{\mathbb{R}^n \setminus F}(y)
		\]
		is an upper gradient for $f$ and there exists a non-decreasing  sequence of continuous functions  $(g_j)_{j \in \mathbb{N}}$ such that $\lim_{j \to \infty} g_j(y)= g(y)$ for all $y \in \mathbb{R}^n$ and $g_j(y)= g(y)$ for all $j \in \mathbb{N}$ and for all $y \in \mathbb{R}^n \setminus B(0,R)$, where $R>0$ is such that $B(0,R) \supset F$.  Hence, by Proposition \ref{prop:approximations}, there exists a sequence, there exists a sequence of $1$-Lipschitz functions $(f_j)_{j \in \mathbb{N}}$ weak$*$-converging to $f$ for which
		\[
		\Lip_a[f_j](y) \le g(y) \le \epsilon \one_{F}(y) + \one_{\mathbb{R}^n \setminus F}(y), \quad \mbox{for all $y \in \mathbb{R}^n, j \in \mathbb{N}$.}
		\]
		Thus, 
		\begin{equation} \label{e:lipa-bnd}
			(\Lip_a[f_j] \circ \bphi)(x) \le \epsilon, \quad \mbox{for all $x \in \bphi^{-1}(F)$ and $j \in \mathbb{N}$.}
		\end{equation}
		By replacing $f_j(y)$ by $f_j(y)-f_j(0)$ if necessary, we may assume that $f_{j}(0)=0$ for all $j \in \mathbb{N}$. 
		
		Since $\abs{f \circ \bphi-f_{j} \circ \bphi} \le  \LIP[f - f_j] \abs{\bphi} \le 2 \sum_{i=1}^n \abs{\phi_i} \in L^p$, by dominated convergence theorem, we have 
		\begin{equation}  \label{e:main4}
			\lim_{j \to \infty}	\norm{f \circ \bphi - f_{j} \circ \bphi}_{L^p} =0, \quad \mbox{for all $k \in \mathbb{N}$.}
		\end{equation}
		By Proposition \ref{prop:quasicont-conseq}-(3) and using $\LIP[f_j]\le \LIP[f]=1$, we have
		\begin{equation}  \label{e:main5}
			\sup_{j \in \mathbb{N}}	\mathcal{E}_p(f_{j} \circ \bphi) \le C(n,p) \sum_{i=1}^n \mathcal{E}_p(\phi_i) < \infty.
		\end{equation}

		By lower semicontinuity \eqref{e:def-lsc}, chain rule (Proposition \ref{prop:quasicont-conseq}-(3))   and recalling that $K \subset A \cap \bphi^{-1}(F) \subset  A \cap \bphi^{-1}(B)$, we obtain
		\begin{align} \label{e:cont1}
			\Gamma_p\langle f \circ \bphi \rangle(K) &\le \liminf_{j\to \infty} \Gamma_p\langle f_j \circ \bphi \rangle(K) \quad \mbox{(by \eqref{e:main4},\eqref{e:main5}, \eqref{e:def-lsc})} \nonumber \\
			&\le \liminf_{j\to \infty}  C(n,p) \int_{K} \Lip_a[f_j](\phi(x))^p \Lambda_{\bphi}(dx) \quad \mbox{(by \eqref{e:multi-chainrule})} \nonumber \\
			&\le \epsilon^p  C(n,p) \Lambda_{\bphi}(K), \quad \mbox{(by \eqref{e:lipa-bnd})}
		\end{align}
		On the other hand, by \eqref{e:main1} and $K \subset A$,  we have 
		\begin{equation} \label{e:cont2}
			\Gamma_p\langle f \circ \bphi \rangle(K) \ge \delta \Lambda_{\bphi}(K).
		\end{equation}
		Combining \eqref{e:main3a}, \eqref{e:cont1}, \eqref{e:cont2} and \eqref{e:cont0}, we obtain the desired contradiction.
	\end{proof}
	The remaining cases of the energy image density property follow by the same argument, with only minor adjustments related to accounting for slightly different assumptions.
	\begin{proof}[Proof of Theorem \ref{thm:eid-pdirstructure}]
		The proof is  identical to the proof of Theorem \ref{thm:eid-dirspace}, except that we use Proposition \ref{prop:multidimcontraction-structure} instead of Proposition \ref{prop:quasicont-conseq}-(3).
	\end{proof}
	\begin{proof}[Proof of Theorem \ref{thm:eid-mmdspace}]
		This follows from  Theorem \ref{thm:eid-dirspace}, Lemma  \ref{lem:df-dirspace}, and Lemma \ref{l:invertibly-independence}.
	\end{proof}	
	\begin{proof}[Proof of Theorem \ref{thm:eid-dirstructure}]
		This follows from  Theorem \ref{thm:eid-pdirstructure}, Lemma  \ref{lem:dirichlet}, and Lemma \ref{l:invertibly-independence-dirstructure}.
	\end{proof}

	\subsection{Alternate proof of Bouleau--Hirsch conjecture using normal currents} \label{ss:alternate}
	We recall some basic notions concerning currents in $\mathbb{R}^n$ and refer to \cite{KrPa,Federer} for a detailed exposition.
	
	A \emph{$k$-dimensional current} $T$ in $\mathbb{R}^n$  is a continuous linear functional on the space of smooth, compactly supported $k$-forms $\mathcal{D}^k(\mathbb{R}^n)$. The \emph{boundary} $\partial T$ of $T$ is the $(k-1)$-current is defined by 
	\[
	(\partial T)(\omega):= T(d \omega), \quad \mbox{for all $\omega \in \mathcal{D}^{k-1}(\mathbb{R}^n)$, }
	\]
	where $d\omega$ is the exterior derivative of $\omega$.
	The \emph{mass} of $T$ is defined as the supremum of $T(\omega)$, where $\omega \in \mathcal{D}^k(\mathbb{R}^n)$ varies over all $k$-forms such that $\abs{\omega} \le 1$ everywhere. A current $T$ is said to be \emph{normal} if both $T$ and its boundary $\partial T$ have finite mass.
	
	We will only need the case $k=0$ and $k=1$ for our purposes, and   our currents will often have finite mass. By Riesz theorem, a $0$-dimensional current on $\mathbb{R}^n$ with finite mass can be viewed as a finite  (signed) measure on $\mathbb{R}^n$, while a $1$-dimensional current can be viewed as a $\mathbb{R}^n$-valued finite (signed) measure. Every $1$-dimensional current with finite mass $T$ can be canonically written as 
	\[
	T:= \vec{T} \cdot \norm{T}, 
	\]
	where $\norm{T}$ is a finite (non-negative) measure on $\mathbb{R}^n$ and $\vec{T}$ is a measurable vector field such that $|\vec{T}|=1$, $\norm{T}$-almost everywhere.
	
	Throughout \textsection \ref{ss:alternate}, we fix a Dirichlet structure $(X,\mathcal{X},\mu,\mathcal{E},\mathcal{F})$ in the sense of Definition \ref{d:dirstructure}. 
	\begin{definition}
		Let  $(X,\mathcal{X},\mu,\mathcal{E},\mathcal{F})$ be a Dirichlet structure. The \emph{generator} $A:D(A) \to L^2(X,\mu)$ is the linear operator with domain
		\[
		D(A):= \{ f\in \mathcal{F} \vert \mbox{there exists $C>0$ such that $\abs{\mathcal{E}(f,g)} \le C \norm{g}_{L^2(\mu)}$ for all $g \in \mathcal{F}$}\}.
		\]
		For any $f \in D(A)$, the element $A(f) \in L^2(X,\mu)$ is defined by
		\[
		\mathcal{E}(f,g)= -\langle A(f), g \rangle_{L^2(\mu)}.
		\] 
		
		We define the \emph{energy measure} as $\Gamma(f,g)= \gamma(f,g)\cdot \mu$ for all $f,g \in \mathcal{F}$, where $\gamma: \mathcal{F} \times \mathcal{F} \to L^1(X,\mu)$ is the associated carr\'e du champ.
	\end{definition}

	We define a family of $1$-dimensional currents associated with respect to a Dirichlet structure.
	\begin{definition}
		For a $\mathbb{R}^n$-valued function $f=(f_1,\ldots,f_n) \in \mathcal{F}^n$ and $g \in \mathcal{F}$,  we define $T_{f,g}$ to be the one dimensional current on $\mathbb{R}^n$ associated with the $\mathbb{R}^n$-valued measure $(f_*(\Gamma(f_i,g)))_{1 \le i \le n}$ on $\mathbb{R}^n$; that is,
		\[
		T_{f,g} \left( \sum_{j=1}^n h_j dx_j \right):= \sum_{j=1}^n \int h_j(f(x))\,\Gamma(f_i,g)(dx)  
		\]
		for any smooth, compactly supported $1$-form  $\sum_{j=1}^n h_j dx_j$.
	\end{definition}
	
	Recall that the \emph{boundary} of a $1$-dimensional current $T$ on $\mathbb{R}^n$ is the $0$-dimensional current (or distribution) defined by 
	\[
	\partial T(\phi)= T \left( \sum_{j=1}^n  \frac{\partial \phi}{\partial x_j} dx_j\right), \quad \mbox{for all $\phi \in C_c^\infty(\mathbb{R}^n)$.}
	\]
	Note that if $T$ is a $1$-dimensional current with finite mass, then the boundary $\partial T$ is the negative of the distributional divergence of the $\mathbb{R}^n$-valued measure corresponding to $T$.
	
	The following lemma is a simple consequence of the chain rule for Dirichlet forms.
	\begin{lemma} \label{l:normal-current}
		For any $n \in \mathbb{N}, f \in \mathcal{F}^n$ and $g \in \mathcal{F}$, we have that $T_{f,g}$ is a  $1$-dimensional current with finite mass. If in addition, $g \in D(A)$, then  $T_{f,g}$ is a  $1$-dimensional normal current whose  boundary is
		\[
		\partial T_{f,g} = -f_*(A(g)\cdot \mu).
		\]
	\end{lemma}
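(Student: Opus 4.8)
The plan is to handle the two assertions in turn, the first being essentially formal and the second a short consequence of the carr\'e du champ chain rule.

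For the finite-mass claim I would argue as follows. Since $\gamma\colon\mathcal{F}\times\mathcal{F}\to L^1(X,\mu)$ is continuous, each $\Gamma(f_j,g)=\gamma(f_j,g)\cdot\mu$ is a finite signed measure on $X$, so $(f_*\Gamma(f_j,g))_{j=1}^n$ is a finite $\mathbb{R}^n$-valued measure on $\mathbb{R}^n$; testing $T_{f,g}$ against a smooth compactly supported $1$-form $\omega=\sum_{j=1}^n h_j\,dx_j$ with $\abs{\omega}\le1$ everywhere gives $\abs{T_{f,g}(\omega)}\le\int_X\abs{(\gamma(f_j,g)(x))_{j=1}^n}\,d\mu(x)\le\sum_{j=1}^n\norm{\gamma(f_j,g)}_{L^1(\mu)}<\infty$, so $T_{f,g}$ is a $1$-current with mass at most this sum.

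For the boundary formula and normality, assume $g\in D(A)$ and fix $\phi\in C_c^\infty(\mathbb{R}^n)$. I would first record the identity $\mathcal{E}(u,v)=\int_X\gamma(u,v)\,d\mu$ for all $u,v\in\mathcal{F}$: for bounded $u,v$ it follows by taking $h=\one$ in \eqref{e:def-carreduchamp}, using symmetry of $\mathcal{E}$ and $\mathcal{E}(\one,uv)=0$ (the latter from $\mathcal{E}(\one,\one)=0$ and the Cauchy--Schwarz inequality for the nonnegative bilinear form $\mathcal{E}$, recalling $uv\in\mathcal{F}$), and it extends to all of $\mathcal{F}$ by continuity of both sides and density of $\mathcal{F}\cap L^\infty$ in $\mathcal{F}$. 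Next, setting $\widetilde{\phi}:=\phi-\phi(0)\in C^1(\mathbb{R}^n)\cap\Lip(\mathbb{R}^n)$, the functional calculus for Dirichlet structures \cite[Corollary I.6.1.3]{BH91} gives $\widetilde{\phi}\circ f\in\mathcal{F}$ with $\gamma(\widetilde{\phi}\circ f,g)=\sum_{j=1}^n(\partial_j\phi\circ f)\,\gamma(f_j,g)$ $\mu$-a.e. Combining these with the definition of the boundary,
\[
\partial T_{f,g}(\phi)=T_{f,g}\Bigl(\sum_{j=1}^n\frac{\partial\phi}{\partial x_j}\,dx_j\Bigr)=\sum_{j=1}^n\int_X(\partial_j\phi\circ f)\,\gamma(f_j,g)\,d\mu=\int_X\gamma(\widetilde{\phi}\circ f,g)\,d\mu=\mathcal{E}(\widetilde{\phi}\circ f,g),
\]
and, since $g\in D(A)$, this equals $-\langle\widetilde{\phi}\circ f,A(g)\rangle_{L^2(\mu)}=-\int_X(\phi\circ f)\,A(g)\,d\mu+\phi(0)\int_X A(g)\,d\mu$. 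I would then note $\int_X A(g)\,d\mu=\langle A(g),\one\rangle_{L^2(\mu)}=-\mathcal{E}(g,\one)=0$ (using $\one\in\mathcal{F}$ and $\mathcal{E}(\one,\one)=0$), so $\partial T_{f,g}(\phi)=-\int_X(\phi\circ f)\,A(g)\,d\mu=-\bigl(f_*(A(g)\cdot\mu)\bigr)(\phi)$ for every $\phi\in C_c^\infty(\mathbb{R}^n)$, that is, $\partial T_{f,g}=-f_*(A(g)\cdot\mu)$. Finally $A(g)\in L^2(X,\mu)\subset L^1(X,\mu)$ because $\mu$ is a probability measure, so $f_*(A(g)\cdot\mu)$ is a finite signed measure and $\partial T_{f,g}$ has finite mass; hence $T_{f,g}$ is normal.

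I do not expect a genuine obstacle here; the only points requiring care are the two auxiliary facts, namely the identity $\mathcal{E}(\cdot,\cdot)=\int_X\gamma(\cdot,\cdot)\,d\mu$ on all of $\mathcal{F}$ (a routine density argument, but one should check that $\mathcal{F}\cap L^\infty$ is dense in $\mathcal{F}$ and that $uv\in\mathcal{F}$ for bounded $u,v$) and the use of the full chain rule of \cite{BH91} rather than the weaker contraction inequalities proved earlier in this paper. Everything else is a direct computation from the definitions.
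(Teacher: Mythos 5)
Your proof is correct and follows essentially the same approach as the paper: both establish finite mass from the fact that the $\Gamma(f_j,g)$ are finite signed measures, and both compute the boundary via the chain rule for $\phi-\phi(0)$ composed with $f$, the identity $\mathcal{E}=\int_X\gamma\,d\mu$, the defining property of the generator, and the vanishing of the $\phi(0)$ correction term via $\one\in\mathcal{F}$, $\mathcal{E}(\one,\cdot)=0$. The only cosmetic difference is that you spell out the intermediate identity $\mathcal{E}(u,v)=\int_X\gamma(u,v)\,d\mu$ and defer the cancellation of the $\phi(0)\int_X A(g)\,d\mu$ term until after invoking the generator, whereas the paper applies $\mathcal{E}(\one,g)=0$ one step earlier to replace $\phi_0(f)$ by $\phi(f)$.
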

	\begin{proof}
		Note that for all $i=1,\ldots,n$, $\Gamma(f_i,g)$ is a measure whose variation   is bounded by $2^{-1}(\Gamma(f,f)+\Gamma(g_i,g))$ by the Cauchy-Schwarz inequality \cite[(2.2)]{hinoenergymeasures}. Hence $T_{f,g}$ is an $\mathbb{R}^n$-valued Borel measure of finite variation and hence a $1$-dimensional current with finite mass.
		
		Given a function $\phi:\mathbb{R}^n\to \mathbb{R}$, we define $\phi_0:\mathbb{R}^n\to \mathbb{R}$ as $\phi_0(y)=\phi(y)-\phi(0)$.
		We now compute the boundary of the current $T_{f,g}$.
		For all $\phi \in C_c^\infty(\mathbb{R}^n)$, $1 \le i \le n$, we have 
		\begin{align*}
			\partial T_{f,g}(\phi)&= T_{f,g}\left(\sum_{j=1}^n \frac{\partial \phi}{\partial x_j} dx_j\right) \\
			&= \sum_{i=1}^n \int \frac{\partial \phi}{\partial x_i}(f(x)) \,\Gamma(f_i,g)(dx) \\
			&= \mathcal{E}(\phi_0(f),g) \quad  \mbox{(by \cite[Corollary I.6.1.3]{BH91} or \cite[Theorem 3.2.2]{FOT})} \\
			&= \mathcal{E}(\phi(f),g) \quad \mbox{(since $\one \in \mathcal{F}$ and $\mathcal{E}(\one,g)=0$.)}
		\end{align*}
		In particular, if $f \in D(A)^n$, then 
		\[
		\partial T_{f,g}(\phi)= -\int_{X} A(g) \phi(f)\,d\mu = \int_{\mathbb{R}^n} \phi(y) \,f_*(-A(g)\cdot \mu) (dy),  
		\]
		for all $\phi \in C_c^\infty(\mathbb{R}^n)$.

		Since $A(g) \in L^2(X,\mu) \subset L^1(X,\mu)$, we have that $f_*(-A(g)\cdot \mu)$ is a finite Radon measure. Hence $T_{f,g}$ is a normal current.
	\end{proof}
	\begin{remark}\label{r:constant}
		The assumption $\one \in \mathcal{F}$ in Definition \ref{d:dirstructure}-(iv) is not crucial for the conclusion that $T_{f,g}$ is a normal current. 
		Without this assumption, we   have 
		\[
		\partial T_{f,g}(\phi)= \mathcal{E}(\phi_0(f),g) = \int_{\mathbb{R}^n} \phi(y) \,f_*(-A(g)\cdot \mu) (dy) + \phi(0) (f_*(A(g)\cdot \mu)) (\mathbb{R}^n) 
		\]
		and hence $\partial T_{f,g}=  -f_*(A(g)\cdot \mu) +  (f_*(A(g)\cdot \mu)) (\mathbb{R}^n)  \delta_0$, where $\delta_0$ is the Dirac measure at $0 \in \mathbb{R}^n$. Thus $T_{f,g}$ is a  normal current.
	\end{remark}
	In our  normal current  approach, our main tool  is the following structure theorem for finite families of normal currents by De Philippis and Rindler recalled in Theorem \ref{t:dr-current}. We recall that this result is also used in the other approach in the Proof of Proposition \ref{prop:decompoabilitybundlesinuglar}.

	We verify the hypotheses (i), (ii) of  Theorem \ref{t:dr-current} in the Lemma below.
	\begin{lemma} \label{l:dr-assume}
		For each $n \in \mathbb{N}$,  and $f=(f_1,\ldots,f_n) \in \mathcal{F}^n$, we have 
		\begin{equation} \label{e:dr1}
			f_* \left(\one_{\{\det(\gamma(f))>0\}}\cdot \mu\right) \ll \norm{T_{f,f_i}},
		\end{equation}
		and 
		\begin{equation} \label{e:dr2}
			\operatorname{span}\{\vec{T}_{f,f_1}(y) ,\ldots, \vec{T}_{f,f_n}(y) \} = \mathbb{R}^n, \quad \mbox{for $f_*\left(\one_{\{\det(\gamma(f))>0\}}\cdot \mu\right)$-almost every $y \in \mathbb{R}^n$.}
		\end{equation}
	\end{lemma}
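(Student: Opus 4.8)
The plan is to assemble the $n$ currents $T_{f,f_1},\dots ,T_{f,f_n}$ into a single matrix-valued measure and exploit the non-negativity of the carr\'e du champ. Set $A=\{\det(\gamma(f))>0\}$ and let $\mathbf{M}:=f_*(\gamma(f)\cdot\mu)$ be the $\mathbb{R}^{n\times n}$-valued finite Borel measure on $\mathbb{R}^n$ with entries $\mathbf{M}_{jk}=f_*(\Gamma(f_j,f_k))$; its $i$-th column is exactly the $\mathbb{R}^n$-valued measure defining $T_{f,f_i}$, so $T_{f,f_i}=\vec{T}_{f,f_i}\,\norm{T_{f,f_i}}$ with $\norm{T_{f,f_i}}=\abs{\mathbf{m}_i}$, the total variation of the $i$-th column $\mathbf{m}_i$ of $\mathbf{M}$. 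As in the proof of Lemma~\ref{l:invertibly-independence}, $\gamma(f)$ is symmetric and positive semidefinite $\mu$-a.e.\ (since $\blambda^{T}\gamma(f)\blambda=\frac{d\Gamma(\sum_i\lambda_if_i,\sum_i\lambda_if_i)}{d\mu}\ge 0$, verified on a countable dense set of $\blambda$ and extended by continuity), and on $A$ it is positive \emph{definite}; correspondingly $\mathbf{M}$ is symmetric-matrix-valued with $\blambda^{T}\mathbf{M}\blambda=f_*(\Gamma(\sum_i\lambda_if_i,\sum_i\lambda_if_i))\ge 0$ for every $\blambda\in\mathbb{R}^{n}$.

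For \eqref{e:dr1}, fix $i$ and a Borel set $B\subset\mathbb{R}^n$ with $\norm{T_{f,f_i}}(B)=0$. Since the total variation of a vector measure dominates that of each coordinate, $\abs{\mathbf{M}_{ii}}(B)=0$, hence $0=\mathbf{M}_{ii}(B)=\int_{f^{-1}(B)}\gamma(f_i,f_i)\,d\mu$; because $\gamma(f_i,f_i)\ge 0$ $\mu$-a.e., this forces $\gamma(f_i,f_i)=0$ $\mu$-a.e.\ on $f^{-1}(B)$. But a diagonal entry of a positive definite matrix is strictly positive, so $A\subseteq\{\gamma(f_i,f_i)>0\}$ up to a $\mu$-null set, and therefore $\mu(A\cap f^{-1}(B))=0$, i.e.\ $f_*(\one_A\mu)(B)=0$. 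This gives \eqref{e:dr1}.

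For \eqref{e:dr2}, write $\kappa:=f_*(\one_A\mu)$ and $\theta:=\sum_{j,k}\abs{\mathbf{M}_{jk}}$. By \eqref{e:dr1} each $\vec{T}_{f,f_i}$ is defined both $\kappa$-a.e.\ and $\theta$-a.e., and $\kappa\ll\theta$. Let $W:=d\mathbf{M}/d\theta$, a Borel symmetric positive-semidefinite matrix field (positive semidefiniteness $\theta$-a.e.\ by the same density argument as above); then $\mathbf{m}_i=(We_i)\,\theta$, so for $\theta$-a.e.\ $y$ at which $W(y)e_i\neq 0$ for all $i$ one has $\vec{T}_{f,f_i}(y)=W(y)e_i/\abs{W(y)e_i}$, whence $\operatorname{span}\{\vec{T}_{f,f_i}(y):1\le i\le n\}$ equals the column space of $W(y)$; moreover $\kappa$-a.e.\ $y$ lies in this set, again by \eqref{e:dr1}. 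Hence it suffices to prove $\operatorname{rank} W(y)=n$ for $\kappa$-a.e.\ $y$. Suppose not: the Borel set $E:=\{y:\operatorname{rank} W(y)<n\}$ has $\kappa(E)>0$. By a measurable selection argument, choose a Borel map $\blambda\colon\mathbb{R}^{n}\to\mathbb{S}^{n-1}$ with $\blambda(y)\in\ker W(y)$ for $y\in E$. Then
\begin{align*}
0 &= \int_{E}\blambda(y)^{T}W(y)\blambda(y)\,d\theta(y)=\int_{E}\blambda^{T}\,d\mathbf{M}\,\blambda \\
&= \int_{f^{-1}(E)}(\blambda\circ f)^{T}\gamma(f)(\blambda\circ f)\,d\mu,
\end{align*}
the last equality by the change-of-variables formula for pushforwards applied entrywise to $\mathbf{M}_{jk}=f_*(\gamma(f_j,f_k)\mu)$ (valid since $\blambda$ is a genuine Borel function on $\mathbb{R}^n$). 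The integrand is non-negative $\mu$-a.e., so it vanishes $\mu$-a.e.\ on $f^{-1}(E)$; but on $A\cap f^{-1}(E)$ it equals $(\blambda\circ f)^{T}\gamma(f)(\blambda\circ f)>0$, since $\gamma(f)$ is positive definite there and $\abs{\blambda\circ f}=1$. Therefore $\mu(A\cap f^{-1}(E))=0$, i.e.\ $\kappa(E)=0$, a contradiction, proving \eqref{e:dr2}.

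The genuinely non-trivial point is \eqref{e:dr2}: one must convert the purely measure-theoretic assertion that the mass-measure directions of the $T_{f,f_i}$ span $\mathbb{R}^{n}$ into the pointwise linear-algebra statement about $\gamma(f)$, transported across the merely measurable map $f$. Packaging the $n$ currents into the single matrix-valued measure $\mathbf{M}=f_*(\gamma(f)\cdot\mu)$ and then combining a measurable selection of a null direction with the non-negativity of $(\blambda\circ f)^{T}\gamma(f)(\blambda\circ f)$ is what makes the argument go through; the remaining ingredients (the pushforward change of variables, $\gamma(f)\in\mathcal{S}_{n}^{+}$ $\mu$-a.e., and $\kappa\ll\theta$) are routine.
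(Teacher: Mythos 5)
Your proof of \eqref{e:dr1} follows essentially the same line as the paper: reduce to $\one_{\{\det(\gamma(f))>0\}}\cdot\mu \ll \Gamma(f_i,f_i)$, using that a diagonal entry of a positive definite matrix is positive. For \eqref{e:dr2}, however, you take a genuinely different route. The paper introduces the conditional expectation $\widetilde{M}^f = \mathbb{E}_\mu[\gamma(f)\,\vert\,\sigma(f)]$, identifies it with $N^f\circ f$ where $N^f_{ij} = d f_*(\Gamma(f_i,f_j))/d f_*(\mu)$, and then invokes a result of Bouleau--Hirsch (their Lemma~V.1.1.6.2) to conclude that $\det\widetilde{M}^f \neq 0$ almost everywhere on $\{\det\gamma(f)>0\}$; the columns of $N^f$ are parallel to the $\vec{T}_{f,f_i}$, which gives the spanning. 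You instead normalize the matrix-valued measure $\mathbf{M}=f_*(\gamma(f)\cdot\mu)$ by $\theta=\sum_{j,k}\abs{\mathbf{M}_{jk}}$, identify the span of the $\vec{T}_{f,f_i}(y)$ with the column space of $W(y)=d\mathbf{M}/d\theta$, and then rule out a rank deficiency on a set of positive $\kappa$-measure by a Borel selection of a kernel direction $\blambda(y)\in\ker W(y)\cap\mathbb{S}^{n-1}$, pushing this back to $X$ via the change-of-variables formula and using that $\gamma(f)$ is positive definite on $A$. This effectively reproves the special case of the Bouleau--Hirsch conditional-expectation lemma that is needed here, making your argument self-contained at the cost of a (standard) measurable selection theorem such as Kuratowski--Ryll-Nardzewski; the paper's argument is shorter but leans on the external reference. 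Both are correct; the steps you flagged as routine (positive semidefiniteness of $W$ $\theta$-a.e.\ via a countable dense set of $\blambda$, the entrywise pushforward change of variables, $\kappa\ll\theta$ via \eqref{e:dr1}) indeed go through, and the set $E=\{\det W=0\}$ is Borel so the selection is legitimate.
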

	\begin{proof}

		We note that 
		$\Gamma(f_i,f_j) \ll \Gamma(f_i,f_i)$ for all $i,j=1,\ldots,n$ (by Cauchy-Schwarz inequality \cite[(2.2)]{hinoenergymeasures}). 
		Hence for each $i=1,\ldots,n$, $T_{f,f_i}$ can be represented as 
		\[
		T_{f,f_i}= \vec{T}_{f,f_i} \norm{T_{f,f_i}},
		\]
		where $\vec{T}_{f,f_i}: \mathbb{R}^n \to \mathbb{R}^n$ is given by
		\[
		\vec{T}_{f,f_i} = \left(\sum_{j=1}^n\left(\frac{df_*\left(\Gamma(f_i,f_j)\right)}{df_*\left(\Gamma(f_i,f_i)\right)}\right)^2 \right)^{-1/2} \left(\frac{df_*\left(\Gamma(f_i,f_1)\right)}{df_*\left(\Gamma(f_i,f_i)\right)},\ldots,\frac{df_*\left(\Gamma(f_i,f_n)\right)}{df_*\left(\Gamma(f_i,f_i)\right)}\right) 
		\]
		and $\norm{T_{f,f_i}}$ is a non-negative measure on $\mathbb{R}^n$ defined by
		\[
		\norm{T_{f,f_i}}:= \left( \sum_{j=1}^n\left(\frac{df_*\left(\Gamma(f_i,f_j)\right)}{df_*(\Gamma(f_i,f_i))}\right)^2 \right)^{ 1/2} \cdot f_*\left(\Gamma(f_i,f_i)\right),
		\]
		such that $\norm{\vec{T}_{f,f_i}(y)}=1$ for $\norm{T_{f,f_i}}$-almost every $y \in \mathbb{R}^n$.
		Since $f_*\left(\Gamma(f_i,f_i)\right) \ll \norm{T_{f,f_i}}$, in order to show \eqref{e:dr1} it suffices to prove
		\begin{equation} \label{e:aconX}
			\one_{\{\det(\gamma(f))>0\}}\cdot \mu \ll \Gamma(f_i,f_i), \quad \mbox{for all $i=1,\ldots,n$.}
		\end{equation}
		Let $1 \le i \le n$ and let $A \in \mathcal{X}$ be such that $\Gamma(f_i,f_i)(A)=0$. Then 
		$\gamma(f_i,f_i)(x)=0$, for $\mu$-almost every $x \in A$ and hence  	$\gamma(f_i,f_j)(x)=0$ for all $j=1,\ldots,n$ and for $\mu$-almost every $x \in A$ (by Cauchy-Schwarz inequality \cite[(2.2)]{hinoenergymeasures}). Therefore $\det(\gamma(f))(x)=0$ for $\mu$-almost every $x \in A$, or equivalently $\left(\one_{\{\det(\gamma(f))>0\}}\cdot \mu \right)(A)=0$. This concludes the proof of \eqref{e:aconX} and therefore \eqref{e:dr1}.
		
		Note that if $G \in L^1(X,\mu)$, then $f_*(G \cdot \mu)  \ll f_*(\mu)$ and if 
		\[
		\widetilde{G}:= \frac{df_*(G \cdot \mu)}{d f_*(\mu)},
		\quad \mbox{then} \quad 
		\mathbb{E}_{\mu}[G \vert \sigma(f)]= \widetilde{G} \circ f, \quad \mbox{$\mu$-almost surely},
		\]
		where $\sigma(f)= \{f^{-1}(B): B \in \mathcal{B}(\mathbb{R}^n)\}$ is the $\sigma$-field generated by $f$. Note that $\widetilde{G}$ is defined up to $f_*(\mu)$-almost everywhere equivalence, and $\mathbb{E}_{\mu}[G \vert \sigma(f)]$ is defined up to $\mu$-almost everywhere equivalence. Therefore the matrix valued random variable $	\widetilde{M}^f: X \to \mathbb{R}^{n \times n}$ defined as the conditional expectation given by
		\[
		\widetilde{M}^f:= \mathbb{E}_{\mu}[\gamma(f) \vert  \sigma(f)]
		\]
		satisfies
		\[
		\frac{d f_*(\Gamma(f_i,f_j))}{df_*(\mu)} \circ f= \widetilde{M}^f_{ij}, \quad \mbox{for all $i,j \in \{1,\ldots,n\}$.}
		\]
		By \cite[Lemma V.1.1.6.2]{BH91}, we have 
		\[
		\mu \left(\{x \in X: \det(\widetilde{M}^f(x))=0,\det(\gamma(f)(x)) \neq 0\}\right)=0,
		\]
		and hence $\det(\widetilde{M}^f(x)) \neq 0$ for $\one_{\{\det(\gamma(f))>0\}} \cdot \mu$-almost every $x \in X$. 
		Since $\widetilde{M}^f$ is $\mu$-almost surely symmetric non-negative definite matrix valued funtion,
		we have that the matrix-valued function $N^f: \mathbb{R}^n \to \mathbb{R}^{n \times n}$ defined by 
		\begin{equation} \label{e:defNf}
			N^f_{ij}:= \frac{d f_*(\Gamma(f_i,f_j))}{d f_*(\mu)}, \quad \mbox{for all $i,j \in \{1,\ldots,n\}$,}
		\end{equation}
		satisfies
		\begin{equation} \label{e:detNf}
			\det(N^f(y)) > 0, \quad \mbox{for $f_*(\one_{\{\det (\gamma(f))>0\}} \cdot \mu)$-almost every $y \in \mathbb{R}^n$.}
		\end{equation}
		Since the columns of $N^f$ are parallel to $\{\vec{T}_{f,f_i}: 1 \le i \le n\}$, we conclude the proof of \eqref{e:dr2}.
	\end{proof}
	\begin{proof}[Alternate proof of Theorem \ref{thm:eid-dirstructure}]
		If we assume in addition that $f \in D(A)^n$, where $A$ is the generator, then the desired result follows immediately by an application of Theorem \ref{t:dr-current} since the hypotheses of \cite[Corollary 1.12]{DR}   follow from Lemmas \ref{l:normal-current} and \ref{l:dr-assume}. 
		
		For an arbitrary $f=(f_1,\ldots,f_n) \in \mathcal{F}^n$, the one-dimensional currents $T_{f,f_i}, i=1,\ldots,n$ need not be normal. So in order to apply \cite[Corollary 1.12]{DR}, we approximate these   currents   $T_{f,f_i}, i=1,\ldots,n$ by a suitable seqence of  one-dimensional \emph{normal} currents. By the density of the domain of the generator $D(A)$ in the Hilbert space $(\mathcal{F}, \mathcal{E}_1)$ (using \cite[(1.3.3) and Lemma 1.3.3-(iii)]{FOT}), for each $i=1,\ldots,n$, there exists a sequence $(f_{i,k})_{k \in \mathbb{N}}$ such that $\lim_{k \to \infty} f_{i,k}=f_i$ in the norm topology of $\mathcal{F}$. By the continuity of carr\'e du champ $\gamma: \mathcal{F} \times \mathcal{F} \to L^1(\mu)$ \cite[Proposition I.4.1.3]{BH91},  the sequence of measures $\Gamma(f_j,f_{i,k})$ converges in variation to $\Gamma(f_j,f_i)$ as $k\to \infty$ for all $i,j \in \{1,\ldots,n\}$. Hence each entry in the sequence of matrix valued functions $N^{f,k}: \mathbb{R}^n \to \mathbb{R}^{n \times n}$, defined by
		\[
		N^{f,k}_{ij}:= \frac{d f_*(\Gamma(f_i,f_{j,k}))}{d f_*(\mu)}, \quad \mbox{for all $i,j \in \{1,\ldots,n\}$, and $k \in \mathbb{N}$}
		\]
		converges  in $L^1(f_*(\mu))$ to the corresponding entry of the matrix valued function $N^f:\mathbb{R}^n \to \mathbb{R}^{n \times n}$ as defined in \eqref{e:defNf}.
		
		By passing to a subsequence if necessary, we may assume that $	N^{f,k}$ converges $f_*(\mu)$-a.e.~to $N^f$. By the continuity of the determinant and \eqref{e:detNf}, we have that 
		\begin{equation} \label{e:convNf}
			\lim_{k \to \infty} \det(N^{f,k}(y))= \det( N^f(y)) > 0 \quad \mbox{for $f_*(\one_{\{\det (\gamma(f))>0\}} \cdot \mu)$-almost every $y \in \mathbb{R}^n$.}
		\end{equation}
		Note that for all $1 \le j \le n, k \in \mathbb{N}$ 
		\[
		T_{f,f_{j,k}}= \vec{T}_{f,f_{j,k}} \norm{T_{f,f_{j,k}}},\quad \mbox{ where } 		\norm{T_{f,f_{j,k}}}:= \left(\sum_{i=1}^n (N_{ij}^{f,k})^2\right)^{1/2} \cdot f_*(\mu).
		\]
		Hence for all $k \in \mathbb{N}$, we have
		\begin{equation} \label{e:dr1-approx}
			\one_{\{\det(N^{f,k})>0\}}\cdot	f_* \left(\one_{\{\det(\gamma(f))>0\}}\cdot \mu\right) \ll \norm{T_{f,f_{j,k}}},
		\end{equation}
		and by the same argument in the proof of Lemma \ref{l:dr-assume}
		\begin{equation} \label{e:dr2-approx}
			\operatorname{span}\{\vec{T}_{f,f_{1,k}}(y) ,\ldots, \vec{T}_{f,f_{n,k}}(y) \} = \mathbb{R}^n, 
		\end{equation}
		for $	\one_{\{\det(N^{f,k})>0\}}\cdot f_*\left(\one_{\{\det(\gamma(f))>0\}}\cdot \mu\right)$-almost every $y \in \mathbb{R}^n$. Thus by Theorem \ref{t:dr-current}, we obtain that 
		\[
		\one_{\{\det(N^{f,k})>0\}}\cdot f_*\left(\one_{\{\det(\gamma(f))>0\}}\cdot \mu\right) \ll \mathcal{L}_n, \quad \mbox{for all $k \in \mathbb{N}$.}
		\]
		Letting $k \to \infty$ and using \eqref{e:convNf}, we obtain $f_*\left(\one_{\{\det(\gamma(f))>0\}}\cdot \mu\right) \ll \mathcal{L}_n$ for all $f \in \mathcal{F}^n$.
	\end{proof}
	\begin{remark} \label{r:mmd-eid-alternate}
		The argument using normal currents presented in the proof of Theorem~\ref{thm:eid-dirspace} also provides an alternative proof of Theorem~\ref{thm:eid-mmdspace}. We briefly indicate the necessary modifications.
		
		Lemma~\ref{l:dr-assume} extends to the setting of MMD spaces with minor changes, the main one being the replacement of $\mu$ by a minimal energy dominant measure. The proof of Lemma~\ref{l:normal-current} requires $\mu$ to be finite, since the inclusion $L^2(X,\mu) \subset L^1(X,\mu)$ is used there. As noted in Remark~\ref{r:constant}, the assumption $\one \in \mathcal{F}$ is not needed. Hence, the proof of Lemma~\ref{l:normal-current} applies to MMD spaces as long as we additionally assume that $\mu$ is finite.
		
		This restriction is not essential for Theorem~\ref{thm:eid-mmdspace}, since we can reduce to the finite-measure case by using the trace Dirichlet form (see \cite[(6.2.4), Theorem~6.2.1]{FOT} for the definition of trace Dirichlet form). Since $\mu$ is $\sigma$-finite (recall that $\mu$ is a Radon measure on a locally compact, separable space), there exists a bounded strictly positive function $h:X \to (0,\infty)$ such that $h\mu$ is a probability measure. Since $h$ is bounded and strictly positive, it is straightforward to verify that the trace Dirichlet form $(\widecheck{\mathcal{E}},\widecheck{\mathcal{F}})$ on $L^2(X,h\mu)$ is an extension of $(\mathcal{E},\mathcal{F})$, that is,
		\[
		\widecheck{\mathcal{F}} \supset \mathcal{F}, \qquad 
		\restr{\widecheck{\mathcal{E}}}{\mathcal{F} \times \mathcal{F}} = \mathcal{E}.
		\]
		Therefore, it suffices to establish the energy image density property for the trace Dirichlet form associated with the finite measure $h\mu$.
		
	\end{remark}
	\section{Applications} \label{s:applications}
	\subsection{Martingale dimension}	 \label{ss:martingale-dim}
	We recall the definition of martingale dimension associated to a Dirichlet form. This notion was called as \emph{index} by Hino \cite[Definition 2.9-(ii)]{hinoenergymeasures} and is equivalent to classical definition of martingale dimension due to \cite[Theorem 3.4]{hinoenergymeasures}.
	\begin{definition}\cite[Definition 2.9]{hinoenergymeasures} \label{d:mdim}
		Let $(X,d,\mu,\mathcal{E},\mathcal{F})$ be an MMD space. Let $\Gamma(\cdot,\cdot)$ denote the corresponding energy measure and let $\nu$ be a minimal energy dominant measure. The martingale dimension is the smallest number $K \in \mathbb{N} \cup \{0,\infty\}$ such that 
		for any $n \in \mathbb{N}$ and for any $f \in \mathcal{F}^n$, we have 
		\[
		\nu-\esssup \operatorname{rank}(\gamma_\nu(f)) \le K.
		\]
		It is easy to verify that this definition does not depend on the choice of $\nu$.
	\end{definition}
	
	Associated with a Dirichlet form is a \textbf{strongly continuous contraction semigroup} $(P_t)_{t > 0}$; that is, a family of symmetric bounded linear operators $P_t:L^2(X,\mu) \to L^2(X,\mu)$ such that
	\[
	P_{t+s}f=P_t(P_sf), \quad \norm{P_tf}_2 \le \norm{f}_2, \quad \lim_{t \downarrow 0} \norm{P_tf-f}_2 =0, 
	\]
	for all $t,s>0, f \in L^2(X,\mu)$. In this case, we can express $(\mathcal{E},\mathcal{F})$ in terms of the semigroup as
	\begin{equation} \label{e:semigroup}
		\mathcal{F}=\{f \in L^2(X,\mu): \lim_{t \downarrow 0}  \frac{1}{t}\langle f - P_t f, f \rangle < \infty \}, \quad \mathcal{E}(f,f)= \lim_{t \downarrow 0}  \frac{1}{t}\langle f - P_t f, f \rangle, 
	\end{equation}
	for all $f \in \mathcal{F}$, where $\langle \cdot, \cdot \rangle$ denotes the inner product in $L^2(X,\mu)$ \cite[Theorem 1.3.1 and Lemmas 1.3.3 and 1.3.4]{FOT}.

	Let $(X,d,\mu,\mathcal{E},\mathcal{F})$ be an MMD space, and let $\{P_t\}_{t>0}$
	denote its associated Markov semigroup. A family $\{p_t\}_{t>0}$ of non-negative
	Borel measurable functions on $X \times X$ is called the
	\emph{heat kernel} of $(X,d,\mu,\mathcal{E},\mathcal{F})$, if $p_t$ is the integral kernel
	of the operator $P_t$ for any $t>0$, that is, for any $t > 0$ and for any $f \in L^2(X,\mu)$,
	\[
	P_t f(x) = \int_X p_t (x, y) f (y)\, d\mu (y) \qquad \mbox{for $\mu$-almost all $x \in X$.}
	\]
	
	Let $\Psi:[0,\infty) \to [0,\infty)$ be a homeomorphism, such that for all $0 < r \le R$,
	\begin{equation}  \label{e:reg}
		C^{-1} \left( \frac R r \right)^{\beta_1} \le \frac{\Psi(R)}{\Psi(r)} \le C \left( \frac R r \right)^{\beta_2}, 
	\end{equation}
	for some constants $1 < \beta_1 < \beta_2$ and $C>1$. 
	Such a function $\Psi$ is said to be a \textbf{scale function}.
	For $\Psi$ satisfying \eqref{e:reg}, we define
	\begin{equation} \label{e:defPhi}
		\Phi(s)= \sup_{r>0} \left(\frac{s}{r}-\frac{1}{\Psi(r)}\right).
	\end{equation}
	
	We say that $(X,d,\mu)$ is \emph{doubling} (or equivalently, $\mu$ is a \emph{doubling measure} on $(X,d)$) if there exists $C \in (1,\infty)$ such that 
	\[
	\mu(B(x,2r)) \le C\mu(B(x,r)), \quad \mbox{for all $x \in X, r>0$.}
	\]
	
	We say that $(X,d,\mu,\mathcal{E},\mathcal{F})$ satisfies the sub-Gaussian \textbf{heat kernel estimates}
	\hypertarget{hke}{$\operatorname{HKE(\Psi)}$}, if there exist $C_{1},c_{1},c_{2},c_{3},\delta\in(0,\infty)$
	and a heat kernel $\{p_t\}_{t>0}$ such that for any $t>0$,
	\begin{align}\label{e:uhke}
		p_t(x,y) &\le \frac{C_{1}}{m\bigl(B(x,\Psi^{-1}(t))\bigr)} \exp \left( -c_{1} t \Phi\left( c_{2}\frac{d(x,y)} {t} \right) \right)
		\qquad \mbox{for $\mu$-a.e.~$x,y \in X$,}\\
		p_t(x,y) &\ge \frac{c_{3}}{m\bigl(B(x,\Psi^{-1}(t))\bigr)}
		\qquad \mbox{for $\mu$-a.e.~$x,y\in X$ with $d(x,y) \le \delta\Psi^{-1}(t)$,}
		\label{e:nlhke}
	\end{align}
	where $\Phi$ is as defined in \eqref{e:defPhi} and $\Psi^{-1}:[0,\infty)\to [0,\infty)$ denotes the inverse of the homeomorphism $\Psi$. 
	
	We obtain a finiteness result for martingale dimension, thereby verifying  \cite[Conjecture 3.12]{murugananalytic}.
	\begin{theorem} \label{t:finite} 
		Let $(X,d,\mu,\mathcal{E},\mathcal{F})$ be an MMD space that satisfies sub-Gaussian heat kernel bounds  \hyperlink{hke}{$\operatorname{HKE(\Psi)}$}, where $\Psi: [0,\infty)\to [0,\infty)$ is a scale function and $\mu$ is a doubling measure on $(X,d)$. Then the martingale dimension of $(X,d,\mu,\mathcal{E},\mathcal{F})$ is finite.
	\end{theorem}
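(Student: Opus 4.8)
The plan is to deduce Theorem~\ref{t:finite} from the energy image density property for MMD spaces (Theorem~\ref{thm:eid-mmdspace}), combined with two structural consequences of the hypotheses, via an argument by contradiction. The first consequence is that $(X,d)$ has finite Hausdorff dimension: since $\mu$ is doubling, any maximal $r$-separated subset of a ball of radius $2r$ has cardinality bounded by a fixed power of the doubling constant, so $(X,d)$ is a doubling metric space and hence $d_{\mathcal H}:=\dim_{\mathcal H}(X)<\infty$. The second is that $\mathcal F$ contains a dense family of H\"older functions: under $\operatorname{HKE}(\Psi)$ together with volume doubling, the heat kernel is H\"older regular, so there is an exponent $\alpha\in(0,1]$, depending only on the structural constants, such that $P_tu$ is $\alpha$-H\"older on bounded subsets of $X$ for every $u\in L^\infty(X,\mu)$ and $t>0$. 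Since $\mathcal F\cap L^\infty(X,\mu)$ is dense in $(\mathcal F,\mathcal E_1)$ and $P_tu\to u$ in $(\mathcal F,\mathcal E_1)$ as $t\downarrow 0$ for each $u\in\mathcal F$, the subspace $\mathcal F_\alpha:=\{P_tu: u\in\mathcal F\cap L^\infty(X,\mu),\ t>0\}$ is dense in $(\mathcal F,\mathcal E_1)$ and consists of functions that are $\alpha$-H\"older on bounded sets, hence continuous, hence quasicontinuous.

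Next I would reduce the bound on martingale dimension to functions with components in $\mathcal F_\alpha$. Set $N:=\lceil d_{\mathcal H}/\alpha\rceil$ and suppose, for contradiction, that the martingale dimension exceeds $N$; then by Definition~\ref{d:mdim} there are $n\in\mathbb N$ and $f\in\mathcal F^n$ with $\nu(\{\rank(\gamma_\nu(f))\ge N+1\})>0$. Choosing $f_i^{(k)}\in\mathcal F_\alpha$ with $f_i^{(k)}\to f_i$ in $(\mathcal F,\mathcal E_1)$, the Cauchy--Schwarz inequality for energy measures gives $\Gamma(f_i^{(k)},f_j^{(k)})\to\Gamma(f_i,f_j)$ in total variation, hence $\gamma_\nu(f^{(k)})\to\gamma_\nu(f)$ in $L^1(\nu)$, so along a subsequence $\nu$-a.e.; since matrix rank is lower semicontinuous, $\rank(\gamma_\nu(f)(x))\le\liminf_k\rank(\gamma_\nu(f^{(k)})(x))$ for $\nu$-a.e.\ $x$, and therefore $\nu(\{\rank(\gamma_\nu(f^{(k)}))\ge N+1\})>0$ for some $k$. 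Writing $h=f^{(k)}\in\mathcal F_\alpha^n$ and using that a symmetric positive semidefinite matrix of rank $\ge r$ has a nonsingular principal $r\times r$ block, with the relevant index set depending measurably on $x$, I would extract an index set $I=\{i_1<\dots<i_{N+1}\}$ and a positive-$\nu$-measure set on which $(\gamma_\nu(h)_{i,j})_{i,j\in I}$ is positive definite; then $g:=(h_{i_1},\dots,h_{i_{N+1}})\in\mathcal F^{N+1}$ has components in $\mathcal F_\alpha$ and satisfies $\nu(E)>0$ where $E:=\{\det(\gamma_\nu(g))>0\}$.

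The final step applies Theorem~\ref{thm:eid-mmdspace}: the nonzero positive measure $\mu_0:=g_*(\one_E\cdot\nu)$ on $\mathbb R^{N+1}$ is absolutely continuous with respect to $\mathcal L_{N+1}$. On the other hand $\dim_{\mathcal H}(E)\le\dim_{\mathcal H}(X)=d_{\mathcal H}$ and $g$ is $\alpha$-H\"older on bounded sets, so writing $X$ as a countable union of balls and using $\dim_{\mathcal H}(g(A))\le\alpha^{-1}\dim_{\mathcal H}(A)$ for $A$ contained in a ball together with countable stability of Hausdorff dimension, we obtain $\dim_{\mathcal H}(g(E))\le d_{\mathcal H}/\alpha\le N<N+1$. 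Hence $\mathcal H^{N+1}(g(E))=0$, so $g(E)$ is contained in a Borel set $Z\subset\mathbb R^{N+1}$ with $\mathcal L_{N+1}(Z)=0$. Since $E\subset g^{-1}(g(E))\subset g^{-1}(Z)$ we get $\mu_0(Z)=\nu(E\cap g^{-1}(Z))=\nu(E)>0$, contradicting $\mu_0\ll\mathcal L_{N+1}$. This forces the martingale dimension to be at most $N=\lceil d_{\mathcal H}/\alpha\rceil<\infty$.

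Most of the ingredients above are routine (the density--plus--lower-semicontinuity reduction, the measurable selection of a nonsingular principal block, the H\"older control of Hausdorff dimension, and the elementary Lebesgue-null-superset argument). The one genuine external input, and the step I expect to be the main obstacle, is the assertion that $\operatorname{HKE}(\Psi)$ together with volume doubling implies H\"older regularity of the heat kernel and of caloric functions with a uniform exponent; this rests on the parabolic regularity theory for sub-Gaussian MMD spaces (equivalently, on the parabolic Harnack inequality), for which I would cite the relevant stability and regularity results of Barlow--Bass--Kumagai and Grigor'yan--Telcs rather than reproduce them.
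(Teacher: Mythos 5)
Your proof is correct and follows essentially the same strategy as the paper: finite Hausdorff dimension from doubling, H\"older regularity of $P_t u$ from the sub-Gaussian heat kernel bounds (the paper cites \cite[Theorem 3.1 and Corollary 4.2]{BGK} for this), and the energy image density property (Theorem~\ref{thm:eid-mmdspace}) applied to a dense family of H\"older-continuous functions to bound the martingale dimension by $d_H(X,d)/\alpha$. The only difference is one of detail versus citation: where the paper invokes \cite[Lemma 2.5-(ii)]{hinoenergymeasures} to pass from $\mathcal{F}^n$ to the dense set $\mathcal{G}^n=\{P_t u : u \in \mathcal{F}\cap L^\infty\}^n$, you instead re-derive this via total-variation convergence of energy measures, lower semicontinuity of matrix rank, and a measurable selection of a nonsingular principal block, and you spell out the countable-cover step for the H\"older bound on Hausdorff dimension --- all correct elaborations of steps the paper leaves implicit.
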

	\begin{proof}
		By \cite[Lemma 1.5.4  and Theorem 1.4.2-(iii)]{FOT}, the set $$\mathcal{G} :=\{P_t( f): f \in   L^\infty(X,\mu) \cap \mathcal{F}\}$$ is dense in the Hilbert space $(\mathcal{F},\mathcal{E}_1)$, where $P_t$ is the semigroup associated with the MMD space. By the H\"older continuity of heat kernel (see \cite[Theorem 3.1 and Corollary 4.2]{BGK}), there exists $\alpha >0$ such that every function in $\mathcal{G}$ admits an $\alpha$-H\"older continuous version.
		
		We claim that the martingale dimension $d_M$ satisfies 
		\begin{equation} \label{e:mdim1}
			d_M \le \frac{d_H(X,d)}{\alpha},
		\end{equation}
		where $d_H(X,d)$ is the Hausdorff dimension of $(X,d)$.
		Consider any $n \in \mathbb{N}$ and for any continuous function $f \in \mathcal{G}^n$ such that $\nu-\esssup \operatorname{rank}(\gamma_\nu(f))=n$. Then the measure $f_*(\one_{\det(\gamma_nu(f))>0}\cdot \nu)$ is non-zero and by the energy image density property (Theorem \ref{thm:eid-mmdspace}) is absolutely continuous with respect to the Lebesgue measure $\mathcal{L}_n$. Hence the image  $f(X)$ has Hausdorff dimension   $n$. By the $\alpha$-H\"older continuity of $f$, we have 
		\[
		n=d_H(f(X)) \le \frac{d_H(X,d)}{\alpha}.
		\]
		By the density of $\mathcal{G}$ in the Hilbert space $(\mathcal{F},\mathcal{E}_1)$ and \cite[Lemma 2.5-(ii)]{hinoenergymeasures}, we obtain \eqref{e:mdim1}.
	\end{proof}
	
	A slight modification of the argument in Theorem \ref{t:finite} also leads to a new proof that the martingale dimension is bounded from above by the Hausdorff dimension if the MMD space satisfies Gaussian heat kernel bounds  \cite[Corollary 3.3]{murugananalytic}. This result was shown in \cite{murugananalytic} by proving an equality between martingale dimension and Cheeger's analytic dimension and using known estimate on the analytic dimension. The proof below uses the energy image density property for MMD spaces in Theorem \ref{thm:eid-mmdspace}.
	\begin{proposition} (Cf. \cite[Corollary 3.3]{murugananalytic})
		Let $(X,d,\mu,\mathcal{E},\mathcal{F})$ be an MMD space that satisfies Gaussian heat kernel bounds  \hyperlink{hke}{$\operatorname{HKE(\Psi)}$}, where $\Psi(r)=r^2$ and $\mu$ is a doubling measure on $(X,d)$. Then the martingale dimension of $(X,d,\mu,\mathcal{E},\mathcal{F})$ is less than or equal to the Hausdorff dimension of $(X,d)$.
	\end{proposition}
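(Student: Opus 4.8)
The plan is to run the argument of Theorem~\ref{t:finite} essentially verbatim, replacing the dense family of $\alpha$-Hölder functions by a dense family of genuinely \emph{Lipschitz} functions, which becomes available precisely because the walk dimension is $\beta=2$. Concretely, I would first show that under $\operatorname{HKE}(\Psi)$ with $\Psi(r)=r^2$ and $\mu$ doubling, the set $\mathcal{G}:=\{P_t f : f\in L^\infty(X,\mu)\cap\mathcal{F},\ t>0\}$ — which is dense in $(\mathcal{F},\mathcal{E}_1)$ by \cite[Lemma 1.5.4, Theorem 1.4.2-(iii)]{FOT}, exactly as in the proof of Theorem~\ref{t:finite} — consists of functions admitting Lipschitz representatives. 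This is the one place where the improvement from sub-Gaussian to Gaussian is used: two-sided Gaussian bounds together with doubling are equivalent to the parabolic Harnack inequality (Grigor'yan, Saloff-Coste, Sturm), which forces the intrinsic metric of $(\mathcal{E},\mathcal{F})$ to be comparable to $d$ and yields the semigroup estimate $\Lip_a[P_t f]\le C t^{-1/2}\|f\|_\infty$ for $f\in L^\infty(X,\mu)$; equivalently, $(X,d,\mu)$ is then a PI space on which $\mathcal{F}$ coincides, with comparable norms, with the Newtonian Sobolev space $N^{1,2}$, in which Lipschitz functions are dense. Either route produces a dense subspace $\mathcal{G}\subset\mathcal{F}$ all of whose elements are Lipschitz.

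With $\mathcal{G}$ in hand, I would argue by contradiction. Note first that $d_H(X,d)<\infty$, since a doubling measure makes $(X,d)$ a doubling metric space, hence of finite Assouad and finite Hausdorff dimension. Suppose the martingale dimension satisfies $d_M>d_H(X,d)$ and fix an integer $n$ with $d_H(X,d)<n\le d_M$. By Definition~\ref{d:mdim} there are $m\in\mathbb{N}$ and $\psi\in\mathcal{F}^m$ with $\nu\text{-}\esssup\operatorname{rank}(\gamma_\nu(\psi))\ge n$; passing to $n$ suitable linear combinations of the components of $\psi$ and using \cite[Lemma 2.5]{hinoenergymeasures} produces $\bphi\in\mathcal{F}^n$ with $\nu(\{\det(\gamma_\nu(\bphi))>0\})>0$. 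Approximating each component of $\bphi$ in $(\mathcal{F},\mathcal{E}_1)$ by elements of $\mathcal{G}$ and invoking the continuity of energy measures \cite[Lemma 2.5-(ii)]{hinoenergymeasures} (as used for \eqref{e:mdim1}), the set $\{\det(\gamma_\nu(\cdot))>0\}$ still carries positive $\nu$-measure for a sufficiently close approximation, so we may assume $\bphi\in\mathcal{G}^n$, i.e.\ $\bphi$ is Lipschitz and $\nu(\{\det(\gamma_\nu(\bphi))>0\})>0$.

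By the energy image density property for MMD spaces (Theorem~\ref{thm:eid-mmdspace}), $\bphi_*(\one_{\{\det(\gamma_\nu(\bphi))>0\}}\cdot\nu)$ is a nonzero Radon measure on $\mathbb{R}^n$ with $\bphi_*(\one_{\{\det(\gamma_\nu(\bphi))>0\}}\cdot\nu)\ll\mathcal{L}_n$; since it is carried by the analytic (hence $\mathcal{L}_n$-measurable) set $\bphi(\{\det(\gamma_\nu(\bphi))>0\})$, that set has positive $\mathcal{L}_n$-measure, hence Hausdorff dimension $n$. On the other hand $\bphi$ is Lipschitz, so $d_H(\bphi(X))\le d_H(X,d)$, forcing $n\le d_H(X,d)$, a contradiction. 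Hence $d_M\le d_H(X,d)$. The only genuinely new ingredient beyond Theorem~\ref{t:finite} is the first step — producing a \emph{Lipschitz} (not merely Hölder) dense subspace of $\mathcal{F}$ — and I expect the verification of the semigroup bound $\Lip_a[P_t f]\le C t^{-1/2}\|f\|_\infty$, equivalently the comparability of the intrinsic metric with $d$ under Gaussian bounds, to be the main obstacle, although it is standard in the theory of Dirichlet forms satisfying the parabolic Harnack inequality.
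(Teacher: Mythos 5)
Your overall plan coincides with the paper's: replace the $\alpha$-H\"older dense family of Theorem~\ref{t:finite} by a dense family of Lipschitz functions, then run the same EID plus Hausdorff-dimension argument with $\alpha=1$. Your final argument, once such a dense Lipschitz family is available, is correct. However, your route~(a) for producing that family has a gap. The parabolic Harnack inequality (equivalently, two-sided Gaussian heat kernel bounds together with volume doubling) does \emph{not} imply the pointwise regularization estimate $\Lip_a[P_t f]\le C t^{-1/2}\|f\|_\infty$; Moser iteration only yields H\"older continuity of $P_t f$ with some exponent $\alpha\in(0,1)$ depending on the structural constants, and in general this $\alpha$ is strictly less than $1$. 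For instance, for divergence-form elliptic operators on $\mathbb{R}^n$ with bounded measurable coefficients, parabolic Harnack holds but the semigroup maps $L^\infty$ only into a H\"older class, not into Lipschitz. The stronger estimate you assert is of Bakry--\'Emery/Li--Yau type and requires curvature-type hypotheses beyond heat kernel bounds. Your route~(b) is the correct one and is exactly what the paper uses: under Gaussian bounds and doubling, $\mathcal{F}$ is identified (with comparable norms) with the Newtonian space $N^{1,2}$ by Koskela--Zhou, and compactly supported Lipschitz functions are dense there by Kajino--Murugan. But note that this route furnishes density of $\Lip(X)\cap C_{\mathrm{c}}(X)$ in $(\mathcal{F},\mathcal{E}_1)$, not of $\mathcal{G}=\{P_t f\}$; the functions $P_t f$ themselves need not be Lipschitz, so the two routes are not interchangeable and one cannot ``either way'' take $\mathcal{G}$ as the dense Lipschitz family.
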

	\begin{proof}
		By \cite[Theorem 2.2-(i)]{KZ} and \cite[Proposition 4.8]{KM20}, the set $\Lip_c(X) \cap C_c(X)$ is dense in the Hilbert space $(\mathcal{F},\mathcal{E}_1)$ (see also \cite[Proposition 2.16-(iii)]{murugananalytic} and \cite[Remark 4.6]{KM20}). Thus the same argument in the proof of \eqref{e:mdim1} in Theorem \ref{t:finite} applies with $\alpha=1$.
	\end{proof}
	
	\subsection{Cheeger's conjecture} \label{ss:cheeger}
	We introduce relevant background and recall   Cheeger's conjecture. The \emph{pointwise Lipschitz constant} of a function $f: X \to \mathbb{R}$ is given by
	\[\Lip f(x):= \limsup_{r \to 0} \sup_{d(x,y)<r} \frac{\abs{f(x)-f(y)}}{r}= \limsup_{\substack{y \to x,\\
			y \neq x}} \frac{\abs{f(x)-f(y)}}{d(x,y)}.\]

	We recall the definition of Sobolev space due to Cheeger \cite[Definition 2.2]{Ch}. 
	\begin{definition}
		Let $(X,d,\mu)$ be a metric measure space and $p \in (1,\infty)$. 
		For $f \in L^p(X,\mu)$, we define
		\[
		\norm{f}_{1,p}:=   \norm{f}_{L^p(\mu)}+  \inf_{(g_i)_{i \in \mathbb{N}}} \liminf_{i \to \infty} \norm{g_i}_{L^p(\mu)},
		\]
		where the infimum is taken over all sequences, $(g_i)_{i \in \mathbb{N}}$ such that there exists a
		sequence, $(f_i)_{i \in \mathbb{N}}$ with $f_i \xrightarrow{L^p(\mu)} f$
		and such that $g_i$ is an upper gradient for $f_i$ for all $i \in \mathbb{N}$. The Sobolev space $H_{1,p}:= H_{1,p}(X,d,\mu)$ is defined as $\{f \in L^{p}(X,\mu) \vert \norm{f}_{1,p} <\infty\}$. 
	\end{definition}
	It is known that $H_{1,p}$ is a Banach space \cite[Theorem 2.7]{Ch}. 
	We recall the notion of minimal generalized upper gradient.
	\begin{definition}
		Let $(X,d,\mu)$ be a metric measure space and $p \in (1,\infty)$. 
		The function  $g \in L^p(X,\mu)$ is a \emph{generalized upper gradient} for
		$f \in L^p(X,mu)$, if there exist sequences, $f_i \xrightarrow{L^p(X,\mu)} f, g_i \xrightarrow{L^p(X,\mu)}g$, such that $g_i$ is an upper
		gradient for $f_i$, for all $i \in \mathbb{N}$. \\
		We say that $g$ is a \emph{minimal generalized upper gradient} for $f$ if $g$ is a generalized upper gradient such that 
		\[
		\norm{f}_{1,p}= \norm{f}_{L^p(\mu)}+\norm{g}_{L^p(\mu)}.
		\]
	\end{definition}
	The existence    of minimal generalized upper gradient and its uniqueness up to  $\mu$-null sets follow from \cite[Theorem 2.10]{Ch}. We denote the unique minimal  generalized upper gradient for $f \in L^p(X,\mu)$ by $g_f \in L^p(X,\mu)$.
	
	The following $(1,p)$-Poincar\'e inequality plays an important role in \cite{Ch}. Let $p \in [1,\infty)$. We say that a complete metric space $(X,d,\mu)$ satisfies $(1,p)$-Poincar\'e inequality if there exist $C \in (0,\infty),\Lambda \in [1,\infty)$ such that for any $f \in L^p(X,\mu), x \in X, r>0$, we have 
	\begin{equation} \label{e:def-PI}
		\int_{B(x,r)} \abs{f-f_{B(x,r)}} \, d\mu \le C r \left(\int_{B(x,\Lambda r)} g^p \,d\mu\right)^{1/p},
	\end{equation}
	where $g$ is any upper gradient for $f$ and $f_{B(x,r)}:= \mu(B(x,r))^{-1} \int_{B(x,r)} f\,d\mu$.
	
	We verify that the Sobolev space $H_{1,p}$ can be viewed as a $p$-Dirichlet space in the sense of Definition \ref{d:dir-space}.
	\begin{lemma} \label{lem:upper-gradient=p-Dirichlet-space}
		Let $(X,d,\mu)$ be a complete metric measure space with a doubling measure $\mu$ such that the Poincar\'e inequality \eqref{e:def-PI} holds for some $p \in (1,\infty)$. Let $\mathcal{F}_p:= H_{1,p}$ and define for all Borel sets $A \subset X$ and for all $f \in \mathcal{F}_p$ 
		\[
		\mathcal{E}_p(f):= \int_X g_f^p\,d\mu, \quad \Gamma_p \langle f \rangle (A):= \int_A g_f^p\,d\mu,  
		\]
		where $g_f$ is the minimal generalized upper gradient for $f$.  Then $(X,d,\mu,\mathcal{E}_p,\mathcal{F}_p,\Gamma_p)$ is a regular $p$-Dirichlet space.
	\end{lemma}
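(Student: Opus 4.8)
The plan is to verify, one by one, the defining properties (1)--(7) of a regular local $p$-Dirichlet space in Definition~\ref{d:dir-space} for $(X,d,\mu,\mathcal{E}_p,\mathcal{F}_p,\Gamma_p)$ with $\mathcal{F}_p=H_{1,p}$, relying on standard facts about Cheeger's Sobolev space and minimal generalized upper gradients under a doubling measure and a $(1,p)$-Poincar\'e inequality \cite{Ch,Shanmun,HKST}. The key point is that almost every property is a direct consequence of the scaling, locality and chain-rule behaviour of (minimal) upper gradients, together with Minkowski's inequality; the only property needing genuine input is the weak lower semicontinuity (7), which I would obtain from the reflexivity of $H_{1,p}$ via Lemma~\ref{l:suff-lsc}.

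For (1), a complete metric space carrying a doubling measure is proper, hence locally compact, $\sigma$-compact and separable, and a doubling $\mu$ is then a Radon measure. For (2), recall from \cite[Theorem~2.10]{Ch} that each $f\in H_{1,p}$ has a unique minimal generalized upper gradient $g_f\in L^p(X,\mu)$ with $\norm{f}_{1,p}=\norm{f}_{L^p(\mu)}+\norm{g_f}_{L^p(\mu)}$, and that $g_f\le g$ $\mu$-a.e.\ for every generalized upper gradient $g$ of $f$. Since $(a^p+b^p)^{1/p}\le a+b\le 2^{(p-1)/p}(a^p+b^p)^{1/p}$ for $a,b\ge 0$, the norm $\norm{f}_{\mathcal{F}_p}=(\norm{f}_{L^p}^p+\mathcal{E}_p(f))^{1/p}$ is equivalent to $\norm{f}_{1,p}$, so $(\mathcal{F}_p,\norm{\cdot}_{\mathcal{F}_p})$ is a Banach space by \cite[Theorem~2.7]{Ch} and $\mathcal{E}_p(f)=\int_X g_f^p\,d\mu<\infty$. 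For (3): $g$ is an upper gradient of $f$ iff $|\lambda|g$ is one of $\lambda f$, and passing to generalized upper gradients gives $g_{\lambda f}=|\lambda|\,g_f$ $\mu$-a.e.; hence $\Gamma_p\langle\lambda f\rangle=|\lambda|^p\Gamma_p\langle f\rangle$, while $\Gamma_p\langle f\rangle=g_f^p\,\mu$ is a finite Borel measure with total mass $\mathcal{E}_p(f)$. For (4): since $g_f+g_g$ is a generalized upper gradient for $f+g$, minimality gives $g_{f+g}\le g_f+g_g$ $\mu$-a.e., and Minkowski's inequality in $L^p(A,\mu)$ yields $\Gamma_p\langle f+g\rangle(A)^{1/p}\le\Gamma_p\langle f\rangle(A)^{1/p}+\Gamma_p\langle g\rangle(A)^{1/p}$.

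For the chain rule (5): if $\rho$ is an upper gradient of $u$ and $g\in\Lip(\mathbb{R})$, then $\LIP[g]\rho$ is an upper gradient of $g\circ u$, because $|g(u(\gamma(a)))-g(u(\gamma(b)))|\le\LIP[g]|u(\gamma(a))-u(\gamma(b))|\le\LIP[g]\int_\gamma\rho\,ds$ for every rectifiable $\gamma$; passing to approximating sequences, $\LIP[g]g_f$ is a generalized upper gradient of $g\circ f$, and $g(0)=0$ gives $|g\circ f|\le\LIP[g]|f|\in L^p$, whence $g\circ f\in\mathcal{F}_p$ and $g_{g\circ f}\le\LIP[g]g_f$ $\mu$-a.e., i.e.\ $d\Gamma_p\langle g\circ f\rangle\le\LIP[g]^p\,d\Gamma_p\langle f\rangle$. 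Property (6) is the locality of the minimal generalized upper gradient: if $f_1=f_2$ $\mu$-a.e.\ on an open set $U$ then $g_{f_1}=g_{f_2}$ $\mu$-a.e.\ on $U$; under the doubling and Poincar\'e hypotheses $H_{1,p}$ coincides with the Newtonian space $N^{1,p}$ with the same minimal object (\cite{Shanmun}, \cite[Chapters~7--8]{HKST}; cf.\ \cite[Proposition~2.22]{Ch}), for which locality is classical, and taking $f_2\equiv c$ gives $g_f=0$ $\mu$-a.e.\ on $A$, hence $\Gamma_p\langle f\rangle(A)=0$. For regularity, compactly supported Lipschitz functions lie in $C_0(X)\cap H_{1,p}$, are dense in $H_{1,p}$ (density of Lipschitz functions in PI spaces \cite{Shanmun,HKST}, truncated by Lipschitz cutoffs of the form $\eta_R$ equal to $1$ on $B(x_0,R)$ and $0$ off $B(x_0,2R)$ with $\LIP[\eta_R]\le C/R$ to achieve compact support when $X$ is unbounded), and are dense in $C_0(X)$ in the uniform norm since $X$ is locally compact.

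Finally, for the weak lower semicontinuity (7), I would invoke the reflexivity of $(H_{1,p},\norm{\cdot}_{1,p})$ under the doubling and $(1,p)$-Poincar\'e assumptions \cite[Theorem~4.48]{Ch}; reflexivity is preserved under the equivalent norm $\norm{\cdot}_{\mathcal{F}_p}$, so, having already established (1)--(6), Lemma~\ref{l:suff-lsc} delivers property (7). Thus the proof is essentially an assembly of known results about $H_{1,p}$. I expect the main point requiring care to be exactly where the Poincar\'e inequality is used: confirming that Cheeger's space $H_{1,p}$ with its minimal generalized upper gradient agrees with the Newtonian space $N^{1,p}$ with its minimal $p$-weak upper gradient, which underlies both the locality used in step (6) and the density and reflexivity used for regularity and step (7).
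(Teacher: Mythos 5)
Your proof follows essentially the same route as the paper's: verify the seven axioms one by one via standard facts about Cheeger's $H_{1,p}$ (local compactness from completeness plus doubling, the Banach-space property from \cite[Theorem~2.7]{Ch}, homogeneity and sublinearity from the behaviour of minimal generalized upper gradients, the chain rule by pushing $\LIP[g]\rho$ through the approximating sequences, locality via the identification of $H_{1,p}$ with the Newtonian space, and regularity via density of Lipschitz functions plus cutoffs), with the weak lower semicontinuity extracted from reflexivity \cite[Theorem~4.48]{Ch} through Lemma~\ref{l:suff-lsc}. The only cosmetic difference is that the paper cites \cite[Lemma~2.4]{KZ} directly for locality rather than routing through $N^{1,p}$, but this is the same mechanism.
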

	\begin{proof}
		The fact that $(X,d,\mu)$ is a locally compact follows from the fact that metric balls are precompact due to the completeness of $(X,d)$ and doubling property of $\mu$ \cite[Exercise 10.17]{Hei}. The completeness follows from  \cite[Theorem 2.7]{Ch}. The homogeneity immediately follows from homogeneity of the minimal generalized upper gradient \cite[p.~446]{Ch}, while the sublinearity follows from \cite[(3.3)]{Ch}. 
		
		To obtain the chain rule, we note that for any $\phi \in \Lip(\mathbb{R})$ and for any $f \in L^p(X,\mu)$, if 
		$g$ is an upper gradient for $f$, then $\LIP(\phi) g$ is an upper gradient for $\phi \circ f$. This along with \cite[(3.4)]{Ch} implies that $g_{\phi\circ f} \le \LIP(\phi) g_f$ $\mu$-a.e., where  $g_{\phi\circ f}, g_f$ denote the minimal generalized upper gradients of $\phi \circ f$ and $f$ respectively. Hence we obtain chain rule.   Alternately, the chain rule, sublinearity and homogeneity can be obtained by corresponding properties for the minimal $p$-weak upper gradient given in \cite[(6.3.18) and (6.31.9)]{HKST} and $\mu$-a.e.~equivalence between  minimal $p$-weak upper gradient and minimal generalized upper gradient that follows from \cite[Lemma 6.2.2 and Proof of Theorem 10.1.1]{HKST}.
		
		The locality property follows from \cite[Lemma 2.4]{KZ} and the equivalence between  minimal $p$-weak upper gradient and minimal generalized upper gradient mentioned above.
		The weak lower semicontinuity follows from reflexivity  \cite[Theorem 4.48]{Ch} and Lemma \ref{l:suff-lsc}. 
		
		It remains to show regularity. The fact that $\Lip(X) \cap \mathcal{F}_p$ is dense in $\mathcal{F}_p$ follows from \cite[Theorem 4.1]{Shanmun}; see also \cite[Theorem 4.24, Remark 4.25]{Ch}. We can approximate functions in $\Lip(X) \cap \mathcal{F}_p$ by a sequence of functions in $\Lip(X) \cap C_o(X)$ by considering $f\phi_n$, where 
		\[
		\phi_n(x):= n^{-1}\min\left(1,(2n-d(x,x_0))_+\right).
		\]
		It is easy to verify that $f \phi_n$ converges in $\mathcal{F}_p$ to $f$ by using product rule for weak upper gradients \cite[Proposition 6.3.28]{HKST}. Thus $\Lip(X) \cap C_0(X)$ is dense in $\mathcal{F}_p$. 
	\end{proof}
	\begin{remark}
		Lemma \ref{lem:upper-gradient=p-Dirichlet-space} holds also without the Poincar\'e inequality. The completeness, chain rule, sub-linearity and homogeneity follow by the same arguments, which do not rely on the Poincar\'e inequality. The final claim about regularity follows from the density of $C(X)\cap \cF_p$ in $\cF_p$, which was established in \cite[Theorem 1.6]{continuousdense}. This can be upgraded to the density of functions with compact support by the same argument as in the previous proof.
	\end{remark}
	The following   of differentiability on a metric space arises from Cheeger's work.
	\begin{definition} \label{d:derivative} Suppose $f: X \to \mathbb{R}$ and $\phi=(\phi_1,\ldots,\phi_n): X \to\mathbb{R}^n$ are Lipschitz functions on a metric   space $(X,d)$. Then $f$ is \emph{differentiable with respect to $\phi$ at $x_0 \in X$} if there is a unique $\mathbf{a}=(a_1,\ldots,a_n) \in \mathbb{R}^n$ such that $f$ and the linear combination $a\cdot \phi = \sum_{i=1}^n a_i \phi_i$ agree to first order near $x_0$: 
		\[
		\limsup_{x \to x_0} \frac{\abs{f(x)-f(x_0)-\langle \mathbf{a}, \phi(x)-\phi(x_0) \rangle }}{d(x,x_0)}=0.
		\]
		Equivalently,  $\Lip g(x_0)=0$, where $g(\cdot)= f(\cdot)-\sum_{i=1}^n a_i \phi_i(\cdot)$. The tuple $a \in \mathbb{R}^n$ is the \emph{derivative of $f$ with respect to $\phi$} and will be denoted by $\partial_\phi f(x_0)$.
	\end{definition}
	Now that we have a notion of differentiability on a metric space, the analogue of \emph{almost everywhere differentiability} for a metric measure space is given by the notion of a \emph{measurable differentiable structure} defined below.  
	\begin{definition} \label{d:chart}
		A \emph{chart of dimension $n$} on a metric measure space $(X,d,\mu)$ is a pair $(U,\phi)$ where:
		\begin{enumerate}[(i)]
			\item $U \subset X$ is measurable, $m(U) >0$, and $\phi:X \to \mathbb{R}^n$ is Lipschitz.
			\item For every Lipschitz function $f: X \to \mathbb{R}$, there exists a measurable function $\partial_\phi f: U \to \mathbb{R}^n$ such that $f$ is differentiable with respect to $\phi$ at $m$-almost every $x_0 \in U$ and with derivative $\partial_\phi f(x_0)$.
		\end{enumerate}
		A \textbf{measurable differentiable structure}	on $(X,d,\mu)$ is a  countable collection $\{(U_\alpha,\phi_\alpha)\}$ of charts with uniformly bounded dimension such that $$\mu \left( X \setminus \bigcup_{\alpha} U_\alpha\right)=0.$$
	\end{definition}
	Given these notions Cheeger's theorem can be stated as follows.
	\begin{theorem}(\cite[Theorem 4.38]{Ch}) \label{t:cheeger}
		Let $(X,d,\mu)$ be a complete metric space that satisfies the volume doubling property  and   a $(1,p)$-Poincar\'e inequality \eqref{e:def-PI} for some $p \in [1,\infty)$. Then there is a measurable differentiable structure on  $(X,d,\mu)$ such that the dimension of the charts are uniformly bounded above by a constant that depends only on the constants involved in the assumptions.
	\end{theorem}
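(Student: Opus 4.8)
The statement is Cheeger's differentiability theorem \cite[Theorem~4.38]{Ch}; the plan I would follow is to recover it by a maximality construction of charts whose dimension is controlled by the energy image density property. By Lemma~\ref{lem:upper-gradient=p-Dirichlet-space}, $(X,d,\mu,\mathcal{E}_p,H_{1,p},\Gamma_p)$ with $\Gamma_p\langle f\rangle=g_f^p\,\mu$ is a regular local $p$-Dirichlet space, so $p$-independence (Definition~\ref{d:p-independent}) is available; Lipschitz functions are continuous, hence quasicontinuous, and after cutting off by functions of the form $\phi_n$ as in the proof of Lemma~\ref{lem:upper-gradient=p-Dirichlet-space} they may be taken to lie in $H_{1,p}$. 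Using separability of $(X,d)$, I would first reduce to a countable family $\{f_j\}_{j\in\mathbb{N}}\subset\Lip(X)$ such that simultaneous $\mu$-a.e.\ differentiability of all $f_j$ with respect to a Lipschitz tuple $\phi$ on a set $U$ implies the same for every $f\in\Lip(X)$; this is routine since the derivative in Definition~\ref{d:derivative} is a local notion stable under the relevant limits.

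The heart of the argument is a dictionary between Cheeger's notion of differentiability and $p$-independence. Call a Lipschitz tuple $\phi=(\phi_1,\dots,\phi_n)$ \emph{maximal on $U$} if $\mu(U)>0$, $\phi$ is $p$-independent on $U$, and no Lipschitz function, after subtracting a measurable $\mathbb{R}^n$-valued linear combination of the $\phi_i$, is $p$-independent together with $\phi$ on any positive-measure subset of $U$. The claim to establish is that, in a PI space, maximality is equivalent to $(U,\phi)$ being a differentiability chart. The nontrivial direction rests on relating the pointwise Lipschitz constant $\Lip f$ in Definition~\ref{d:derivative} to the Radon–Nikodym density $d\Gamma_p\langle f\rangle/d\Lambda$: non-differentiability of $f$ with respect to $\phi$ on a positive-measure set says $\inf_{a\in\mathbb{R}^n}\Lip(f-\langle a,\phi\rangle)>0$ there, and in a PI space — through the identity $g_f=\Lip f$ $\mu$-a.e.\ (Cheeger), equivalently through the abundance of rectifiable curves detecting infinitesimal variation — this forces the energy measure of $f-\langle a,\phi\rangle$ to stay uniformly comparable to $\Lambda_{(\phi,f)}$ on that set, i.e.\ a genuine extra $p$-independent direction; conversely a maximal tuple leaves no room for such a direction, so every $f$ is differentiable with respect to $\phi$. \textbf{This is where the Poincar\'e inequality is used essentially, and I expect it to be the main obstacle}: the energy image density property is a clean downstream consequence of this infinitesimal comparison, not a replacement for it, so one still has to import (or reprove) Cheeger's $g_f=\Lip f$ theorem, or an Alberti-representation substitute for it.

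Granting the dictionary, the dimension bound drops out of Theorem~\ref{thm:eid-dirspace}. If $\psi=(\psi_1,\dots,\psi_N)$ is $p$-independent on $V$ with $N\ge1$ and $\Lambda_\psi(V)>0$, then Theorem~\ref{thm:eid-dirspace} yields $\psi_*(\one_V\cdot\Lambda_\psi)\ll\mathcal{L}_N$; this measure is nonzero since its total mass is $\Lambda_\psi(V)$, so $\mathcal{H}^N(\psi(V))>0$, and as $\psi$ is $\LIP[\psi]$-Lipschitz we get $\mathcal{H}^N(V)\ge\LIP[\psi]^{-N}\mathcal{H}^N(\psi(V))>0$, hence $N\le\dim_H(X,d)$. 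Since $\mu$ is doubling, $(X,d)$ has finite Assouad dimension, and in particular finite Hausdorff dimension $N_0$ bounded in terms of the doubling constant alone; thus no $p$-independent tuple has more than $N_0$ entries. Consequently the greedy extension — begin with the empty (constant) tuple on a positive-measure set and, whenever some $f_j$ is not $\mu$-a.e.\ differentiable with respect to the current tuple, pass to the positive-measure subset on which the extra independent direction it produces is genuine and adjoin it — terminates after at most $N_0$ steps at a maximal tuple, i.e.\ a chart.

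Finally I would exhaust $X$ by charts. On any Borel set $W$ with $\mu(W)>0$, either all Lipschitz functions are infinitesimally constant $\mu$-a.e.\ on $W$, so $(W,(\,))$ is a $0$-dimensional chart, or some Lipschitz function has positive pointwise Lipschitz constant on a positive-measure subset, from which the greedy procedure produces a chart of positive measure inside $W$. Localizing by $\sigma$-finiteness to sets of finite measure and iterating — at each stage choosing a chart whose domain has measure at least half the supremum of admissible chart-domain measures in the remaining set — produces a countable family $\{(U_\alpha,\phi_\alpha)\}$ with uniformly bounded dimension $\le N_0$ and $\mu(X\setminus\bigcup_\alpha U_\alpha)=0$, which is the required measurable differentiable structure.
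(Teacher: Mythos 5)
The paper does not actually prove Theorem~\ref{t:cheeger}; it is a cited background result from Cheeger \cite[Theorem~4.38]{Ch}, recalled only to set up the statement and new proof of Cheeger's conjecture in Proposition~\ref{prop:cheeger}. So there is no internal proof to compare against. What you have produced is instead a proposed \emph{alternative derivation} of Cheeger's differentiability theorem that uses the paper's EID result (Theorem~\ref{thm:eid-dirspace}, via Lemma~\ref{lem:upper-gradient=p-Dirichlet-space}) together with Cheeger's identity $g_f=\Lip f$ $\mu$-a.e.\ (\cite[Theorem~6.1]{Ch}). That route is genuinely different from Cheeger's original blowup argument and from Keith's $\Lip$--$\mathrm{lip}$ proof; what it buys is that the hard dimension bound on charts falls out of EID plus the Assouad-dimension bound for doubling measures, exactly in the spirit of Theorem~\ref{t:finite} and Proposition~\ref{prop:cheeger}. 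You are correct, and candid, that this is not a proof from scratch: $g_f=\Lip f$ is itself a deep consequence of the Poincar\'e inequality of comparable weight to the differentiability theorem, and the EID property supplies only the finite-dimensionality, not the differentiability mechanism.

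Two steps are glossed over more than your own caveat flags. First, the ``dictionary'' as written passes from the pointwise statement $\min_{\mathbf a}\Lip(f-\langle\mathbf a,\phi\rangle)(x)>0$ on $W$ to a \emph{uniform} lower bound $\inf_{|\blambda|=1}\Lip(\sum_i\lambda_i\phi_i+\lambda_{n+1}f)(x)\geq\delta\,(\sum_i\Lip\phi_i(x)^p+\Lip f(x)^p)^{1/p}$ on a subset; this requires (a) noting that the $\blambda$-infimum is attained and controlled for $|\mathbf a|$ large using the $p$-independence of $\phi$ alone, and (b) an Egorov-type passage to a positive-measure $W'\subset W$ where the pointwise minimum is bounded below, after which $p$-independence of $(\phi,f)$ follows. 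None of this is hard given \cite[Sublemma~7.2.4]{Kei-diff}-style continuity in $\mathbf a$, but it needs to be done. Second, the reduction to a countable family $\{f_j\}$ such that a.e.\ differentiability of the $f_j$ w.r.t.\ $\phi$ propagates to all Lipschitz $f$ is not ``routine'' in the sense of a soft separability argument: $\Lip$ is only upper semicontinuous under the natural convergences, so one cannot simply pass limits through the differentiability condition. The known route (Cheeger, Keith) uses the Poincar\'e inequality once more to control the error, and your outline should make explicit that this second use of the PI structure is hiding in the countable reduction, not only in $g_f=\Lip f$. With those two steps filled in, the outline does assemble into a correct alternative proof; as it stands it is a sound plan with two non-trivial but fixable gaps.
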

	Cheeger conjectured that if $(U,\phi)$ is an $n$-dimensional chart then $\mathcal{H}^n(\phi(U))>0$ \cite[Conjecture~4.63]{Ch}. 
	This follows from a theorem of De~Philippis--Marchese--Rindler \cite[Theorem~4.1.1]{DMR}, which yields
	\[
	\phi_*(\mathbf{1}_U \mu)\ll \mathcal{L}_n.
	\]
	Let us explain how we can obtain this result as a consequence of our energy image density property.
	\begin{proposition} (\cite[Theorem 4.1.1]{DMR}) \label{prop:cheeger}
		Let $(X,d,\mu)$ be a complete metric space that satisfies the volume doubling property  and   a $(1,p)$-Poincar\'e inequality \eqref{e:def-PI} for some $p \in (1,\infty)$. For any chart $(U,\phi)$ of dimension $n$, we have $\phi_*(\mathbf{1}_U \mu)\ll \mathcal{L}_n.$ In particular, $\mathcal{H}^n(\phi(U))>0$.
	\end{proposition}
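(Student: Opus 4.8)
The plan is to realize the Sobolev space $H_{1,p}$ as a regular $p$-Dirichlet space via Lemma \ref{lem:upper-gradient=p-Dirichlet-space}, to prove that the components of a chart map are $p$-independent on the chart domain in the sense of Definition \ref{d:p-independent}, and then to invoke the energy image density property, Theorem \ref{thm:eid-dirspace}, together with the mutual absolute continuity of $\mu$ and $\Lambda_\phi$ on the chart.

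First I would reduce to the case in which $\phi_1,\dots,\phi_n\in H_{1,p}$ are Lipschitz with compact support. Since a complete metric space carrying a doubling measure is proper, I can write $U=\bigcup_k U_k$ with each $\overline{U_k}$ compact; those $U_k$ with $\mu(U_k)=0$ are trivial, so fix one with $\mu(U_k)>0$ and multiply $\phi$ by a compactly supported Lipschitz cutoff $\psi_k$ equal to $1$ on an open neighbourhood $O_k\supseteq\overline{U_k}$. The components of $\psi_k\phi$ then lie in $\mathcal{F}_p=H_{1,p}$, are continuous hence quasicontinuous, agree with those of $\phi$ on $U_k$ (so the pushforwards of $\one_{U_k}\mu$ agree), and $(U_k,\psi_k\phi)$ is still an $n$-dimensional chart because differentiability at a point of $O_k$ is a local notion. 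Thus from now on I assume $\phi_i\in H_{1,p}$ are Lipschitz with compact support, and I set $g_i:=g_{\phi_i}$ and $\Lambda_\phi:=\sum_{i=1}^n g_i^p\mu$.

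The crux is the verification of $p$-independence on $U_k$. The key observation is that for fixed $x\in X$ the map $\mathbb{R}^n\ni\blambda\mapsto\Lip\bigl(\sum_i\lambda_i\phi_i\bigr)(x)$ is a seminorm --- positively homogeneous, and subadditive because $\Lip(f+h)(x)\le\Lip f(x)+\Lip h(x)$ --- which is moreover dominated by $(\max_i\LIP[\phi_i])\,\|\blambda\|_1$, hence Lipschitz in $\blambda$. Applying the uniqueness clause in Definition \ref{d:derivative} (which is part of the chart condition in Definition \ref{d:chart}) to the Lipschitz function $f\equiv 0$ shows that for $\mu$-a.e.\ $x\in U_k$ the only $\blambda$ with $\Lip(\sum_i\lambda_i\phi_i)(x)=0$ is $\blambda=0$; hence this seminorm is in fact a norm for $\mu$-a.e.\ such $x$, so that
\[
c(x):=\inf_{\blambda\in\mathbb{S}^{n-1}}\Lip\Bigl(\sum_{i=1}^n\lambda_i\phi_i\Bigr)(x)>0\qquad\text{for }\mu\text{-a.e.\ }x\in U_k .
\]
Next I would use the well-known identification, valid in a PI space, of the minimal generalized upper gradient of a Lipschitz function with its pointwise Lipschitz constant \cite{Ch,HKST}, so that for each fixed $\blambda$ one has $\dfrac{d\Gamma_p\langle\sum_i\lambda_i\phi_i\rangle}{d\Lambda_\phi}=\Lip\bigl(\sum_i\lambda_i\phi_i\bigr)^p\big/\sum_{i=1}^n g_i^p$ $\Lambda_\phi$-a.e. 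Since $\blambda\mapsto\Lip(\sum_i\lambda_i\phi_i)(x)$ is continuous, the lattice infimum over $\mathbb{S}^{n-1}$ in Definition \ref{d:p-independent} may be computed along a countable dense subset of the sphere, giving
\[
\bigwedge^{\Lambda_\phi}_{\blambda\in\mathbb{S}^{n-1}}\frac{d\Gamma_p\langle\sum_i\lambda_i\phi_i\rangle}{d\Lambda_\phi}=\frac{c(x)^p}{\sum_{i=1}^n g_i(x)^p}\ \ge\ \frac{c(x)^p}{n\max_i\LIP[\phi_i]^p}\ >\ 0\qquad\Lambda_\phi\text{-a.e.\ on }U_k ,
\]
using $g_i\le\LIP[\phi_i]$. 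Hence $\phi$ is $p$-independent on $U_k$ (equivalently, exhaust $U_k$ by the sets on which the above quantity is $\ge 1/j$ and apply Lemma \ref{lem:essinf}).

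Finally, Theorem \ref{thm:eid-dirspace} applied to the regular $p$-Dirichlet space $(X,d,\mu,\mathcal{E}_p,\mathcal{F}_p,\Gamma_p)$ of Lemma \ref{lem:upper-gradient=p-Dirichlet-space}, with $A=U_k$, yields $\phi_*(\one_{U_k}\Lambda_\phi)\ll\mathcal{L}_n$. Since $\sum_i g_i^p>0$ $\mu$-a.e.\ on $U_k$ and $\int_{U_k}\bigl(\sum_i g_i^p\bigr)^{-1}\,d\Lambda_\phi=\mu(U_k)<\infty$, the measure $\one_{U_k}\mu$ has an $L^1(\Lambda_\phi)$ density with respect to $\one_{U_k}\Lambda_\phi$, so $\phi_*(\one_{U_k}\mu)\ll\phi_*(\one_{U_k}\Lambda_\phi)\ll\mathcal{L}_n$; recalling $\psi_k\phi=\phi$ on $U_k$ and letting $k\to\infty$ gives $\phi_*(\one_U\mu)\ll\mathcal{L}_n$. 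If $\cH^n(\phi(U))=0$ then $\phi(U)$ lies in a Borel $\mathcal{L}_n$-null set $N$, so $\phi_*(\one_U\mu)(N)=0$; but $U\subseteq\phi^{-1}(N)$, whence $\mu(U)=\mu(U\cap\phi^{-1}(N))=\phi_*(\one_U\mu)(N)=0$, contradicting the definition of a chart. Thus $\cH^n(\phi(U))>0$. The main obstacle is the third paragraph: extracting from the purely pointwise chart condition a uniform-in-$\blambda$ positive lower bound on $\Lip(\sum_i\lambda_i\phi_i)$ --- which needs both the seminorm structure and the uniqueness of the derivative --- and then converting pointwise Lipschitz constants into Radon--Nikodym derivatives of energy measures through the PI-space identification of minimal upper gradients. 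The localization in the second paragraph and the bookkeeping around the lattice infimum are routine.
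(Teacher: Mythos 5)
Your proposal is correct and follows essentially the same route as the paper's proof: realize $H_{1,p}$ as a regular $p$-Dirichlet space via Lemma \ref{lem:upper-gradient=p-Dirichlet-space}, localize to make the chart components compactly supported Lipschitz, deduce $p$-independence on the chart domain from the uniqueness clause in the chart definition applied to $f\equiv 0$ together with the identification $g_f=\Lip f$ for Lipschitz $f$ in a PI space, and then invoke Theorem \ref{thm:eid-dirspace}. The only cosmetic differences are that the paper localizes by inner regularity of $\mu$ plus a single McShane extension whereas you exhaust by relatively compact pieces and cut off, and that you observe the continuity of $\blambda\mapsto\Lip(\sum_i\lambda_i\phi_i)(x)$ directly from the seminorm structure rather than citing Keith's sublemma --- these are equivalent.
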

	\begin{proof}
		By inner regularity of $m$, it suffices to assume that $U$ is compact. Let $x_0 \in X, r >0$ be such that $U \subset B(x_0,r)$ By using McSchane's extension theorem \cite[Theorem 6.2]{Hei}, we can replace $\phi=(\phi_1,\ldots,\phi_n)$ with  $\widetilde{\phi}=(\widetilde{\phi_1},\ldots,\widetilde{\phi_n})$ such that $\widetilde{\phi_i} \in \Lip(X) \cap C_0(X)$ for all $1 \le i \le n$ and $\phi_i \equiv \widetilde{\phi_i}$ on $B(x_0,r)$ and $\phi_i \equiv 0$ on $B(x_0,2r)^c$. 
		Therefore, without loss of generality, we assume that $\phi \in (\Lip(X)\cap C_0(X))^n \subset \mathcal{F}_p^n$, where $(X,d,\mu,\mathcal{E}_p,\mathcal{F}_p,\Gamma_p)$ is the $p$-Dirichlet space defined in Lemma \ref{lem:upper-gradient=p-Dirichlet-space}.
		
		By considering the constant function $f \equiv 0$ and using the uniqueness of the derivative $\partial_\phi f \equiv 0$, there exists a measurable subset $V \subset U$ such that $\mu(U \setminus V)=0$ and
		\begin{equation} \label{eq:independent}
			\Lip\left(\sum_{i=1}^n a_i \phi_i \right)(x) \neq 0, \quad \mbox{for all $x \in V, (a_1,\ldots,a_n) \in \mathbb{R}^n \setminus \{(0,\ldots,0)\}$.}
		\end{equation}
		Let $S$ be a countable dense subset of $\mathbb{R}^n$. 	By the continuity of $\mathbf{a} \mapsto \Lip(\langle \mathbf{a}, \phi\rangle)(x)$  on $\mathbb{R}^n$  for each $x \in X$ (see \cite[Sublemma 7.2.4]{Kei-diff}), we have 
		\begin{equation} \label{e:lip-pos}
			\inf_{\mathbf{a}=(a_1,\ldots,a_n) \in S} 	 	 \Lip\left(\sum_{i=1}^n a_i \phi_i \right)(x) >0, \quad \mbox{for all $x \in V$.}
		\end{equation}
		Let $\Lambda_\phi:= \sum_{i=1}^n \Gamma \langle \phi_i \rangle$.
		Note that for any $f \in \Lip(X) \cap C_0(X)$, we have that the minimal generalized upper gradient $g_f$ for $f$ satisfies $g_f= \Lip(f)$ $\mu$-a.e. (due to \cite[Theorem 6.1]{Ch}). Hence we have 
		\[
		\bigwedge^{\Lambda_{\phi}}_{\blambda \in \mathbb{S}^{n-1}} \frac{d\Gamma_p\langle \sum_{i=1}^n \lambda_i \phi_i\rangle}{d\Lambda_{\bphi}}= \frac{\inf_{\mathbf{a}=(a_1,\ldots,a_n) \in S} 	 	 \Lip\left(\sum_{i=1}^n a_i \phi_i \right)^p(\cdot)}{\sum_{i=1}^n (\Lip(\phi_i)(\cdot))^p}  \stackrel{\eqref{e:lip-pos}}{>}0 
		\]
		$\Lambda_{\phi}$-a.e.~in $U$. Hence $\phi$ is $p$-independent in $U$ in the $p$-Dirichlet space $(X,d,\mu,\mathcal{E}_p,\mathcal{F}_p,\Gamma_p)$. 
		By \eqref{e:lip-pos} and \cite[Theorem 6.1]{Ch},  $\one_U \Lambda_\phi$ and $\one_U \mu$ are mutually absolutely continuous.
		Hence the desired conclusion follows from Theorem \ref{thm:eid-dirspace}.
		
		Since $\one_U \mu$ is a non-zero measure the absolute continuity $\phi_*(\mathbf{1}_U \mu)\ll \mathcal{L}_n$ implies that $\mathcal{H}^n(\phi(U))>0$ since $\phi:X \to \mathbb{R}^n$ is Lipschitz.
	\end{proof}
	One can also obtain a slight strengthening with the same argument. We say that $\phi=(\phi_1,\dots, \phi_n)\in \Lip(X)^n$ is independent on $V$ if \eqref{eq:independent} holds.  Charts are independent, but the converse does not hold. By noting that the proof above only used the independence, we get the following result.
	
	\begin{proposition}\label{prop:independent}
		Let $(X,d,\mu)$ be a complete metric space that satisfies the volume doubling property  and   a $(1,p)$-Poincar\'e inequality \eqref{e:def-PI} for some $p \in (1,\infty)$. For any  $\phi=(\phi_1,\dots, \phi_n)$, which is independent on $U$, we have $\phi_*(\mathbf{1}_U \mu)\ll \mathcal{L}_n.$ In particular, $\mathcal{H}^n(\phi(U))>0$.
	\end{proposition}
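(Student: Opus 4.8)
The plan is to run the proof of Proposition~\ref{prop:cheeger} essentially verbatim, after observing that the chart structure of $(U,\phi)$ was used there for one purpose only: to extract, from the uniqueness of the derivative of the constant function $f\equiv 0$, a full-measure subset $V\subseteq U$ on which the independence condition \eqref{eq:independent} holds. Here \eqref{eq:independent} is assumed on all of $U$, so I would simply take $V=U$ and leave everything else unchanged.

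Concretely, I would first reduce to the case that $U$ is compact by inner regularity of $\mu$. Then, fixing a ball $B(x_0,r)\supseteq U$ and applying McShane's extension theorem \cite[Theorem~6.2]{Hei}, I would replace $\phi=(\phi_1,\dots,\phi_n)$ by functions $\widetilde\phi_i\in\Lip(X)\cap C_0(X)$ agreeing with $\phi_i$ on $B(x_0,r)$ and vanishing off $B(x_0,2r)$. Since $B(x_0,r)$ is an open neighbourhood of every point of $U$ and the pointwise Lipschitz constant $\Lip(\cdot)(x)$ depends only on the germ of the function at $x$, this modification changes neither $\phi|_U$ nor the value of $\Lip\bigl(\sum_i a_i\phi_i\bigr)$ at any point of $U$; hence independence on $U$ is preserved. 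Relabelling, we may assume $\phi\in(\Lip(X)\cap C_0(X))^n\subseteq\mathcal{F}_p^n$ for the regular $p$-Dirichlet space $(X,d,\mu,\mathcal{E}_p,\mathcal{F}_p,\Gamma_p)$ provided by Lemma~\ref{lem:upper-gradient=p-Dirichlet-space}.

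The core step is translating ``independence on $U$'' into ``$p$-independence on $U$'' in the sense of Definition~\ref{d:p-independent}. For this I would use the continuity of $\mathbf a\mapsto\Lip(\langle\mathbf a,\phi\rangle)(x)$ for each fixed $x$ \cite[Sublemma~7.2.4]{Kei-diff}: together with the compactness of $\mathbb{S}^{n-1}$ and \eqref{eq:independent} this yields $\inf_{\mathbf a\in S}\Lip\bigl(\sum_i a_i\phi_i\bigr)(x)>0$ for every $x\in U$, where $S$ is a countable dense subset of $\mathbb{S}^{n-1}$. Invoking \cite[Theorem~6.1]{Ch}, which identifies the minimal generalized upper gradient of any $f\in\Lip(X)\cap C_0(X)$ with $\Lip f$ $\mu$-a.e., one gets $\Gamma_p\langle\sum_i\lambda_i\phi_i\rangle=\bigl(\Lip(\sum_i\lambda_i\phi_i)\bigr)^p\mu$ and $\Lambda_\phi=\bigl(\sum_i(\Lip\phi_i)^p\bigr)\mu$, so the lattice infimum in Definition~\ref{d:p-independent} satisfies
\[
\bigwedge^{\Lambda_\phi}_{\blambda\in\mathbb{S}^{n-1}}\frac{d\Gamma_p\langle\sum_{i=1}^n\lambda_i\phi_i\rangle}{d\Lambda_\phi}=\frac{\inf_{\mathbf a\in S}\Lip\bigl(\sum_{i=1}^n a_i\phi_i\bigr)^p(\cdot)}{\sum_{i=1}^n(\Lip\phi_i)^p(\cdot)}>0\qquad\Lambda_\phi\text{-a.e. on }U,
\]
i.e.\ $\phi$ is $p$-independent on $U$. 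The same identities show that $\one_U\Lambda_\phi$ and $\one_U\mu$ are mutually absolutely continuous, since $\sum_i\Lip\phi_i>0$ on $U$ (otherwise \eqref{eq:independent} would fail for every $\mathbf a\neq 0$, by subadditivity and homogeneity of $\Lip$).

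Finally, each $\phi_i$ is continuous, hence its own quasicontinuous representative, so Theorem~\ref{thm:eid-dirspace} applies and gives $\phi_*(\one_U\Lambda_\phi)\ll\mathcal{L}_n$; mutual absolute continuity upgrades this to $\phi_*(\one_U\mu)\ll\mathcal{L}_n$, and since $\one_U\mu\neq 0$ and $\phi$ is Lipschitz we conclude $\mathcal{H}^n(\phi(U))>0$. I do not expect a genuine obstacle: the entire content is the remark that the proof of Proposition~\ref{prop:cheeger} never invokes the chart axiom beyond \eqref{eq:independent}. The only point requiring a moment's care is checking that the McShane modification preserves independence on $U$, which is immediate once one recalls that $\Lip(\cdot)(x)$ depends only on the germ at $x$ and that $B(x_0,r)$ is a neighbourhood of each point of $U$.
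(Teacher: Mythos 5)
Your proposal is correct and is precisely the paper's intended argument: the paper justifies Proposition~\ref{prop:independent} only by the remark that the proof of Proposition~\ref{prop:cheeger} invokes the chart hypothesis solely to produce a full-measure subset $V\subset U$ on which \eqref{eq:independent} holds, and you have fleshed out exactly that observation, including the correct check that the McShane modification preserves independence on $U$ because $\Lip(\cdot)(x)$ depends only on the germ at $x$. One minor improvement over the printed proof of Proposition~\ref{prop:cheeger}: you correctly take $S$ dense in $\mathbb{S}^{n-1}$ (rather than in $\mathbb{R}^n$, as that proof states), which is what is actually required for \eqref{e:lip-pos} to hold.
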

	
	\begin{remark}
		In \cite{teriseb} a map $\phi=(\phi_1,\dots, \phi_n)\in \cF_p^n$ was defined as $p$-independent if
		\[
		\bigwedge_{|\mathbf{a}|=1} g_{\langle {\bf a}, \phi\rangle}>0,
		\]
		where $g_{\langle {\bf a}, \phi\rangle}$ is the minimal generalized upper gradient for $\langle {\bf a}, \phi\rangle$. By construction, 
		\[
		g_{\langle {\bf a}, \phi\rangle}=\frac{d\Gamma_p\langle {\bf a}, \phi\rangle}{d\mu},
		\]
		and one sees that $\phi$ is $p$-independent in the sense of \cite{teriseb} if and only if it $p$-independent in the sense of Definition \ref{d:p-independent}. This observation yields Proposition \ref{prop:cheeger} with a Cheeger chart replaced with a  $p$-independent $\phi$ in the sense of \cite{teriseb}, and without the assumption of a Poincar\'e inequality. Indeed, this would yield a different proof of \cite[Theorem 1.11(c)]{teriseb}. Our definition of $p$-independence was inspired by the one in \cite{teriseb}, which in turn was based on the presentation in \cite{KM}.
	\end{remark}

	\subsection{Metric currents and a theorem of Preiss} \label{ss:preiss}
	The following theorem is a  generalization of a result due to Preiss in two dimensions \cite[Theorem 3.3]{AKircheim} (Preiss had the sharp  constant $1$ instead of $\sqrt{2}$ below). An elementary proof in the case $n=1$ is given in \cite[Remarks after Theorem 3.3]{AKircheim}. This theorem answers a problem of Ambrosio and Kirchheim as they ask if  Preiss' theorem is valid in higher dimension \cite[p.~15]{AKircheim}. 
	\begin{theorem} \label{t:detjacobian}
		Let $\mu$ be a finite Borel measure on $\mathbb{R}^n$ and assume that $\mu$ is not absolutely continuous
		with respect to $\mathcal{L}_n$. Then there exists a sequence of continuously differentiable, $\sqrt{2}$-Lipschitz function
		functions $g_k: \mathbb{R}^n \to \mathbb{R}^n$ converging pointwise to the identity, and such that
		\begin{equation} \label{e:det-lim}
			\lim_{k \to \infty} \int_{\mathbb{R}^n} \det(\nabla g_k)\,d\mu < \mu(\mathbb{R}^n).
		\end{equation}
	\end{theorem}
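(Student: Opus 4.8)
The plan is to extract from the singular part of $\mu$ a compact cone-null set along which the identity map can be ``flattened'' in a single direction, and then combine Lemma~\ref{l:upper-gradient}, Proposition~\ref{prop:approximations} and a mollification to build the required maps $g_k$ as perturbations of the identity. First I would produce the cone-null set. Since $\mu$ is not absolutely continuous with respect to $\mathcal{L}_n$, its singular part is nonzero, so by inner regularity of Radon measures there is a compact set $K\subset\mathbb{R}^n$ with $\mathcal{L}_n(K)=0$ and $\mu(K)>0$. The finite nonzero measure $\one_K\cdot\mu$ is singular with respect to $\mathcal{L}_n$, so by Proposition~\ref{prop:decompoabilitybundlesinuglar} its decomposability bundle has dimension at most $n-1$ at $(\one_K\cdot\mu)$-a.e.\ point. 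Fix $\epsilon\in(0,\pi/2)$; by Proposition~\ref{prop:decompositionconenull} applied to $\one_K\cdot\mu$ there are a unit vector $w\in\mathbb{S}^{n-1}$ and a compact $C(w,\pi/2-\epsilon)$-cone null set $F$ with $(\one_K\cdot\mu)(F)>0$. Replacing $F$ by $F\cap K$, we may assume $F\subset K$, so that $F$ is compact, $\mathcal{L}_n(F)=0$, $\mu(F)>0$, and $F$ is $C(w,\pi/2-\epsilon)$-cone null; fix $R>0$ with $F\subset B(0,R)$.

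Next I would produce one-dimensional approximations. Apply Lemma~\ref{l:upper-gradient} to the linear function $f(y):=\langle w,y\rangle$, for which $\LIP[f]=1$ and $\nabla f\equiv w$. As in the proof of Theorem~\ref{thm:eid-dirspace}, since $\sup\{\,\abs{\langle w,v\rangle}:v\in\mathbb{S}^{n-1}\setminus(C^\circ\cup(-C)^\circ)\,\}=\sin\epsilon$ for $C=C(w,\pi/2-\epsilon)$, the function
\[
\psi:=(\sin\epsilon)\,\one_F+\one_{\mathbb{R}^n\setminus F}
\]
is a lower semicontinuous upper gradient for $f$ with values in $(0,1]$ that equals $\LIP[f]=1$ outside $B(0,R)$, and by Lemma~\ref{l:upper-gradient}(b) there is a non-decreasing sequence of continuous functions $\psi_k$ with $\psi_k\to\psi$ pointwise and $\psi_k=\psi$ outside a fixed closed ball containing $F$ (so $\psi_k\equiv 1$ outside a sufficiently large ball). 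Proposition~\ref{prop:approximations} then yields $1$-Lipschitz functions $f_k\in\Lip(\mathbb{R}^n)$ converging weak-$*$ to $f$ (in particular, pointwise) with $\Lip_a[f_k](y)\le\psi_k(y)$ for all $k\in\mathbb{N}$ and all $y\in\mathbb{R}^n$.

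The remaining step is to mollify and assemble. Let $\varphi$ be a symmetric smooth mollifier supported in the closed unit ball and put $h_k:=f_k*\varphi_{\delta_k}$, where $\delta_k\downarrow0$ is chosen, for each $k$, small enough that $\delta_k\le 2^{-k}$ and $\sup_{z\in\overline{B}(y,\delta_k)}\psi_k(z)\le\psi_k(y)+2^{-k}$ for all $y\in\overline{B}(0,2R)$; this is possible by uniform continuity of $\psi_k$ on $\overline{B}(0,2R+1)$. Then $h_k\in C^\infty(\mathbb{R}^n)$ is $1$-Lipschitz, $\sup_{\mathbb{R}^n}\abs{h_k-f_k}\le\delta_k$, and, since $\abs{\nabla f_k}\le\Lip_a[f_k]\le\psi_k$ $\mathcal{L}_n$-a.e.\ (by Rademacher's theorem),
\[
\abs{\nabla h_k(y)}\le\esssup_{z\in\overline{B}(y,\delta_k)}\abs{\nabla f_k(z)}\le\sup_{z\in\overline{B}(y,\delta_k)}\psi_k(z)\le\psi(y)+2^{-k}\qquad\text{for }y\in\overline{B}(0,2R),
\]
while $\abs{\nabla h_k}\le 1$ everywhere. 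Define $g_k\colon\mathbb{R}^n\to\mathbb{R}^n$ by $g_k(y):=y-(\langle w,y\rangle-h_k(y))\,w$. Each $g_k$ is continuously differentiable; writing $P=I-ww^\top$ for the orthogonal projection onto $w^\perp$ we have $g_k(y)-g_k(y')=P(y-y')+(h_k(y)-h_k(y'))w$ with the two summands mutually orthogonal, so $g_k$ is $\sqrt2$-Lipschitz (as $\abs P\le1$ and $\LIP[h_k]\le1$); from $\nabla g_k=I+w(\nabla h_k-w)^\top$ one computes $\det(\nabla g_k)=\langle\nabla h_k,w\rangle$, whence $\abs{\det(\nabla g_k)}\le\abs{\nabla h_k}$; and $g_k\to\id$ pointwise because $\abs{h_k(y)-\langle w,y\rangle}\le\delta_k+\abs{f_k(y)-f(y)}\to0$. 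Finally, using $\det(\nabla g_k)\le\sin\epsilon+2^{-k}$ on $F$ (note $F\subset\overline{B}(0,2R)$ and $\psi\equiv\sin\epsilon$ on $F$) and $\det(\nabla g_k)\le 1$ on $\mathbb{R}^n\setminus F$, we obtain
\[
\int_{\mathbb{R}^n}\det(\nabla g_k)\,d\mu\le(\sin\epsilon+2^{-k})\,\mu(F)+\mu(\mathbb{R}^n\setminus F)=\mu(\mathbb{R}^n)-(1-\sin\epsilon)\,\mu(F)+2^{-k}\mu(F).
\]
Passing to a subsequence along which $\lim_k\int_{\mathbb{R}^n}\det(\nabla g_k)\,d\mu$ exists, this limit is at most $\mu(\mathbb{R}^n)-(1-\sin\epsilon)\mu(F)<\mu(\mathbb{R}^n)$, since $\sin\epsilon<1$ and $\mu(F)>0$; this is the assertion of Theorem~\ref{t:detjacobian}.

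I expect the main obstacle to be reconciling the mollification with the fact that the improved upper gradient $\psi$ drops below $\LIP[f]$ only on $F$, which is Lebesgue-null (being cone-null): mollifying $f_k$ at a scale fixed in advance would only see $\psi\equiv\LIP[f]$ $\mathcal{L}_n$-a.e.\ and lose the gain on $F$. This forces one to route the gradient bound through the \emph{continuous} approximants $\psi_k$ from Lemma~\ref{l:upper-gradient}(b) and to let the mollification radius $\delta_k$ shrink with $k$. Granting that, the verification that $g_k$ is $C^1$ and $\sqrt2$-Lipschitz with $\det(\nabla g_k)=\langle\nabla h_k,w\rangle$ is elementary linear algebra.
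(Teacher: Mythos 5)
The proposal is correct and follows essentially the same route as the paper: it extracts a compact cone-null set from the singular part of $\mu$ via Propositions~\ref{prop:decompoabilitybundlesinuglar} and~\ref{prop:decompositionconenull}, applies Lemma~\ref{l:upper-gradient} and Proposition~\ref{prop:approximations} to the linear function $\langle w,\cdot\rangle$, mollifies the resulting $1$-Lipschitz approximants at a $k$-dependent shrinking scale, and perturbs the identity map in the $w$-direction. The only cosmetic differences are that the paper fixes the cone angle at $\pi/3$, obtains the shrinking mollification scale from compactness of the cone-null set together with the definition of $\Lip_a$ rather than from uniform continuity of the continuous approximants $\psi_k$, and writes $g_k$ in coordinates after rotating $w$ to $e_1$; your coordinate-free $g_k(y)=y-(\langle w,y\rangle-h_k(y))w$ is the same map.
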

	\begin{proof}
		By Propositions \ref{prop:decompoabilitybundlesinuglar} and \ref{prop:decompositionconenull}, there exist a unit vector $\blambda \in \mathbb{S}^{n-1}$, compact set $K \subset \mathbb{R}^n$ such that $\mu(K)>0$ and $K$ is $C(\blambda,\pi/3)$-cone null. Without loss of generality, we may assume that $\blambda =(1,0,\ldots,0)$. By Lemma \ref{l:upper-gradient}, the function $f:\mathbb{R}^n \to \mathbb{R}$ defined by $f(y):=\langle \blambda, y \rangle= y_1$ for all $y=(y_1,\ldots,y_n) \in \mathbb{R}^n$ has upper gradient
		$$g(y)= \one_{\mathbb{R}^n \setminus K} + \frac{1}{2}\one_{K}.$$
		Hence, by Proposition \ref{prop:approximations}, there is a sequence of $1$-Lipschitz functions $f_k: \mathbb{R}^n \to \mathbb{R}, k \in \mathbb{N}$ such that $\lim_{k \to \infty} f_k(y)=f(y)$ for all $y \in \mathbb{R}^n$ and such that 
		\[
		\Lip_a[f_k](y) \le g(y) = \one_{\mathbb{R}^n \setminus K} + \frac{1}{2}\one_{K}, \quad \mbox{for all $k \in \mathbb{N}, y \in \mathbb{R}^n$.}
		\] 
		For each $k \in \mathbb{N}$, compactness of $K$, there exists $\delta_k>0$ such that
		\begin{equation} \label{e:fk-lip}
			\LIP\left(\restr{f_k}{\overline{B}(y,\delta_k)}\right) \le \frac{2}{3}, \quad \mbox{for all $y \in K, k \in \mathbb{N}$,}
		\end{equation}
		and
		\[
		\lim_{k \to \infty} \delta_k=0.
		\]
		Let $\rho: \mathbb{R} \to [0,\infty)$ be a smooth function (mollifier)  such that $\int_{\mathbb{R}^n} \rho(y)\,dy=1$ and $\rho(y)=0$ for all $y \in \mathbb{R}^n$ with $\abs{y} \ge 1$. Define
		\[
		\rho_k(y):= \delta_k^{-n} \rho(y/\delta_k), \quad \widetilde{f_k}(y):= \int_{\mathbb{R}^n} f_k(y-z) \rho_k(z)\,\mathcal{L}_n(dz), \quad \mbox{for all $y \in \mathbb{R}^n, k \in \mathbb{N}$.}
		\]
		Then $\widetilde{f_k}:\mathbb{R}^n \to \mathbb{R}$ is smooth. Since each $f_k$ is $1$-Lipschitz, we have
		\[
		\abs{\widetilde{f_k}(y)-f(y)} \le \delta_k, \quad \mbox{for all $k \in \mathbb{N}, y \in \mathbb{R}^n$.}
		\] 
		Furthermore, each $\widetilde{f_k}$ satisfies
		\begin{equation} \label{e:tilfk-grad}
			\abs{\nabla \widetilde{f_k}(y)}= \abs{\int_{\mathbb{R}^n} \nabla f_k(y-z) \rho_k(z)\,\mathcal{L}_n(dz)} \le \esssup_{w \in \overline{B}(y,\delta_k)} \abs{\nabla f_k(w)} \le	\LIP\left(\restr{f_k}{\overline{B}(y,\delta_k)}\right),
		\end{equation}
		for all $y \in \mathbb{R}^n, k \in \mathbb{N}$,
		where $\nabla f_k$ denotes the $\mathcal{L}_n$-a.e.~well-defined gradient of $f_k$. Hence $\widetilde{f}_k$ is $1$-Lipschitz for all $k \in \mathbb{N}$. Combining \eqref{e:fk-lip} and \eqref{e:tilfk-grad}, we obtain 
		\begin{equation} \label{e:gradfk-til}
			\abs{\nabla \widetilde{f_k}(y)} \le \frac{2}{3} \one_K(y) + \one_{\mathbb{R}^n\setminus K}(y), \quad \mbox{	for all $y \in \mathbb{R}^n, k \in \mathbb{N}$.}
		\end{equation}
		
		Consider the sequence of functions $g_k: \mathbb{R}^n \to \mathbb{R}^n$ defined by 
		\begin{equation} \label{e:diff-fk}
			g_k(y):= (\widetilde{f_k}(y),y_2,\ldots,y_n), \quad \mbox{for all $y=(y_1,\ldots,y_n) \in \mathbb{R}^n, k \in \mathbb{N}$.}
		\end{equation}
		Since $\widetilde{f}_k$ is $1$-Lipschitz, for any $y,z \in \mathbb{R}^n, k \in \mathbb{N}$, we have 
		\[
		\abs{g_k(y)-g_k(z)} \le \sqrt{\abs{\widetilde{f_k}(y)-\widetilde{f_k}(z)}^2+\abs{y-z}^2} \le \sqrt{2} \abs{y-z}
		\]
		Thus each $g_k$ is $\sqrt{2}$-Lipschitz and continuously differentiable. Since $\lim_{k \to \infty} f_k=f$ pointwise, by \eqref{e:diff-fk}, we have that $g_k$ converges pointwise to the identity map. By Hadamard's inequality, we have 
		\begin{equation} \label{e:detbnd}
			\abs{\det (\nabla g_k)(y)} \le 	\abs{\nabla \widetilde{f_k}(y)} \stackrel{\eqref{e:gradfk-til}}{\le}  \frac{2}{3} \one_K(y) + \one_{\mathbb{R}^n\setminus K}(y), \quad \mbox{	for all $y \in \mathbb{R}^n, k \in \mathbb{N}$.}
		\end{equation}
		Since $\mu(K)>0$,  by \eqref{e:detbnd} and passing to a subsequence if necessary, we obtain \eqref{e:det-lim}.		
	\end{proof}
	\begin{remark}
		Theorem \ref{t:detjacobian} provides a characterization of absolute continuity with respect to Lebesge measure as the converse is known.
		By   \cite[Example 3.2]{AKircheim}, if $\mu \ll \mathcal{L}_n$ is a finite Borel measure, and if $g_k: \mathbb{R}^n \to \mathbb{R}^n,k \in \mathbb{N}$ is a sequence of $\mathcal{C}^1$ functions  such that $\sup_{k \in \mathbb{N}} \LIP(g_k)<\infty$ with $g_k$ converging pointwise to identity, then 
		\[
		\lim_{k \to \infty} \int_{\mathbb{R}^n} \det(\nabla g_k)\,d\mu = \mu(\mathbb{R}^n).
		\]
	\end{remark}
	
	Let $\mathbf{M}_k(\mathbb{R}^n)$ denote the set of $k$-dimensional metric currents in $\mathbb{R}^n$ as defined by Ambrosio and Kirchheim \cite[Definition 3.1]{AKircheim}. Any $T \in \mathbf{M}_k(\mathbb{R}^n)$ has an associated finite Borel measure $\norm{T}$ called the \emph{mass of $T$} \cite[Definition 2.6, Proposition 2.7]{AKircheim}. By replacing the use of Preiss' theorem mentioned above (\cite[Theroem 3.3]{AKircheim}) with Theorem \ref{t:detjacobian} in the proof of \cite[Theorem 3.8]{AKircheim}, we obtain  a different proof of the following result of \cite[Theorem 1.15]{DR}. 
	\begin{corollary} \label{cor:metric-current}
		For any $n \in \mathbb{N}$ and for any $T \in \mathbf{M}_n(\mathbb{R}^n)$, the mass $\norm{T}$ is absolutely continuous with respect to $\mathcal{L}_n$.
	\end{corollary}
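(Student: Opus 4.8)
The plan is to reproduce the proof of \cite[Theorem~3.8]{AKircheim} almost verbatim, the only change being that its Preiss-type input \cite[Theorem~3.3]{AKircheim} --- available there only for $n\le 2$ --- is replaced throughout by Theorem~\ref{t:detjacobian}, which we have now established for every $n$. Fix $T\in\mathbf{M}_n(\mathbb{R}^n)$ and put $\mu:=\norm{T}$, a finite Borel measure on $\mathbb{R}^n$ (finiteness of the mass is part of the definition of a metric current). Arguing by contradiction, suppose $\mu$ is not absolutely continuous with respect to $\mathcal{L}_n$. Then Theorem~\ref{t:detjacobian}, and more precisely the estimate \eqref{e:detbnd} obtained in its proof, furnishes a compact set $K\subset\mathbb{R}^n$ with $\mu(K)>0$ together with a sequence of continuously differentiable, $\sqrt 2$-Lipschitz maps $g_k\colon\mathbb{R}^n\to\mathbb{R}^n$ converging pointwise to the identity and satisfying $\abs{\det(\nabla g_k)(y)}\le \tfrac23\one_K(y)+\one_{\mathbb{R}^n\setminus K}(y)$ for every $y\in\mathbb{R}^n$ and every $k\in\mathbb{N}$; integrating against $\mu$,
\[
\int_{\mathbb{R}^n}\abs{\det(\nabla g_k)}\,d\mu\ \le\ \mu(\mathbb{R}^n)-\tfrac13\mu(K)\ <\ \mu(\mathbb{R}^n)\qquad\text{for all }k\in\mathbb{N}.
\]

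Next I would invoke three standard facts from the Ambrosio--Kirchheim theory of metric currents, exactly as in \cite[Proof of Theorem~3.8]{AKircheim}. First, since $g_k\to\mathrm{id}$ pointwise with $\sup_k\LIP[g_k]\le\sqrt 2<\infty$, the continuity axiom for metric currents yields the weak convergence $g_{k\#}T\to T$: indeed $g_{k\#}T(h,\pi_1,\dots,\pi_n)=T(h\circ g_k,\pi_1\circ g_k,\dots,\pi_n\circ g_k)\to T(h,\pi_1,\dots,\pi_n)$ for every admissible tuple $(h,\pi_1,\dots,\pi_n)$, because $h\circ g_k\to h$ and $\pi_i\circ g_k\to\pi_i$ pointwise with uniformly bounded Lipschitz constants. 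Second, the total mass is lower semicontinuous under weak convergence, so $\norm{T}(\mathbb{R}^n)\le\liminf_k\norm{g_{k\#}T}(\mathbb{R}^n)$. Third, for a $C^1$ map $g\colon\mathbb{R}^n\to\mathbb{R}^n$ one has the Jacobian mass bound $\norm{g_{\#}T}(\mathbb{R}^n)\le\int_{\mathbb{R}^n}\abs{\det(\nabla g)}\,d\norm{T}$; this is the metric-current analogue of the classical area-formula estimate, obtained by localizing the mass measure, approximating $g$ on small balls by its affine tangent maps, and using that a linear endomorphism $L$ of $\mathbb{R}^n$ scales the norm of an $n$-vector by $\abs{\det L}$. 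Chaining the three inequalities with the displayed estimate gives
\[
\mu(\mathbb{R}^n)=\norm{T}(\mathbb{R}^n)\le\liminf_{k\to\infty}\norm{g_{k\#}T}(\mathbb{R}^n)\le\liminf_{k\to\infty}\int_{\mathbb{R}^n}\abs{\det(\nabla g_k)}\,d\mu\le\mu(\mathbb{R}^n)-\tfrac13\mu(K)<\mu(\mathbb{R}^n),
\]
a contradiction. Hence $\norm{T}=\mu\ll\mathcal{L}_n$, as claimed.

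I do not anticipate a genuine obstacle in this deduction: all of the analytic weight lies either in Theorem~\ref{t:detjacobian} (which supplies the higher-dimensional Preiss-type statement, the single ingredient that was previously unavailable for $n\ge 3$) or in the Ambrosio--Kirchheim structure theory --- weak continuity of the pushforward, lower semicontinuity of mass, and the $C^1$ Jacobian mass bound --- which I am free to quote. The one point requiring care is that the \emph{pointwise} Jacobian bound $\abs{\det(\nabla g_k)}\le\tfrac23\one_K+\one_{\mathbb{R}^n\setminus K}$ is what is needed, rather than merely the integrated limit inequality \eqref{e:det-lim} appearing in the statement of Theorem~\ref{t:detjacobian}; but this is precisely \eqref{e:detbnd} in the proof of that theorem, where $g_k=(\widetilde{f_k},y_2,\dots,y_n)$, so that $\abs{\det(\nabla g_k)}\le\abs{\nabla\widetilde{f_k}}$ by Hadamard's inequality and $\abs{\nabla\widetilde{f_k}}\le\tfrac23\one_K+\one_{\mathbb{R}^n\setminus K}$ by \eqref{e:gradfk-til}.
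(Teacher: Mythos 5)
Your proof is correct and takes the same approach as the paper, whose own proof is essentially a one-line citation: reproduce the Ambrosio--Kirchheim proof of \cite[Theorem~3.8]{AKircheim}, replacing Preiss' theorem (available there only for $n\le 2$) with Theorem~\ref{t:detjacobian}. You spell out the AK machinery (weak continuity of the pushforward, lower semicontinuity of mass, and the Jacobian mass bound for $C^1$ maps) that the paper leaves to the cited reference, and you correctly observe that the pointwise estimate \eqref{e:detbnd} from the \emph{proof} of Theorem~\ref{t:detjacobian} — rather than merely the integrated conclusion \eqref{e:det-lim} — is the input required, since $\int\det(\nabla g_k)\,d\mu\le\int\abs{\det(\nabla g_k)}\,d\mu$ goes in the wrong direction.
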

	Corollary \ref{cor:metric-current} was previously shown by Ambrosio and Kirchheim for the case $n=1,2$ \cite[Theorem 3.8]{AKircheim} and by De Philippis and Rindler for all $n \in \mathbb{N}$ using Alberti representations.
	
	\bibliographystyle{acm}

\end{document}